\DeclarePairedDelimiter\ceil{\lceil}{\rceil}
\DeclarePairedDelimiter\floorgrid{\llbracket}{\rrbracket}
\renewcommand{\epsilon}{\varepsilon}
\newcommand{\R}{\mathbb{R}}
\newcommand{\N}{\mathbb{N}}
\newcommand{\Z}{\mathbb{Z}}
\newcommand{\Q}{\mathbb{Q}}
\newcommand{\F}{\mathcal{F}}
\newcommand{\E}{\mathbb{E}}
\renewcommand{\P}{\mathbb{P}}
\newcommand{\lb}{\lbrace}
\newcommand{\rb}{\rbrace}
\newcommand{\la}{\left|}
\newcommand{\ra}{\right|}
\newcommand{\norm}[1]{\left\lVert#1\right\rVert} 
\newcommand{\pt}{\tfrac{\partial}{\partial t}}
\newcommand{\pv}{\tfrac{\partial}{\partial \vartheta}}
\newcommand\numberthis{\addtocounter{equation}{1}\tag{\theequation}}
\newtheorem{theorem}{Theorem}
\newtheorem{lemma}[theorem]{Lemma}
\newtheorem{setting}[theorem]{Setting}
\newtheorem{cor}[theorem]{Corollary}
\newtheorem{prop}[theorem]{Proposition}
\numberwithin{theorem}{section}
\title{Weak error analysis for stochastic gradient\\ descent optimization algorithms}
\author{Aritz Bercher$^1$, Lukas Gonon$^{2,3}$, \\
	 Arnulf Jentzen$^{4,5}$, and Diyora Salimova$^{6,7}$
	\bigskip
	\\
	\small{$^1$ Department of Mathematics, ETH Zurich,}\\
	\small{Switzerland, e-mail:  
		abercher@outlook.com}
	\smallskip
	\\
	\small{$^2$ Faculty of Mathematics and Statistics, University of}
	\\
	\small{St.~Gallen, Switzerland, e-mail: lukas.gonon@unisg.ch}
		\smallskip
	\\
	\small{$^3$ Department of Mathematics, ETH Zurich,}\\
	\small{Switzerland, e-mail:  lukas.gonon@math.ethz.ch}
	\smallskip
	\\
	\small{$^4$ Department of Mathematics, ETH Zurich,}\\
	\small{Switzerland, e-mail:  arnulf.jentzen@sam.math.ethz.ch}
	\smallskip
	\\
	\small{$^5$ Faculty of Mathematics and Computer Science, University of}\\
	\small{M\"unster, Germany, e-mail: ajentzen@uni-muenster.de}
	\smallskip
	\\
	\small{$^6$ Department of Mathematics, ETH Zurich,}\\
	\small{Switzerland, e-mail:  diyora.salimova@sam.math.ethz.ch}
	    \\
	\small{$^7$ Department of Information Technology and Electrical Engineering,}\\
	\small{ETH Zurich, Switzerland, e-mail:  sdiyora@mins.ee.ethz.ch}}
\date{\today}
\begin{document}

\maketitle

\chapter*{Abstract}

Stochastic gradient descent (SGD) type optimization schemes are fundamental ingredients in a large number of machine learning  based algorithms. In particular, SGD type optimization schemes are frequently employed in applications involving natural language processing, 
	object and face recognition, 
	fraud detection, computational advertisement, 
	and numerical approximations of partial differential equations. In mathematical convergence results for SGD type optimization schemes there are usually two types of error criteria studied in the scientific literature, that is,  the error in the strong sense  and  the error with respect to the objective function. In applications  one is  often not only  interested   in the size of the error with respect to 
	 the objective function but also in the size of the error with respect to  a test function which is possibly different from the objective function. The analysis of the size of this error
	is  the subject of this article. In particular, the main result of this article proves  under suitable assumptions that the size of this error decays at the same speed as in the special case where the test function coincides with the objective function.

\tableofcontents

\chapter{Introduction}

Stochastic gradient descent (SGD) type optimization schemes are fundamental ingredients in a large number of machine learning  based algorithms. In particular, SGD type optimization schemes are frequently employed in applications involving
natural language processing (cf., e.g., \cite{DahlEtAl2012,GravesMohamedHinton13,HintonETAL12,HuLuLiChen2014,KalchbrennerEtAl2014,WuEtAl2016}), 
 object and face recognition  (cf., e.g., \cite{HuangEtAl2017,KrizhevskySutskeverHinton12,SimonyanZisserman2014,TaigmanEtAl2014,WangEtAl2015}), 
fraud detection
(cf., e.g., \cite{ChouiekhHaj2018,RoyEtAl2018}), computational advertisement
(cf., e.g., \cite{WangEtAl2017,ZhaiEtAl2016}), 
price formation (cf., e.g., \cite{SirignanoCont2019}),
portfolio hedging (cf., e.g., \cite{BuehlerGanonTeichmann2019}),
 financial model calibration (cf., e.g., \cite{LiuBorovykhEtAl2019,BayerHorvathEthAl2019}),
and numerical approximations of partial differential equations (PDEs) 
(cf., e.g., \cite{BeckEtAL2018,BeckJentzenE2019, EHanJentzen2017,EYu2018, HanJentzenE2018,Henry2017,Mishra2018, NabianMeidani2018,SirignanoSpiliopoulos2017}).
In view of the success of the SGD type optimization schemes in the above sketched applications, SGD type optimization schemes  have also been intensively studied in the scientific literature. 
In particular,
we refer, e.g., to \cite{BercuFort13,BottouCurtisNocedal2018,Ruder16} for overview articles on SGD type optimization schemes,
we refer, e.g., to \cite{BengioLewandowskiPascanu2013,BordesBottouGallinari09,DauphinDeVriesBengio15,DauphinEtAl2014,DefossezBach17,Dozat16,DuchiHazanSinger11,KingmaBa14,LangfordLiZhang09,LeRouxSchmidtBach12,McmahanStreeter14,NiuRechtChristopherWright11,Polyak98,PolyakJuditsky92,PolyakTsypkin80,Schraudolph99,SchraudolphYuGunter07,Shalev-ShwartzShingerSrebroCotter11,Sohl-DicksteinPooleGanguli14,Zeiler12,ZhangChoromanskaLeCun15,LiTaiE2017,ZarembaSutskever2014,LovasEtAl2020} and the references mentioned therein for the proposal and the derivation  of SGD type optimization schemes,
we refer, e.g.,  to
\cite{BachMoulines11,BachMoulines13,Bottou12,BottouLeCun04,DarkenChangMoody92,
DieuleveutDurmusBach17,
InoueParkOkada03,
LeCunBottouOrrMuller98,
MizutaniDreyfus10,
PascanuBengio13,
Pillaud-VivienRudiBach17,
RakhlinShamirSridharan12,
RattraySaadAmari98,
SutskeverMartensDahlHinton13,
Xu11,
Zhang04,
JohnsonZhangNIPS2013,
NguyenEtAl2018} and the references mentioned therein 
for numerical simulations for
SGD type optimization schemes,
and
we refer, e.g., to \cite{DeanETAL12,DengETAL13,Graves13,GravesMohamedHinton13,HintonETAL12,HintonSalakhutdinov06,DahlSainathHinton2013,KrizhevskySutskeverHinton12,LeCunBottouBengioHaffner98,SchaulZhangLeCun12,IoffeSzegedy2015,Xu11,Zhang04,
SankararamanEtAl2019,BeckerEtAl2019,BeckerCheriditoJentzen2019,BeckBeckerCheridito2019,becker2018deep} and the references mentioned therein 
for applications involving  neural networks and SGD type optimization schemes.
There are also a number of rigorous mathematical results on SGD type optimization schemes which aim to contribute to an understanding toward the success and the limitations of SGD type optimization schemes (cf., e.g., 
\cite{DieuleveutDurmusBach17,Wurstemberger2018,JentzenWurstemberger2018,JohnsonZhangNIPS2013,LanZhou2018,NguyenEtAl2018,Pillaud-VivienRudiBach17,RakhlinShamirSridharan12,TangMonteleoni15}
  for mathematical results in case of strongly convex objective functions, cf., e.g., \cite{Bach14,BachMoulines11,BachMoulines13,BottouBousquet11,WoodworthSrebro2016} for mathematical results in case of convex but possibly non-strongly convex objective functions, and  cf., e.g., 
  \cite{FehrmanGessJentzen2019,GhadimiLanZhang2016,LeiHuLiTang2019,CheriditoEtAl2020,ZeyuanEtAl2018,LiLiang2018,BrutzkusEtAl2017,ChauEtAl2019,LovasEtAl2020}
  for mathematical results in case of possibly non-convex objective functions).
In mathematical convergence results for SGD type optimization schemes there are usually two types of error criteria studied in the scientific literature, that is, (I) the error in the strong sense (cf., e.g., 
\cite{Bach14,BachMoulines11,BottouLeCun04,DieuleveutDurmusBach17,Wurstemberger2018,JentzenWurstemberger2018,NguyenEtAl2018,NguyenEtAl2018}) and (II) the error with respect to the objective function (cf., e.g.,
\cite{Bach14,BachMoulines11,BachMoulines13,DieuleveutDurmusBach17,JentzenWurstemberger2018,JohnsonZhangNIPS2013,LanZhou2018,Pillaud-VivienRudiBach17,RakhlinShamirSridharan12,TangMonteleoni15,WoodworthSrebro2016}). More specifically, 
suppose that the objective function  $f\colon\R^d\to\R$  which we intend to minimize by means of an SGD type optimization scheme satisfies for all $x \in \R^d$ that $f(x) =\E[F(x,Z)]$, where $d \in \N = \{1, 2, 3, \ldots\}$, where $Z \colon \Omega \to S$  is a random variable on a probability space  $(\Omega,\F,\P)$ with values in a measurable space $(S,\mathcal{S})$, and where $F \colon \R^d\times S\to \R$ is a sufficiently regular function  (cf., e.g.,  \cite[Section~1]{DereichMuellerGronbach2019}, \cite[Theorem~1.1]{Wurstemberger2018}, and \cite[Theorem~1.1]{JentzenWurstemberger2018}). Moreover, suppose that $\Xi \in \R^d$ is a minimum point of the objective function $f \colon \R^d \to \R$ and 
suppose that $\Theta \colon \N_0\times \Omega \to \R^d$ is the stochastic process induced by the considered SGD type optimization scheme (cf.~\eqref{eq:intro:SGD} in Theorem~\ref{thm:intro} below). Then in the case of (I) one is interested  in the size of the strong $L^2$-error between the minimum point $\Xi$ and $\Theta_n$  as $n \to \infty$ and in the case of (II) one is interested in the size of the error between the objective function $f$ evaluated at the minimum point $\Xi$ and the expectation  of the
objective function $f$ evaluated at $\Theta_n$ as $n \to \infty$.
In the case of (II) the error is in some sense weaker but in many situations one can establish quicker convergence rates for (II), namely, twice the convergence rate in (I) 
(see, e.g., \cite[items (ii) and (iii) in Theorem~1.1]{JentzenWurstemberger2018}).
In applications one is usually    not only interested in the objective function $f$ evaluated at the minimum point $\Xi$ but also in some other functional evaluated at the minimum point $\Xi$ and the analysis of the error corresponding to this approximation problem 
 is the subject of this article. More formally,  the main contribution of this work is to study an error criteria which is different from (I) and (II) and which essentially generalizes (II), that is, in this work we study the size of the error between  $\psi(\Xi)$ and $\E [\psi(\Theta_n)]$ as $n \to \infty$ for any sufficiently regular function $\psi \colon \R^d \to \R$ (in particular, including the objective function $f \colon \R^d \to \R$ as a special case).
More specifically,  the main result of this article, Theorem~\ref{cor:no_setting_discrete_version} below,  establishes that under suitable convexity type assumptions
the convergence rate of this error   is the same convergence rate
 as in the special case (II) where the sufficiently regular function $\psi \colon \R^d \to \R$ coincides with the objective function $f \colon \R^d \to \R$.
To illustrate the  findings of Theorem~\ref{cor:no_setting_discrete_version} we now present a special case of the main result of this article. 
\begin{theorem}
\label{thm:intro}
Let  $d\in \N$, $ \xi,\,\Xi\in \R^d$, $\epsilon\in (0,1)$,  $\eta, L, c \in (0,\infty)$, $\psi\in C^2(\R^d,\R)$,  let $(S,\mathcal{S})$ be a measurable space, let $(\Omega,\F,\P)$ be a probability space, 
let  $F  = \linebreak (F(\theta, s))_{(\theta, s) \in \R^d\times S } \colon \R^d\times S\to \R$ be  $(\mathcal{B}(\R^d)\otimes \mathcal{S})/\mathcal{B}(\R)$-measurable,
let $Z_n\colon\Omega \to S$, $n \in \N$, be i.i.d.\ random variables,
 assume for all $s \in S$ that $(\R^d \ni \theta \mapsto F(\theta,s)\in\R )\in C^3(\R^d,\R)$,  assume  for all $\theta,  \vartheta \in \R^d$  that 
\begin{gather}
\E\big[\|(\nabla_{\theta} F)(\theta, Z_1)\|_{\R^d}^2 \big] \leq c \big[1+\|\theta\|_{\R^d} \big]^2,\\
\textstyle\sum_{i=2}^3 \inf\nolimits_{\delta\in(0,\infty)}\sup\nolimits_{u\in [-\delta,\delta]^d}\E\big[ |F(\theta, Z_1)| + \| (\tfrac{\partial^i}{\partial \theta^i}F)(\theta+u,Z_1)\|_{L^{(i)}(\R^d,\R)}^{1+\delta}\big]<\infty,\\
\langle \theta - \vartheta, \E[ ( \nabla_{\theta} F)(\theta, Z_1) ]-\E[ ( \nabla_{\theta} F)(\vartheta,Z_1) ]\rangle_{\R^d} \geq L \|\theta-\vartheta\|_{\R^d}^2,\\
\big\|\E\big[ (\tfrac{\partial^3}{\partial {\theta}^3}F)(\theta, Z_1) \big]\big\|_{L^{(3)}(\R^d,\R)} 
+ \max\nolimits_{i \in \{1,2\}}
\|\psi^{(i)}(\theta)\|_{L^{(i)}(\R^d,\R)} < \infty,
\end{gather}
and $\| \E[ ( \nabla_{\theta} F)(\theta, Z_1) ]\|_{\R^d} \leq c \|\theta - \Xi\|_{\R^d}$,
and let $\Theta \colon \N_0\times \Omega \to \R^d$ satisfy for all $n\in\N$   that $\Theta_0 = \xi$ and 
\begin{equation}
\label{eq:intro:SGD}
\Theta_n = \Theta_{n-1} - \tfrac{\eta}{n^{1-(\epsilon/2)}} (\nabla_{\theta} F)(\Theta_{n-1},Z_n).
\end{equation}
Then 
\begin{enumerate}[(i)]
	\item
	\label{item1:thm:intro} we have that $\lb \theta \in \R^d \colon ( \E[F(\theta, Z_1)] = \inf\nolimits_{\vartheta \in \R^d} \E[F(\vartheta, Z_1)])\rb = \lb \Xi\rb$
	and 
	\item
	\label{item2:thm:intro} there exists  $C \in \R$ such that for all $n\in\N$ we have that 
	\begin{equation}
	\label{eq:intro:statement}
		|\psi(\Xi) - \E [\psi(\Theta_{n})]|  \leq C n^{\epsilon-1}.
		\end{equation}
\end{enumerate}
\end{theorem}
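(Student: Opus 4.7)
To prove item (i), I would first check that $f(\theta):=\E[F(\theta,Z_1)]$ is a well-defined $C^2$ function with $\nabla f(\theta)=\E[(\nabla_\theta F)(\theta,Z_1)]$: the integrability assumption $\sum_{i=2}^3\inf_\delta\sup_u\E[|F(\theta,Z_1)|+\|(\tfrac{\partial^i}{\partial\theta^i}F)(\theta+u,Z_1)\|^{1+\delta}]<\infty$ provides local uniform $L^{1+\delta}$ bounds on $F$ and its first two derivatives, so dominated convergence lets me interchange $\nabla$ and $\E$. The hypothesis $\|\E[(\nabla_\theta F)(\theta,Z_1)]\|_{\R^d}\leq c\|\theta-\Xi\|_{\R^d}$ evaluated at $\theta=\Xi$ then yields $\nabla f(\Xi)=0$, and the $L$-monotonicity condition makes $f$ strongly convex, so $\Xi$ is its unique global minimum, giving (i).

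For item (ii), setting $\Delta_n:=\Theta_n-\Xi$ and applying Taylor's theorem to $\psi$ about $\Xi$ yields
\[
\psi(\Theta_n)-\psi(\Xi)=\langle\psi'(\Xi),\Delta_n\rangle_{\R^d}+\int_0^1(1-s)\psi''(\Xi+s\Delta_n)(\Delta_n,\Delta_n)\,ds,
\]
so that $|\E[\psi(\Theta_n)]-\psi(\Xi)|\leq\|\psi'(\Xi)\|\,\|\E[\Delta_n]\|+\tfrac{1}{2}\sup_\theta\|\psi''(\theta)\|\,\E[\|\Delta_n\|^2]$. It therefore suffices to show that both $\E[\|\Delta_n\|^2]$ and $\|\E[\Delta_n]\|$ are $O(n^{\epsilon/2-1})$, which is a fortiori $O(n^{\epsilon-1})$. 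The second-moment bound is the classical SGD strong-convergence argument: expanding $\|\Delta_n\|^2$ from the update rule, taking conditional expectation given $\Theta_{n-1}$, using $L$-monotonicity and the quadratic-growth assumption $\E[\|(\nabla_\theta F)(\theta,Z_1)\|^2]\leq c(1+\|\theta\|)^2$, one obtains $\E[\|\Delta_n\|^2]\leq(1-\eta_n L+O(\eta_n^2))\E[\|\Delta_{n-1}\|^2]+C\eta_n^2$ with $\eta_n=\eta/n^{1-\epsilon/2}$, and a discrete Gronwall lemma then gives $\E[\|\Delta_n\|^2]=O(n^{\epsilon/2-1})$ after one first establishes uniform second-moment bounds on $\Theta_n$ via the same Lyapunov-type estimate.

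The bias bound is the new ingredient. Taking expectation in the update rule and using $\nabla f(\Xi)=0$,
\[
\E[\Delta_n]=\E[\Delta_{n-1}]-\eta_n\E[\nabla f(\Theta_{n-1})].
\]
A second-order Taylor expansion of $\nabla f$ about $\Xi$, whose cubic remainder is controlled by the assumed bound on $\|\E[(\tfrac{\partial^3}{\partial\theta^3}F)(\cdot,Z_1)]\|_{L^{(3)}(\R^d,\R)}$, produces $\nabla f(\theta)=\nabla^2 f(\Xi)(\theta-\Xi)+r(\theta)$ with $\|r(\theta)\|\leq C\|\theta-\Xi\|^2$. Combined with $\nabla^2 f(\Xi)\succeq LI$ (from $L$-monotonicity) and $\|\nabla^2 f(\Xi)\|\leq c$ (from the linear-growth bound on $\E[\nabla_\theta F]$), for $n$ large enough that $\eta_n c\leq 1$ this gives the recursion $\|\E[\Delta_n]\|\leq(1-\eta_n L)\|\E[\Delta_{n-1}]\|+C\eta_n\E[\|\Delta_{n-1}\|^2]$. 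Inserting $\E[\|\Delta_{n-1}\|^2]=O(n^{\epsilon/2-1})$ makes the forcing term $O(n^{\epsilon-2})$, and a second discrete Gronwall estimate delivers $\|\E[\Delta_n]\|=O(n^{\epsilon/2-1})$, completing (ii). The hard part will be this bias recursion: making the contraction constant uniform in $n$, handling the finitely many initial steps before $\eta_n c\leq 1$, and executing the Gronwall-type argument with time-dependent step sizes and quadratic forcing carefully enough that the resulting constant $C$ depends only on $\xi$, $\Xi$, $\eta$, $L$, $c$, $\epsilon$, $\psi$ and the integrability data of $F$, but not on $n$.
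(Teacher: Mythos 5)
Your argument is correct in its essentials, but it takes a genuinely different route from the paper. The paper never Taylor-expands $\psi$ about $\Xi$; instead it compares $\E[\psi(\Theta_T)]$ with $\psi(\theta_T^\xi)$, where $\theta^\xi$ is the flow of the deterministic gradient ODE $\dot\theta_t = -\nabla f(\theta_t)$, and handles the two resulting terms separately. The discrepancy $\E[\psi(\Theta_T)] - \psi(\theta_T^\xi)$ is expanded as an integral of local truncation errors against the solution $u(t,\vartheta)=\psi(\theta_{T-t}^\vartheta)$ of a first-order backward Kolmogorov PDE (Proposition~\ref{thm:Kolm_back_eq}, Lemma~\ref{lem:Kolm_back_eq_main}); the martingale/zero-mean part of the local error is killed by conditional independence (this is the analogue of where you use $\E[\nabla_\theta F(\Theta_{n-1},Z_n)\mid\Theta_{n-1}]=\nabla f(\Theta_{n-1})$), and the remaining $O(\gamma_n^2)$-size terms are bounded through exponential-contraction estimates on $\partial_\vartheta\theta_t^\vartheta$ and $\partial_\vartheta^2\theta_t^\vartheta$ (Lemmas~\ref{lem:exp_decay_partial_vartheta} and~\ref{lem:bound_partial_theta^2}), feeding into the key estimate Proposition~\ref{prop:main_theorem}. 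The term $|\psi(\theta_T^\xi)-\psi(\Xi)|$ decays like $e^{-LT}$ because the flow contracts. Your approach bypasses all the flow regularity and replaces it with two coupled discrete recursions: one for $\E[\|\Theta_n-\Xi\|^2]$ (standard strong SGD convergence, as in \cite[Corollary~3.5]{Wurstemberger2018}, which the paper itself cites in Proposition~\ref{prop:PvW_L^2}) and one for the bias $\|\E[\Theta_n-\Xi]\|$, driven by the strong error and the uniform bound on $g''$.

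What each approach buys: your argument is more elementary, does not require any PDE or semigroup machinery, and — as you implicitly note by claiming $O(n^{\epsilon/2-1})$, which for $\epsilon\in(0,1)$ is strictly better than the advertised $O(n^{\epsilon-1})$ — actually yields a sharper rate than the theorem states; the constant $K(\lambda)$ in the paper's Lemma~\ref{lem:bnd_sum_main} / Lemma~\ref{lem:rate_conv} is not optimised and loses a factor $n^{\epsilon'}$ relative to the heuristic $\int (t-\floorgrid{t})e^{-L(T-t)}dt\sim\gamma_k/(2L)$. The paper's route, on the other hand, proves the weak estimate for arbitrary learning-rate schedules and mini-batch sizes first (Setting~\ref{sec:setting_1}, Proposition~\ref{prop:main_theorem}) and then specialises, which is why the extra machinery is there. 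The details you flag as "the hard part" are exactly what you'd need to fill in: (a) the interchanges $\nabla^k f=\E[\partial_\theta^k F]$, $k\le 3$, via Lemma~\ref{lem:diff:induction}/Lemma~\ref{lem:f_C^n}; (b) symmetry and spectral bounds $LI\preceq\nabla^2 f(\Xi)$ (and some finite upper bound — boundedness at the single point $\Xi$ is automatic by continuity, so the hypothesis $\|\E[\nabla_\theta F(\theta,Z_1)]\|\leq c\|\theta-\Xi\|$ is not strictly needed for this step, only for the paper's coercivity argument in Corollary~\ref{cor:conv:sgd:nof}); (c) the conditional-expectation identity justifying $\E[\nabla_\theta F(\Theta_{n-1},Z_n)]=\E[\nabla f(\Theta_{n-1})]$, which requires independence of $Z_n$ from $\Theta_{n-1}$ together with a measurability argument (cf.\ Lemma~\ref{lem:diff_martingale}); and (d) careful treatment of the finitely many initial indices where $\eta_n\|\nabla^2 f(\Xi)\|>1$. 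None of these is a gap in principle, but (c) in particular deserves as much care as the paper gives it.
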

Theorem~\ref{thm:intro} is an immediate consequence of Corollary~\ref{cor:conv:sgd:nof} below. Corollary~\ref{cor:conv:sgd:nof}, in turn, follows from
Theorem~\ref{cor:no_setting_discrete_version} which is the main result of this article.
We now introduce some of the notation which we have used in 
 Theorem~\ref{thm:intro} above and which we will use in the later part of this article. For every $d \in \N$ we denote by $\left\| \cdot \right\|_{\R^d} \colon \R^d \to [0, \infty)$  the standard norm on $\R^d$, for every $d \in \N$ we denote by $\langle \cdot, \cdot \rangle_{\R^d} \colon \R^d \times \R^d \to \R$ the standard scalar product on $\R^d$,
for every $k, m, n \in\N$ we denote by  $L^{(k)}(\R^m,\R^n)$ the set of all continuous $k$-linear functions from $\R^m \times \R^m \times \ldots \times \R^m= (\R^m)^k$ to $\R^n$,
for every $k, m, n \in\N$ 
we denote by $\left\|\cdot\right\|_{L^{(k)}(\R^m,\R^n)}\colon L^{(k)}(\R^m, \R^n)\to [0,\infty) $ the function which satisfies for all $A\in L^{(k)}(\R^m,\R^n)$ that 
\begin{equation}
\| A \|_{L^{(k)}(\R^m,\R^n)} = \sup_{u_1,u_2,\dots,u_k\in \R^m\backslash \lb 0\rb} \frac{\|A(u_1,u_2,\dots,u_k)\|_{\R^n}}{\|u_1\|_{\R^m}\|u_2\|_{\R^m}\cdots\|u_k\|_{\R^m}},
\end{equation}
for every $m, n \in\N$  we denote by $L^{(0)}(\R^m,\R^n)$ the set given by $L^{(0)}(\R^m,\R^n) = \R^n$, and for every $ m, n \in\N$ 
 we denote by $\left\|\cdot\right\|_{L^{(0)}(\R^m,\R^n)}\colon \R^n\to [0,\infty) $ the function which satisfies for all $x \in \R^n$ that $\| x \|_{L^{(0)}(\R^m,\R^n)} =\|x\|_{\R^n}$.
Note that for all $m, n \in \N$, $A \in L(\R^m, \R^n)$
we have that $L(\R^m, \R^n)= L^{(1)}(\R^m, \R^n)$ and $\| A \|_{L(\R^m,\R^n)} = \| A \|_{L^{(1)}(\R^m,\R^n)}$.
Let us also add a few further comments  on some of the mathematical objects appearing appearing in  Theorem~\ref{thm:intro} above. In 
Theorem~\ref{thm:intro} above we intend to approximately solve the stochastic optimization problem in item~\eqref{item1:thm:intro} above. More specifically, in Theorem~\ref{thm:intro} above we intend to  weakly approximate the global minimizer $\Xi \in \R^d$ of the function $\R^d \ni \theta \mapsto  \E[F(\theta, Z_1)] \in \R$, where $F \colon \R^d\times S\to \R$ is a sufficiently regular function and where $Z_1 \colon \Omega \to S$ is a random variable on the probability space  $(\Omega,\F,\P)$ with values on the measurable space $(S, \mathcal{S})$.  In Theorem~\ref{thm:intro} above we intend to accomplish this by means of the stochastic gradient descent process $\Theta \colon \N_0\times \Omega \to \R^d$  defined recursively in \eqref{eq:intro:SGD}. In \eqref{eq:intro:statement} in item~\eqref{item2:thm:intro}  in Theorem~\ref{thm:intro} above we establish that for every  sufficiently regular function $\psi \colon \R^d \to \R$ and every arbitrarily small $\varepsilon \in (0, 1)$ we have that the weak error $|\psi(\Xi) - \E [\psi(\Theta_{n})]|$ converges with convergence rate $1-\varepsilon$ to $0$ as $n \to \infty$.  The weak error analysis which we use in our proof of Theorem~\ref{thm:intro} above is strongly based on employing first-order  Kolmogorov  backward PDEs associated to ordinary differential equations (ODEs). In that aspect our strategy of our proof of Theorem~\ref{thm:intro} is inspired by the weak error analysis for numerical approximations of stochastic differential equations (SDEs). In particular, the weak error analysis for numerical approximations of SDEs is often based on employing second-order Kolmogorov PDEs associated to SDEs; see, e.g.,  Kloeden \& Platen \cite[Chapter 14]{kp92}, Rößler~\cite[Subsection~2.2.1]{r03}, M{\"u}ller-Gronbach \& Ritter~\cite[Section~4]{mr08} and the references mentioned therein for weak error analyses for numerical approximations of SDEs.

The rest of this article is structured in the following way.  As we mentioned earlier, the weak error analysis which we use in our proof of Theorem~\ref{thm:intro} above is strongly based on employing first-order  Kolmogorov  backward PDEs associated to ODEs. To this end, we recall in Chapter~\ref{section:existence} existence and regularity properties for solutions of such  first-order  Kolmogorov  backward PDEs. In Chapter~\ref{sec:SAA:general} we use the analysis for first-order  Kolmogorov  backward PDEs from Chapter~\ref{section:existence} to study weak approximation errors for stochastic approximation algorithms (SAAs) in the case of general learning rates. In 
Chapter~\ref{sec:SAA:poly} we specialize the weak error analysis for SAAs in the case of general learning rates from Chapter~\ref{sec:SAA:general} to accomplish weak error estimates for SAAs in the case of polynomially decaying learning rates. In Chapter~\ref{sec:SGD} we apply the weak error analysis results for SAAs from Chapter~\ref{sec:SAA:poly} to establish weak error estimates for SGD optimization methods.

\chapter{Existence results for solutions of first-order  Kolmogorov  backward partial differential equations (PDEs)}
\label{section:existence}
\chaptermark{}

The weak error analysis which we use in our proof of Theorem~\ref{thm:intro} above is strongly based on employing first-order  Kolmogorov  backward PDEs associated to ODEs. In this chapter we present in \Cref{thm:Kolm_back_eq} in Section~\ref{subsec:Kolmogorov} below an elementary existence  result for solutions of such  first-order  Kolmogorov  backward PDEs. In our proof of  \Cref{thm:Kolm_back_eq}  we use the well-known  regularity result for solutions of ODEs in  Lemma~\ref{lem:regularity_flow} in Section~\ref{subsec:existence} below and we use the elementary  uniqueness result  for solutions of ODEs in Lemma~\ref{lem:unique}
in Section~\ref{subsec:existence} below. 
Our proof of Lemma~\ref{lem:unique}, in turn, employs the well-known  result for continuous functions on compact topological spaces in  \Cref{prop:cont_fct_bnd_on_cmpcts} in Section~\ref{subsec:sufficient} below and the well-known  Gronwall integral inequality in Lemma \ref{lem:Gronwall} in Section~\ref{subsec:Gronwall} below.
In addition,  our proof of \Cref{thm:Kolm_back_eq} also uses 
the essentially well-known result on the possibility of interchanging derivatives and integrals in Lemma~\ref{lem:diff:induction} in  Section~\ref{subsec:sufficient} below.  A slightly modified version of
 Lemma~\ref{lem:diff:induction} can, e.g., be found 
in Durrett~\cite[Theorem A.5.1]{Durrett2010}.  In order to formulate the statement of Lemma~\ref{lem:diff:induction}  we employ the essentially well-known measurability result for derivatives of sufficiently regular functions in \Cref{cor:derivative:gen} in  Section~\ref{subsec:sufficient} below. \Cref{cor:derivative:gen}  follows directly from the elementary measurability results in Lemmas~\ref{thm:cont:meas}, \ref{lem:derivative:measd}, and \ref{lem:product:meas}  in  Section~\ref{subsec:sufficient} below.
Moreover, in this chapter we present in \Cref{lem:Gronwall_differential} a well-known Gronwall-type differential inequality, we present   in \Cref{lem:diff^n_under_int} a direct generalization of the result on the possibility of interchanging derivatives and integrals in Lemma~\ref{lem:diff:induction}, we present  in \Cref{thm:regularity_flow_dynamical_system} an essentially well-known existence and  uniqueness result for solutions of ODEs, and we present  in \Cref{cor:regularity_flow} a direct generalization of  the  regularity result for solutions of ODEs in  Lemma~\ref{lem:regularity_flow}. In Chapter~\ref{sec:SAA:general} below   we employ \Cref{lem:Gronwall_differential}, Lemma~\ref{lem:Gronwall}, Lemma~\ref{prop:cont_fct_bnd_on_cmpcts}, Corollary~\ref{cor:derivative:gen},
Lemma~\ref{lem:diff:induction},  \Cref{lem:diff^n_under_int}, \Cref{thm:regularity_flow_dynamical_system},    Lemma~\ref{lem:regularity_flow},  \Cref{cor:regularity_flow}, and \Cref{thm:Kolm_back_eq} 
to study  weak approximation errors for SAAs.

\section{Gronwall-type inequalities}
\label{subsec:Gronwall}
\sectionmark{}

\begin{lemma}\label{lem:Gronwall_differential}
Let $t\in \R$, $T\in (t,\infty)$, $b\in C([t,T],\R)$, $f\in C^1([t,T],\R)$ satisfy for all $s\in [t,T]$ that
$
f'(s)\leq b(s)f(s)
$.
Then we have for all $s\in [t,T]$ that
\begin{equation}\label{eq:Gronwall_differential_2}
f(s)\leq f(t)\exp\!\left(\int_{t}^s b(u)\, du\right).
\end{equation}
\end{lemma}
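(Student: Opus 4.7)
The plan is to prove Lemma~\ref{lem:Gronwall_differential} by the standard integrating factor trick. Define the antiderivative $B\colon [t,T]\to\R$ by $B(s) = \int_{t}^{s} b(u)\,du$ for $s\in[t,T]$. Since $b$ is continuous on $[t,T]$, the fundamental theorem of calculus guarantees $B\in C^1([t,T],\R)$ with $B'(s)=b(s)$ and $B(t)=0$. This justifies introducing the auxiliary function $g\colon [t,T]\to\R$ given by
\begin{equation}
g(s) = f(s)\exp(-B(s)),
\end{equation}
which lies in $C^1([t,T],\R)$ since $f\in C^1([t,T],\R)$, $B\in C^1([t,T],\R)$, and the exponential is smooth.

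Next I would compute $g'$ via the product and chain rules and use the hypothesis to show $g$ is non-increasing. Concretely, for every $s\in[t,T]$,
\begin{equation}
g'(s) = f'(s)\exp(-B(s)) - f(s)b(s)\exp(-B(s)) = \exp(-B(s))\bigl[f'(s) - b(s)f(s)\bigr] \leq 0,
\end{equation}
where the last inequality uses the assumed differential inequality $f'(s)\leq b(s)f(s)$ together with the positivity of the exponential. It then follows from the mean value theorem (or the fundamental theorem of calculus applied to $g$) that $g$ is non-increasing on $[t,T]$, so that $g(s)\leq g(t)$ for every $s\in[t,T]$.

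Finally I would translate this back to $f$. Since $B(t)=0$ we have $g(t)=f(t)$, and since $\exp(B(s))>0$ we may multiply the inequality $f(s)\exp(-B(s)) = g(s)\leq g(t) = f(t)$ by $\exp(B(s))$ to obtain
\begin{equation}
f(s)\leq f(t)\exp(B(s)) = f(t)\exp\!\left(\int_{t}^{s} b(u)\,du\right),
\end{equation}
which is exactly \eqref{eq:Gronwall_differential_2}. There is no serious obstacle here: the only points requiring care are the measurability/regularity of the integrating factor (handled by the continuity of $b$) and the sign of the exponential (used to preserve the inequality after multiplication); the argument is entirely elementary and requires no auxiliary result from the paper.
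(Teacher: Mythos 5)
Your proof is correct and takes essentially the same route as the paper: you multiply $f$ by the integrating factor $\exp(-\int_t^s b)$ and show the result is non-increasing, whereas the paper divides $f$ by $\exp(\int_t^s b)$ and differentiates with the quotient rule, which is the same auxiliary function up to notation. The only cosmetic difference is product rule versus quotient rule in computing $g'$.
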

\begin{proof}[Proof of Lemma~\ref{lem:Gronwall_differential}]
Throughout this proof let $v\colon [t,T]\to (0,\infty)$ satisfy for all $s\in [t,T]$ that
\begin{equation}\label{eq:Gronwall_differential_3}
v(s)=\exp\!\left( \int_{t}^sb(u)\, du \right)
\end{equation}
and let $g\colon [t,T]\to \R$ satisfy for all $s\in [t,T]$ that
\begin{equation}\label{eq:Gronwall_differential_4}
g(s)=\frac{f(s)}{v(s)}.
\end{equation}
Observe that for all $s\in [t,T]$ we have that
$
v'(s)=b(s)v(s)
$.
This  implies that for all $s\in [t,T]$ we have that
\begin{equation}
\begin{split}
g'(s)
&=\frac{f'(s)v(s)-f(s)v'(s)}{v(s)^2}\\
&=\frac{f'(s)v(s)-f(s)b(s)v(s)}{v(s)^2}\\
&\leq \frac{b(s)f(s)v(s)-f(s)b(s)v(s)}{v(s)^2}=0.
\end{split}
\end{equation}
This assures that $g$ is non-increasing. This reveals that for all $s\in [t,T]$  it holds that
\begin{equation}
\frac{f(s)}{v(s)}=g(s)\leq g(t)=\frac{f(t)}{v(t)}=f(t).
\end{equation}
This establishes  \eqref{eq:Gronwall_differential_2}. The proof of Lemma~\ref{lem:Gronwall_differential} is thus completed.
\end{proof}

\begin{lemma}\label{lem:Gronwall}
Let $T\in (0,\infty)$, $a,b\in [0,\infty)$, let $f\colon[0,T]\to [0,\infty)$ be  $ \mathcal{B}([0,T])$/ \allowbreak$\mathcal{B}([0,\infty)) $-measurable, and assume  for all $t\in [0,T]$ that
\begin{equation}\label{eq:Gronwall_1}
\int_0^T|f(s)|\, ds<\infty \qquad\text{and} \qquad
f(t)\leq a + b\int_0^tf(s)\, ds.
\end{equation}
Then we have for all $t\in [0,T]$ that $f(t)\leq a \exp(bt)$.
\end{lemma}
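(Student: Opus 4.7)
The plan is to iterate the integral inequality. The case $b=0$ is immediate from the hypothesis (which then gives $f(t)\le a$), so I may assume $b>0$. First I would substitute the bound $f(s)\le a+b\int_0^s f(u)\,du$ into the right-hand side of the hypothesis and apply Fubini's theorem to rewrite $\int_0^t\int_0^s f(u)\,du\,ds=\int_0^t(t-u)f(u)\,du$. This reveals the pattern that I would then prove by induction on $n\in\N$, namely that for every $t\in[0,T]$,
\begin{equation}
f(t)\le a\sum_{k=0}^{n-1}\frac{(bt)^k}{k!}+\frac{b^n}{(n-1)!}\int_0^t(t-s)^{n-1}f(s)\,ds.
\end{equation}
The inductive step repeats the base-case calculation: substitute the hypothesis once more inside the remainder integral, then apply Fubini's theorem to evaluate $\int_0^t(t-s)^{n-1}\int_0^s f(u)\,du\,ds=\tfrac{1}{n}\int_0^t(t-u)^n f(u)\,du$ and combine terms.

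The second step is to let $n\to\infty$. Using $(t-s)^{n-1}\le T^{n-1}$ on $s\in[0,t]$ together with the integrability hypothesis from~\eqref{eq:Gronwall_1}, the remainder is bounded by
\begin{equation}
\frac{b^n}{(n-1)!}\int_0^t(t-s)^{n-1}f(s)\,ds\le \frac{b(bT)^{n-1}}{(n-1)!}\int_0^T|f(s)|\,ds,
\end{equation}
which tends to $0$ as $n\to\infty$. Passing to the limit in the induction formula and recognising the exponential series $\sum_{k=0}^\infty(bt)^k/k!=e^{bt}$ then yields $f(t)\le a e^{bt}$, as desired.

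There is no serious obstacle; the whole argument is textbook. The only conceptual remark worth flagging is that, in contrast to the differential version in \Cref{lem:Gronwall_differential}, the function $f$ here is only measurable and integrable, not continuous. Hence the natural auxiliary primitive $g(t)=a+b\int_0^t f(s)\,ds$ is merely absolutely continuous, not $C^1$, and one cannot directly reduce the claim to \Cref{lem:Gronwall_differential} by applying it to $g$. This is precisely why the Fubini-plus-iteration route is preferable: it needs nothing beyond the measurability and integrability stated in~\eqref{eq:Gronwall_1}, and it avoids any appeal to differentiability of $g$ or to Lebesgue's differentiation theorem.
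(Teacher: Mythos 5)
Your proposal is correct and follows essentially the same route as the paper's proof: the same inductive inequality (modulo a shift of index), the same Fubini/Tonelli rewriting of the iterated integral, and the same passage to the limit via the rapidly decaying remainder. The concluding remark about why one cannot simply invoke Lemma~\ref{lem:Gronwall_differential} for the merely measurable $f$ is a nice observation but does not change the argument.
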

\begin{proof}[Proof of Lemma~\ref{lem:Gronwall}]
We claim that for all $n\in\N_0$, $t\in [0,T]$  we have that
\begin{equation}\label{eq:Gronwall_4}
f(t)\leq a \left(\textstyle\sum\limits_{k=0}^{n} \displaystyle \frac{(bt)^k}{k!} \right) + b^{n+1}\int_0^t \frac{(t-s)^n}{n!}f(s)\, ds.
\end{equation}
We now establish \eqref{eq:Gronwall_4} by induction on $n \in \N_0$. The base case $n=0$ is an immediate consequence of \eqref{eq:Gronwall_1}. For the induction step $\N_0\ni n\to n+1\in\N_0$ assume that \eqref{eq:Gronwall_4} holds for a given $n \in \N_0$.
Observe that the induction hypothesis and \eqref{eq:Gronwall_1} ensure that for all $t\in [0,T]$ we have that
\begin{equation}\label{eq:Gronwall_5}
\begin{split}
f(t) & \leq a \left( \textstyle\sum\limits_{k=0}^{n} \displaystyle \frac{(bt)^k}{k!} \right)+ b^{n+1}\int_0^t \frac{(t-s)^n}{n!}f(s)\, ds\\
& \leq a \left(\textstyle\sum\limits_{k=0}^{n} \displaystyle \frac{(bt)^k}{k!} \right) + b^{n+1}\int_0^t \frac{(t-s)^n}{n!}\bigg( a + b\int_0^sf(v)\, dv \bigg) ds.
\end{split}
\end{equation}
Moreover, note that for all $t\in [0,T]$ we have that
\begin{equation}\label{eq:Gronwall_6}
\int_0^t\frac{(t-s)^n}{n!}\, ds= \frac{1}{n!}\bigg[\frac{-(t-s)^{n+1}}{n+1}\bigg]_{s=0}^{s=t}=\frac{ t^{n+1}}{(n+1)!}.
\end{equation}
Furthermore, observe that Tonelli's theorem implies that for all $t\in [0,T]$ we have that
\begin{equation}\label{eq:Gronwall_7}
\begin{split}
\int_0^t(t-s)^n\int_0^sf(v)\, dv\, ds 
& = \int_0^t\int_0^t(t-s)^nf(v)\mathbbm{1}_{\lbrace 0\leq v\leq s \leq t\rbrace}\, dv\, ds\\
& = \int_0^t f(v) \int_v^t(t-s)^n\, ds\, dv\\
& = \int_0^t f(v) \left[ -\frac{(t-s)^{n+1}}{n+1} \right]_{s=v}^{s=t}\, dv\\
& = \int_0^tf(v)\frac{(t-v)^{n+1}}{n+1}\, dv.
\end{split}
\end{equation}
Combining this, \eqref{eq:Gronwall_5}, and \eqref{eq:Gronwall_6} establishes that for all $t\in [0,T]$ we have that
\begin{equation}\label{eq:Gronwall_8}
\begin{split}
f(t) & \leq a \left( \textstyle\sum\limits_{k=0}^{n} \displaystyle \frac{(bt)^k}{k!} \right) + b^{n+1}\int_0^t \frac{(t-s)^n}{n!}\left( a + b\int_0^sf(v)\, dv \right) ds\\
& = a \left( \textstyle\sum\limits_{k=0}^{n} \displaystyle \frac{(bt)^k}{k!} \right)+ a\frac{(bt)^{n+1}}{(n+1)!} +b^{n+2}\int_0^t \frac{(t-s)^{n+1}}{(n+1)!}f(s)\, ds\\
& = a \left( \textstyle\sum\limits_{k=0}^{n+1} \displaystyle \frac{(bt)^k}{k!} \right) +b^{n+2}\int_0^t \frac{(t-s)^{n+1}}{(n+1)!}f(s)\, ds.
\end{split}
\end{equation}
This proves \eqref{eq:Gronwall_4} in the case $n+1$.
This finishes the proof of the induction step. Induction hence establishes \eqref{eq:Gronwall_4}.
Next observe that \eqref{eq:Gronwall_4}  implies that for all $t\in[0,T]$, $n\in\N_0$ we have that
\begin{equation}\label{eq:Gronwall_9}
\begin{split}
f(t) \leq  a \, e^{bt} + b^{n+1}\int_0^t \frac{(t-s)^n}{n!}f(s)\, ds 
\leq  a \, e^{bt} + b^{n+1}\frac{t^n}{n!}\int_0^tf(s)\, ds.
\end{split}
\end{equation}
Moreover, note  that \eqref{eq:Gronwall_1} ensures that for all $t\in[0,T]$ we have that
\begin{equation}\label{eq:Gronwall_10}
\limsup_{n\to \infty} \left[ b^{n+1}\frac{t^n}{n!}\int_0^tf(s)\, ds \right]=0.
\end{equation}
Combining this and \eqref{eq:Gronwall_9} establishes that for all $t\in [0,T]$ we have that $f(t)\leq a \exp(bt)$. The proof of Lemma~\ref{lem:Gronwall} is thus completed.
\end{proof}

\section{Sufficient conditions for interchanging derivatives and integrals}
\label{subsec:sufficient}
\sectionmark{}

\begin{lemma}
\label{thm:cont:meas}
Let $(S,\mathcal{S})$ be a measurable space, let $(X, d_X)$ be a compact metric space, let $(Y, d_Y)$ be a separable metric space, let $C(X,Y)$ be the space of continuous functions endowed with the topology of $d_Y$-uniform  convergence,  let $f \colon X \times S \to Y$ be a function, assume for all  $x \in X$ that $ ( S \ni s  \mapsto f(x, s) \in Y)$ is $\mathcal{S}$/$\mathcal{B}(Y)$-measurable, and assume for all $s \in S$ that $(X \ni x \mapsto f(x,s) \in Y )\in C(X,Y)$. Then we have that
\begin{equation}
(S \ni s \mapsto (X \ni x \mapsto f(x,s) \in Y) \in C(X,Y)) 
\end{equation}
is $\mathcal{S}$/$\mathcal{B}(C(X,Y))$-measurable.
\end{lemma}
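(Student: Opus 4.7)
The plan is to show that the map $\Phi : S \to C(X, Y)$ defined by $\Phi(s) := (X \ni x \mapsto f(x, s) \in Y)$ is $\mathcal{S}/\mathcal{B}(C(X, Y))$-measurable by identifying a convenient countable generating family for the Borel $\sigma$-algebra on $C(X, Y)$. Since $X$ is compact metric, hence separable, I would fix a countable dense subset $D = \{x_n : n \in \N\} \subseteq X$ and use continuity of elements of $C(X, Y)$ together with density of $D$ to obtain $d_\infty(g, h) = \sup_{n \in \N} d_Y(g(x_n), h(x_n))$ for all $g, h \in C(X, Y)$, where $d_\infty$ denotes the uniform metric.

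Next I would establish that $\mathcal{B}(C(X, Y)) = \Sigma$, where $\Sigma := \sigma(\{e_{x_n} : n \in \N\})$ and $e_x : C(X, Y) \to Y$, $e_x(g) := g(x)$, denotes point evaluation. Using the distance formula, every open $d_\infty$-ball $\{g \in C(X, Y) : d_\infty(g, g_0) < r\}$ equals $\bigcup_{m \in \N} \bigcap_{n \in \N} e_{x_n}^{-1}(\{y \in Y : d_Y(y, g_0(x_n)) \leq r - 1/m\})$ and therefore lies in $\Sigma$. Separability of $C(X, Y)$, which I would obtain by embedding $Y$ homeomorphically into the Hilbert cube $H = [0, 1]^{\N}$ and using that since $X$ is compact the induced map $C(X, Y) \hookrightarrow C(X, H)$ is a topological embedding for the uniform topologies (with $C(X, H)$ inheriting separability from the countable product $\prod_{n \in \N} C(X, [0, 1])$ because the uniform topology is coarser than the product-uniform topology), then ensures that every open subset of $C(X, Y)$ is a countable union of open balls, yielding $\mathcal{B}(C(X, Y)) \subseteq \Sigma$; the reverse inclusion is immediate from continuity of each $e_{x_n}$.

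Having $\mathcal{B}(C(X, Y)) = \Sigma$ in hand, measurability of $\Phi$ would reduce to checking $\Phi^{-1}(e_{x_n}^{-1}(B)) \in \mathcal{S}$ for all $n \in \N$ and $B \in \mathcal{B}(Y)$, and this follows immediately from the identity $\Phi^{-1}(e_{x_n}^{-1}(B)) = \{s \in S : f(x_n, s) \in B\}$ together with the hypothesis that each $f(x_n, \cdot)$ is $\mathcal{S}/\mathcal{B}(Y)$-measurable. The main technical obstacle will be the separability argument for $C(X, Y)$ used in the second paragraph; once that is in place, the remaining steps are routine manipulations with countable dense sets and their images under point evaluations.
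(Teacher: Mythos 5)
The paper states this lemma without proof, treating it as a well\nobreakdash-known elementary fact, so there is no proof of the paper's to compare against; you were essentially asked to supply a self\nobreakdash-contained argument, and yours is correct. The route you take --- showing $\mathcal{B}(C(X,Y))=\sigma(\{e_{x_n}\colon n\in\N\})$ for a countable dense $\{x_n\colon n\in\N\}\subseteq X$ by writing open $d_\infty$-balls as countable unions of countable intersections of preimages under point evaluations and then invoking separability of $C(X,Y)$ to express arbitrary open sets as countable unions of such balls --- is the standard one.

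One step that deserves more careful phrasing: the homeomorphic embedding $\iota\colon Y\hookrightarrow H=[0,1]^{\N}$ is generally not isometric, so the claim that the induced map $C(X,Y)\hookrightarrow C(X,H)$ is a topological embedding ``for the uniform topologies'' is not immediate at the level of metrics. The clean justification is that for compact $X$ the topology of uniform convergence on $C(X,Z)$ coincides with the compact\nobreakdash-open topology for \emph{every} metric compatible with the topology of $Z$, and the compact\nobreakdash-open topology on $C(X,Y)$ agrees with the subspace topology inherited from $C(X,H)$ once $Y$ carries the subspace topology from $H$. With that observation supplied, your chain $C(X,Y)\hookrightarrow C(X,H)\cong\prod_{n\in\N}C(X,[0,1])$, together with separability of the countable product and the fact that the uniform topology on $C(X,H)$ is coarser than the product\nobreakdash-uniform topology, does give separability of $C(X,Y)$, and the remaining reduction of measurability of $\Phi$ to measurability of $f(x_n,\cdot)$ is exactly right.
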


\begin{lemma}\label{prop:cont_fct_bnd_on_cmpcts}
	Let $(X,\mathcal{X})$ be a  compact topological space, let $(M,d)$ be a metric space, and let $u\in M$, $f \in C(X, M)$. Then we have that
	\begin{equation}\label{eq:cont_fct_bnd_on_cmpcts_1}
	\sup\! \big(\{ d(f(x),u) \in \R \colon x \in X \} \cup \{0\} \big)<\infty.
	\end{equation}
\end{lemma}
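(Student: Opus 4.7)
The plan is to reduce the statement to the classical fact that a continuous real-valued function on a compact topological space is bounded. Concretely, I would introduce the auxiliary function $g\colon X\to\R$ defined by $g(x)=d(f(x),u)$ and show that $g$ is continuous, after which compactness of $X$ forces $g(X)$ to be a compact, hence bounded, subset of $\R$.

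For the continuity of $g$, I would first recall the reverse triangle inequality: for all $y,z\in M$ one has $|d(y,u)-d(z,u)|\leq d(y,z)$. This shows that the map $M\ni y\mapsto d(y,u)\in\R$ is (Lipschitz) continuous with respect to the metric topology on $M$ and the standard topology on $\R$. Since $f\in C(X,M)$, composition yields that $g=d(\cdot,u)\circ f\in C(X,\R)$.

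Next, I would invoke the standard fact that the continuous image of a compact topological space is compact: because $X$ is compact and $g$ is continuous, the set $g(X)\subseteq\R$ is compact in $\R$, and therefore in particular bounded. Hence there exists a constant $C\in[0,\infty)$ such that $g(x)\leq C$ for all $x\in X$, which rewrites as $d(f(x),u)\leq C$ for all $x\in X$.

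Finally, I would conclude that
\begin{equation}
\sup\!\bigl(\{d(f(x),u)\in\R\colon x\in X\}\cup\{0\}\bigr)\leq \max\{C,0\}=C<\infty,
\end{equation}
which establishes \eqref{eq:cont_fct_bnd_on_cmpcts_1}. The only mildly delicate point is the edge case $X=\emptyset$, where $\{d(f(x),u)\colon x\in X\}=\emptyset$; this is precisely why the supremum is taken over the union with $\{0\}$, which makes the supremum equal to $0$ in that degenerate case rather than $-\infty$. No real obstacle is expected here — the argument is essentially a one-line appeal to the extreme value principle for continuous maps on compact spaces, together with the observation that the distance to a fixed point is a continuous function of its argument.
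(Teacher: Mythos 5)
Your proof is correct. The paper states this lemma without giving a proof, treating it as a standard fact; your argument — composing $f$ with the $1$-Lipschitz map $M \ni y \mapsto d(y,u) \in \R$ via the reverse triangle inequality, observing that the resulting $g = d(\cdot,u)\circ f \in C(X,\R)$ has compact (hence bounded) image since $X$ is compact, and noting that the union with $\{0\}$ handles the $X=\emptyset$ degeneracy — is exactly the expected route and contains no gaps.
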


\begin{lemma}
\label{lem:derivative:measd}
Let $(X, \left\| \cdot \right\|_{X})$ and $(Y, \left\| \cdot \right\|_{Y})$ be finite dimensional normed vector spaces, let $(S,\mathcal{S})$ be a measurable space,  let $F  = (F(x, s))_{(x, s) \in X\times S} \colon X\times S \to Y$ be  $(\mathcal{B}(X)\otimes \mathcal{S})$/$\mathcal{B}(Y)$-measurable, and assume for all  $s \in S$ that $(X \ni x \mapsto F(x,s) \in Y )\in C^1(X,Y)$. Then we have for all $x \in X$  that 
\begin{equation}
\label{eq:partial:measd}
\big(S \ni s \mapsto (\tfrac{\partial}{\partial x} F)(x,s) \in L(X, Y) \big)
\end{equation}
is $\mathcal{S}$/$\mathcal{B}(L(X, Y))$-measurable.
\end{lemma}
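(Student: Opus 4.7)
The plan is to exploit the finite dimensionality of $X$, $Y$, and hence $L(X,Y)$ to reduce measurability of the derivative-valued map to measurability of finitely many $Y$-valued maps, each of which is realized as a pointwise limit of difference quotients.

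More concretely, fix $x \in X$ and choose a basis $e_1, \ldots, e_d$ of $X$. Consider the evaluation map $\Phi\colon L(X,Y) \to Y^d$ given by $\Phi(A) = (Ae_1, \ldots, Ae_d)$. Since $X$ and $Y$ are finite dimensional, $L(X,Y)$ is a finite dimensional normed space, and $\Phi$ is a linear bijection between finite dimensional normed spaces. All linear bijections between finite dimensional normed spaces are homeomorphisms, so $\Phi$ is a Borel isomorphism between $L(X,Y)$ and $Y^d$. Hence it suffices to show that for each $i \in \{1, \ldots, d\}$ the map $s \mapsto (\tfrac{\partial}{\partial x}F)(x,s)\,e_i \in Y$ is $\mathcal{S}/\mathcal{B}(Y)$-measurable, since measurability of a map into a product of separable metric spaces is equivalent to measurability of each coordinate.

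For the componentwise step, first note that joint measurability of $F$ together with a standard section argument implies that for every fixed $y \in X$ the map $s \mapsto F(y,s) \in Y$ is $\mathcal{S}/\mathcal{B}(Y)$-measurable. Consequently, for every $n \in \N$ and every $i \in \{1,\ldots, d\}$ the map
\begin{equation}
G_{i,n}\colon S \to Y, \qquad G_{i,n}(s) = n\bigl[F\bigl(x + \tfrac{1}{n}e_i, s\bigr) - F(x,s)\bigr],
\end{equation}
is $\mathcal{S}/\mathcal{B}(Y)$-measurable, as a linear combination of measurable $Y$-valued maps. By the hypothesis that $F(\cdot, s) \in C^1(X,Y)$ for every $s \in S$, the directional derivative exists, so $\lim_{n \to \infty} G_{i,n}(s) = (\tfrac{\partial}{\partial x}F)(x,s)\,e_i$ for every $s \in S$.

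The final step is to observe that $Y$ is a finite dimensional normed space and hence a separable metric space, so pointwise limits of $\mathcal{S}/\mathcal{B}(Y)$-measurable maps are $\mathcal{S}/\mathcal{B}(Y)$-measurable. Therefore each $s \mapsto (\tfrac{\partial}{\partial x}F)(x,s)\,e_i$ is measurable, and combining this with the Borel isomorphism $\Phi$ yields measurability of \eqref{eq:partial:measd}. The only subtle point is the measurability of pointwise limits into $Y$; this is standard for separable metric codomains, which is the reason finite dimensionality of $Y$ is needed, and I would simply cite the standard fact rather than reprove it.
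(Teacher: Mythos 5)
Your proof is correct and takes a genuinely different route from the paper's. The paper embeds $(\tfrac{\partial}{\partial x}F)(x,s)$ into the infinite-dimensional Banach space $V = C(\{w \in X : \|w\|_X \le 1\}, Y)$, shows that the difference quotients $h \mapsto \tfrac{1}{r}(F(x+rh,s)-F(x,s))$ converge \emph{uniformly} on the unit ball as $r \to 0$, invokes a Carath\'eodory-type measurability result (Lemma~\ref{thm:cont:meas}) for the $V$-valued map $s \mapsto (h \mapsto F(x+rh,s))$, verifies that the linear elements of $V$ form a Borel subset, and finally pulls back along a continuous retraction $\psi$ to $L(X,Y)$. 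You instead exploit finite dimensionality of $X$ and $Y$ directly: you coordinatize $L(X,Y) \cong Y^d$ via evaluation at a fixed basis $e_1,\dots,e_d$, observe that this is a linear homeomorphism and hence a Borel isomorphism, and then establish measurability of each coordinate $s \mapsto (\tfrac{\partial}{\partial x}F)(x,s)e_i$ as a pointwise (rather than uniform) limit of the measurable difference quotients $G_{i,n}$, using only that sections of jointly measurable maps are measurable and that pointwise limits of maps into a separable metric space are measurable. Your argument is shorter and more elementary, and it makes the role of finite dimensionality (of both $X$ and $Y$) completely transparent; the paper's argument is heavier but closer in spirit to what one would do if $X$ were allowed to be infinite-dimensional, since it never picks coordinates. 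Both are valid; under the hypotheses of the lemma as stated, yours is the more economical choice.
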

\begin{proof}[Proof of Lemma~\ref{lem:derivative:measd}]
	Throughout this proof let $V = C(\{w \in X \colon \|w\|_{X} \leq 1\}, Y)$, let $\left\|\cdot\right\|_V \colon V \to [0, \infty)$ satisfy for all $f \in V$ that
\begin{equation}
\label{eq:meas:norm}
\|f\|_V = \sup\nolimits_{h \in \{w \in X \colon \|w\|_{X} \leq 1 \}} \|f(h)\|_{Y}
\end{equation}
(cf.~\Cref{prop:cont_fct_bnd_on_cmpcts}), let $\iota \colon C(X, Y) \to V$	satisfy for all $\varphi \in C(X, Y)$ that
\begin{equation}
\iota (\varphi) = (\{ w \in X \colon \|w\|_{X} \leq 1\} \ni h \mapsto \varphi(h) \in Y),
\end{equation}
and let $\psi \colon \{f \in V \colon f \text{ is linear}\} \to L(X, Y)$  satisfy for all $\mathcal{A} \in  \{f \in V \colon f \text{ is linear}\}$, $h \in X \backslash \{0\}$ that 
\begin{equation}
\psi(\mathcal{A})(h) = \|h\|_{X} \mathcal{A} \Big(\tfrac{h}{\|h\|_{X}}\Big).
\end{equation}
Observe that
the assumption that  $\forall \, s \in S \colon (X \ni x \mapsto F(x,s) \in Y )\in C^1(X,Y)$ implies that for all $x \in X$,  $s \in S$, $\varepsilon \in (0, \infty)$ there exists $r \in (0, \infty)$ such that for all $h \in X \backslash \{0\}$ with $\|h\|_X \leq r$ we have that
\begin{equation}
 \frac{\|F(x+h,s) - F(x,s) - (\tfrac{\partial}{\partial x} F)(x,s) h\|_Y}{\|h\|_X} \leq \varepsilon.
\end{equation}
This reveals that for all $x \in X$,  $s \in S$, $\varepsilon \in (0, \infty)$  there exists $r \in (0, \infty)$ such that
\begin{equation}
\sup_{h \in \{w \in X \colon 0<\|w\|_{X} \leq r \}} \frac{\|F(x+h,s) - F(x,s) - (\tfrac{\partial}{\partial x} F)(x,s) h\|_Y}{\|h\|_X} \leq \varepsilon.
\end{equation}
This ensures that for all $x \in X$,  $s \in S$, $\varepsilon \in (0, \infty)$  there exists $r \in (0, \infty)$ such that for all $\delta \in (0, r]$ we have that
\begin{equation}
\sup_{h \in \{w \in X \colon 0<\|w\|_{X} \leq \delta \}} \frac{\|F(x+h,s) - F(x,s) - (\tfrac{\partial}{\partial x} F)(x,s) h\|_Y}{\|h\|_X} \leq \varepsilon.
\end{equation}
This reveals that for all $x \in X$, $s \in S$  it holds that
\begin{equation}
\limsup_{(0,\infty) \ni r \to 0} \sup_{h \in \{w \in X \colon 0<\|w\|_{X} \leq r \}} \! \left[ \frac{\|F(x+h,s) - F(x,s) - (\tfrac{\partial}{\partial x} F)(x,s) h\|_Y}{\|h\|_X} \right]\! =0.
\end{equation}
This assures that  for all $x \in X$,  $s \in S$ we have that
\begin{equation}
\limsup_{(0,\infty) \ni r \to 0} \sup_{h \in \{w \in X \colon 0<\|w\|_{X} \leq 1 \}} \! \left[ \frac{\|F(x+rh,s) - F(x,s) - (\tfrac{\partial}{\partial x} F)(x,s) rh\|_Y}{r \|h\|_X } \right]\! =0.
\end{equation}
This reveals that for all $x \in X$,  $s \in S$  it holds that
\begin{equation}
\limsup_{(0,\infty) \ni r \to 0} \sup_{h \in \{w \in X \colon 0<\|w\|_{X} \leq 1 \}} \Big\| \tfrac{F(x+rh,s) - F(x,s)}{r}  -  (\tfrac{\partial}{\partial x} F)(x,s)h \Big\|_{Y} =0.
\end{equation}
This and \eqref{eq:meas:norm} demonstrate that for all $x \in X$, $s \in S$ we have that 
\begin{align}
\label{eq:uniform:limit}
&\limsup_{(0,\infty) \ni r \to 0} \Big\| \big(\{ w \in X \colon \|w\|_{X} \leq 1\} \ni h \mapsto \tfrac{F(x+rh,s) - F(x,s)}{r} \in Y\big) - \iota\big(  \big(\tfrac{\partial}{\partial x} F\big)(x,s) \big)  \Big\|_V \nonumber\\
& = \limsup_{(0,\infty) \ni r \to 0} \sup_{h \in \{w \in X \colon 0<\|w\|_{X} \leq 1 \}} \Big\| \tfrac{F(x+rh,s) - F(x,s)}{r}  -  (\tfrac{\partial}{\partial x} F)(x,s)h \Big\|_{Y} = 0.
\end{align}
Next observe that \Cref{thm:cont:meas} (with $S=S$, $\mathcal{S}= \mathcal{S}$, $X = \{ w \in X \colon \|w\|_{X} \leq 1\}$, $d_X = (\{ w \in X \colon \|w\|_{X} \leq 1\} \times \{ w \in X \colon \|w\|_{X} \leq 1\} \ni (y,z) \mapsto \|y-z\|_{X} \in [0,\infty))$, $Y=Y$, $d_Y = (Y \times Y \ni (y,z) \mapsto \|y-z\|_{Y} \in [0, \infty) )$, $f = (\{ w \in X \colon \|w\|_{X} \leq 1\} \times S \ni (h,s) \mapsto F(x+rh,s) \in Y)$ for  $x \in X$, $r \in (0, \infty)$ in the notation of \Cref{thm:cont:meas})  implies that for all $x \in X$, $r \in (0, \infty)$ we have that
\begin{equation}
\begin{split}
(S \ni s  \mapsto (\{ w \in X \colon \|w\|_{X} \leq 1\} \ni h \mapsto F(x+rh,s) \in Y) \in V) 
\end{split}
\end{equation}
is $\mathcal{S}$/$\mathcal{B}(V)$-measurable. This and \eqref{eq:uniform:limit} prove that for all $x \in X$ we have that
\begin{equation}
\label{eq:iota}
\big(S \ni s \mapsto \iota \big( (\tfrac{\partial}{\partial x} F)(x,s)\big) \in V \big) 
\end{equation}
is $\mathcal{S}$/$\mathcal{B}(V)$-measurable. 
Moreover, note that for all $f_n \in  \{f \in V \colon f \text{ is linear}\}$, $n \in \N$, and all functions  $g \colon \{w \in X \colon \|w\|_{X} \leq 1\} \to Y$ with 
\begin{equation}
\limsup_{n \to \infty}  \sup_{h \in \{w \in X \colon \|w\|_{X} \leq 1 \}} \|f_n(h) - g(h)\|_{Y} =0
\end{equation}
we have that
\begin{equation}
g \in \{f \in V \colon f \text{ is linear}\}.
\end{equation}
This ensures that
\begin{equation}
\label{eq:linear:borel}
\{f \in V \colon f \text{ is linear}\} \in \mathcal{B}(V).
\end{equation}
Combining this, the fact that $\forall \, A \in L(X,Y)  \colon  \iota(A) \in  \{f \in V \colon f \text{ is linear}\}$, and \eqref{eq:iota} proves that 
\begin{equation}
\label{eq:iota2}
\big(S \ni s \mapsto \iota \big( (\tfrac{\partial}{\partial x} F)(x,s)\big) \in  \{f \in V \colon f \text{ is linear}\}  \big) 
\end{equation}
is $\mathcal{S}$/$\mathcal{B}(\{f \in V \colon f \text{ is linear}\})$-measurable. 
Furthermore, observe that for all $A \in L(X,Y)$, $x \in X \backslash \{0\}$ we have that 
\begin{equation}
\psi(\iota(A)) x = \|x\|_X (\iota(A)) \Big(\tfrac{x}{\|x\|_X}\Big) = \|x\|_X A  \Big(\tfrac{x}{\|x\|_X}\Big) = Ax.
\end{equation}
This implies that for all $A \in L(X,Y)$ we have that
\begin{equation}
\label{eq:psi:iota}
\psi(\iota(A)) = A.
\end{equation}
Next note that for all $f_1, f_2 \in \{f \in V \colon f \text{ is linear}\}$  we have that
\begin{equation}
\begin{split}
\|\psi(f_1) - \psi(f_2)\|_{L(X,Y)}  &= \sup_{h \in X \backslash \{0\}} \frac{\|\psi(f_1)h - \psi(f_2)h\|_Y}{\|h\|_X} \\
 &=  \sup_{h \in X \backslash \{0\}} \frac{ \|h\|_X \big\|f_1 \big(\frac{h}{\|h\|_X}\big) - f_2 \big(\frac{h}{\|h\|_X}\big)\big\|_Y}{\|h\|_X}\\
& = \sup\nolimits_{h \in \{w \in X \colon \|w\|_{X} \leq 1 \}} \|(f_1 - f_2)(h)\|_Y.
\end{split}
\end{equation}
Combining this and \eqref{eq:meas:norm}  establishes that 
\begin{equation}
\psi \in C(\{f \in V \colon f \text{ is linear}\}, L(X, Y)).
\end{equation}
This, \eqref{eq:linear:borel}, \eqref{eq:iota2}, and \eqref{eq:psi:iota}  demonstrate that for all $x \in X$ we have that
\begin{equation}
\big(S \ni s \mapsto  (\tfrac{\partial}{\partial x} F)(x,s) \in L(X, Y)\big) 
\end{equation}
is $\mathcal{S}$/$\mathcal{B}(L(X, Y))$-measurable.
The proof of Lemma~\ref{lem:derivative:measd} is thus completed.
\end{proof}

\begin{lemma}
	\label{lem:product:meas}
Let $(S,\mathcal{S})$ be a measurable space, let $(X, d_X)$ be a separable metric space, let $(Y, d_Y)$ be a metric space, let $F \colon X \times S \to Y$ satisfy for all $s \in S$, $x \in X$ that $(X \ni y \mapsto F(y,s) \in Y )\in C(X, Y)$ and $(S \ni w \mapsto F(x, w) \in Y)$  is $\mathcal{S}$/$\mathcal{B}(Y)$-measurable. Then $F$ is $(\mathcal{B}(X)\otimes \mathcal{S})$/$\mathcal{B}(Y)$-measurable.
\end{lemma}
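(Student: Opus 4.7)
The plan is to use a classical Carathéodory-type approximation argument: approximate $F$ by functions $F_n$ that depend on $x$ only through a countable set of sample points, verify that these $F_n$ are jointly measurable by hand, and then pass to the limit using continuity in $x$ together with a pointwise-limit argument for metric-space-valued maps.

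First I would fix a countable dense subset $\{x_k\}_{k \in \N}$ of $X$ using separability, and for each $n \in \N$ form a disjoint Borel partition of $X$ by setting $A_{1,n} = \{x \in X \colon d_X(x, x_1) < 1/n\}$ and
\begin{equation*}
A_{k,n} = \{x \in X \colon d_X(x, x_k) < 1/n\} \setminus \bigcup\nolimits_{j=1}^{k-1} A_{j,n}
\end{equation*}
for $k \geq 2$. Density ensures $X = \bigcup_{k \in \N} A_{k,n}$, and each $A_{k,n}$ is a Borel subset of $X$. Defining $F_n \colon X \times S \to Y$ by $F_n(x,s) = F(x_k, s)$ for the unique $k \in \N$ with $x \in A_{k,n}$, I can check joint measurability of $F_n$ directly: for any open $U \subseteq Y$,
\begin{equation*}
F_n^{-1}(U) = \bigcup\nolimits_{k \in \N} \bigl(A_{k,n} \times \{s \in S \colon F(x_k, s) \in U\}\bigr)
\end{equation*}
is a countable union of measurable rectangles, using the hypothesis that $s \mapsto F(x_k, s)$ is $\mathcal{S}$/$\mathcal{B}(Y)$-measurable. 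Moreover $d_X(x, x_k) < 1/n$ for the index $k$ selected at $x$, so continuity of $y \mapsto F(y, s)$ gives the pointwise convergence $F_n(x, s) \to F(x, s)$ for every $(x,s) \in X \times S$ as $n \to \infty$.

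The only step that takes any real thought is to conclude joint measurability of $F$ from the pointwise limit, since $Y$ is merely a metric space and one cannot appeal directly to the usual $\R$-valued statements. I would do this by hand: for any closed $C \subseteq Y$, the continuity of $y \mapsto d_Y(y, C)$ together with $F_n \to F$ pointwise gives the equivalence $F(x,s) \in C$ iff $d_Y(F_n(x,s), C) \to 0$, which rewrites as
\begin{equation*}
F^{-1}(C) = \bigcap\nolimits_{m \in \N} \bigcup\nolimits_{N \in \N} \bigcap\nolimits_{n \geq N} F_n^{-1}\bigl(\{y \in Y \colon d_Y(y, C) < 1/m\}\bigr).
\end{equation*}
The set $\{y \in Y \colon d_Y(y,C) < 1/m\}$ is open in $Y$, so each of the innermost sets is $(\mathcal{B}(X) \otimes \mathcal{S})$-measurable by the previous step, and countable intersections and unions preserve measurability. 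Since the closed subsets of the metric space $Y$ generate $\mathcal{B}(Y)$, this establishes that $F$ is $(\mathcal{B}(X) \otimes \mathcal{S})$/$\mathcal{B}(Y)$-measurable, completing the proof.
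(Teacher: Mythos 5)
Your proof is correct. The paper does not actually prove this lemma; it simply cites Aliprantis \& Border~\cite[Lemma~4.51]{AliprantisBorder2006} and moves on. Your proposal supplies the standard self-contained argument for joint measurability of Carathéodory-type functions: discretize the $x$-variable along a countable dense set using the radius-$1/n$ partition $\{A_{k,n}\}_k$, observe directly that the step functions $F_n(x,s)=F(x_k,s)$ are product-measurable because each $F_n^{-1}(U)$ is a countable union of measurable rectangles, use continuity in $x$ to get $F_n \to F$ pointwise, and then pass to the limit. The only delicate point is that you cannot invoke the usual $\R$-valued ``pointwise limits of measurable functions are measurable'' fact when $Y$ is merely a metric space, and you handle it correctly: the identity
\begin{equation*}
F^{-1}(C) = \bigcap_{m\in\N}\bigcup_{N\in\N}\bigcap_{n\geq N} F_n^{-1}\bigl(\{y\in Y : d_Y(y,C)<1/m\}\bigr)
\end{equation*}
holds for closed $C$ because $F(x,s)\in C$ iff $d_Y(F_n(x,s),C)\to 0$ (the forward direction by $d_Y(F_n,C)\leq d_Y(F_n,F)$, the backward by the triangle inequality and closedness of $C$), each set $\{y : d_Y(y,C)<1/m\}$ is open, and closed sets generate $\mathcal{B}(Y)$. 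In short, you have proved the cited lemma rather than citing it; the content is essentially the argument one would find in the reference, so there is no mathematical divergence, only a difference in the level of detail presented.
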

\begin{proof}[Proof of \Cref{lem:product:meas}]
This is a direct consequence of, e.g., Aliprantis \& Border~\cite[Lemma~4.51]{AliprantisBorder2006}. The proof of \Cref{lem:product:meas} is thus completed. 
\end{proof}

\begin{cor}
\label{cor:derivative:gen}
Let $d, m, n \in \N$,  let $(S,\mathcal{S})$ be a measurable space,  let $F  = \linebreak (F(x, s))_{(x, s) \in \R^{d} \times S} \colon \R^{d} \times S \to \R^{m}$ be  $(\mathcal{B}(\R^{d})\otimes \mathcal{S})$/$\mathcal{B}(\R^{m})$-measurable, and assume  for all  $s \in S$ that $(\R^{d} \ni x \mapsto F(x,s) \in \R^{m} )\in C^n(\R^{d},\R^{m})$. Then we have for all $k \in \{1,2, \ldots, n\}$, $x \in \R^d$  that 
\begin{equation}
\label{eq:derivative:measd}
\big(S \ni s \mapsto (\tfrac{\partial^k}{\partial x^k} F)(x,s) \in L^{(k)}(\R^d, \R^m) \big)
\end{equation}
is $\mathcal{S}$/$\mathcal{B}(L^{(k)}(\R^d, \R^m))$-measurable.
\end{cor}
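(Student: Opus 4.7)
The plan is to prove Corollary~\ref{cor:derivative:gen} by induction on $k \in \{1, 2, \ldots, n\}$, using Lemma~\ref{lem:derivative:measd} repeatedly as the differentiation step and Lemma~\ref{lem:product:meas} to restore joint measurability between successive differentiations. The base case $k=1$ is essentially a direct application of Lemma~\ref{lem:derivative:measd} to $X = \R^d$, $Y = \R^m$ equipped with their usual norms, since the hypothesis $(\R^d \ni x \mapsto F(x,s) \in \R^m) \in C^n(\R^d, \R^m)$ in particular yields $C^1$-regularity in $x$.

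For the inductive step, suppose that for some $k \in \{1, \ldots, n-1\}$ and every $x \in \R^d$ the map $s \mapsto (\tfrac{\partial^k}{\partial x^k} F)(x,s) \in L^{(k)}(\R^d, \R^m)$ is $\mathcal{S}$/$\mathcal{B}(L^{(k)}(\R^d, \R^m))$-measurable. The first substep is to promote this to joint measurability: because $F(\cdot, s) \in C^n(\R^d, \R^m)$ for every $s \in S$, the function $\R^d \ni x \mapsto (\tfrac{\partial^k}{\partial x^k} F)(x, s) \in L^{(k)}(\R^d, \R^m)$ is continuous for each fixed $s$, and together with the pointwise-in-$x$ measurability in $s$ coming from the induction hypothesis, Lemma~\ref{lem:product:meas} (applied with $X = \R^d$ and $Y = L^{(k)}(\R^d, \R^m)$, both finite-dimensional normed spaces and hence separable metric spaces) yields that
\begin{equation}
\R^d \times S \ni (x, s) \mapsto (\tfrac{\partial^k}{\partial x^k} F)(x, s) \in L^{(k)}(\R^d, \R^m)
\end{equation}
is $(\mathcal{B}(\R^d) \otimes \mathcal{S})$/$\mathcal{B}(L^{(k)}(\R^d, \R^m))$-measurable.

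The second substep is to differentiate once more. Applying Lemma~\ref{lem:derivative:measd} with $X = \R^d$, $Y = L^{(k)}(\R^d, \R^m)$, and the function just shown to be jointly measurable, and using that this function is $C^1$ in $x$ for each $s$ (which follows from $F(\cdot, s) \in C^{k+1}(\R^d, \R^m)$ since $k+1 \leq n$), one obtains that for every $x \in \R^d$ the map $s \mapsto (\tfrac{\partial}{\partial x} [\tfrac{\partial^k}{\partial x^k} F])(x, s) \in L(\R^d, L^{(k)}(\R^d, \R^m))$ is $\mathcal{S}$/$\mathcal{B}(L(\R^d, L^{(k)}(\R^d, \R^m)))$-measurable. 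To conclude the induction, I would use the canonical isometric isomorphism $\Phi \colon L(\R^d, L^{(k)}(\R^d, \R^m)) \to L^{(k+1)}(\R^d, \R^m)$ given by $(\Phi(A))(u_1, u_2, \ldots, u_{k+1}) = (A(u_1))(u_2, \ldots, u_{k+1})$, which is continuous and hence Borel-measurable, and note that the equality $(\tfrac{\partial^{k+1}}{\partial x^{k+1}} F)(x, s) = \Phi((\tfrac{\partial}{\partial x} [\tfrac{\partial^k}{\partial x^k} F])(x, s))$ then transfers the measurability to $L^{(k+1)}(\R^d, \R^m)$, completing the induction.

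The only delicate point I anticipate is the second substep, namely the careful verification of the hypotheses of Lemma~\ref{lem:derivative:measd} applied with codomain $Y = L^{(k)}(\R^d, \R^m)$; the routine steps are the identification of iterated derivatives via $\Phi$ and the transport of measurability under a continuous isomorphism, both of which are standard once the measurability statement for $L(\R^d, L^{(k)}(\R^d, \R^m))$ is in hand.
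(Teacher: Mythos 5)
Your proposal is correct and matches the paper's intended argument, which simply cites Lemma~\ref{lem:derivative:measd} and Lemma~\ref{lem:product:meas} as a ``direct consequence'' without spelling out the induction. You fill in precisely the details the paper leaves implicit: the inductive loop alternating between Lemma~\ref{lem:product:meas} (to restore joint measurability) and Lemma~\ref{lem:derivative:measd} (to differentiate once more), together with the transport of measurability through the canonical continuous linear isomorphism $L(\R^d, L^{(k)}(\R^d,\R^m)) \cong L^{(k+1)}(\R^d,\R^m)$.
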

\begin{proof}[Proof of \Cref{cor:derivative:gen}]
This is a direct consequence of Lemma~\ref{lem:derivative:measd} and Lemma~\ref{lem:product:meas}. The proof of \Cref{cor:derivative:gen} is thus completed. 
\end{proof}

\begin{lemma}\label{lem:diff:induction}
Let $d, m, n \in\N$,  let $(S,\mathcal{S},\mu)$ be a finite measure space, let $F   = (F(x, s))_{(x, s) \in \R^{d} \times S} \colon \R^d\times S\to\R^m$ be  $(\mathcal{B}(\R^d)\otimes \mathcal{S})$/$\mathcal{B}(\R^m)$-measurable, 
 let $f \colon \R^d \to \R^m$ be  $(n-1)$-times differentiable,  assume for all  $s\in S$ that $(\R^d\ni x\mapsto F(x,s)\in\R^m)\in C^{n}(\R^d,\R^m)$, and assume for all $x\in\R^d$ that
\begin{equation}
\begin{split}
\label{eq:diff:induction}
\inf_{\delta\in (0,\infty)}\sup_{u\in [-\delta,\delta]^d}\int_S & \|(\tfrac{\partial^{n-1}}{\partial x^{n-1}}F)(x,z)\|_{L^{(n-1)}(\R^d,\R^m)}\\
& \quad + \|(\tfrac{\partial^n}{\partial x^n}F)(x+u,z)\|_{L^{(n)}(\R^d,\R^m)}^{1+\delta}\, \mu(dz)<\infty
\end{split}
\end{equation}
(cf.~Corollary~\ref{cor:derivative:gen}) and 
\begin{equation}
f^{(n-1)}(x)=\int_S (\tfrac{\partial^{n-1}}{\partial x^{n-1}}F)(x,s)\, \mu(ds).
\end{equation}
Then
\begin{enumerate}[(i)]
\item\label{item1:diff:induction} we have that $f\in C^n(\R^d,\R^m)$ and 
\item\label{item2:diff:induction} we have for all  $x\in\R^d$ that
\begin{equation}
f^{(n)}(x)=\int_S (\tfrac{\partial^n}{\partial x^n}F)(x,s)\, \mu(ds).
\end{equation}
\end{enumerate}
\end{lemma}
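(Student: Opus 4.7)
The plan is to show that $f^{(n-1)}$ is Fr\'echet differentiable at each $x \in \R^d$ with derivative
\[
G(x) := \int_S \bigl(\tfrac{\partial^n}{\partial x^n} F\bigr)(x,s) \, \mu(ds) \in L^{(n)}(\R^d, \R^m),
\]
and that $G$ is continuous. First I fix $x \in \R^d$ and extract from \eqref{eq:diff:induction} some $\delta_0 \in (0, \infty)$ such that
\[
M := \sup_{u \in [-\delta_0, \delta_0]^d} \int_S \bigl[\|(\tfrac{\partial^{n-1}}{\partial x^{n-1}} F)(x,z)\|_{L^{(n-1)}(\R^d, \R^m)} + \|(\tfrac{\partial^n}{\partial x^n} F)(x+u,z)\|_{L^{(n)}(\R^d, \R^m)}^{1+\delta_0}\bigr] \mu(dz) < \infty.
\]
Measurability of the $L^{(n)}(\R^d,\R^m)$-valued integrand defining $G$ follows from \Cref{cor:derivative:gen}; combined with $\mu(S) < \infty$ and H\"older's inequality applied to the $L^{1+\delta_0}$ bound extracted from $M$, this shows that $G$ is well-defined.

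Next, for $h \in \R^d$ with $\max_{i \in \{1,\ldots,d\}} |h_i| \leq \delta_0$, I will apply the fundamental theorem of calculus in $t$, pointwise in $s \in S$, to the $C^1$ map $[0,1] \ni t \mapsto (\tfrac{\partial^{n-1}}{\partial x^{n-1}} F)(x+th, s) \in L^{(n-1)}(\R^d, \R^m)$ to obtain
\[
(\tfrac{\partial^{n-1}}{\partial x^{n-1}} F)(x+h, s) - (\tfrac{\partial^{n-1}}{\partial x^{n-1}} F)(x, s) = \int_0^1 (\tfrac{\partial^n}{\partial x^n} F)(x+th, s)(h, \cdot, \ldots, \cdot) \, dt,
\]
where $(\tfrac{\partial^n}{\partial x^n} F)(y,s)(h, \cdot, \ldots, \cdot) \in L^{(n-1)}(\R^d, \R^m)$ denotes the $(n{-}1)$-linear map obtained by fixing the first slot to be $h$. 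Integrating against $\mu$ and using the assumed representation of $f^{(n-1)}$, together with the trivial identity $G(x)h = \int_S \int_0^1 (\tfrac{\partial^n}{\partial x^n} F)(x,s)(h, \cdot, \ldots, \cdot) \, dt \, \mu(ds)$, produces
\[
f^{(n-1)}(x+h) - f^{(n-1)}(x) - G(x) h = \int_S \int_0^1 \bigl[(\tfrac{\partial^n}{\partial x^n} F)(x+th, s) - (\tfrac{\partial^n}{\partial x^n} F)(x, s)\bigr](h, \cdot, \ldots, \cdot) \, dt \, \mu(ds).
\]

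Taking the $L^{(n-1)}$-norm under the integrals and dividing by $\|h\|_{\R^d}$ bounds
\[
\frac{\|f^{(n-1)}(x+h) - f^{(n-1)}(x) - G(x) h\|_{L^{(n-1)}(\R^d, \R^m)}}{\|h\|_{\R^d}}
\]
by $\int_S \int_0^1 \|(\tfrac{\partial^n}{\partial x^n} F)(x+th, s) - (\tfrac{\partial^n}{\partial x^n} F)(x, s)\|_{L^{(n)}(\R^d, \R^m)} \, dt \, \mu(ds)$. As $h \to 0$ the integrand tends to $0$ pointwise by continuity of $(\partial^n F)(\cdot, s)$; and for $\max_{i} |h_i| \leq \delta_0$ the integrand is uniformly $L^{1+\delta_0}$-bounded on the finite product measure space $([0,1] \times S, \mathrm{Leb} \otimes \mu)$ (via $|a-b|^{1+\delta_0} \leq 2^{\delta_0}(a^{1+\delta_0} + b^{1+\delta_0})$ and $M < \infty$), hence uniformly integrable there. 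Vitali's convergence theorem therefore yields that the right-hand side tends to $0$, which establishes $f^{(n)}(x) = G(x)$ and item~\eqref{item2:diff:induction}. Item~\eqref{item1:diff:induction} — i.e.~continuity of $f^{(n)} = G$ — follows from an entirely analogous Vitali-style argument applied to $\|G(y) - G(x)\|_{L^{(n)}(\R^d, \R^m)}$ as $y \to x$, using the $L^{1+\delta_1}(\mu)$-bound extracted from \eqref{eq:diff:induction} at $x$ (or at any fixed reference point) together with pointwise continuity of $(\partial^n F)(\cdot, s)$.

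The main obstacle is that \eqref{eq:diff:induction} provides only uniform $L^{1+\delta_0}(\mu)$-boundedness of the family $\{\|(\partial^n F)(x+u, \cdot)\|_{L^{(n)}(\R^d, \R^m)}\}_{u \in [-\delta_0, \delta_0]^d}$ rather than a single $\mu$-integrable pointwise majorant, so the classical dominated convergence theorem does not apply directly; one must pass through uniform integrability (which $L^{1+\delta_0}$-boundedness implies because $\mu(S) < \infty$) and invoke Vitali's convergence theorem.
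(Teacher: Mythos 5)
Your proposal is correct, but it takes a genuinely different route from the paper's proof. The paper proves the statement coordinate-by-coordinate: it fixes a component $F_j$ and a multi-index $(i_1,\ldots,i_n)$, applies the one-dimensional fundamental theorem of calculus along the single coordinate direction $e_{i_n}$, invokes Klenke's uniform-integrability criterion and the Vitali convergence theorem to identify the scalar partial derivative $\tfrac{\partial^n}{\partial x_{i_1}\cdots\partial x_{i_n}}f_j$ with the corresponding integral, shows these partials are continuous, and finally appeals to the classical calculus result (Coleman, Corollary~2.2) that joint continuity of all $n$-th order partials implies $f\in C^n$ with the expected Fr\'echet derivative. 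Your argument instead works directly at the level of the Fr\'echet derivative of $f^{(n-1)}$: you apply the Banach-space-valued fundamental theorem of calculus along the full segment $[x,x+h]$, write the Fr\'echet remainder as a double integral over $[0,1]\times S$, and use uniform integrability plus Vitali (on the finite product measure) to kill it, then repeat the Vitali argument for the continuity of $G$. Both proofs rely on the same two mechanisms — fundamental theorem of calculus plus Vitali via $L^{1+\delta}$-boundedness — but yours avoids the detour through coordinate partials and the ``continuous partials $\Rightarrow$ Fr\'echet differentiable'' theorem, at the modest cost of needing the multilinear estimate $\|A(h,\cdot,\ldots,\cdot)\|_{L^{(n-1)}}\le\|A\|_{L^{(n)}}\|h\|_{\R^d}$ and working with Bochner integrals of $L^{(n)}(\R^d,\R^m)$-valued functions directly. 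Two points are worth tightening if you wrote this up in full: (a) the interchange of $\int_S$ and $\int_0^1$ in your key identity needs a one-line Fubini justification, which you have implicitly via the H\"older bound but should make explicit; and (b) the Vitali step formally applies to sequences, so you should phrase the $h\to0$ limit in terms of arbitrary sequences $h_k\to0$ with $h_k\in[-\delta_0,\delta_0]^d$. Neither is a gap, just bookkeeping.
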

\begin{proof}[Proof of Lemma~\ref{lem:diff:induction}]
Throughout this proof let $f_1, f_2, \dots,f_m\colon\R^d\to\R$  satisfy for all $x\in\R^d$ that
\begin{equation}
f(x)=(f_1(x), f_2(x), \dots,f_m(x)),
\end{equation}
let $F_1, F_2, \dots,F_m\colon\R^d\times S\to\R$  satisfy for all $x\in\R^d$, $s\in S$ that
\begin{equation}
\label{eq:F:components}
F(x,s)=(F_1(x,s), F_2(x, s), \dots,F_m(x,s)),
\end{equation}
let $\delta_x\in (0,\infty)$, $x \in \R^d$, satisfy for all $x \in \R^d$ that 
\begin{equation}\label{eq:diff^n:delta}
\sup_{v\in [-\delta_x,\delta_x]^d}\int_{ S} \| (\tfrac{\partial^{n}}{\partial x^{n}} F)(x+v,s)\|_{L^{(n)}(\R^d,\R^m)}^{1+\delta_x}\,  \mu(ds) < \infty,
\end{equation}	
and let $e_1=(1,0,\dots,0)$, $e_2=(0,1,0,\dots,0)$,  \ldots, $e_d=(0,0, \dots,0,1)\in\R^d$. 
Note that, e.g., Coleman~\cite[pages 93-94, Section~4.5]{Coleman2012} assures that for all  $x\in\R^d$, $i_1, i_2, \ldots, i_n \in \{1,2, \ldots, d\}$, $j \in \{1, 2, \ldots, m\}$, $s\in S$ we have that
\begin{equation}
(\tfrac{\partial^{n}}{\partial x_{i_1} \partial x_{i_2} \dots \partial x_{i_{n}} }F_j)(x,s)=(\tfrac{\partial^{n}}{\partial x^{n}} F_j)(x,s)(e_{i_1}, e_{i_2},\dots,e_{i_{n}}).
\end{equation}
This ensures that for all  $x\in\R^d$, $i_1, i_2, \ldots, i_n \in \{1,2, \ldots, d\}$, $j \in \{1, 2, \ldots, m\}$ we have that
\begin{align*}
\label{eq:diff^n_under_int_Fj}
&  \sup_{v\in [-\delta_x,\delta_x]^d}\int_{ S}  \big| (\tfrac{\partial^{n}}{\partial x_{i_1} \partial x_{i_2} \dots \partial x_{i_{n}} }F_j)(x+v,s)\big|^{1+\delta_x}\,  \mu(ds)\\
& = \sup_{v\in [-\delta_x,\delta_x]^d}\int_{ S} \big| (\tfrac{\partial^{n}}{\partial x^{n}} F_j)(x+v,s)(e_{i_1}, e_{i_2}, \dots,e_{i_{n}})\big|^{1+\delta_x}\,  \mu(ds) \numberthis\\
& \leq \sup_{v\in [-\delta_x,\delta_x]^d}\int_{ S} \big(\| (\tfrac{\partial^{n}}{\partial x^{n}} F_j)(x+v,s)\|_{L^{(n)}(\R^d,\R)}\|e_{i_1}\|_{\R^d} \|e_{i_2}\|_{\R^d} \cdots\|e_{i_{n}}\|_{\R^d}\big)^{1+\delta_x}\,  \mu(ds)\\
& = \sup_{v\in [-\delta_x,\delta_x]^d}\int_{ S} \| (\tfrac{\partial^{n}}{\partial x^{n}} F_j)(x+v,s)\|_{L^{(n)}(\R^d,\R)}^{1+\delta_x}\,  \mu(ds). 
\end{align*}
In addition, note that, e.g., Coleman~\cite[Proposition~4.6]{Coleman2012} and \eqref{eq:F:components} demonstrate that for all $x \in \R^d$, $s \in S$ we have that
\begin{equation}
(\tfrac{\partial^n}{\partial x^n} F)(x,s) = \big( (\tfrac{\partial^n}{\partial x^n} F_1)(x,s), (\tfrac{\partial^n}{\partial x^n} F_2)(x,s), \ldots, (\tfrac{\partial^n}{\partial x^n} F_m)(x,s) \big).
\end{equation}
This reveals that for all $x \in \R^d$, $s \in S$,  $j \in \{1, 2, \ldots, m\}$   it holds that
\begin{align*}
& \| (\tfrac{\partial^{n}}{\partial x^{n}} F_j)(x,s)\|_{L^{(n)}(\R^d,\R)} = \sup_{y_1, y_2, \ldots, y_n \in \R^d \backslash \{0\}} \frac{|(\tfrac{\partial^{n}}{\partial x^{n}} F_j)(x,s) (y_1, y_2, \ldots, y_n)|}{\|y_1\|_{\R^d} \|y_2\|_{\R^d} \ldots \|y_n\|_{\R^d}} \numberthis\\
 & \leq  \sup_{y_1, y_2, \ldots, y_n \in \R^d \backslash \{0\}} \frac{\|(\tfrac{\partial^{n}}{\partial x^{n}} F)(x,s) (y_1, y_2, \ldots, y_n)\|_{\R^m}}{\|y_1\|_{\R^d} \|y_2\|_{\R^d} \ldots \|y_n\|_{\R^d}}
  =  \| (\tfrac{\partial^{n}}{\partial x^{n}} F)(x,s)\|_{L^{(n)}(\R^d,\R^m)}.
\end{align*}
Combining this, \eqref{eq:diff^n_under_int_Fj}, and  \eqref{eq:diff^n:delta} implies that for all  $x\in\R^d$, $i_1, i_2, \ldots, i_n \in \{1,2, \ldots, d\}$, $j \in \{1, 2, \ldots, m\}$ we have that
\begin{equation}
\begin{split}
\label{eq:diff^n_under_int_1_1}
&  \sup_{v\in [-\delta_x,\delta_x]^d}\int_{ S}  \big| (\tfrac{\partial^{n}}{\partial x_{i_1} \partial x_{i_2} \dots \partial x_{i_{n}} }F_j)(x+v,s)\big|^{1+\delta_x}\,  \mu(ds)\\ 
& \leq \sup_{v\in [-\delta_x,\delta_x]^d}\int_{ S} \| (\tfrac{\partial^{n}}{\partial x^{n}} F)(x+v,s)\|_{L^{(n)}(\R^d,\R^m)}^{1+\delta_x}\,  \mu(ds)<\infty.
\end{split}
\end{equation}
Next observe that the assumption that $\forall\, s\in S\colon (\R^d\ni y\mapsto F(y,s)\in \R^m)\in C^{n}(\R^d,\R^m)$ and the fundamental theorem of calculus imply that for all  $x\in\R^d$, $i_1, i_2, \ldots, i_n \in \{1,2, \ldots, d\}$, $j \in \{1, 2, \ldots, m\}$, $h\in\R$ we have that
\begin{align}
\label{eq:diff^n_under_int_2}
\begin{split}
&(\tfrac{\partial^{n-1}}{\partial x_{i_1} \partial x_{i_2} \dots \partial x_{i_{n-1}}}f_j)(x+he_{i_{n}})-(\tfrac{\partial^{n-1}}{\partial {x_{i_1}} \partial x_{i_2}  \dots \partial x_{i_{n-1}}}f_j)(x)\\
&= \int_S(\tfrac{\partial^{n-1}}{\partial x_{i_1} \partial x_{i_2}  \dots \partial x_{i_{n-1}}}F_j)(x+h e_{i_{n}},s)\, \mu(ds) 
- \int_S(\tfrac{\partial^{n-1}}{\partial x_{i_1} \partial x_{i_2} \dots \partial x_{i_{n-1}}}F_j)(x,s)\, \mu(ds)\\
&= \int_S(\tfrac{\partial^{n-1}}{\partial x_{i_1} \partial x_{i_2}  \dots \partial x_{i_{n-1}}}F_j)(x+h e_{i_{n}},s)
-(\tfrac{\partial^{n-1}}{\partial x_{i_1} \partial x_{i_2}  \dots \partial x_{i_{n-1}}}F_j)(x,s)\, \mu(ds)\\
& = \int_{ S}\int_0^h\! (\tfrac{\partial^{n}}{\partial x_{i_1} \partial x_{i_2}  \dots \partial x_{i_{n}}}F_j)(x+u e_{i_{n}},s)\, du\,  \mu(ds). 
\end{split}
\end{align}
Moreover, note that Tonelli's theorem, H\" older's inequality, and \eqref{eq:diff^n_under_int_1_1} prove that for all  $x\in\R^d$, $i_1, i_2, \allowbreak \ldots, i_n \in \{1,2, \ldots, d\}$, $j \in \{1, 2, \ldots, m\}$, $h\in [-\delta_x,\delta_x]$ we have that
\begin{align*}
& \bigg| \int_{ S}\int_0^h \big| (\tfrac{\partial^{n}}{\partial x_{i_1} \partial x_{i_2}  \dots \partial x_{i_{n}}}F_j)(x+u e_{i_{n}},s)\big|\, du\,  \mu(ds) \bigg| \\
&= \bigg| \int_0^h\!\int_{ S} \big| (\tfrac{\partial^{n}}{\partial x_{i_1} \partial x_{i_2}  \dots \partial x_{i_{n}}}F_j)(x+u e_{i_{n}},s)\big|\,  \mu(ds)\, du \bigg| \\
&\leq |h|\sup_{v\in [-\delta_x,\delta_x]^d}\int_{ S} \big| (\tfrac{\partial^{n}}{\partial x_{i_1} \partial x_{i_2}  \dots \partial x_{i_{n}}}F_j)(x+v,s)\big|\,  \mu(ds) \numberthis\\
&\leq |h|\sup_{v\in [-\delta_x,\delta_x]^d}\bigg(\int_{ S} \big| (\tfrac{\partial^{n}}{\partial x_{i_1} \partial x_{i_2}  \dots \partial x_{i_{n}}}F_j)(x+v,s)\big|^{1+\delta_x}\,  \mu(ds)\bigg)^{\!\frac{1}{1+\delta_x}} \cdot |\mu(S)|^{(1-\frac{1}{1+\delta_x})}  \\
& = |h|\bigg(\sup_{v\in [-\delta_x,\delta_x]^d}\int_{ S} \big| (\tfrac{\partial^{n}}{\partial x_{i_1} \partial x_{i_2}  \dots \partial x_{i_{n}}}F_j)(x+v,s)\big|^{1+\delta_x}\,  \mu(ds)\bigg)^{\!\frac{1}{1+\delta_x}} \cdot |\mu(S)|^{(1-\frac{1}{1+\delta_x})}  <\infty.
\end{align*}
This, Fubini's theorem, and \eqref{eq:diff^n_under_int_2} assure that for all  $x\in\R^d$, $i_1, i_2, \ldots, i_n \in \{1,2, \ldots, d\}$, $j \in \{1, 2, \ldots, m\}$, $h\in [-\delta_x,\delta_x]$ we have that
\begin{equation}\label{eq:diff^n_under_int_4}
\begin{split}
&(\tfrac{\partial^{n-1}}{\partial x_{i_1} \partial x_{i_2}  \dots \partial x_{i_{n-1}}}f_j)(x+he_{i_{n}})-(\tfrac{\partial^{n-1}}{\partial x_{i_1} \partial x_{i_2}  \dots \partial x_{i_{n-1}}}f_j)(x)\\
&= \int_{ S}\int_0^h\! (\tfrac{\partial^{n}}{\partial x_{i_1} \partial x_{i_2}  \dots \partial x_{i_{n}}}F_j)(x+u e_{i_{n}},s)\, du\,  \mu(ds)\\
&= \int_0^h\int_{ S}\! (\tfrac{\partial^{n}}{\partial x_{i_1} \partial x_{i_2}  \dots \partial x_{i_{n}}}F_j)(x+u e_{i_{n}},s)\,  \mu(ds)\, du.
\end{split}
\end{equation}
In addition, observe that, e.g.,  Klenke~\cite[Corollary 6.21]{Klenke2013} and \eqref{eq:diff^n_under_int_1_1} ensure that for all  $x\in\R^d$, $i_1, i_2, \ldots, i_n \in \{1,2, \ldots, d\}$, $j \in \{1, 2, \ldots, m\}$ we have that
\begin{equation}
\label{eq:ui:induction}
\big( S\ni s\mapsto(\tfrac{\partial^{n}}{\partial x_{i_1} \partial x_{i_2}  \dots \partial x_{i_{n}}}F_j)(x+v,s)\in\R\big), v\in [-\delta_x,\delta_x]^d,
\end{equation}
is a uniformly integrable family of functions.
This reveals that  for all  $x\in\R^d$, $i_1, i_2, \ldots, i_n \in \{1,2, \ldots, d\}$, $j \in \{1, 2, \ldots, m\}$, and all  functions $u =(u_k)_{k \in \N} \colon \N \to [-\delta_x, \delta_x]^d$ it holds that                                                                                                                                                                                                                                                                                                                                                                                                                                                                                                                      
\begin{equation}
\big(S\ni s\mapsto (\tfrac{\partial^{n}}{\partial x_{i_1} \partial x_{i_2}  \dots \partial x_{i_{n}}}F_j)(x+u_k,s)\in\R\big),  k \in \N,
\end{equation} 
is a uniformly integrable sequence of functions. This and the Vitali convergence theorem (see, e.g., Klenke~\cite[Theorem 6.25]{Klenke2013}) assure that for all  $x\in\R^d$, $i_1, i_2, \ldots, i_n \in \{1,2, \ldots, d\}$, $j \in \{1,2,  \ldots, m\}$, and all  functions $u=(u_k)_{k \in \N} \colon \N \to [-\delta_x, \delta_x]^d$ with $\limsup_{k \to \infty} \allowbreak \|u_k\|_{\R^d}=0$  we have that                          
\begin{equation}
\label{eq:limsup:int}
\limsup_{k \to \infty} \int_{S}\big|(\tfrac{\partial^{n}}{\partial x_{i_1} \partial x_{i_2}  \dots \partial x_{i_{n}}}F_j)(x+u_k,s) - (\tfrac{\partial^{n}}{\partial x_{i_1}  \partial x_{i_2}  \dots \partial x_{i_{n}}}F_j)(x,s)\big|\, \mu(ds) =0.
\end{equation}
This reveals that for all  $x\in\R^d$, $i_1, i_2, \ldots, i_n \in \{1,2, \ldots, d\}$, $j \in \{1, 2, \ldots, m\}$  it holds that                         
\begin{equation}
\label{eq:cont:F:induction}
\limsup_{v \to 0} \int_{S}\big|(\tfrac{\partial^{n}}{\partial x_{i_1} \partial x_{i_2}  \dots \partial x_{i_{n}}}F_j)(x+v e_{i_n},s) - (\tfrac{\partial^{n}}{\partial x_{i_1} \partial x_{i_2}  \dots \partial x_{i_{n}}}F_j)(x,s)\big|\, \mu(ds) =0.
\end{equation}
This implies that for all $x\in\R^d$, $i_1, i_2, \ldots, i_n \in \{1,2, \ldots, d\}$, $j \in \{1, 2, \ldots, m\}$, $\varepsilon \in (0, \infty)$ there exists  $\delta \in (0, \infty)$ such that 
\begin{equation}
\forall \, v \in (-\delta, \delta) \colon  \int_{S} \big|(\tfrac{\partial^{n}}{\partial x_{i_1} \partial x_{i_2}  \dots \partial x_{i_{n}}}F_j)(x+v e_{i_n},s) - (\tfrac{\partial^{n}}{\partial x_{i_1} \partial x_{i_2}  \dots \partial x_{i_{n}}}F_j)(x,s)\big|\, \mu(ds) < \varepsilon.
\end{equation}
Combining this and \eqref{eq:diff^n_under_int_4} ensures that for all $x\in\R^d$, $i_1, i_2, \ldots, i_n \in \{1,2, \ldots, d\}$, $j \in \{1, 2, \ldots, m\}$, $\varepsilon \in (0, \infty)$ there exists $\delta \in (0, \infty)$ such that  for all $h\in (-\delta, \delta)\backslash \lb 0 \rb$ we have that
\begin{align*}
\label{eq:diff^n_under_int_6}
&\Big| \frac{1}{h}  \!\left((\tfrac{\partial^{n-1}}{\partial x_{i_1} \partial x_{i_2}  \dots \partial x_{i_{n-1}}}f_j)(x+h e_{i_{n}})-(\tfrac{\partial^{n-1}}{\partial x_{i_1} \partial x_{i_2}  \dots \partial x_{i_{n-1}}}f_j)(x)\right)  \\
& \quad- \int_S (\tfrac{\partial^{n}}{\partial x_{i_1} \partial x_{i_2}  \dots \partial x_{i_{n}}}F_j)(x,s)  \, \mu(ds) \Big| \\
& =  \bigg| \frac{1}{h}\int_0^{h}\int_{ S}\! (\tfrac{\partial^{n}}{\partial x_{i_1} \partial x_{i_2}  \dots \partial x_{i_{n}}} F_j)(x+u e_{i_{n}},s)\,  \mu(ds)\, du \numberthis\\
& \quad   - \frac{1}{h}\int_0^{h}\int_{ S}\! (\tfrac{\partial^{n}}{\partial x_{i_1} \partial x_{i_2}  \dots \partial x_{i_{n}}}  F_j)(x,s)\,  \mu(ds)\, du \bigg| \\
& \leq  \frac{1}{h}\int_0^{h} \int_{ S} \big| (\tfrac{\partial^{n}}{\partial x_{i_1}\partial x_{i_2}  \dots \partial x_{i_{n}}}  F_j)(x+u e_{i_{n}},s) - (\tfrac{\partial^{n}}{\partial x_{i_1} \partial x_{i_2}  \dots \partial x_{i_{n}}}  F_j)(x,s) \big|\,  \mu(ds) \, du <\epsilon.
\end{align*}
This demonstrates that   for all $x\in\R^d$, $i_1, i_2, \ldots, i_n \in \{1,2, \ldots, d\}$, $j \in \{1, 2, \ldots, m\}$ we have that
\begin{align}
\begin{split}
\limsup_{\substack{h \to 0\\
		h \in \R \backslash \{0\}}} \Big| &\frac{1}{h} \! \left((\tfrac{\partial^{n-1}}{\partial x_{i_1} \partial x_{i_2}  \dots \partial x_{i_{n-1}}}f_j)(x+h e_{i_{n}})-(\tfrac{\partial^{n-1}}{\partial x_{i_1} \partial x_{i_2}  \dots \partial x_{i_{n-1}}}f_j)(x)\right)\\
	&\quad - \int_S (\tfrac{\partial^{n}}{\partial x_{i_1} \partial x_{i_2}  \dots \partial x_{i_{n}}}F_j)(x,s)  \, \mu(ds) \Big| = 0.
\end{split}
\end{align}
This reveals that  for all  $x\in\R^d$, $i_1, i_2, \ldots, i_n \in \{1,2, \ldots, d\}$, $j \in \{1, 2, \ldots, m\}$  it holds that
\begin{equation}
\label{eq:partial:f:induction}
\big(\tfrac{\partial}{\partial x_{i_{n}} }(\tfrac{\partial^{n-1}}{\partial x_{i_1} \partial x_{i_2}  \dots \partial x_{i_{n-1}}}f_j)\big)(x)=\int_S (\tfrac{\partial^{n}}{\partial x_{i_1} \partial x_{i_2}  \dots \partial x_{i_{n}}}F_j)(x,s)  \, \mu(ds).
\end{equation}
Next observe that \eqref{eq:limsup:int} proves that for all $x\in\R^d$, $i_1, i_2, \ldots, i_n \in \{1,2, \ldots, d\}$, $j \in \{1, 2, \ldots, m\}$, and all functions  $u=(u_k)_{k \in \N} \colon \N \to \R^d$ with $\limsup_{k \to \infty} \|u_k\|_{\R^d}=0$ we have that
\begin{equation}
\begin{split}
&\limsup_{k \to \infty}\la \int_{S} (\tfrac{\partial^{n}}{\partial x_{i_1} \partial x_{i_2}  \dots \partial x_{i_{n}}}F_j)(x+u_k,s)\, \mu(ds)
- \int_{S} (\tfrac{\partial^{n}}{\partial x_{i_1} \partial x_{i_2}  \dots \partial x_{i_{n}}}F_j)(x,s)\, \mu(ds)\ra\\
&\leq \limsup_{k \to \infty} \int_{S}\big| (\tfrac{\partial^{n}}{\partial x_{i_1} \partial x_{i_2}  \dots \partial x_{i_{n}}}F_j)(x+u_k,s)
-(\tfrac{\partial^{n}}{\partial x_{i_1}\partial x_{i_2}  \dots \partial x_{i_{n}}}F_j)(x,s)\big|\, \mu(ds)=0.
\end{split}
\end{equation}
This reveals that for all  $i_1, i_2, \ldots, i_n \in \{1,2, \ldots, d\}$, $j \in \{1, 2, \ldots, m\}$  it holds that
\begin{equation}
\left( \R^d \ni x \mapsto \int_{S} (\tfrac{\partial^{n}}{\partial x_{i_1} \partial x_{i_2}  \dots \partial x_{i_{n}}}F_j)(x,s)\, \mu(ds) \in \R \right) \in C(\R^d, \R).
\end{equation}
Combining this, \eqref{eq:partial:f:induction} and, e.g.,  Coleman~\cite[Corollary~2.2]{Coleman2012} demonstrates that for all
$x\in\R^d$, $i_1, i_2, \ldots, i_n \in \{1,2, \ldots, d\}$, $j \in \{1, 2, \ldots, m\}$ we have that $f \in C^{n}(\R^d, \R^m)$ and
\begin{equation}
(\tfrac{\partial^{n}}{\partial x_{i_1} \partial x_{i_2}  \dots \partial x_{i_{n}}}f_j)(x)=\int_S (\tfrac{\partial^{n}}{\partial x_{i_1} \partial x_{i_2}  \dots \partial x_{i_{n}}}F_j)(x,s)  \, \mu(ds).
\end{equation}
This establishes items~\eqref{item1:diff:induction}--\eqref{item2:diff:induction}.
 The proof of Lemma~\ref{lem:diff:induction} is thus completed.
\end{proof}

\begin{lemma}\label{lem:diff^n_under_int}
Let $d, m, n\in\N$, let $(S,\mathcal{S},\mu)$ be a finite measure space, let $F   = (F(x, s))_{(x, s) \in \R^{d} \times S} \colon \R^d\times S\to\R^m$ be  $(\mathcal{B}(\R^d)\otimes \mathcal{S})$/$\mathcal{B}(\R^m)$-measurable,
let $f\colon \R^d\to\R^m$ be a function, assume  for all  $s\in S$ that $(\R^d\ni x\mapsto F(x,s)\in\R^m)\in C^n(\R^d,\R^m)$, and assume for all $x\in\R^d$ that
\begin{equation}\label{eq:diff^n_under_int_01}
\inf_{\delta\in (0,\infty)}\sup_{u\in [-\delta,\delta]^d}\int_S\|F(x,z)\|_{\R^m}
+ \textstyle\sum\limits_{k=1}^{n} \displaystyle \|(\tfrac{\partial^k}{\partial x^k}F)(x+u,z)\|_{L^{(k)}(\R^d,\R^m)}^{1+\delta}\, \mu(dz)<\infty
\end{equation}
(cf.~Corollary~\ref{cor:derivative:gen}) and   
\begin{equation}\label{eq:diff^n_under_int_03}
f(x)=\int_S F(x,s)\, \mu(ds).
\end{equation}
Then
\begin{enumerate}[(i)]
\item\label{item:diff^n_under_int_1} we have that $f\in C^n(\R^d,\R^m)$ and 
\item\label{item:diff^n_under_int_2} we have for all $k\in\lb 1, 2, \dots,n\rb$, $x\in\R^d$ that
\begin{equation}\label{eq:diff^n_under_int_04}
f^{(k)}(x)=\int_S (\tfrac{\partial^k}{\partial x^k}F)(x,s)\, \mu(ds).
\end{equation}
\end{enumerate}
\end{lemma}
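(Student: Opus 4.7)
The plan is to prove the lemma by induction on $n \in \N$, with Lemma~\ref{lem:diff:induction} serving as the engine that upgrades the differentiability of $f$ by one order at a time. Lemma~\ref{lem:diff:induction} is precisely the statement that if $f$ is already $(n-1)$-times differentiable with $f^{(n-1)}$ realized as an integral of $(\partial^{n-1}_x F)(\cdot, s)$ against $\mu$, then under a one-step integrability hypothesis one may pass to the $n$-th derivative by differentiating under the integral. Iterating this observation is exactly what is needed.

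For the base case $n = 1$ I would apply Lemma~\ref{lem:diff:induction} with its parameter set to $1$. The required $0$-th order identity $f^{(0)}(x) = \int_S F(x,s)\, \mu(ds)$ is \eqref{eq:diff^n_under_int_03}, and the integrability premise \eqref{eq:diff:induction} at level $1$ is a special case of \eqref{eq:diff^n_under_int_01} at level $1$. For the inductive step, assume the lemma holds for some $n \in \N$ and suppose $F$ meets the hypotheses of Lemma~\ref{lem:diff^n_under_int} with $n+1$ in place of $n$. Those hypotheses a fortiori imply the hypotheses at level $n$ (just drop the $k = n+1$ summand from \eqref{eq:diff^n_under_int_01}), so by the induction hypothesis we obtain $f \in C^n(\R^d, \R^m)$ together with the formula
\begin{equation}
f^{(k)}(x) = \int_S (\tfrac{\partial^k}{\partial x^k} F)(x,s)\, \mu(ds)
\end{equation}
for every $k \in \{1, 2, \ldots, n\}$; in particular this identity holds at $k = n$, which supplies the hypothesis on $f^{(n-1)}$ that Lemma~\ref{lem:diff:induction} demands when its parameter is taken to be $n+1$ (after relabeling). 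A second application of Lemma~\ref{lem:diff:induction}, now at level $n+1$, then upgrades $f$ to $C^{n+1}$ and produces the integral representation of $f^{(n+1)}$, which closes the induction.

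The only slightly delicate point is verifying the integrability premise of Lemma~\ref{lem:diff:induction} at level $n+1$ from the assumption \eqref{eq:diff^n_under_int_01} at level $n+1$. The $(1+\delta)$-integrability of $(\partial^{n+1}_x F)(x+u, \cdot)$ uniformly in $u$ is immediate, while the required plain $L^1$-integrability of $(\partial^n_x F)(x, \cdot)$ follows by taking $u = 0$ in the supremum in \eqref{eq:diff^n_under_int_01} and then invoking Hölder's inequality together with $\mu(S) < \infty$ to pass from $L^{1+\delta}(\mu)$ to $L^1(\mu)$. I do not anticipate any genuine obstacle beyond this routine bookkeeping; the combinatorial structure of the proof is essentially that of iterating a one-step differentiation-under-the-integral lemma, and the measurability issues have already been absorbed into Corollary~\ref{cor:derivative:gen}.
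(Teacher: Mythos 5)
Your proposal is correct and matches the paper's approach: the paper's proof simply declares the result a ``direct consequence of Lemma~\ref{lem:diff:induction},'' and the induction on $n$ you spell out is exactly the unwinding of that claim. The one detail you flag — passing from $L^{1+\delta}(\mu)$ to $L^1(\mu)$ via H\"older and $\mu(S)<\infty$ to supply the untwisted $(\partial^{n}_x F)(x,\cdot)$ term in \eqref{eq:diff:induction} — is indeed the only nontrivial bookkeeping, and you handle it correctly.
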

\begin{proof}[Proof of \Cref{lem:diff^n_under_int}]
This is a direct consequence of  Lemma~\ref{lem:diff:induction}. The proof of  \Cref{lem:diff^n_under_int} is thus completed.
\end{proof}

\section{Existence, uniqueness, and regularity results for solutions of ordinary differential equations (ODEs)}
\label{subsec:existence}
\sectionmark{}

\begin{prop}\label{thm:regularity_flow_dynamical_system}
Let $d \in \N$, $L, T\in [0,\infty)$, $f\in C([0,T]\times \R^d,\R^d)$ satisfy for all $t\in [0,T]$, $x,y\in\R^d$  that
\begin{equation}
\label{eq:f:linear}
\|f(t,x)-f(t,y)\|_{\R^d}\leq L \|x-y\|_{\R^d}.
\end{equation}
Then there exists a unique  $\chi\in C( \lb (s,t)\in [0,T]^2\colon s\leq t\rb\times \R^d,\R^d)$ which satisfies for all $x\in\R^d$, $s\in [0,T]$, $t\in [s,T]$ that
\begin{equation}
\chi(s,t,x) = x + \int_s^tf(u,\chi(s,u,x))\, du.
\end{equation}
\end{prop}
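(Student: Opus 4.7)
The plan is to construct $\chi$ via the standard Picard iteration at each fixed initial condition $(s,x)$, derive a flow (semigroup) identity from uniqueness, and then bootstrap these into joint continuity by means of Gronwall estimates and an a priori bound.

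First, I would fix $s \in [0,T]$ and $x \in \R^d$ and define inductively $y_0, y_1, \dots \in C([s,T], \R^d)$ by $y_0(t) := x$ and $y_{n+1}(t) := x + \int_s^t f(u, y_n(u))\, du$. Setting $M_{s,x} := \sup_{u \in [s,T]} \|f(u,x)\|_{\R^d}$ (finite by continuity of $f$ on the compact set $[s,T] \times \{x\}$), a straightforward induction using \eqref{eq:f:linear} yields $\|y_{n+1}(t) - y_n(t)\|_{\R^d} \leq M_{s,x} L^n (t-s)^{n+1}/(n+1)!$, so the telescoping series converges uniformly on $[s,T]$ to some $y^{s,x} \in C([s,T], \R^d)$ which satisfies $y^{s,x}(t) = x + \int_s^t f(u, y^{s,x}(u))\, du$. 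Uniqueness of this solution is immediate from \Cref{lem:Gronwall}: any other continuous solution $z$ obeys $\|y^{s,x}(t) - z(t)\|_{\R^d} \leq L \int_s^t \|y^{s,x}(u) - z(u)\|_{\R^d}\, du$, forcing $z = y^{s,x}$. Setting $\chi(s,t,x) := y^{s,x}(t)$ makes the uniqueness of $\chi$ amongst continuous functions satisfying the stated integral equation automatic.

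Next I would derive three workhorse estimates. (i) A linear growth bound: since \eqref{eq:f:linear} implies $\|f(u,y)\|_{\R^d} \leq \|f(u,0)\|_{\R^d} + L\|y\|_{\R^d}$ and $C_0 := \sup_{u \in [0,T]}\|f(u,0)\|_{\R^d} < \infty$, applying \Cref{lem:Gronwall} to the integral equation gives the uniform bound $\sup_{0 \leq s \leq t \leq T}\|\chi(s,t,x)\|_{\R^d} \leq (\|x\|_{\R^d} + TC_0)e^{LT}$. (ii) Lipschitz dependence on the initial datum: subtracting the integral equations for $x$ and $\tilde{x}$ and invoking \Cref{lem:Gronwall} yields $\|\chi(s,t,x) - \chi(s,t,\tilde{x})\|_{\R^d} \leq e^{LT}\|x - \tilde{x}\|_{\R^d}$. (iii) A flow identity: by uniqueness, for all $0 \leq s \leq s' \leq t \leq T$ and $x \in \R^d$ one has $\chi(s,t,x) = \chi(s', t, \chi(s, s', x))$, since both sides solve the same integral equation on $[s',T]$ with value $\chi(s,s',x)$ at time $s'$.

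Finally, joint continuity at a point $(s_0,t_0,x_0)$ splits into cases. The degenerate case $s_0 = t_0$ follows directly from the integral equation and (i): then $\chi(s_0,t_0,x_0) = x_0$, and for $(s,t,x) \to (s_0,s_0,x_0)$ with $s \leq t$ we have $\|\chi(s,t,x) - x_0\|_{\R^d} \leq \|x - x_0\|_{\R^d} + (t-s)M \to 0$, where $M$ bounds $\|f\|_{\R^d}$ on the relevant compact set provided by (i). For $s_0 < t_0$, I would restrict to nearby $(s,t,x)$ with $s < t_0$ (valid eventually) and use the telescope
\begin{equation*}
\chi(s,t,x) - \chi(s_0,t_0,x_0) = [\chi(s,t,x) - \chi(s,t,x_0)] + [\chi(s,t,x_0) - \chi(s,t_0,x_0)] + [\chi(s,t_0,x_0) - \chi(s_0,t_0,x_0)].
\end{equation*}
The first bracket is controlled by (ii); the second equals $\int_{t_0}^{t} f(u, \chi(s,u,x_0))\, du$ and is $O(|t-t_0|)$ by (i); the third, which I expect to be the main obstacle because of the triangular domain, I would bound by setting $a := \min(s,s_0)$ and $b := \max(s,s_0)$, applying (iii) to obtain $\chi(a,t_0,x_0) = \chi(b, t_0, \chi(a,b,x_0))$, and then using (ii) to deduce $\|\chi(s,t_0,x_0) - \chi(s_0,t_0,x_0)\|_{\R^d} \leq e^{LT}\|\chi(a,b,x_0) - x_0\|_{\R^d} \leq e^{LT}M|s - s_0|$, which vanishes as $s \to s_0$. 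Together the three estimates yield joint continuity of $\chi$ on the whole domain, completing the proof.
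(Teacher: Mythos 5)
Your proposal is correct, and it is a genuinely different route from the paper's.  The paper's proof is a two-line delegation: it extends $f$ in time to $(-1,T+1)\times\R^d$ by setting $g(s,x)=f(0,x)$ for $s<0$ and $g(u,x)=f(T,x)$ for $u>T$, and then cites Teschl's textbook results for global existence/uniqueness under a global Lipschitz condition and for joint continuity of the flow map.  You instead prove everything from scratch: Picard iteration gives existence and a factorial error bound, the Gronwall inequality \Cref{lem:Gronwall} gives uniqueness, and joint continuity is established by hand.  The genuinely nontrivial part of your argument is continuity in the start time $s$, which most from-scratch expositions gloss over; your treatment of it — reduce $\|\chi(s,t_0,x_0)-\chi(s_0,t_0,x_0)\|$ to $\|\chi(b,t_0,\chi(a,b,x_0))-\chi(b,t_0,x_0)\|$ via the flow identity (iii), then control the inner displacement $\|\chi(a,b,x_0)-x_0\|\le M\,|s-s_0|$ by the a priori bound (i), then apply Lipschitz dependence (ii) — is exactly right and is the standard mechanism, but it is worth making explicit since that is where a naive argument runs into the triangular domain.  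One cosmetic point: you invoke \Cref{lem:Gronwall} on the interval $[s,T]$ rather than $[0,T]$; this is an immediate translation, but since the paper's \Cref{lem:Gronwall} is stated with base point $0$ it deserves a remark.  Overall, your approach is longer but self-contained and stays entirely within the toolkit already developed in the paper (\Cref{lem:Gronwall}, \Cref{prop:cont_fct_bnd_on_cmpcts}), whereas the paper's approach is terse and offloads the substance to an external reference.
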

\begin{proof}[Proof of \Cref{thm:regularity_flow_dynamical_system}]
Throughout this proof let $g \colon (-1, T+1) \times \R^d \to \R^d$ satisfy for all $s \in (-1,0)$, $t \in [0, T]$, $u \in (T, T+1)$, $x \in \R^d$ that 	$g(s,x) = f(0,x)$, $g(t,x) = f(t,x)$, and $g(u, x) = f(T, x)$. Note that the hypothesis that $f\in C([0,T]\times \R^d,\R^d)$ and \eqref{eq:f:linear} imply that for all $t\in (-1,T+1)$, $x,y\in\R^d$ we have that $g\in C((-1,T+1)\times \R^d,\R^d)$ and
\begin{equation}
\label{eq:g:linear}
\|g(t,x)-g(t,y)\|_{\R^d}\leq L \|x-y\|_{\R^d}.
\end{equation}
This and, e.g.,  Teschl~\cite[Corollary 2.6]{TeschlODE} ensure that there exists a unique  $\chi \colon \lb (s,t)\in [0,T]^2\colon s\leq t\rb\times \R^d \to \R^d$ which satisfies for all $x\in\R^d$, $s\in [0,T]$, $t\in [s,T]$ that $( [s, T] \ni u \mapsto \chi(s,u,x) \in \R^d) \in C([s,T], \R^d)$ and
\begin{equation}
\chi(s,t,x) = x + \int_s^t g(u,\chi(s,u,x))\, du = x + \int_s^t f(u,\chi(s,u,x))\, du.
\end{equation}
Moreover, observe that, e.g.,  Teschl~\cite[Theorem 2.9]{TeschlODE} assures that  $\chi\in C( \lb (s,t)\in [0,T]^2\colon s\leq t\rb\times \R^d,\R^d)$. The proof of \Cref{thm:regularity_flow_dynamical_system} is thus completed.
\end{proof}

\begin{lemma}\label{lem:regularity_flow}
	Let $d,n\in\N$, $T \in (0, \infty)$, $f\in C^{n}(\R^d,\R^d)$ and let $\theta^{\vartheta} \in C( [0, T], \R^{d})$, $\vartheta\in \R^d$,  satisfy for all $\vartheta\in\R^d$, $t\in [0, T]$ that 
	\begin{equation}
	\theta^{\vartheta}_t=\vartheta + \int_0^t f(\theta^{\vartheta}_s)\, ds.
	\end{equation}
	Then we have that 
	$
	( [0, T] \times \R^d\ni (t,\vartheta)\mapsto \theta^{\vartheta}_t\in \R^d )\in C^{n}([0, T]\times\R^d,\R^d)$.
\end{lemma}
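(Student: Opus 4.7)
The plan is to prove the lemma by induction on $n \in \N$, using the variational (linearized) equation and reducing higher-order regularity to lower-order regularity of an augmented flow on a larger state space.

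For the base case $n=1$, for each $\vartheta \in \R^d$ I would introduce the unique continuous solution $Y^\vartheta \colon [0,T] \to L(\R^d, \R^d)$ of the linear integral equation
\begin{equation*}
Y^\vartheta_t = I_d + \int_0^t f'(\theta^\vartheta_s)\, Y^\vartheta_s\, ds,
\end{equation*}
whose existence and joint continuity in $(t,\vartheta)$ would follow by Picard iteration together with Lemma~\ref{lem:Gronwall} (the coefficient $s \mapsto f'(\theta^\vartheta_s)$ is continuous since $f \in C^1$ and $\theta^\vartheta \in C([0,T], \R^d)$). Using a first-order Taylor expansion of $f$ and the integral equation defining $\theta^\vartheta$, for $\vartheta, h \in \R^d$ I would derive
\begin{equation*}
\theta^{\vartheta+h}_t - \theta^\vartheta_t - Y^\vartheta_t h = \int_0^t f'(\theta^\vartheta_s)\bigl(\theta^{\vartheta+h}_s - \theta^\vartheta_s - Y^\vartheta_s h\bigr)\, ds + R_t(\vartheta,h),
\end{equation*}
where the remainder $R_t(\vartheta,h)$ is $o(\|h\|_{\R^d})$ uniformly in $t \in [0,T]$ by uniform continuity of $f'$ on the relevant compact set. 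Applying Gronwall's inequality (Lemma~\ref{lem:Gronwall}) to the map $t\mapsto \|\theta^{\vartheta+h}_t - \theta^\vartheta_t - Y^\vartheta_t h\|_{\R^d}$ then yields $\partial_\vartheta \theta^\vartheta_t = Y^\vartheta_t$. Combined with $\partial_t \theta^\vartheta_t = f(\theta^\vartheta_t)$, both partial derivatives are jointly continuous in $(t,\vartheta)$, and hence $\theta \in C^1([0,T] \times \R^d, \R^d)$.

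For the inductive step, I would assume the statement for $n-1$ and let $f \in C^n(\R^d, \R^d)$. I would then consider the augmented ODE on $\R^d \times L(\R^d, \R^d) \cong \R^{d+d^2}$ with vector field $F(x, A) = (f(x), f'(x) A)$, which lies in $C^{n-1}$ since $f \in C^n$. For each initial condition $(\vartheta, A) \in \R^d \times L(\R^d, \R^d)$, the unique continuous solution of the augmented ODE is $t \mapsto (\theta^\vartheta_t, Y^\vartheta_t A)$, by linearity of the second equation in $A$. Applying the induction hypothesis in dimension $d+d^2$ gives that the map $(t, \vartheta, A) \mapsto (\theta^\vartheta_t, Y^\vartheta_t A)$ is $C^{n-1}$; restricting to $A = I_d$ then shows that $(t,\vartheta) \mapsto Y^\vartheta_t = \partial_\vartheta \theta^\vartheta_t$ is $C^{n-1}$. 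Since $f \in C^n \subseteq C^{n-1}$, the induction hypothesis also yields $\theta \in C^{n-1}$, so by the chain rule $\partial_t \theta^\vartheta_t = f(\theta^\vartheta_t)$ is $C^{n-1}$ in $(t,\vartheta)$ as well. Standard results on joint regularity (a function whose first-order partial derivatives are all $C^{n-1}$ is itself $C^n$) then conclude $\theta \in C^n([0,T] \times \R^d, \R^d)$.

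The main obstacle is the base case: justifying existence and continuity of $Y^\vartheta$ requires a global linear existence theorem for the variational equation, whereas \Cref{thm:regularity_flow_dynamical_system} demands a globally Lipschitz vector field, which $f'(\theta^\cdot_\cdot)$ need not provide uniformly in $\vartheta$. I would handle this by localization: on any compact neighborhood $K \subset \R^d$, Gronwall's inequality shows that the trajectories $\{\theta^\vartheta_t : t \in [0,T],\, \vartheta \in K\}$ stay in a bounded set where $f$ and $f'$ are uniformly bounded and Lipschitz, so \Cref{thm:regularity_flow_dynamical_system} can be applied to a globally Lipschitz cutoff of $F$ on a sufficiently large ball without altering the relevant flow. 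Once the base case is in hand, the inductive step is essentially a formal application of the hypothesis to the augmented system.
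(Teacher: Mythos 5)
Your argument is essentially correct, but it takes a genuinely different route from the paper: the paper's own proof of this lemma is a single citation to Coleman~\cite[Theorem 10.3]{Coleman2012}, a textbook theorem on $C^n$-dependence of ODE flows on initial data. Your proposal is instead a self-contained inductive argument: for $n=1$ you identify $\partial_\vartheta \theta^\vartheta_t$ with the solution $Y^\vartheta_t$ of the variational integral equation via a Taylor-plus-Gronwall estimate, and for the step $n-1 \to n$ you feed the augmented flow $(t,\vartheta,A)\mapsto(\theta^\vartheta_t, Y^\vartheta_t A)$ of the $C^{n-1}$ vector field $F(x,A)=(f(x), f'(x)A)$ on $\R^{d+d^2}$ back into the $(n-1)$-version of the lemma, then upgrade joint $C^{n-1}$ regularity of both first-order partials to $C^n$ regularity of $\theta$. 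Your treatment of the only technical wrinkle --- the augmented vector field is merely locally Lipschitz, which you repair by a smooth cutoff on a compact neighbourhood enclosing the a priori bounded trajectories so that \Cref{thm:regularity_flow_dynamical_system} applies --- is the right fix. The paper's approach is shorter because it delegates the entire mechanism to the cited reference; yours makes the variational-equation-plus-lifting argument explicit and keeps the chapter self-contained, at the cost of length.
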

\begin{proof}[Proof of Lemma~\ref{lem:regularity_flow}]
This a direct consequence of, e.g., Coleman~\cite[Theorem 10.3]{Coleman2012}. The proof of Lemma~\ref{lem:regularity_flow} is thus completed. 
\end{proof}

\begin{cor}\label{cor:regularity_flow}
Let $d,n\in\N$, $f\in C^{n}(\R^d,\R^d)$ and let $\theta^{\vartheta} \in C([0,\infty),\R^{d})$, $\vartheta\in \R^d$, satisfy for all $\vartheta\in\R^d$, $t\in [0,\infty)$ that 
\begin{equation}\label{eq:regularity_flow_1}
\theta^{\vartheta}_t=\vartheta + \int_0^t f(\theta^{\vartheta}_s)\, ds.
\end{equation}
Then we have that 
$
( [0,\infty)\times \R^d\ni (t,\vartheta)\mapsto \theta^{\vartheta}_t\in \R^d )\in C^{n}([0,\infty)\times\R^d,\R^d)$.
\end{cor}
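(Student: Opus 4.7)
The plan is to deduce \Cref{cor:regularity_flow} from Lemma~\ref{lem:regularity_flow} via a straightforward localization argument, exploiting the fact that $C^n$-regularity is a local property.

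First, I would fix an arbitrary $T \in (0, \infty)$ and, for each $\vartheta \in \R^d$, consider the restriction $\tilde{\theta}^{\vartheta} = (\theta^{\vartheta}_t)_{t \in [0, T]} \in C([0, T], \R^d)$. By \eqref{eq:regularity_flow_1}, for every $\vartheta \in \R^d$ and every $t \in [0, T]$ we have that $\tilde{\theta}^{\vartheta}_t = \vartheta + \int_0^t f(\tilde{\theta}^{\vartheta}_s)\, ds$, so the hypotheses of Lemma~\ref{lem:regularity_flow} are satisfied for this $T$ with the family $(\tilde{\theta}^{\vartheta})_{\vartheta \in \R^d}$. Lemma~\ref{lem:regularity_flow} therefore ensures that
\begin{equation}
\big( [0, T] \times \R^d \ni (t, \vartheta) \mapsto \theta^{\vartheta}_t \in \R^d \big) \in C^n([0, T] \times \R^d, \R^d).
\end{equation}

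Second, I would pass from finite $T$ to $[0, \infty)$ by locality. For every $(t_0, \vartheta_0) \in [0, \infty) \times \R^d$, I choose $T \in (t_0 + 1, \infty)$, so that $[0, T) \times \R^d$ is an open neighborhood of $(t_0, \vartheta_0)$ in $[0, \infty) \times \R^d$ (with respect to the subspace topology inherited from $\R \times \R^d$). By the previous step, all partial derivatives up to order $n$ of the flow exist and are continuous throughout $[0, T] \times \R^d$, hence on this neighborhood, hence in particular at $(t_0, \vartheta_0)$. Since $(t_0, \vartheta_0)$ was arbitrary, this yields $([0, \infty) \times \R^d \ni (t, \vartheta) \mapsto \theta^{\vartheta}_t \in \R^d) \in C^n([0, \infty) \times \R^d, \R^d)$.

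I do not expect any real obstacle in this argument; the corollary is essentially a repackaging of Lemma~\ref{lem:regularity_flow} together with the elementary fact that $C^n$-regularity of a function on $[0, \infty) \times \R^d$ can be checked locally in neighborhoods of points.
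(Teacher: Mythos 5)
Your proposal is correct and is exactly the localization argument the paper has in mind; the paper simply states that the corollary is a direct consequence of Lemma~\ref{lem:regularity_flow} without spelling out the passage from finite $T$ to $[0,\infty)$, which you have supplied.
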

\begin{proof}[Proof of \Cref{cor:regularity_flow}]
This is a direct consequence of Lemma~\ref{lem:regularity_flow}. The proof of \Cref{cor:regularity_flow} is thus completed. 
\end{proof}

\begin{lemma}
\label{lem:unique}
Let $d \in \N$, $T \in (0, \infty)$, $\vartheta \in \R^d$, $A \in C([0,T], L(\R^d, \R^d))$, let $ y_1, y_2 \colon [0, T] \to \R^d$ be $\mathcal{B}([0, T])$/$\mathcal{B}(\R^d)$-measurable,  and assume for all $ t \in [0, T]$, $i \in \{1, 2\}$ that
\begin{gather}
\label{eq:lem:finite}
\int_0^T \|y_i(s)\|_{\R^d}  \, ds < \infty \qquad \text{and} \qquad y_i(t) = \vartheta + \int_0^t A(s) y_i(s) \, ds.
\end{gather}
Then we have that $ y_1 = y_2$.
\end{lemma}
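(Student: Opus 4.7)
The plan is to reduce the claim to a direct application of the Gronwall inequality (Lemma~\ref{lem:Gronwall}) applied to the difference $z = y_1 - y_2 \colon [0,T] \to \R^d$.

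First, I would subtract the two identities in \eqref{eq:lem:finite} to obtain, for every $t\in[0,T]$,
\begin{equation*}
z(t) = y_1(t) - y_2(t) = \int_0^t A(s)\bigl(y_1(s) - y_2(s)\bigr)\,ds = \int_0^t A(s)\,z(s)\,ds,
\end{equation*}
and then take norms and use the operator-norm estimate to get
\begin{equation*}
\|z(t)\|_{\R^d} \leq \int_0^t \|A(s)\|_{L(\R^d,\R^d)}\,\|z(s)\|_{\R^d}\,ds.
\end{equation*}

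Next I would use the continuity of $A$ on the compact interval $[0,T]$: by \Cref{prop:cont_fct_bnd_on_cmpcts} (applied with the metric induced by the operator norm and $u=0$) there exists $M\in[0,\infty)$ with $\sup_{s\in[0,T]}\|A(s)\|_{L(\R^d,\R^d)} \leq M$. Consequently,
\begin{equation*}
\|z(t)\|_{\R^d} \leq M \int_0^t \|z(s)\|_{\R^d}\,ds \qquad \text{for every } t \in [0,T].
\end{equation*}

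Finally, I would verify the hypotheses needed to invoke Lemma~\ref{lem:Gronwall} with $a=0$, $b=M$, and $f=(\,[0,T]\ni t\mapsto \|z(t)\|_{\R^d}\in[0,\infty)\,)$: the map $f$ is $\mathcal{B}([0,T])/\mathcal{B}([0,\infty))$-measurable as the composition of the $\mathcal{B}([0,T])/\mathcal{B}(\R^d)$-measurable map $z = y_1 - y_2$ with the continuous norm $\|\cdot\|_{\R^d}$, and $\int_0^T f(s)\,ds \leq \int_0^T \|y_1(s)\|_{\R^d} + \|y_2(s)\|_{\R^d}\,ds < \infty$ by \eqref{eq:lem:finite}. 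Lemma~\ref{lem:Gronwall} then yields $\|z(t)\|_{\R^d} \leq 0\cdot \exp(Mt) = 0$ for all $t\in[0,T]$, which is $y_1 = y_2$.

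There is no real obstacle here; the only points requiring care are the measurability of $\|z\|_{\R^d}$ and the uniform boundedness of $\|A(\cdot)\|_{L(\R^d,\R^d)}$, both of which are immediate from the results already established earlier in the chapter.
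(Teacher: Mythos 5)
Your proof is correct and follows essentially the same route as the paper: subtract the two integral identities, bound $\|A(s)\|_{L(\R^d,\R^d)}$ uniformly via \Cref{prop:cont_fct_bnd_on_cmpcts}, and apply the Gronwall integral inequality (Lemma~\ref{lem:Gronwall}) with $a=0$ to the difference. Your explicit verification of the measurability and integrability hypotheses of Lemma~\ref{lem:Gronwall} matches what the paper does (the integrability check is precisely the paper's first displayed estimate).
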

\begin{proof}[Proof of Lemma~\ref{lem:unique}]
First, note that \eqref{eq:lem:finite} and the triangle inequality ensure that 
\begin{equation}
\label{eq:finite}
\int_0^T \|y_1(s) - y_2(s)\|_{\R^d} \, ds \leq \int_0^T \|y_1(s)\|_{\R^d} + \|y_2(s)\|_{\R^d} \, ds < \infty.
\end{equation}
Next observe that \eqref{eq:lem:finite} and the triangle inequality for the Bochner integral  prove that for all $t \in [0, T]$ we have that 
\begin{equation}
\label{eq:lem:unique:Bochner}
\begin{split}
\| y_1(t) - y_2(t) \|_{\R^d} & = \left\| \int_0^t A(s)(y_1(s) -y_2(s)) \, ds \right\|_{\R^d}\\
& \leq \int_0^t \| A(s)(y_1(s) -y_2(s)) \|_{\R^d} \, ds \\
& \leq \int_0^t \|A(s)\|_{L(\R^d, \R^d)} \|y_1(s) -y_2(s)\|_{\R^d} \, ds\\
& \leq \big[ \sup\nolimits_{v \in [0, T]} \|A(v)\|_{L(\R^d, \R^d)} \big] \int_0^t \|y_1(s) - y_2(s) \|_{\R^d} \, ds.
\end{split}
\end{equation}
Moreover, note that the fact that $[0,T]$ is a compact set, the assumption that $A \in C([0,T], L(\R^d, \R^d))$, and \Cref{prop:cont_fct_bnd_on_cmpcts}  establish that 
\begin{equation}
\sup_{v \in [0,T]} \|A(v)\|_{L(\R^d,\R^d)} < \infty.
\end{equation}
Combining \eqref{eq:finite},  \eqref{eq:lem:unique:Bochner},  and the Gronwall integral inequality in Lemma \ref{lem:Gronwall} hence assures that for all $t\in [0,T]$ we have that
\begin{equation}
y_1(t)=y_2(t).
\end{equation}
The proof of Lemma~\ref{lem:unique} is thus completed.
\end{proof}

\section{Existence results for solutions of first-order   Kolmogorov  backward   PDEs}
\label{subsec:Kolmogorov}
\sectionmark{}

\begin{prop}\label{thm:Kolm_back_eq}
Let $d\in\N$, $T \in (0, \infty)$, $\psi\in C^1(\R^d,\R)$, $f\in C^{1}(\R^d,\R^d)$, let $\theta^{\vartheta}  = (\theta^{\vartheta}_t)_{t \in [0, T]} \in C( [0, T], \R^{d})$, $\vartheta\in \R^d$,  satisfy for all $\vartheta\in\R^d$, $t\in [0, T]$ that 
\begin{equation}\label{eq:Kolm_back_eq_1}
\theta^{\vartheta}_t=\vartheta + \int_0^t f(\theta^{\vartheta}_s)\, ds,
\end{equation}
and let $u  = (u(t,\vartheta))_{(t, \vartheta) \in [0, T] \times \R^d} \colon[0, T] \times \R^d\to\R$ satisfy for all $t\in [0, T]$, $\vartheta \in \R^d$ that $u(t,\vartheta)=\psi(\theta^{\vartheta}_t)$.
Then
\begin{enumerate}[(i)]
\item\label{Kolm_back_eq_item_1} we have that $u\in C^{1}([0, T]\times \R^d,\R)$ and
\begin{equation}
\label{eq:C1:theta}
\big( [0, T] \times \R^d\ni (t,\vartheta)\mapsto \theta^{\vartheta}_t\in \R^d \big)\in C^{1}([0, T]\times\R^d,\R^d),
\end{equation}
\item\label{Kolm_back_eq_item_1_2} we have for all $t\in[0, T]$, $\vartheta\in \R^d$ that $f(\theta_t^{\vartheta})=(\pv \theta^{\vartheta}_t) f(\vartheta)$,
 and
\item\label{Kolm_back_eq_item_2} we have for all $t\in[0, T]$, $\vartheta\in \R^d$ that
\begin{equation}\label{eq:Kolm_back_eq_3}
(\tfrac{\partial}{\partial t}u )(t,\vartheta) = (\tfrac{\partial}{\partial \vartheta} u)(t,\vartheta) f(\vartheta)
=\langle (\nabla_{\vartheta} u)(t,\vartheta), f(\vartheta)\rangle_{\R^d} .
\end{equation}
\end{enumerate}
\end{prop}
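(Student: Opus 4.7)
My plan is to deduce all three items from the $C^1$-regularity of the flow given by Lemma~\ref{lem:regularity_flow}, together with the uniqueness result for linear ODEs in Lemma~\ref{lem:unique}. Item~\eqref{Kolm_back_eq_item_1} follows essentially for free: applying Lemma~\ref{lem:regularity_flow} with $n=1$ gives \eqref{eq:C1:theta}, and composing with $\psi\in C^1(\R^d,\R)$ yields $u\in C^1([0,T]\times\R^d,\R)$ via the chain rule.

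The core of the proof is item~\eqref{Kolm_back_eq_item_1_2}, which asserts that the Jacobian flow $\pv\theta_t^\vartheta$ carries the initial vector field $f(\vartheta)$ to the current vector field $f(\theta_t^\vartheta)$. I would approach this by observing that both sides satisfy the same linear integral equation. On the one hand, differentiating \eqref{eq:Kolm_back_eq_1} in $\vartheta$ (justified by \eqref{eq:C1:theta} and the continuity of $f'$) yields the variational equation
\[
\pv\theta_t^\vartheta = \mathrm{Id}_{\R^d} + \int_0^t f'(\theta_s^\vartheta)(\pv\theta_s^\vartheta)\,ds,
\]
so that $W(t) := (\pv\theta_t^\vartheta)f(\vartheta)$ satisfies $W(t) = f(\vartheta) + \int_0^t f'(\theta_s^\vartheta)W(s)\,ds$. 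On the other hand, the fundamental theorem of calculus applied to \eqref{eq:Kolm_back_eq_1} gives $\pt\theta_t^\vartheta = f(\theta_t^\vartheta)$, so the chain rule yields $\tfrac{d}{dt}f(\theta_t^\vartheta) = f'(\theta_t^\vartheta)f(\theta_t^\vartheta)$, which means $Z(t) := f(\theta_t^\vartheta)$ also obeys $Z(t) = f(\vartheta) + \int_0^t f'(\theta_s^\vartheta)Z(s)\,ds$. Since $[0,T]\ni s\mapsto A(s) := f'(\theta_s^\vartheta)\in L(\R^d,\R^d)$ is continuous and $Z,W$ are continuous on $[0,T]$, Lemma~\ref{lem:unique} applies and forces $Z = W$, which is exactly item~\eqref{Kolm_back_eq_item_1_2}.

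Item~\eqref{Kolm_back_eq_item_2} will then follow by two applications of the chain rule to $u(t,\vartheta)=\psi(\theta_t^\vartheta)$: the $t$-derivative equals $\psi'(\theta_t^\vartheta)f(\theta_t^\vartheta)$, while $(\pv u)(t,\vartheta)f(\vartheta) = \psi'(\theta_t^\vartheta)(\pv\theta_t^\vartheta)f(\vartheta)$; equality of these two expressions is precisely item~\eqref{Kolm_back_eq_item_1_2}, and identifying $\psi'(\cdot) = \langle\nabla\psi(\cdot),\cdot\rangle_{\R^d}$ gives the $\nabla$-form in \eqref{eq:Kolm_back_eq_3}. The main obstacle is item~\eqref{Kolm_back_eq_item_1_2}, i.e.\ recognizing that both the Jacobian-applied-to-initial-field and the current field solve the same linear Volterra equation with the same initial datum $f(\vartheta)$, so that Lemma~\ref{lem:unique} closes the gap; everything else is chain-rule bookkeeping that relies on the regularity produced in item~\eqref{Kolm_back_eq_item_1}.
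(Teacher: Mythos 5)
Your proposal is correct and follows essentially the same route as the paper's own proof: derive the variational equation $\pv\theta_t^\vartheta = \mathrm{Id}_{\R^d} + \int_0^t f'(\theta_s^\vartheta)(\pv\theta_s^\vartheta)\,ds$, observe that both $(\pv\theta_t^\vartheta)f(\vartheta)$ and $f(\theta_t^\vartheta)$ then satisfy the linear Volterra equation $y(t)=f(\vartheta)+\int_0^t f'(\theta_s^\vartheta)y(s)\,ds$, and invoke Lemma~\ref{lem:unique} to conclude, with the chain rule bookkeeping doing the rest. The only point you left slightly informal is the justification for differentiating under the integral sign to obtain the variational equation; the paper does this via Lemma~\ref{lem:diff:induction}, supplying a uniform local bound on $f'(\theta_s^{\vartheta+h})(\pv\theta_s^{\vartheta+h})$, whereas you gesture at ``\eqref{eq:C1:theta} and the continuity of $f'$'' --- the same idea, just not written out.
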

\begin{proof}[Proof of  \Cref{thm:Kolm_back_eq}]
First, observe that
Lemma~\ref{lem:regularity_flow},  the assumption that $f\in C^{1}(\R^d,\R^d)$, and the assumption that $\psi\in C^1(\R^d,\R)$ establish  item~\eqref{Kolm_back_eq_item_1}. 
Next  let $y_{\vartheta}\colon[0,T]\to \R^d$, $\vartheta\in\R^d$,  satisfy for all $\vartheta\in\R^d$, $t\in [0,T]$ that
\begin{equation}\label{eq:Kolm_back_eq_7}
y_{\vartheta}(t) = (\pv \theta^{\vartheta}_t) f(\vartheta),
\end{equation}
and let $z_{\vartheta}\colon [0,T]\to \R^d$, $\vartheta\in\R^d$, satisfy for all $\vartheta\in\R^d$, $t\in [0,T]$ that
\begin{equation}\label{eq:Kolm_back_eq_9}
z_{\vartheta}(t)=f(\theta^{\vartheta}_t).
\end{equation}
Observe that the assumption that $f\in C^1(\R^d,\R^d)$ and \eqref{eq:C1:theta} ensure that for all $\vartheta\in\R^d$ we have that $y_{\vartheta}, z_{\vartheta} \in C([0,T], \R^d)$. This and \Cref{prop:cont_fct_bnd_on_cmpcts} prove that for all $\vartheta\in\R^d$ we have that
\begin{equation}
\label{eq:finite:y:z}
\int_0^T \|y_{\vartheta}(s)\|_{\R^d} + \|z_{\vartheta}(s)\|_{\R^d} \, ds \leq T \sup_{s \in [0,T]} \|y_{\vartheta}(s)\|_{\R^d}  + T \sup_{s \in [0,T]} \|z_{\vartheta}(s)\|_{\R^d} < \infty.
\end{equation}
Next note that the assumption that $f\in C^1(\R^d,\R^d)$ and \eqref{eq:C1:theta} demonstrate that for all $\vartheta\in\R^d$ we have that
\begin{equation}
\begin{split}
\big( [0, T] \times \R^d\ni (t,h)\mapsto f'(\theta^{\vartheta +h}_t)(\pv \theta^{\vartheta+h}_t )
 \in L(\R^d, \R^d) \big) \in C([0, T]\times\R^d, L(\R^d, \R^d)).
 \end{split}
\end{equation}
\Cref{prop:cont_fct_bnd_on_cmpcts} hence assures that for all $\vartheta\in\R^d$ we have that 
\begin{equation}
\sup_{(s, h)\in [0,T]\times [-1,1]^d}\|f'(\theta^{\vartheta+h}_s)(\pv \theta^{\vartheta+h}_s )\|_{L(\R^d,\R^d)}<\infty.
\end{equation}
This ensures that for all $\vartheta\in\R^d$, $t\in [0,T]$ we have that
\begin{equation}
\sup_{h\in [-1,1]^d}\int_0^t\|f'(\theta^{\vartheta+h}_s)(\pv \theta^{\vartheta+h}_s )\|_{L(\R^d,\R^d)}^2\, ds<\infty.
\end{equation}
Lemma~\ref{lem:diff:induction}
and \eqref{eq:Kolm_back_eq_1}  hence imply that for all $\vartheta\in\R^d$, $t\in [0,T]$ we have that 
\begin{equation}\label{eq:Kolm_back_eq_4}
\pv \theta^{\vartheta}_t = \operatorname{Id}_{\R^d} + \int_0^t f'(\theta^{\vartheta}_s)(\pv \theta^{\vartheta}_s )\, ds.
\end{equation}
This reveals that for all $\vartheta\in\R^d$, $t\in [0,T]$  it holds that 
\begin{equation}\label{eq:Kolm_back_eq_6}
(\pv \theta^{\vartheta}_t) f(\vartheta) = f(\vartheta) + \int_0^t  f'(\theta^{\vartheta}_s)(\pv \theta^{\vartheta}_s)f(\vartheta)\, ds.
\end{equation}
This and \eqref{eq:Kolm_back_eq_7}  assure that for all $\vartheta\in\R^d$, $t\in [0,T]$ we have that 
\begin{equation}\label{eq:Kolm_back_eq_8}
\begin{split}
y_{\vartheta}(t) &=f(\vartheta)+\int_0^t f'(\theta^{\vartheta}_s) y_{\vartheta}(s)\, ds.
\end{split}
\end{equation}
Moreover, observe that
the assumption that $f\in C^1(\R^d,\R^d)$ and \eqref{eq:C1:theta}
prove that  for all $\vartheta\in\R^d$, $t\in [0,T]$ we have that
\begin{equation}
z_{\vartheta} \in C^1([0,T], \R^d).
\end{equation}
The fundamental theorem of calculus
hence demonstrates that for all $\vartheta\in\R^d$, $t\in [0,T]$ we have that
\begin{equation}
\begin{split}
z_{\vartheta}(t) &= z_{\vartheta}(0) + \int_0^t  z'_{\vartheta}(s)\, ds.
\end{split}
\end{equation}
Combining this and
\eqref{eq:Kolm_back_eq_1}  ensures that for all $\vartheta\in\R^d$, $t\in [0,T]$ we have that
\begin{equation}\label{eq:Kolm_back_eq_10}
\begin{split}
z_{\vartheta}(t) 
= z_{\vartheta}(0) + \int_0^t f'(\theta^{\vartheta}_s) f(\theta_s^{\vartheta})\, ds = z_{\vartheta}(0) + \int_0^t f'(\theta^{\vartheta}_s)z_{\vartheta}(s)\, ds.
\end{split}
\end{equation}
Furthermore, note that the assumption that $f\in C^1(\R^d,\R^d)$ and \eqref{eq:C1:theta} imply that for all $\vartheta\in\R^d$ we have that
\begin{equation}
\big([0,T]\ni s \mapsto f'(\theta^{\vartheta}_s)\in L(\R^d,\R^d)\big) \in C([0,T], L(\R^d, \R^d)).
\end{equation}
This, \eqref{eq:Kolm_back_eq_8}, \eqref{eq:Kolm_back_eq_10}, the fact that $\forall \, \vartheta \in \R^d \colon z_{\vartheta}(0)=f(\vartheta)$, \eqref{eq:finite:y:z}, and Lemma~\ref{lem:unique} demonstrate that for all $\vartheta\in\R^d$, $t\in [0,T]$ we have that
\begin{equation}
\label{eq:y=z}
y_{\vartheta}(t)=z_{\vartheta}(t).
\end{equation}
Combining this with the chain rule and the assumption that $\forall \, t\in [0,T]$, $\vartheta\in\R^d \colon u(t,\vartheta)=\psi(\theta^{\vartheta}_t)$  proves that for all $\vartheta\in\R^d$, $t\in [0,T]$ we have that
\begin{equation}\label{eq:Kolm_back_eq_13}
\begin{split}
(\tfrac{\partial}{\partial t}u)(t,\vartheta) 
& = \psi'(\theta_t^{\vartheta}) (\pt \theta^{\vartheta}_t) = \psi'(\theta_t^{\vartheta}) f(\theta^{\vartheta}_t)= \psi'(\theta_t^{\vartheta}) z_{\vartheta}(t)\\
& = \psi'(\theta_t^{\vartheta}) y_{\vartheta}(t) = \psi'(\theta_t^{\vartheta}) (\pv \theta^{\vartheta}_t) f(\vartheta)  = (\tfrac{\partial}{\partial \vartheta} u)(t,\vartheta) f(\vartheta).
\end{split}
\end{equation}
This and \eqref{eq:y=z} establish item~\eqref{Kolm_back_eq_item_1_2} and item~\eqref{Kolm_back_eq_item_2}. The proof of \Cref{thm:Kolm_back_eq} is thus completed.
\end{proof}

\chapter{Weak error estimates for stochastic approximation algorithms (SAAs) in the case of general learning rates}
\label{sec:SAA:general}
\chaptermark{}

 In this chapter we use the analysis for first-order  Kolmogorov  backward PDEs from Chapter~\ref{section:existence} above to study weak approximation errors for SAAs in the case of general learning rates. In particular, we establish in \Cref{prop:main_theorem}  in Section~\ref{subsec:weak:SAA:general} below weak error estimates for SAAs in the case of general learning rates with mini-batches. Our proof of \Cref{prop:main_theorem}  employs the well-known results on the possibility of interchanging derivatives and expectations in  Section~\ref{subsec:sufficient:exp} below,  the essentially well-known spatial regularity results for flows of certain deterministic ODEs in Section~\ref{subsec:spatial} below,  the auxiliary intermediate results on upper bounds for second-order spatial derivatives of certain deterministic flows in Section~\ref{subsec:upper} below,
   the elementary   temporal regularity result for SAAs  in \Cref{lem:partial_t_of_Theta}  in Section~\ref{subsec:temporal} below, and the auxiliary intermediate results  on a priori estimates for SAAs  in Section~\ref{subsec:apriori} below. In Section~\ref{subsec:SAA:poly} below we combine \Cref{prop:main_theorem}  and the elementary auxiliary results on upper bounds for integrals of certain exponentially decaying functions in Section~\ref{subsec:upper:integrals} below to establish in \Cref{cor:sp_case_gamma} below  weak error estimates for SAAs in the case of polynomially decaying learning rates with mini-batches. In Setting~\ref{sec:setting_1}  in Section~\ref{subsec:setting:3} below we present a mathematical framework for describing SAAs  in the case of general learning rates. In the results of this chapter  we frequently employ Setting~\ref{sec:setting_1}.

\section{Mathematical description for SAAs in the case of general learning rates}
\label{subsec:setting:3}
\sectionmark{}

\begin{setting}
	\label{sec:setting_1}
	
	Let $d\in \N$, $ \xi\in \R^d$, let $(S,\mathcal{S})$ be a measurable space, let $(\Omega,\F,\P)$ be a probability space, let $Z_n\colon\Omega \to S$, $n\in\N$, be i.i.d.\ random variables, for every set $A$ let $\#_A\in \N_0\cup\lb\infty\rb$ be the
	number of elements of $A$, let $\gamma\colon[0,\infty)\to \lb A \subseteq \N\colon\#_A<\infty \rb$ satisfy for all $t\in[0,\infty)$ that $0< \#_{\lb s \in [0,t]\colon\gamma(s)\neq \emptyset\rb}<\infty$,
	let  $G   = (G(x, s))_{(x, s) \in \R^d \times S} \colon \R^d\times S\to \R^d$ be  $(\mathcal{B}(\R^d)\otimes \mathcal{S})$/$\mathcal{B}(\R^d)$-measurable, assume for all $s \in S$ that $( \R^d\ni x \mapsto G(x,s)\in\R^d )\in C^2(\R^d,\R^d)$, assume for all  $x\in\R^d$ that 
	\begin{equation}\label{eq:setting_4}
	\max_{i \in \{1,2\}} \inf_{\delta\in(0,\infty)}\sup_{u\in [-\delta,\delta]^d}  \E\Big[   \|G(x,Z_1)\|_{\R^d} +  \| (\tfrac{\partial^i}{\partial x^i}G)(x+u,Z_1)\|_{L^{(i)}(\R^d,\R^d)}^{1+\delta} \Big]<\infty
	\end{equation}
	(cf.~Corollary~\ref{cor:derivative:gen}), let $g \colon \mathbb{R}^d \to \R^d$ satisfy for all $x\in \R^d$ that $	g(x)=\E[ G(x,Z_1) ]$, 
	let $\theta^{\vartheta} = (\theta^{\vartheta}_t)_{t \in [0, \infty)}\in C( [0,\infty), \R^d)$, $\vartheta\in\R^d$, satisfy for all $t\in [0,\infty)$, $\vartheta\in\R^d$ that
	\begin{equation}\label{eq:equation theta}
	\theta_t^{\vartheta}= \vartheta + \int_0^t g(\theta_s^{\vartheta})\, ds
	\end{equation}
	(cf.~item~\eqref{item1:diff:induction} in Lemma~\ref{lem:diff:induction}), let $\floorgrid{\cdot}\colon[0,\infty)\to [0,\infty)$ satisfy for all $t\in [0,\infty)$ that
	\begin{equation}\label{eq:setting_2}
	\floorgrid{t}=\max\lb s\in [0,t]\colon \gamma(s)\neq \emptyset \rb,
	\end{equation}
	and let $\Theta = (\Theta_t(\omega))_{(t, \omega) \in [0, \infty) \times \Omega} \colon [0,\infty)\times \Omega \to \R^d$ be the stochastic process with continuous sample paths (w.c.s.p.) which satisfies for all $t\in (0,\infty)$  that $\Theta_0 = \xi$ and 
	\begin{equation}\label{eq: equation Theta 2}
	\Theta_t = \Theta_{\floorgrid{t}} + \tfrac{(t-\floorgrid{t})}{\#_{\gamma(\floorgrid{t})}}\Big[ \textstyle\sum_{n\in \gamma(\floorgrid{t})}G(\Theta_{\floorgrid{t}},Z_n)\Big].
	\end{equation}
	
\end{setting}

\section{Sufficient conditions for interchanging derivatives and expectations}
\label{subsec:sufficient:exp}
\sectionmark{}

\begin{lemma}\label{lem:f_C^1}
	Assume Setting~\ref{sec:setting_1}. Then  
	\begin{enumerate}[(i)]
		\item\label{item:f_C^1_1} we have that $g\in C^1(\R^d,\R^d)$ and
		\item\label{item:f_C^1_2} we have for all $x\in \R^d$  that 
		$
		g'(x)=\E [ (\tfrac{\partial}{\partial x}G)(x,Z_1) ]
		$.
	\end{enumerate}
\end{lemma}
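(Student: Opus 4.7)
The plan is to derive both items as a direct application of \Cref{lem:diff^n_under_int} (with $n = 1$), interpreting the expectation $\E[G(x, Z_1)]$ as an integral against the distribution of $Z_1$. More precisely, I would let $\mu \colon \mathcal{S} \to [0,1]$ be the pushforward probability measure $\mu = \P \circ Z_1^{-1}$ on $(S, \mathcal{S})$, so that $(S, \mathcal{S}, \mu)$ is a finite measure space and the transformation formula yields for every $x \in \R^d$ that
\begin{equation}
g(x) = \E[G(x, Z_1)] = \int_S G(x, s)\, \mu(ds).
\end{equation}
Thus $g$ has exactly the integral representation \eqref{eq:diff^n_under_int_03} required by \Cref{lem:diff^n_under_int}.

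Next I would verify the hypotheses of \Cref{lem:diff^n_under_int} one by one. The joint measurability of $G$ and the assumption $\forall\, s \in S \colon (\R^d \ni x \mapsto G(x,s) \in \R^d) \in C^2(\R^d, \R^d)$ are part of Setting~\ref{sec:setting_1}, and \Cref{cor:derivative:gen} guarantees the pointwise measurability of $s \mapsto (\tfrac{\partial}{\partial x} G)(x + u, s)$ needed to make the integrability condition meaningful. The integrability assumption \eqref{eq:diff^n_under_int_01} in the case $n = 1$ reads
\begin{equation}
\inf_{\delta \in (0, \infty)} \sup_{u \in [-\delta, \delta]^d} \E\Big[ \|G(x, Z_1)\|_{\R^d} + \|(\tfrac{\partial}{\partial x} G)(x + u, Z_1)\|_{L(\R^d, \R^d)}^{1+\delta} \Big] < \infty,
\end{equation}
which is precisely the $i = 1$ case of the estimate \eqref{eq:setting_4} built into Setting~\ref{sec:setting_1}.

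Applying \Cref{lem:diff^n_under_int} (with $d = d$, $m = d$, $n = 1$, $(S, \mathcal{S}, \mu) = (S, \mathcal{S}, \P \circ Z_1^{-1})$, $F = G$, and $f = g$) then yields simultaneously that $g \in C^1(\R^d, \R^d)$ and that
\begin{equation}
g'(x) = \int_S (\tfrac{\partial}{\partial x} G)(x, s)\, \mu(ds) = \E\!\left[ (\tfrac{\partial}{\partial x} G)(x, Z_1) \right]
\end{equation}
for every $x \in \R^d$. This establishes items~\eqref{item:f_C^1_1} and \eqref{item:f_C^1_2} at once. I do not anticipate any substantial obstacle here; the entire content of the lemma is contained in matching the integrability bound \eqref{eq:setting_4} (for $i = 1$) against the hypothesis of \Cref{lem:diff^n_under_int}, with the mild bookkeeping detail of passing from expectation on $(\Omega, \F, \P)$ to integration against the distribution of $Z_1$ on the finite measure space $(S, \mathcal{S}, \mu)$.
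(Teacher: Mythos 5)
Your proof is correct and essentially the same as the paper's, which also derives the lemma directly from the interchange-of-derivative-and-integral result (the paper cites Lemma~\ref{lem:diff:induction}, you cite its restatement Lemma~\ref{lem:diff^n_under_int}, and for $n=1$ these are the same statement). The only cosmetic difference is that you instantiate the finite measure space as $(S,\mathcal{S},\P\circ Z_1^{-1})$ whereas the paper's analogous Lemma~\ref{lem:f_C^n} works with $(\Omega,\F,\P)$ and the composed function $(y,\omega)\mapsto G(y,Z_1(\omega))$; both bookkeeping choices are equally valid.
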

\begin{proof}[Proof of Lemma~\ref{lem:f_C^1}]
	This is a direct consequence of Lemma~\ref{lem:diff:induction}. The proof of Lemma~\ref{lem:f_C^1} is thus completed.
\end{proof}

\begin{lemma}\label{lem:f_C^n}
	Assume Setting~\ref{sec:setting_1}, let $n\in\N$, assume for all $s\in S$ that $(\R^d\ni y\mapsto G(y,s)\in \R^d)\in C^n(\R^d,\R^d)$, and assume for all $x\in\R^d$ that
	\begin{equation}\label{eq:f_C^n_0}
	\max_{i \in \{1,2, \ldots, n\}} \inf_{\delta\in (0,\infty)}\sup_{u\in [-\delta,\delta]^d}  \E \Big[ \| (\tfrac{\partial^i}{\partial x^i }G)(x+u,Z_1)\|_{L^{(i)}(\R^d,\R^d)}^{1+\delta}\Big]<\infty.
	\end{equation}
	Then  
	\begin{enumerate}[(i)]
		\item\label{item:f_C^n_1} we have that $g\in C^n(\R^d,\R^d)$ and
		\item\label{item:f_C^n_2} we have for all $x\in \R^d$ that $\E[\|(\tfrac{\partial^n}{\partial x^n}G)(x,Z_1)\|_{L^{(n)}(\R^d,\R^d)}]< \infty$ and
		\begin{equation}\label{eq:lem_f_C^n_1}
		g^{(n)}(x)=\E\!\left[ (\tfrac{\partial^n}{\partial x^n}G)(x,Z_1) \right].
		\end{equation}
	\end{enumerate}
\end{lemma}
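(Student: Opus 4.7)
The plan is to derive both conclusions as direct applications of Lemma~\ref{lem:diff^n_under_int} with the finite measure space $(\Omega,\F,\P)$, the integrand $F(x,\omega)=G(x,Z_1(\omega))$, and the function $f=g$. The joint measurability of $(x,\omega)\mapsto G(x,Z_1(\omega))$ and the assumed $C^n$-regularity in $x$ for each fixed $\omega$ are immediate from Setting~\ref{sec:setting_1} together with the hypothesis on $G$. The representation $g(x)=\int_\Omega G(x,Z_1(\omega))\,\P(d\omega)$ is just the definition of expectation.

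The only nontrivial hypothesis to verify is the integrability condition \eqref{eq:diff^n_under_int_01} for a single $\delta>0$ simultaneously covering the $L^1$-term and all derivative orders $k\in\{1,\ldots,n\}$. I would first extract from \eqref{eq:setting_4} that $\E[\|G(x,Z_1)\|_{\R^d}]<\infty$ for all $x\in\R^d$. Next, for each fixed $x\in\R^d$ and each $k\in\{1,2,\ldots,n\}$, the assumption \eqref{eq:f_C^n_0} supplies a $\delta_k\in(0,\infty)$ (depending on $x$) with
\begin{equation}
\sup_{u\in[-\delta_k,\delta_k]^d}\E\big[\|(\tfrac{\partial^k}{\partial x^k}G)(x+u,Z_1)\|_{L^{(k)}(\R^d,\R^d)}^{1+\delta_k}\big]<\infty.
\end{equation}
Set $\delta=\min\{\delta_1,\ldots,\delta_n\}>0$. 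For nonnegative random variables the elementary bound $X^{1+\delta}\leq 1+X^{1+\delta_k}$ (valid since $\delta\leq\delta_k$) and monotonicity of the supremum under the shrinking of the neighborhood $[-\delta,\delta]^d\subseteq[-\delta_k,\delta_k]^d$ together yield, for each $k\in\{1,\ldots,n\}$,
\begin{equation}
\sup_{u\in[-\delta,\delta]^d}\E\big[\|(\tfrac{\partial^k}{\partial x^k}G)(x+u,Z_1)\|_{L^{(k)}(\R^d,\R^d)}^{1+\delta}\big]\leq 1+\sup_{u\in[-\delta_k,\delta_k]^d}\E\big[\|(\tfrac{\partial^k}{\partial x^k}G)(x+u,Z_1)\|_{L^{(k)}(\R^d,\R^d)}^{1+\delta_k}\big]<\infty.
\end{equation}
Summing over $k\in\{1,\ldots,n\}$ and adding the finite constant $\E[\|G(x,Z_1)\|_{\R^d}]$ verifies \eqref{eq:diff^n_under_int_01} with this common $\delta$.

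With all hypotheses of Lemma~\ref{lem:diff^n_under_int} in place, item~\eqref{item:diff^n_under_int_1} gives $g\in C^n(\R^d,\R^d)$, proving \eqref{item:f_C^n_1}. For \eqref{item:f_C^n_2}, item~\eqref{item:diff^n_under_int_2} applied with $k=n$ yields the integral representation \eqref{eq:lem_f_C^n_1}; the asserted integrability of $\|(\tfrac{\partial^n}{\partial x^n}G)(x,Z_1)\|_{L^{(n)}(\R^d,\R^d)}$ then follows either from the fact that $g^{(n)}(x)$ is well-defined as a Bochner integral or, more directly, from a single application of Jensen's inequality on the probability space, since $\E[X^{1+\delta}]<\infty$ implies $\E[X]\leq (\E[X^{1+\delta}])^{1/(1+\delta)}<\infty$ for nonnegative $X$.

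No step is genuinely hard; the only technical point requiring care is the choice of a single $\delta$ that simultaneously controls all $n$ derivative-order conditions in \eqref{eq:f_C^n_0} and matches the format demanded by \eqref{eq:diff^n_under_int_01}, which is handled by the minimization argument above.
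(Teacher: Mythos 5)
Your proposal is correct and follows exactly the route the paper takes: a direct application of Lemma~\ref{lem:diff^n_under_int} to $(\Omega,\F,\P)$ with integrand $(x,\omega)\mapsto G(x,Z_1(\omega))$. The paper's proof is a single citation that leaves the verification of hypothesis \eqref{eq:diff^n_under_int_01} implicit; your common-$\delta$ minimization argument (using $X^{1+\delta}\leq 1+X^{1+\delta_k}$ and monotonicity of the supremum as the neighborhood shrinks) is precisely the bookkeeping that citation suppresses, and it is carried out correctly.
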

\begin{proof}[Proof of Lemma~\ref{lem:f_C^n}]
	Lemma~\ref{lem:diff^n_under_int} (with $d=d$, $n=n$, $(S,\mathcal{S},\mu) = (\Omega,\F,\P) $, $ F = (\R^d \times \Omega \ni (y, \omega) \mapsto G(y, Z_1(\omega)) \in \R^d)$, $f=g$, $k =n$    in the notation of Lemma~\ref{lem:diff^n_under_int}) establishes item~\eqref{item:f_C^n_1} and item~\eqref{item:f_C^n_2}. The proof of Lemma~\ref{lem:f_C^n} is thus completed.
\end{proof}

\section{Spatial regularity results for flows of deterministic ODEs}
\label{subsec:spatial}
\sectionmark{}

\begin{lemma}\label{lem:flow_deriv_1}
	Let $d \in \N$, $L \in \R$, $T \in (0, \infty)$,   let $g \in C^1(\R^d, \R^d)$ satisfy for all $x,y\in \R^d$   that 
	\begin{equation}\label{eq:condition_f}
	\langle g(x)-g(y), x-y\rangle_{\R^d} \leq L \|x-y\|_{\R^d}^2,
	\end{equation}
	and let  $\theta^{\vartheta}\in C( [0, T],\R^d)$, $\vartheta\in\R^d$,  satisfy for all $t\in [0, T]$, $\vartheta\in\R^d$ that
	\begin{equation}\label{eq:theta_f}
	\theta_t^{\vartheta}= \vartheta + \int_0^t g(\theta_s^{\vartheta})\, ds.
	\end{equation} 	
	Then we have for all $x,y\in \R^d$, $t\in [0, T]$ that
	$
	\|\theta_t^{x}-\theta_t^{y}\|_{\R^d}\leq \|x-y\|_{\R^d}\exp(Lt).
	$
\end{lemma}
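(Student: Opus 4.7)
The plan is to show that the squared distance between trajectories grows at most exponentially with rate $2L$, and then take square roots. Concretely, fix $x,y\in\R^d$ and define $f\colon[0,T]\to\R$ by $f(t) = \|\theta_t^{x}-\theta_t^{y}\|_{\R^d}^2$.

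First I would upgrade the regularity of the flow: since $g\in C^1(\R^d,\R^d)$ and $\theta^{\vartheta}\in C([0,T],\R^d)$ satisfies the integral equation \eqref{eq:theta_f}, the fundamental theorem of calculus implies $\theta^{\vartheta}\in C^1([0,T],\R^d)$ with $\tfrac{d}{dt}\theta_t^{\vartheta}=g(\theta_t^{\vartheta})$. In particular $f\in C^1([0,T],\R)$, and the chain rule together with the hypothesis \eqref{eq:condition_f} yields, for every $t\in[0,T]$,
\begin{equation}
f'(t) \;=\; 2\,\bigl\langle \theta_t^{x}-\theta_t^{y},\, g(\theta_t^{x})-g(\theta_t^{y})\bigr\rangle_{\R^d} \;\leq\; 2L\,\|\theta_t^{x}-\theta_t^{y}\|_{\R^d}^{2} \;=\; 2L\,f(t).
\end{equation}

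Next I would invoke the Gronwall-type differential inequality from \Cref{lem:Gronwall_differential} (with $t=0$, $T=T$, $b\equiv 2L$, and $f=f$) to obtain, for every $t\in[0,T]$,
\begin{equation}
f(t) \;\leq\; f(0)\exp\!\Bigl(\int_0^t 2L\,du\Bigr) \;=\; \|x-y\|_{\R^d}^{2}\exp(2Lt).
\end{equation}
Taking square roots (noting $f(t)\ge 0$) gives the claimed inequality $\|\theta_t^{x}-\theta_t^{y}\|_{\R^d}\leq \|x-y\|_{\R^d}\exp(Lt)$ for all $t\in[0,T]$.

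There is no real obstacle here; the argument is entirely standard and the hypothesis of \Cref{lem:Gronwall_differential} does not demand positivity of $f$, only $f\in C^1$. The sign of $L$ is irrelevant since $L\in\R$ is allowed, and the bound still follows (for $L<0$ one even obtains contraction of trajectories). The only small point worth flagging is that one should differentiate the squared norm rather than the norm itself to avoid non-differentiability at points where $\theta_t^{x}=\theta_t^{y}$; this is precisely why working with $f=\|\cdot\|_{\R^d}^2$ and then taking square roots at the end is the natural route.
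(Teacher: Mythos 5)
Your proof is correct and takes essentially the same route as the paper: differentiate the squared distance, apply the one-sided condition \eqref{eq:condition_f} to obtain the differential inequality $f'\leq 2Lf$, and close with Lemma~\ref{lem:Gronwall_differential}. The only cosmetic difference is that the paper justifies $\theta^{\vartheta}\in C^1$ by citing Lemma~\ref{lem:regularity_flow}, whereas you derive it directly from the integral equation and the fundamental theorem of calculus, which is a slightly more elementary route to the same fact.
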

\begin{proof}[Proof of Lemma~\ref{lem:flow_deriv_1}]
	Throughout this proof  let $E^{x,y} = (E^{x,y}(t))_{t \in [0,T]} =(E^{x,y}_t)_{t \in [0,T]}  \colon \allowbreak [0, T] \to [0,\infty)$, $x,y\in\R^d$,  satisfy for all $x,y\in\R^d$, $t\in [0,T]$ that
	\begin{equation}\label{eq:def_e}
	E^{x,y}_t=\|\theta_t^{x}-\theta_t^{y}\|_{\R^d}^2.
	\end{equation}
	Note that the assumption that $g \in C^1(\R^d, \R^d)$, \eqref{eq:theta_f}, and Lemma~\ref{lem:regularity_flow} ensure that for all $x \in \R^d$ we have that $\theta^x \in C^1([0,T], \R^d)$. This and \eqref{eq:condition_f} prove that for all $x, y \in \R^d$, $t\in [0,T]$ we have that $E^{x,y} \in C^1([0, T], [0,\infty))$ and
	\begin{equation}
	\begin{split}
	( E^{x,y})'(t)& = 2\langle g(\theta_t^{x}) - g(\theta_t^{y}),\theta_t^{x} - \theta_t^{y}  \rangle_{\R^d} \leq 2L \| \theta_t^{x} - \theta_t^{y} \|_{\R^d}^2 = 2L E^{x,y}_t.
	\end{split}
	\end{equation}
	The Gronwall differential inequality in Lemma~\ref{lem:Gronwall_differential} hence assures that for all $x, y \in \R^d$, $t\in [0,T]$ we have that
	\begin{equation}
	\|\theta_t^{x}-\theta_t^{y}\|_{\R^d}^2 = E^{x,y}_t  \leq E^{x,y}_0 \, e^{2Lt} = \|x-y\|_{\R^d}^2e^{2Lt}.
	\end{equation}
	The proof of Lemma~\ref{lem:flow_deriv_1} is thus completed.
\end{proof}

\begin{lemma}\label{lem:flow_deriv_2}
	Let $d \in \N$, $L \in \R$, $T \in (0, \infty)$, $g \in C^1(\R^d, \R^d)$
	and let  $\theta^{\vartheta} \in C( [0, T], \R^d)$, $\vartheta\in\R^d$,  satisfy for all $t\in [0, T]$, $x, y \in\R^d$ that
	\begin{equation}\label{eq:theta_f_2}
	\theta_t^{x}= x + \int_0^t g(\theta_s^{x})\, ds
	\end{equation} 	
	and
	$
	\|\theta_t^{x}-\theta_t^{y}\|_{\R^d}\leq \|x-y\|_{\R^d}\exp(Lt)
	$.
	Then we have for all $x,y\in \R^d$   that 
	$
	\langle g(x)-g(y), x-y\rangle_{\R^d} \leq L \|x-y\|_{\R^d}^2
	$.
\end{lemma}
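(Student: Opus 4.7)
The plan is to recover the one-sided Lipschitz estimate on $g$ by differentiating the squared distance $\|\theta^x_t-\theta^y_t\|_{\R^d}^2$ at $t=0$ and comparing with the exponential contraction assumption. Concretely, I fix $x,y\in\R^d$ and define $E\colon[0,T]\to[0,\infty)$ by $E(t)=\|\theta_t^x-\theta_t^y\|_{\R^d}^2$. The first step is to observe that $E\in C^1([0,T],[0,\infty))$ with $E'(0)=2\langle g(x)-g(y),x-y\rangle_{\R^d}$; this follows from the assumption $g\in C^1(\R^d,\R^d)$ together with Lemma~\ref{lem:regularity_flow}, which ensures $\theta^x,\theta^y\in C^1([0,T],\R^d)$ and in particular $\theta_0^x=x$, $\theta_0^y=y$, $(\tfrac{d}{dt}\theta_t^x)|_{t=0}=g(x)$, $(\tfrac{d}{dt}\theta_t^y)|_{t=0}=g(y)$.

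Next I would use the hypothesis $\|\theta_t^x-\theta_t^y\|_{\R^d}\leq\|x-y\|_{\R^d}\exp(Lt)$ to upper bound the forward difference quotient of $E$ at $0$. For every $t\in(0,T]$ we have
\begin{equation}
\frac{E(t)-E(0)}{t}\leq\frac{\|x-y\|_{\R^d}^2\bigl(e^{2Lt}-1\bigr)}{t}.
\end{equation}
Letting $t\to 0^+$ and using the elementary limit $\lim_{t\to 0^+}t^{-1}(e^{2Lt}-1)=2L$ yields
\begin{equation}
E'(0)=\lim_{t\to 0^+}\frac{E(t)-E(0)}{t}\leq 2L\|x-y\|_{\R^d}^2.
\end{equation}

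Combining the two computations of $E'(0)$ gives $2\langle g(x)-g(y),x-y\rangle_{\R^d}\leq 2L\|x-y\|_{\R^d}^2$, which is the claimed inequality after dividing by $2$. There is no real obstacle here: the argument is a one-line comparison of derivatives at $t=0$, and the only subtlety is ensuring $E$ is classically differentiable at $0$ so that one may pass from the finite-difference estimate to a derivative estimate, which is handled by the regularity of flows in Lemma~\ref{lem:regularity_flow} together with the chain rule applied to $(t\mapsto\|\theta_t^x-\theta_t^y\|_{\R^d}^2)$.
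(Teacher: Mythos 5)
Your proposal is correct and matches the paper's own proof almost verbatim: both define $E(t)=\|\theta_t^x-\theta_t^y\|_{\R^d}^2$, invoke Lemma~\ref{lem:regularity_flow} to get $E\in C^1$, bound the forward difference quotient of $E$ at $0$ by $\|x-y\|_{\R^d}^2\,t^{-1}(e^{2Lt}-1)$, let $t\to 0^+$, and identify $E'(0)=2\langle g(x)-g(y),x-y\rangle_{\R^d}$. The only difference is presentational — you compute $E'(0)$ first and then derive the upper bound, while the paper does it in the opposite order.
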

\begin{proof}[Proof of Lemma~\ref{lem:flow_deriv_2}]
	Throughout this proof  let $E^{x,y} = (E^{x,y}(t))_{t \in [0,T]} =(E^{x,y}_t)_{t \in [0,T]}  \colon \allowbreak [0, T] \to [0,\infty)$, $x,y\in\R^d$,  satisfy for all $x,y\in\R^d$, $t\in [0, T]$ that
	\begin{equation}
	E^{x,y}_t=\|\theta_t^{x}-\theta_t^{y}\|_{\R^d}^2.
	\end{equation}
	Observe  that for all $x,y\in\R^d$, $h\in (0,T]$ we have that
	\begin{equation}\label{eq:theta_h}
	\begin{split}
	&\frac{\|\theta_{h}^x-\theta_h^y\|_{\R^d}^2-\norm{\theta_0^x-\theta_0^y}_{\R^d}^2}{h}  = \frac{\|\theta_{h}^x-\theta_h^y\|_{\R^d}^2-\|x-y\|_{\R^d}^2}{h}\\
	& \leq \frac{\|x-y\|_{\R^d}^2e^{2Lh}-\|x-y\|_{\R^d}^2e^{2L\cdot 0}}{h}
	 = \|x-y\|_{\R^d}^2 \! \left[ \frac{e^{2Lh}-e^{2L\cdot 0}}{h} \right].
	\end{split}
	\end{equation}
	Next note that the assumption that $g \in C^1(\R^d, \R^d)$, \eqref{eq:theta_f_2}, and Lemma~\ref{lem:regularity_flow} imply that for all $x \in \R^d$ we have that $\theta^x \in C^1([0, T], \R^d)$. This and \eqref{eq:theta_h} prove that for all $x, y \in \R^d$, $t\in [0, T]$ we have that $E^{x,y} \in C^1([0, T], [0,\infty))$ and
	\begin{equation}
	\begin{split}
	(  E^{x,y})'(0) &= \lim_{\substack{h \to 0\\
			h \in (0, \infty)}} \frac{\|\theta_{h}^x-\theta_h^y\|_{\R^d}^2-\norm{\theta_0^x-\theta_0^y}_{\R^d}^2}{h}\\
		  &\leq \|x-y\|_{\R^d}^2 \! \left[ \lim_{\substack{h \to 0\\
			h \in (0, \infty)}} \frac{e^{2Lh}-e^{2L\cdot 0}}{h} \right] 
	= 2L\|x-y\|_{\R^d}^2.
	\end{split}
	\end{equation}
	This reveals that for all $x, y \in \R^d$  it holds that
	\begin{equation}
	\begin{split}
	2\langle x-y, g(x)-g(y) \rangle_{\R^d}
	& = \Big[ 2\langle \theta_t^x-\theta_t^y,g(\theta_t^x)-g(\theta_t^y) \rangle_{\R^d}\Big]_{t=0} \\
	&= (E^{x,y})'(0) \leq 2L \|x-y\|_{\R^d}^2.
	\end{split}
	\end{equation}
	The proof of Lemma~\ref{lem:flow_deriv_2} is thus completed.
\end{proof}

\begin{cor}\label{cor:flow_deriv}
	Let $d \in \N$, $L \in \R$, $T \in (0, \infty)$,  $g \in C^1(\R^d, \R^d)$
	and let  $\theta^{\vartheta}\in C( [0, T],\R^d)$, $\vartheta\in\R^d$,  satisfy for all $t\in [0, T]$, $\vartheta \in\R^d$ that
	\begin{equation}
	\theta_t^{\vartheta}= \vartheta + \int_0^t g(\theta_s^{\vartheta})\, ds.
	\end{equation} 	
	Then the following two statements are equivalent:
	\begin{enumerate}[(i)]
		\item\label{item:special_condition_f_1} It holds for all $x,y\in \R^d$   that 
		$
		\langle g(x)-g(y), x-y\rangle_{\R^d} \leq L \|x-y\|_{\R^d}^2
		$.
		\item\label{item:special_condition_f_2} It holds for all $x,y\in \R^d$, $t\in [0, T]$ that
		$
		\|\theta_t^{x}-\theta_t^{y}\|_{\R^d}\leq \|x-y\|_{\R^d}\exp(Lt)
		$.
	\end{enumerate}
\end{cor}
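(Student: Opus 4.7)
The plan is essentially bookkeeping: Corollary~\ref{cor:flow_deriv} is nothing more than the juxtaposition of Lemma~\ref{lem:flow_deriv_1} and Lemma~\ref{lem:flow_deriv_2}, which together establish the two implications constituting the equivalence. I would simply invoke each lemma in turn with the data $(d, L, T, g, (\theta^\vartheta)_{\vartheta \in \R^d})$ coming from the hypotheses of the corollary, noting that the hypotheses of both lemmas are verified verbatim (we have $g \in C^1(\R^d,\R^d)$ and the integral equation \eqref{eq:theta_f_2} holds by assumption).

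Concretely, to get \eqref{item:special_condition_f_1} $\Rightarrow$ \eqref{item:special_condition_f_2} I would apply Lemma~\ref{lem:flow_deriv_1}: its hypothesis is exactly item~\eqref{item:special_condition_f_1}, and its conclusion is exactly item~\eqref{item:special_condition_f_2}. For the reverse implication \eqref{item:special_condition_f_2} $\Rightarrow$ \eqref{item:special_condition_f_1} I would apply Lemma~\ref{lem:flow_deriv_2}: its hypothesis (beyond $g\in C^1$ and the ODE) is exactly item~\eqref{item:special_condition_f_2}, and its conclusion is exactly item~\eqref{item:special_condition_f_1}. There is no nontrivial step and hence no main obstacle; the only thing to verify is that the quantifiers in the two lemmas match those of the corollary, which they do (both are stated for all $x,y \in \R^d$ and, in the case of the flow bound, for all $t \in [0,T]$).

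Since both implications have been established, I would close the proof with a one-line statement that \eqref{item:special_condition_f_1} and \eqref{item:special_condition_f_2} are equivalent, and end the proof of \Cref{cor:flow_deriv}.
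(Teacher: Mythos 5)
Your proposal is correct and matches the paper's proof exactly: both directions of the equivalence are obtained by invoking Lemma~\ref{lem:flow_deriv_1} and Lemma~\ref{lem:flow_deriv_2} respectively, whose hypotheses and conclusions align verbatim with items~\eqref{item:special_condition_f_1} and \eqref{item:special_condition_f_2}. There is no divergence in approach.
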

\begin{proof}[Proof of Corollary~\ref{cor:flow_deriv}]
	Observe that Lemma~\ref{lem:flow_deriv_1} ensures that $ (\eqref{item:special_condition_f_1} \Rightarrow \eqref{item:special_condition_f_2})$. Moreover, note that Lemma \ref{lem:flow_deriv_2} establishes  that $ (\eqref{item:special_condition_f_2} \Rightarrow \eqref{item:special_condition_f_1})$.
	The proof of Corollary~\ref{cor:flow_deriv} is thus completed.
\end{proof}

\begin{lemma}\label{lem:exp_decay_partial_vartheta}
	Assume Setting~\ref{sec:setting_1}  and let $L\in \R$  satisfy for all $x,y\in\R^d$ that
	\begin{equation}\label{eq:exp_decay_partial_vartheta_1}
	\langle g(x)-g(y), x-y\rangle_{\R^d} \leq L \|x-y\|_{\R^d}^2.
	\end{equation}
	Then 
	\begin{enumerate}[(i)]
		\item\label{item:exp_decay_partial_vartheta_1} we have that
		$
		( [0,\infty)\times \R^d\ni (t,\vartheta)\mapsto \theta^{\vartheta}_t\in \R^d )\in C^{1}([0,\infty)\times\R^d,\R^d)
		$
		and
		\item\label{item:exp_decay_partial_vartheta_2} we have for all $\vartheta\in\R^d$, $t\in [0,\infty)$ that
		$
		\| (\tfrac{\partial}{\partial \vartheta}\theta_t^{\vartheta})  \|_{L(\R^d, \R^d)} \leq \exp(Lt)
		$.
	\end{enumerate}
\end{lemma}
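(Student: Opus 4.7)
The plan is to obtain item~\eqref{item:exp_decay_partial_vartheta_1} as a direct invocation of Corollary~\ref{cor:regularity_flow}, and to derive item~\eqref{item:exp_decay_partial_vartheta_2} from the dissipativity hypothesis via the flow contraction estimate in Corollary~\ref{cor:flow_deriv}, passing to the operator norm bound by a difference-quotient argument.

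First, I would establish that $g \in C^1(\R^d,\R^d)$ by appealing to Lemma~\ref{lem:f_C^1} (which applies thanks to the integrability assumption~\eqref{eq:setting_4} in Setting~\ref{sec:setting_1}). With $g \in C^1(\R^d,\R^d)$ in hand, Corollary~\ref{cor:regularity_flow} (applied with $n=1$ and with the flow $\theta^\vartheta$ from~\eqref{eq:equation theta}) immediately yields that $([0,\infty)\times \R^d \ni (t,\vartheta) \mapsto \theta^\vartheta_t \in \R^d)\in C^1([0,\infty)\times \R^d,\R^d)$. This proves item~\eqref{item:exp_decay_partial_vartheta_1}.

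For item~\eqref{item:exp_decay_partial_vartheta_2}, I would fix an arbitrary $T\in(0,\infty)$ and apply Corollary~\ref{cor:flow_deriv} with $g=g$ and with the restrictions of $\theta^\vartheta$ to $[0,T]$. Since the dissipativity assumption~\eqref{eq:exp_decay_partial_vartheta_1} coincides with item~\eqref{item:special_condition_f_1} of Corollary~\ref{cor:flow_deriv}, the equivalence proved there supplies item~\eqref{item:special_condition_f_2}, namely $\|\theta^x_t - \theta^y_t\|_{\R^d} \leq \|x-y\|_{\R^d}\exp(Lt)$ for all $x,y\in\R^d$ and all $t\in[0,T]$. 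Because $T$ was arbitrary, the same inequality holds for all $t\in[0,\infty)$.

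It remains to convert the Lipschitz-type estimate into the operator norm bound. Fix $\vartheta \in \R^d$, $t\in[0,\infty)$, and $h\in\R^d\setminus\{0\}$. Using that $(\vartheta,t)\mapsto \theta^\vartheta_t$ is $C^1$ by item~\eqref{item:exp_decay_partial_vartheta_1}, the directional derivative satisfies
\begin{equation}
\bigl(\tfrac{\partial}{\partial \vartheta}\theta^\vartheta_t\bigr) h \;=\; \lim_{\substack{\varepsilon \to 0 \\ \varepsilon \in (0,\infty)}} \frac{\theta^{\vartheta + \varepsilon h}_t - \theta^\vartheta_t}{\varepsilon},
\end{equation}
and the flow contraction estimate above gives
\begin{equation}
\bigl\|\bigl(\tfrac{\partial}{\partial \vartheta}\theta^\vartheta_t\bigr) h\bigr\|_{\R^d} \;\leq\; \limsup_{\substack{\varepsilon \to 0 \\ \varepsilon \in (0,\infty)}} \frac{\|\theta^{\vartheta + \varepsilon h}_t - \theta^\vartheta_t\|_{\R^d}}{\varepsilon} \;\leq\; \limsup_{\substack{\varepsilon \to 0 \\ \varepsilon \in (0,\infty)}} \frac{\varepsilon \|h\|_{\R^d}\exp(Lt)}{\varepsilon} \;=\; \|h\|_{\R^d}\exp(Lt).
\end{equation}
Dividing by $\|h\|_{\R^d}$ and taking the supremum over $h\in\R^d\setminus\{0\}$ yields $\|\tfrac{\partial}{\partial \vartheta}\theta^\vartheta_t\|_{L(\R^d,\R^d)} \leq \exp(Lt)$, which establishes item~\eqref{item:exp_decay_partial_vartheta_2}.

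No step seems to present a serious obstacle here, since the heavy lifting is already packaged in Corollary~\ref{cor:regularity_flow} and Corollary~\ref{cor:flow_deriv}; the only mild subtlety is that Corollary~\ref{cor:flow_deriv} is stated on a compact time interval $[0,T]$, which is handled by letting $T$ range over $(0,\infty)$ and using that the bound $\exp(Lt)$ is $T$-free.
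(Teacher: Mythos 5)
Your proposal is correct and follows essentially the same route as the paper: item~\eqref{item:exp_decay_partial_vartheta_1} via Lemma~\ref{lem:f_C^1} and Corollary~\ref{cor:regularity_flow}, and item~\eqref{item:exp_decay_partial_vartheta_2} via the equivalence in Corollary~\ref{cor:flow_deriv} combined with a difference-quotient argument to pass from the Lipschitz contraction estimate to the operator-norm bound on the derivative. The only difference is cosmetic---you compute the directional derivative at a fixed $h$ directly as a one-sided limit, whereas the paper first bounds $\limsup_{h\to 0}\|\tfrac{1}{\|h\|_{\R^d}}(\tfrac{\partial}{\partial\vartheta}\theta^\vartheta_t)h\|_{\R^d}$ and then scales $h=\lambda v$; both give the same conclusion.
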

\begin{proof}[Proof of Lemma~\ref{lem:exp_decay_partial_vartheta}]
	First, observe that  Corollary~\ref{cor:regularity_flow} and Lemma~\ref{lem:f_C^1} prove item~\eqref{item:exp_decay_partial_vartheta_1}.
	Next note that item~\eqref{item:exp_decay_partial_vartheta_1} and the triangle inequality imply that for all $\vartheta\in\R^d$, $t\in [0,\infty)$ we have that
	\begin{align*}
	\label{eq:exp_decay_partial_vartheta_3}
	&\limsup_{\substack{h \to 0\\
			h \in \R^d \backslash \{0\}}}\Big\| \tfrac{1}{\|h\|_{\R^d}}( \pv \theta_t^{\vartheta} )h \Big\|_{\R^d}\\
	&\leq\limsup_{\substack{h \to 0\\
			h \in \R^d \backslash \{0\}}}\Big[\norm{ \tfrac{1}{\|h\|_{\R^d}}( \pv \theta_t^{\vartheta} )h - \tfrac{1}{\|h\|_{\R^d}}(\theta_t^{\vartheta + h}-\theta_t^{\vartheta})}_{\R^d} + \norm{ \tfrac{1}{\|h\|_{\R^d}}(\theta_t^{\vartheta + h}-\theta_t^{\vartheta})}_{\R^d}\Big]\\
	&\leq\limsup_{\substack{h \to 0\\
			h \in \R^d \backslash \{0\}}}\norm{ \tfrac{1}{\|h\|_{\R^d}}( \pv \theta_t^{\vartheta} )h - \tfrac{1}{\|h\|_{\R^d}}(\theta_t^{\vartheta + h}-\theta_t^{\vartheta})}_{\R^d} 
	+ \limsup_{\substack{h \to 0\\
			h \in \R^d \backslash \{0\}}}\norm{ \tfrac{1}{\|h\|_{\R^d}}(\theta_t^{\vartheta + h}-\theta_t^{\vartheta})}_{\R^d}\\
	& = \limsup_{\substack{h \to 0\\
			h \in \R^d \backslash \{0\}}}\norm{ \tfrac{1}{\|h\|_{\R^d}}(\theta_t^{\vartheta + h}-\theta_t^{\vartheta})}_{\R^d}.  \numberthis
	\end{align*}
	This, \eqref{eq:exp_decay_partial_vartheta_1}, and Corollary~\ref{cor:flow_deriv} assure that for all $\vartheta\in\R^d$, $t\in [0,\infty)$ we have that
	\begin{equation}\label{eq:exp_decay_partial_vartheta_4}
	\begin{split}
	\limsup_{\substack{h \to 0\\
			h \in \R^d \backslash \{0\}}}\norm{ \tfrac{1}{\|h\|_{\R^d}}(\pv \theta_t^{\vartheta})h }_{\R^d}
	& \leq e^{Lt}.
	\end{split}
	\end{equation}
	This reveals that for all $\vartheta\in\R^d$, $v\in\R^d\backslash\lb 0\rb$, $t\in [0,\infty)$  it holds that
	\begin{equation}\label{eq:exp_decay_partial_vartheta_5}
	\begin{split}
	\| (\tfrac{\partial}{\partial \vartheta}\theta_t^{\vartheta}) v \|_{\R^d}
	 &= \limsup_{\substack{\lambda \to 0\\
			\lambda \in \R \backslash \{0\}}}\left\| \tfrac{1}{\lambda}( \pv\theta_t^{\vartheta}) \lambda v \right\|_{\R^d}\\
	& = \norm{v}_{\R^d}\limsup_{\substack{\lambda \to 0\\
			\lambda \in \R \backslash \{0\}}}\left\| \tfrac{1}{\lambda}( \pv\theta_t^{\vartheta}) \lambda \tfrac{v}{\norm{v}}_{\R^d} \right\|_{\R^d} \leq \norm{v}_{\R^d} e^{Lt}.
	\end{split}
	\end{equation}
	This establishes item~\eqref{item:exp_decay_partial_vartheta_2}. 
	The proof of Lemma~\ref{lem:exp_decay_partial_vartheta} is thus completed.
\end{proof}

\begin{lemma}\label{lem:Markov_ppty}
	Assume Setting~\ref{sec:setting_1}. Then we have for all $a,b\in [0,\infty)$, $\vartheta\in\R^d$ that
	$
	\theta_{b}^{(\theta_{a}^{\vartheta})}=\theta_{a+b}^{\vartheta}
	$.
\end{lemma}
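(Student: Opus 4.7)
The plan is to fix $a \in [0, \infty)$ and $\vartheta \in \R^d$, to introduce the two functions $\phi, \psi \colon [0, \infty) \to \R^d$ defined for all $t \in [0, \infty)$ by $\phi(t) = \theta_{a+t}^{\vartheta}$ and $\psi(t) = \theta_{t}^{(\theta_a^{\vartheta})}$, and to show via a uniqueness argument that $\phi = \psi$, which immediately yields the claim at $t = b$.

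First I would verify that both $\phi$ and $\psi$ are continuous solutions of the same integral equation with starting point $\theta_a^{\vartheta}$. For $\psi$ this is immediate from \eqref{eq:equation theta}. For $\phi$ I would split the integral in \eqref{eq:equation theta} at $a$ and then apply the change of variables $u = s - a$ to obtain, for all $t \in [0, \infty)$,
\begin{equation}
\phi(t) = \vartheta + \int_0^{a+t} g(\theta_s^{\vartheta})\, ds = \theta_a^{\vartheta} + \int_0^{t} g(\theta_{a+u}^{\vartheta})\, du = \theta_a^{\vartheta} + \int_0^{t} g(\phi(u))\, du .
\end{equation}
Hence both $\phi$ and $\psi$ lie in $C([0, \infty), \R^d)$ and solve $x(t) = \theta_a^{\vartheta} + \int_0^t g(x(s))\, ds$.

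Next I would linearize the difference to invoke the linear uniqueness result in Lemma~\ref{lem:unique}. Lemma~\ref{lem:f_C^1} assures that $g \in C^1(\R^d, \R^d)$, so the fundamental theorem of calculus yields, for every $s \in [0, \infty)$, the identity $g(\phi(s)) - g(\psi(s)) = A(s)(\phi(s) - \psi(s))$ with $A(s) = \int_0^1 g'(\psi(s) + r(\phi(s) - \psi(s)))\, dr \in L(\R^d, \R^d)$. Since $\phi, \psi$ are continuous and $g' \in C(\R^d, L(\R^d, \R^d))$, the map $A$ lies in $C([0, \infty), L(\R^d, \R^d))$. Subtracting the two integral equations gives, for every $t \in [0, \infty)$,
\begin{equation}
\phi(t) - \psi(t) = \int_0^t A(s)(\phi(s) - \psi(s))\, ds = 0 + \int_0^t A(s)(\phi(s) - \psi(s))\, ds.
\end{equation}
Fixing an arbitrary $T \in (0, \infty)$, the function $\phi - \psi$ and the zero function both satisfy this identity on $[0, T]$, both are continuous (hence $L^1$) on $[0, T]$, and both start from $0 \in \R^d$. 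Lemma~\ref{lem:unique} therefore forces $\phi(t) = \psi(t)$ for all $t \in [0, T]$; letting $T \to \infty$ and evaluating at $b$ completes the proof.

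There is no genuine obstacle: the only subtle point is that $g$ need not be globally Lipschitz, which is exactly why I would avoid a direct Gronwall argument and instead use the linearization above so that Lemma~\ref{lem:unique} applies verbatim.
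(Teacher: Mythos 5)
Your proof is correct, and it takes a mildly different technical route from the paper's. The paper's argument also reduces to a Gronwall-type uniqueness statement, but it does so directly: it sets $e(s) = \|\theta_{a+s}^{\vartheta} - \theta_s^{(\theta_a^{\vartheta})}\|_{\R^d}$, observes that both trajectories live in a compact convex set $K \subseteq \R^d$ on $[0,b]$, uses \Cref{prop:cont_fct_bnd_on_cmpcts} together with Lemma~\ref{lem:f_C^1} to bound $\sup_{x \in K} \|g'(x)\|_{L(\R^d,\R^d)}$, and then applies the mean value inequality plus the scalar Gronwall inequality (Lemma~\ref{lem:Gronwall}) to get $e \equiv 0$. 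You instead linearize the difference $g(\phi(s)) - g(\psi(s)) = A(s)(\phi(s) - \psi(s))$ with $A(s) = \int_0^1 g'(\psi(s) + r(\phi(s)-\psi(s)))\,dr$ and appeal to the linear uniqueness result Lemma~\ref{lem:unique}, which is itself proved via Gronwall. The two routes are equivalent in substance; yours has the small advantage of re-using the lemma the paper already states, while the paper's has the small advantage of not needing the explicit continuity check on the operator-valued $A(\cdot)$, replacing it with the arguably simpler compact-set bound on $g'$. Your closing remark about $g$ not being globally Lipschitz and why that mandates either the local Lipschitz bound or the linearization is exactly the right observation, and the paper's compact-set argument is the other standard way to deal with it.
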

\begin{proof}[Proof of Lemma~\ref{lem:Markov_ppty}]
	Throughout this proof let $a,b\in [0,\infty)$, $\vartheta\in \R^d$ and let $e\colon [0,b] \allowbreak \to \R$ satisfy for all $s\in [0,b]$ that
	\begin{equation}\label{eq:Markov_ppty_2}
	e(s)=\big\|\theta_{a+s}^{\vartheta}-\theta_s^{(\theta_{a}^{\vartheta})}\big\|_{\R^d}.
	\end{equation}
	Observe that the fact that  $ ([0,b]\ni t\mapsto \theta_{a+t}^{\vartheta} \in \R^d)$ and $([0,b]\ni t\mapsto \theta_t^{(\theta_a^{\vartheta})} \in \R^d)$ are continuous
	implies that there exists a non-empty convex compact set $K\subseteq \R^d$ which satisfies   for all $s\in [0,b]$  that
	\begin{equation}\label{eq:Markov_ppty_3}
	\theta_{a+s}^{\vartheta}\in K \qquad \text{and} \qquad \theta_s^{(\theta_{a}^{\vartheta})}\in K.
	\end{equation}
	Moreover, note that \Cref{prop:cont_fct_bnd_on_cmpcts} and Lemma~\ref{lem:f_C^1} assure that 
	\begin{equation}\label{eq:Markov_ppty_3_p}
	\sup_{x\in K}\|g'(x)\|_{L(\R^d,\R^d)}<\infty.
	\end{equation}
	In the next step we observe that \eqref{eq:equation theta} proves that for all $s\in [0,b]$ we have that
	\begin{align}\label{eq:Markov_ppty_4}
	\begin{split}
	e(s) & = \big\|\theta_{a+s}^{\vartheta}-\theta_s^{(\theta_{a}^{\vartheta})}\big\|_{\R^d}\\
	& = \norm{ \vartheta + \int_0^{a+s}g\big(\theta_u^{\vartheta}\big)\, du - \left( \theta_{a}^{\vartheta}+\int_0^s g\big(\theta_u^{(\theta_{a}^{\vartheta})}\big)\, du \right) }_{\R^d}\\
	& = \norm{ \vartheta + \int_0^{a+s}g\big(\theta_u^{\vartheta}\big)\, du - \left( \vartheta+\int_0^{a}g\big(\theta_u^{\vartheta}\big)\,du +\int_0^s g(\theta_u^{(\theta_{a}^{\vartheta})})\, du \right) }_{\R^d}\\
	& = \norm{ \int_{a}^{a+s}g\big(\theta_u^{\vartheta}\big)\, du - \int_0^s g\big(\theta_u^{(\theta_{a}^{\vartheta})}\big)\, du  }_{\R^d}\\
	& = \norm{\int_{0}^{s}g(\theta_{a+u}^{\vartheta}) - g\big(\theta_u^{(\theta_{a}^{\vartheta})}\big)\, du }_{\R^d}.
	\end{split}
	\end{align}
	This, the triangle inequality for the Bochner integral, and the mean value inequality demonstrate that for all $s\in [0,b]$ we have that
	\begin{equation}
	\begin{split}
	e(s)  & \leq  \int_{0}^{s} \big\| g\big(\theta_{a+u}^{\vartheta}\big) - g\big(\theta_u^{(\theta_{a}^{\vartheta})}\big)\big\|_{\R^d} \, du  \\
	& \leq \int_0^s \left[ \sup_{x\in K}\|g'(x)\|_{L(\R^d,\R^d)} \right] \big\|\theta_{a+u}^{\vartheta}-\theta_u^{(\theta_{a}^{\vartheta})}\big\|_{\R^d}\, du\\
	& = \sup_{x\in K}\|g'(x)\|_{L(\R^d,\R^d)} \int_0^s e(u)\, du.
	\end{split}
	\end{equation}
	The Gronwall integral inequality in Lemma~\ref{lem:Gronwall} and \eqref{eq:Markov_ppty_3_p} hence assure that  for all $s\in [0,b]$ we have that
	\begin{equation}\label{eq:Markov_ppty_5}
	e(s)=0.
	\end{equation}
	The proof of Lemma~\ref{lem:Markov_ppty} is thus completed.
\end{proof}

\begin{lemma}\label{lem:motionless_pt_f}
	Assume Setting~\ref{sec:setting_1}  and let $L\in (0,\infty)$ satisfy for all $x,y\in \R^d$ that
	\begin{equation}\label{eq:motionless_pt_f_1}
	\langle g(x)-g(y), x-y\rangle_{\R^d} \leq -L \|x-y\|_{\R^d}^2.
	\end{equation}
	Then  
	\begin{enumerate}[(i)]
		\item\label{item:motionless_pt_f_1} we have that there exists a unique $\Xi\in \R^d$ which satisfies  that
		$
		g(\Xi)=0
		$,
		\item\label{item:motionless_pt_f_1_2} we have for all $t\in [0,\infty)$ that
		$
		\theta_t^{\Xi}=\Xi
		$,
		\item\label{item:motionless_pt_f_2} we have for all $h\in \R^d$, $t\in[0,\infty)$ that
		$
		\|\theta_t^{\Xi+h}-\Xi\|_{\R^d}\leq \|h\|_{\R^d} \exp(-Lt)
		$,
		and
		\item\label{item:motionless_pt_f_3} we have for all $x\in\R^d$ that
		$
		\limsup_{t\to\infty}\|\theta_t^{x}-\Xi\|_{\R^d}=0
		$.
	\end{enumerate}
\end{lemma}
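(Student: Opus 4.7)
The plan is to establish the four items in the stated order.

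For item~(i), uniqueness is immediate from \eqref{eq:motionless_pt_f_1}: any two zeros $\Xi_1,\Xi_2$ of $g$ satisfy $0=\langle g(\Xi_1)-g(\Xi_2),\Xi_1-\Xi_2\rangle_{\R^d}\le -L\|\Xi_1-\Xi_2\|_{\R^d}^2$, so $\Xi_1=\Xi_2$. For existence I would first obtain a uniform a priori bound on $\|\theta_t^0\|_{\R^d}$. Using Corollary~\ref{cor:regularity_flow} and Lemma~\ref{lem:f_C^1}, the function $V(t):=\|\theta_t^0\|_{\R^d}^2$ lies in $C^1([0,\infty),\R)$, and applying \eqref{eq:motionless_pt_f_1} with $y=0$ gives $V'(t)=2\langle g(\theta_t^0),\theta_t^0\rangle_{\R^d}\le -2L\|\theta_t^0\|_{\R^d}^2+2\|g(0)\|_{\R^d}\|\theta_t^0\|_{\R^d}$, from which $V'(t)<0$ whenever $\|\theta_t^0\|_{\R^d}>\|g(0)\|_{\R^d}/L$ and hence $\sup_{t\ge 0}\|\theta_t^0\|_{\R^d}\le \|g(0)\|_{\R^d}/L$. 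Combining Lemma~\ref{lem:Markov_ppty} with Corollary~\ref{cor:flow_deriv} (applied with monotonicity constant $-L$) then gives, for all $s,r\in[0,\infty)$,
\begin{equation*}
\|\theta_{s+r}^0-\theta_s^0\|_{\R^d}=\|\theta_s^{(\theta_r^0)}-\theta_s^0\|_{\R^d}\le \|\theta_r^0\|_{\R^d}\exp(-Ls)\le (\|g(0)\|_{\R^d}/L)\exp(-Ls),
\end{equation*}
so $\theta_t^0$ is Cauchy as $t\to\infty$ and converges to some $\Xi\in\R^d$. To verify $g(\Xi)=0$, I integrate \eqref{eq:equation theta} over $[t,t+1]$: the left-hand side $\theta_{t+1}^0-\theta_t^0$ tends to $0$, while by continuity of $g$ (Lemma~\ref{lem:f_C^1}) and the uniform bound $\sup_{u\in[t,t+1]}\|\theta_u^0-\Xi\|_{\R^d}\to 0$ the right-hand side $\int_t^{t+1}g(\theta_u^0)\,du$ tends to $g(\Xi)$.

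For item~(ii), I apply Lemma~\ref{lem:Gronwall_differential} on each interval $[0,T]$ to $w(t):=\|\theta_t^\Xi-\Xi\|_{\R^d}^2$, which lies in $C^1([0,T],\R)$ by Corollary~\ref{cor:regularity_flow}; using $g(\Xi)=0$ together with \eqref{eq:motionless_pt_f_1},
\begin{equation*}
w'(t)=2\langle g(\theta_t^\Xi)-g(\Xi),\theta_t^\Xi-\Xi\rangle_{\R^d}\le -2L\,w(t),
\end{equation*}
and Gronwall combined with $w(0)=0$ forces $w\equiv 0$, whence $\theta_t^\Xi=\Xi$ for every $t\in[0,\infty)$.

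For item~(iii), an application of Corollary~\ref{cor:flow_deriv} on each $[0,T]$ (the hypothesis there is satisfied with the constant $L$ replaced by $-L$, by virtue of \eqref{eq:motionless_pt_f_1}) yields $\|\theta_t^{\Xi+h}-\theta_t^\Xi\|_{\R^d}\le \|h\|_{\R^d}\exp(-Lt)$; combining this with item~(ii) produces the desired bound. Item~(iv) is then immediate: given $x\in\R^d$, taking $h=x-\Xi$ in item~(iii) and sending $t\to\infty$ yields $\|\theta_t^x-\Xi\|_{\R^d}\le \|x-\Xi\|_{\R^d}\exp(-Lt)\to 0$. The principal obstacle is the existence part of item~(i): one needs both a Lyapunov-type a priori bound on $\|\theta_t^0\|_{\R^d}$ and the Markov-plus-contraction estimate to produce the Cauchy limit, and one must then carefully identify this limit as a zero of $g$ via the integrated ODE. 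Once $\Xi$ exists, items~(ii)--(iv) are short Gronwall/contraction arguments built directly on the tools already developed in Chapter~\ref{section:existence}.
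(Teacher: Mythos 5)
Your argument is correct but takes a genuinely different route from the paper for the existence half of item~(i). The paper applies the Banach fixed point theorem to the time-$t$ flow maps $\R^d \ni x \mapsto \theta_t^x \in \R^d$ (each of which is a strict contraction with constant $e^{-Lt}$ for $t>0$), obtains a fixed point $\beta_t$ for every $t>0$, and then proves $\beta_t \equiv \beta_1$ by first showing $\beta_{nt}=\beta_t$ via Lemma~\ref{lem:Markov_ppty} and induction, deducing equality on $\Q \cap (0,\infty)$, and extending to all $t$ by continuity of the flow before differentiating $\theta_t^{\beta_1}\equiv\beta_1$ to read off $g(\beta_1)=0$. You instead bound the trajectory $\theta_t^0$ a priori via a Lyapunov computation, combine Lemma~\ref{lem:Markov_ppty} with Corollary~\ref{cor:flow_deriv} to show $\theta_t^0$ is Cauchy as $t\to\infty$, and identify the limit as a zero of $g$ by passing to the limit in the integrated ODE. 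Your approach is arguably the more standard ODE-stability argument and avoids the somewhat ad hoc rational--density extension; the paper's route avoids the Lyapunov computation and the limit-identification step entirely by making the fixed point show up algebraically. Both are complete. One small point worth tightening: the step from ``$V'(t)<0$ whenever $\|\theta_t^0\|_{\R^d}>\|g(0)\|_{\R^d}/L$'' to ``$\sup_{t\ge 0}\|\theta_t^0\|_{\R^d}\le \|g(0)\|_{\R^d}/L$'' is a standard invariant-ball argument but deserves either a short contradiction via the intermediate value theorem, or (cleaner, and using only tools already in the chapter) a Young-inequality estimate $2\|g(0)\|_{\R^d}\|\theta_t^0\|_{\R^d}\le L\|\theta_t^0\|_{\R^d}^2 + \|g(0)\|_{\R^d}^2/L$, which turns the Lyapunov bound into $V'(t)\le -L V(t)+\|g(0)\|_{\R^d}^2/L$ and makes the uniform bound an immediate consequence of Lemma~\ref{lem:Gronwall_differential} applied to $V(t)-\|g(0)\|_{\R^d}^2/L^2$. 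Items~(ii)--(iv) in your proposal essentially coincide with the paper's treatment.
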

\begin{proof}[Proof of Lemma~\ref{lem:motionless_pt_f}]
	First, observe that Lemma~\ref{lem:f_C^1}, Corollary~\ref{cor:flow_deriv}, and \eqref{eq:motionless_pt_f_1} imply that for all $t\in [0,\infty)$, $x, y \in \R^d$ we have that
	\begin{equation}\label{eq:motionless_pt_f_5}
	\| \theta_t^x - \theta_t^y \|_{\R^d} \leq \|x-y\|_{\R^d} e^{-Lt}.
	\end{equation}
	This and the Banach fixed point theorem demonstrate that there exists a unique function $\beta\colon (0,\infty)\to\R^d$ which satisfies for all $t\in (0,\infty)$ that
	\begin{equation}\label{eq:motionless_pt_f_6}
	\theta^{\beta_t}_t=\beta_t.
	\end{equation}
	Next we claim that for all $n\in\N$, $t\in (0,\infty)$ we have that
	\begin{equation}\label{eq:motionless_pt_f_9}
	\beta_{nt}=\beta_t.
	\end{equation}
	We establish this by induction on $n\in\N$. The base case $n=1$ is clear. For the induction step $\N\ni n\to n+1 \in\N$ observe that Lemma~\ref{lem:Markov_ppty} and the induction hypothesis imply that for all $t\in (0,\infty)$ we have that
	\begin{equation}
	\theta_{(n+1)t}^{\beta_t}=\theta_{nt+t}^{\beta_t}=\theta_{nt}^{\theta_t^{\beta_t}}=\theta_{nt}^{\beta_t}=\theta_{nt}^{\beta_{nt}}=\beta_{nt}=\beta_t.
	\end{equation}
	This finishes the proof of the induction step. Induction hence establishes \eqref{eq:motionless_pt_f_9}.
	Observe that \eqref{eq:motionless_pt_f_9} implies that for all $m, n\in \N$, $t\in (0,\infty)$ we have that
	\begin{equation}\label{eq:motionless_pt_f_10}
	\beta_{\frac{m}{n}t}=\beta_{n\frac{m}{n}t}=\beta_{mt}=\beta_t.
	\end{equation}
	This reveals that for all $t\in \Q\cap (0,\infty)$  it holds that
	\begin{equation}\label{eq:motionless_pt_f_11}
	\beta_{t}=\beta_1.
	\end{equation}
	This proves that for all $t\in \Q\cap (0,\infty)$ we have that
	\begin{equation}
	\theta_{t}^{\beta_1} = \theta_{t}^{\beta_t} = \beta_t = \beta_1.
	\end{equation}
	Therefore, we obtain  that for all $t \in [0, \infty)$, $n\in \N$ and all functions  $q = (q_k)_{k \in \N} \colon \N \to \Q\cap (0,\infty)$ with $\limsup_{k \to \infty} |q_k -t|=0$ we have that
	\begin{equation}\label{eq:motionless_pt_f_12}
	\theta_{q_n}^{\beta_1} =\beta_1.
	\end{equation}
	Moreover, observe that Corollary~\ref{cor:regularity_flow} and Lemma~\ref{lem:f_C^1} assure that 
	\begin{equation}\label{eq:motionless_pt_f_12_2}
	\big( [0,\infty)\ni t\mapsto \theta_t^{\beta_1} \in \R^d\big)\in C^1([0,\infty),\R^d).
	\end{equation}
	Combining this and \eqref{eq:motionless_pt_f_12} proves that for all $t \in [0, \infty)$ we have that
	\begin{equation}\label{eq:motionless_pt_f_13}
	\theta^{\beta_1}_{t}=\beta_1.
	\end{equation}
	This, \eqref{eq:equation theta}, and \eqref{eq:motionless_pt_f_12_2} ensure that for all $t\in [0,\infty)$ we have that
	\begin{equation}\label{eq:motionless_pt_f_15}
	0=\pt (\theta_t^{\beta_1})=g(\theta_t^{\beta_1})=g(\beta_1).
	\end{equation}
	Combining this and \eqref{eq:motionless_pt_f_1}  implies that for all $x\in\lb y\in\R^d\colon g(y)=0\rb$ we have that
	\begin{equation}\label{eq:motionless_pt_f_16}
	\begin{split}
	0 = \langle 0,x-\beta_1 \rangle_{\R^d} = \langle g(x)-g(\beta_1),x-\beta_1\rangle_{\R^d} \leq -L \|x-\beta_1\|_{\R^d}^2.
	\end{split}
	\end{equation}
	The assumption that $L>0$ and \eqref{eq:motionless_pt_f_15} therefore prove item~\eqref{item:motionless_pt_f_1}. Moreover, note that \eqref{eq:motionless_pt_f_13} establishes item~\eqref{item:motionless_pt_f_1_2}.
	Corollary~\ref{cor:flow_deriv} hence demonstrates that for all $h\in \R^d$, $t\in [0,\infty)$ we have that
	\begin{equation}
	\|\theta_t^{\beta_1+h}-\beta_1\|_{\R^d}= \|\theta_t^{\beta_1+h}-\theta_t^{\beta_1}\|_{\R^d}\leq \|h\|_{\R^d} e^{-Lt}.
	\end{equation}
	This establishes item~\eqref{item:motionless_pt_f_2}. Next observe that item~\eqref{item:motionless_pt_f_2} implies item~\eqref{item:motionless_pt_f_3}. The proof of Lemma~\ref{lem:motionless_pt_f} is thus completed.
\end{proof}

\begin{lemma}\label{lem:estimate_diff_phi}
	Assume Setting~\ref{sec:setting_1}, let $L\in \R$, $\psi\in C^1(\R^d,\R)$,  and assume  for all $x,y\in \R^d$ that
	\begin{equation}\label{eq:estimate_diff_phi_1}
	\langle g(x)-g(y), x-y\rangle_{\R^d} \leq L \|x-y\|_{\R^d}^2.
	\end{equation}
	Then we have for all $t\in [0,\infty)$, $x,y\in\R^d$ that
	\begin{align}
	\label{eq:estimate_diff_phi_4}
	\begin{split}
	|\psi(\theta_t^x)-\psi(\theta_t^y)|  &\leq \sup\!\left\lbrace \| \psi'(\lambda\theta_t^x+(1-\lambda)\theta_t^y) \|_{L(\R^d,\R)} \in \R\colon \lambda\in [0,1] \right\rbrace \\
	& \quad \cdot \|x-y\|_{\R^d}\exp(Lt).
	\end{split}
	\end{align}
\end{lemma}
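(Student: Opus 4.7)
The plan is to combine the standard mean value inequality applied to $\psi$ along the straight line segment joining $\theta_t^x$ and $\theta_t^y$ with the exponential contraction/expansion estimate for the flow $\theta$ that follows from the one-sided Lipschitz condition on $g$.

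First, I would fix $t\in[0,\infty)$ and $x,y\in\R^d$ and set $a=\theta_t^y$ and $b=\theta_t^x$. Since $\psi\in C^1(\R^d,\R)$, the fundamental theorem of calculus applied to $[0,1]\ni\lambda\mapsto\psi(a+\lambda(b-a))\in\R$ gives
\begin{equation}
\psi(\theta_t^x)-\psi(\theta_t^y) = \int_0^1 \psi'\bigl(\lambda\theta_t^x+(1-\lambda)\theta_t^y\bigr)(\theta_t^x-\theta_t^y)\,d\lambda.
\end{equation}
Taking absolute values and using the operator norm estimate $|A(v)|\leq\|A\|_{L(\R^d,\R)}\|v\|_{\R^d}$ inside the integral yields
\begin{equation}
|\psi(\theta_t^x)-\psi(\theta_t^y)| \leq \Bigl[\sup\nolimits_{\lambda\in[0,1]}\|\psi'(\lambda\theta_t^x+(1-\lambda)\theta_t^y)\|_{L(\R^d,\R)}\Bigr]\,\|\theta_t^x-\theta_t^y\|_{\R^d}.
\end{equation}

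Second, I would bound $\|\theta_t^x-\theta_t^y\|_{\R^d}$ by $\|x-y\|_{\R^d}\exp(Lt)$. This is exactly where the assumption \eqref{eq:estimate_diff_phi_1} enters. By Lemma~\ref{lem:f_C^1} we have $g\in C^1(\R^d,\R^d)$, so the hypotheses of Lemma~\ref{lem:flow_deriv_1} (equivalently, the forward implication of Corollary~\ref{cor:flow_deriv}) are satisfied on any finite interval $[0,T]$ with $T>t$. Applying that result (with $L$ as given) directly gives
\begin{equation}
\|\theta_t^x-\theta_t^y\|_{\R^d}\leq\|x-y\|_{\R^d}\exp(Lt).
\end{equation}

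Combining these two inequalities immediately gives \eqref{eq:estimate_diff_phi_4}. I do not anticipate any real obstacle: both ingredients are already in place in the paper, and the only mildly delicate point is to note that, although Lemma~\ref{lem:flow_deriv_1} is stated for a fixed horizon $T\in(0,\infty)$, the conclusion at the single time $t$ is obtained by choosing any $T>t$, while \eqref{eq:equation theta} in Setting~\ref{sec:setting_1} ensures that the restriction of $\theta^{\vartheta}$ to $[0,T]$ satisfies the integral equation required by Lemma~\ref{lem:flow_deriv_1}. The estimate is trivial when $x=y$ (both sides are zero), so no separate case analysis is needed.
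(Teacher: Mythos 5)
Your proposal is correct and follows essentially the same approach as the paper: the fundamental theorem of calculus applied to $\psi$ along the segment between $\theta_t^x$ and $\theta_t^y$, combined with the flow contraction estimate from Lemma~\ref{lem:flow_deriv_1} / Corollary~\ref{cor:flow_deriv} (the paper invokes Corollary~\ref{cor:flow_deriv} and Lemma~\ref{lem:f_C^1}, just as you do). Your observation about choosing a finite horizon $T > t$ to apply the flow estimate, which the paper leaves implicit, is a valid and minor refinement.
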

\begin{proof}[Proof of Lemma~\ref{lem:estimate_diff_phi}]
	Throughout this proof let $M\colon [0,\infty)\times\R^d\times\R^d\to [0,\infty]$ satisfy for all $t\in[0,\infty)$, $x,y\in \R^d$  that
	\begin{equation}\label{eq:estimate_diff_phi_2}
	M(t,x,y)=\sup\!\left\lbrace \| \psi'(\lambda\theta_t^x+(1-\lambda)\theta_t^y) \|_{L(\R^d,\R)} \in \R\colon \lambda\in [0,1] \right\rbrace.
	\end{equation}
	Note that the fundamental theorem of calculus, Lemma~\ref{lem:f_C^1}, and Corollary~\ref{cor:flow_deriv} assure that for all $t\in [0,\infty)$, $x,y\in \R^d$ we have that
	\begin{equation}\label{eq:estimate_diff_phi_6}
	\begin{split}
	|\psi(\theta_t^x)-\psi(\theta_t^y)| 
	& = \left| \int_0^1 \psi'(\lambda\theta_t^x+(1-\lambda)\theta_t^y)(\theta_t^x-\theta_t^y)\, d\lambda \right|\\
	& \leq  \int_0^1 | \psi'(\lambda\theta_t^x+(1-\lambda)\theta_t^y)(\theta_t^x-\theta_t^y) |\, d\lambda \\
	& \leq  \int_0^1 \| \psi'(\lambda\theta_t^x+(1-\lambda)\theta_t^y)\|_{L(\R^d,\R)} \|\theta_t^x-\theta_t^y \|_{\R^d} \, d\lambda \\
	& \leq M(t,x,y) \| \theta_t^x-\theta_t^y \|_{\R^d} \leq M(t,x,y) \|x-y\|_{\R^d}\exp(Lt).
	\end{split}
	\end{equation}
	The proof of Lemma~\ref{lem:estimate_diff_phi} is thus completed.
\end{proof}

\begin{lemma}\label{lem:Kolm_back_eq_main}
	Assume Setting~\ref{sec:setting_1}, let $\psi\in C^1(\R^d,\R)$, $T\in (0,\infty)$, and let $u  = (u(t,\vartheta))_{(t, \vartheta) \in [0, T] \times \R^d} \colon \allowbreak [0,T]\times \R^d\to \R$ satisfy for all $t\in [0,T]$, $\vartheta\in\R^d$ that $	u(t,\vartheta)=\psi(\theta^{\vartheta}_{T-t})$.
	Then 
	\begin{enumerate}[(i)]
		\item\label{item:Kolm_back_eq_main_1} we have that $u\in C^{1}([0,T]\times\R^d,\R)$ and
		\item\label{item:Kolm_back_eq_main_2} we have for all $t\in [0,T]$, $\vartheta\in \R^d$ that
		$
		(\pt u )(t,\vartheta) = - (\pv u )(t,\vartheta)g(\vartheta)
	$.
	\end{enumerate}
\end{lemma}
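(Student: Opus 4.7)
The plan is to reduce this lemma to a direct application of \Cref{thm:Kolm_back_eq} via a simple time-reversal change of variables. First, Lemma~\ref{lem:f_C^1} applied in Setting~\ref{sec:setting_1} gives $g \in C^1(\R^d, \R^d)$, so the hypotheses of \Cref{thm:Kolm_back_eq} are satisfied with $f = g$ and the given $\psi$.

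Concretely, I would introduce the auxiliary function $\tilde u \colon [0,T] \times \R^d \to \R$ defined by $\tilde u(t,\vartheta) = \psi(\theta^\vartheta_t)$. Then \Cref{thm:Kolm_back_eq} (applied on $[0,T]$) yields $\tilde u \in C^{1}([0,T]\times\R^d,\R)$ together with the forward Kolmogorov equation
\begin{equation}
(\tfrac{\partial}{\partial t}\tilde u)(t,\vartheta) = (\tfrac{\partial}{\partial \vartheta}\tilde u)(t,\vartheta)\, g(\vartheta)
\end{equation}
for all $(t,\vartheta) \in [0,T] \times \R^d$. Since $u(t,\vartheta) = \tilde u(T-t,\vartheta)$, the function $u$ is the composition of $\tilde u$ with the $C^\infty$-diffeomorphism $[0,T]\times\R^d \ni (t,\vartheta) \mapsto (T-t,\vartheta) \in [0,T]\times\R^d$, which immediately gives item~\eqref{item:Kolm_back_eq_main_1}.

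For item~\eqref{item:Kolm_back_eq_main_2}, I would apply the chain rule to $u(t,\vartheta) = \tilde u(T-t,\vartheta)$, which yields $(\tfrac{\partial}{\partial t}u)(t,\vartheta) = -(\tfrac{\partial}{\partial t}\tilde u)(T-t,\vartheta)$ and $(\tfrac{\partial}{\partial \vartheta}u)(t,\vartheta) = (\tfrac{\partial}{\partial \vartheta}\tilde u)(T-t,\vartheta)$. Substituting the forward equation for $\tilde u$ at time $T-t$ then gives
\begin{equation}
(\tfrac{\partial}{\partial t} u)(t,\vartheta) = -(\tfrac{\partial}{\partial \vartheta}\tilde u)(T-t,\vartheta)\, g(\vartheta) = -(\tfrac{\partial}{\partial \vartheta} u)(t,\vartheta)\, g(\vartheta),
\end{equation}
which is the desired backward equation. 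There is no real obstacle here — the only thing to keep track of is the minus sign produced by differentiating $T-t$ in $t$, which is precisely what turns the forward PDE of \Cref{thm:Kolm_back_eq} into the backward PDE claimed in the statement.
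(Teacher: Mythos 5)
Your proposal is correct and follows the same approach as the paper's own proof: both introduce the auxiliary function $v(t,\vartheta)=\psi(\theta^\vartheta_t)$ (your $\tilde u$), invoke Lemma~\ref{lem:f_C^1} to supply the $C^1$ regularity of $g$ needed for \Cref{thm:Kolm_back_eq}, use item~\eqref{Kolm_back_eq_item_1} of \Cref{thm:Kolm_back_eq} to obtain item~\eqref{item:Kolm_back_eq_main_1}, and then deduce item~\eqref{item:Kolm_back_eq_main_2} from item~\eqref{Kolm_back_eq_item_2} of \Cref{thm:Kolm_back_eq} via the time reversal $u(t,\vartheta)=v(T-t,\vartheta)$ and the chain rule.
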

\begin{proof}[Proof of Lemma~\ref{lem:Kolm_back_eq_main}]
	Throughout this proof let $v \colon [0,T]\times \R^d\to\R$ satisfy for all $t\in [0,T]$, $\vartheta\in \R^d$ that
	\begin{equation}\label{eq:Kolm_back_eq_main_4}
	v(t,\vartheta)=\psi(\theta^{\vartheta}_t).
	\end{equation}
	Note that combining Lemma~\ref{lem:f_C^1} and item~\eqref{Kolm_back_eq_item_1} in \Cref{thm:Kolm_back_eq} proves item~\eqref{item:Kolm_back_eq_main_1}.
	Next observe that item~\eqref{Kolm_back_eq_item_2} in \Cref{thm:Kolm_back_eq} assures that for all $t\in [0,T]$, $\vartheta\in \R^d$ we have that
	\begin{equation}\label{eq:Kolm_back_eq_main_5}
	(\tfrac{\partial}{\partial t}v )(t,\vartheta) = (\tfrac{\partial}{\partial \vartheta} v)(t,\vartheta) g(\vartheta).
	\end{equation}
	This reveals that for all $t\in [0,T]$, $\vartheta\in \R^d$  it holds that
	\begin{equation}\label{eq:Kolm_back_eq_main_6}
	\begin{split}
	(\tfrac{\partial}{\partial t}u )(t,\vartheta) 
	= - (\tfrac{\partial}{\partial t} v )(T-t,\vartheta)= -(\tfrac{\partial}{\partial \vartheta} v)(T-t,\vartheta) g(\vartheta)
= -(\tfrac{\partial}{\partial \vartheta} u)(t,\vartheta) g(\vartheta).
	\end{split}
	\end{equation}
	This establishes item~\eqref{item:Kolm_back_eq_main_2}. The proof of Lemma \ref{lem:Kolm_back_eq_main} is thus completed.
\end{proof}

\begin{lemma}\label{lem:estimate_u_01}
	Assume Setting~\ref{sec:setting_1}, let $\psi\in C^1(\R^d,\R)$, $T\in (0,\infty)$, $L\in\R$, assume  for all $x,y\in \R^d$ that
	\begin{equation}\label{eq:estimate_u_01_1}
	\langle g(x)-g(y), x-y\rangle_{\R^d} \leq L \|x-y\|_{\R^d}^2,
	\end{equation}
	and let $u  = (u(t,\vartheta))_{(t, \vartheta) \in [0, T] \times \R^d}  \colon [0,T]\times \R^d\to\R$ satisfy for all $t\in [0,T]$, $\vartheta\in\R^d$ that
	$
	u(t,\vartheta)=\psi(\theta_{T-t}^{\vartheta})
	$.
	Then
	\begin{enumerate}[(i)]
		\item\label{item:estimate_u_01_1_2} we have that $u\in C^{1}([0,T]\times\R^d,\R)$
		and
		\item\label{item:estimate_u_01_3} we have for all $t\in [0,T]$, $\vartheta\in\R^d$ that 
		\begin{equation}
		\|(\pv u)(t,\vartheta)\|_{L(\R^d,\R)}\leq \| \psi'(\theta_{T-t}^{\vartheta}) \|_{L(\R^d,\R)} \exp(L(T-t)).
		\end{equation}
	\end{enumerate}
\end{lemma}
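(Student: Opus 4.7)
The plan is to verify both items by direct reduction to results already assembled in Section~\ref{subsec:spatial}. For item~\eqref{item:estimate_u_01_1_2}, I would simply invoke item~\eqref{item:Kolm_back_eq_main_1} of Lemma~\ref{lem:Kolm_back_eq_main}, which asserts exactly that the function $[0,T]\times\R^d\ni(t,\vartheta)\mapsto\psi(\theta^{\vartheta}_{T-t})\in\R$ is in $C^1([0,T]\times\R^d,\R)$. Since $u$ is defined precisely by this formula, item~\eqref{item:estimate_u_01_1_2} follows immediately.

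For item~\eqref{item:estimate_u_01_3}, the key observation is that, thanks to item~\eqref{item:estimate_u_01_1_2} and the chain rule, for all $t\in[0,T]$ and $\vartheta\in\R^d$ we have the identity
\begin{equation}
(\pv u)(t,\vartheta)=\psi'(\theta_{T-t}^{\vartheta})\,(\pv\theta_{T-t}^{\vartheta})\in L(\R^d,\R),
\end{equation}
viewed as the composition of the linear map $\pv\theta_{T-t}^{\vartheta}\in L(\R^d,\R^d)$ followed by $\psi'(\theta_{T-t}^{\vartheta})\in L(\R^d,\R)$. Here I would rely on Lemma~\ref{lem:f_C^1} and item~\eqref{item:exp_decay_partial_vartheta_1} of Lemma~\ref{lem:exp_decay_partial_vartheta} to ensure that $(t,\vartheta)\mapsto\theta_t^{\vartheta}$ is indeed $C^1$ so the chain rule applies.

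Next I would apply the submultiplicativity of the operator norm: for every $A\in L(\R^d,\R)$ and $B\in L(\R^d,\R^d)$ we have $\|A\circ B\|_{L(\R^d,\R)}\leq\|A\|_{L(\R^d,\R)}\|B\|_{L(\R^d,\R^d)}$. Specializing to $A=\psi'(\theta_{T-t}^{\vartheta})$ and $B=\pv\theta_{T-t}^{\vartheta}$ yields
\begin{equation}
\|(\pv u)(t,\vartheta)\|_{L(\R^d,\R)}\leq\|\psi'(\theta_{T-t}^{\vartheta})\|_{L(\R^d,\R)}\,\|\pv\theta_{T-t}^{\vartheta}\|_{L(\R^d,\R^d)}.
\end{equation}
The contractivity assumption \eqref{eq:estimate_u_01_1} together with item~\eqref{item:exp_decay_partial_vartheta_2} of Lemma~\ref{lem:exp_decay_partial_vartheta} then supplies the bound $\|\pv\theta_{T-t}^{\vartheta}\|_{L(\R^d,\R^d)}\leq\exp(L(T-t))$, and combining the two inequalities concludes the proof.

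There is essentially no genuine obstacle here; the lemma is an assembly of (a) the regularity statement of Lemma~\ref{lem:Kolm_back_eq_main}, (b) the chain rule, and (c) the a priori bound on the Jacobian of the flow from Lemma~\ref{lem:exp_decay_partial_vartheta}. The only point that warrants a line of care is making sure the composition is parsed correctly (so that the factor $L(R-t)$ and not $Lt$ appears in the exponent), which follows from replacing $t$ with $T-t$ in the estimate of Lemma~\ref{lem:exp_decay_partial_vartheta}.
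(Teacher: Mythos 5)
Your proposal is correct and mirrors the paper's own argument almost line for line: item~\eqref{item:estimate_u_01_1_2} is drawn directly from item~\eqref{item:Kolm_back_eq_main_1} in Lemma~\ref{lem:Kolm_back_eq_main}, and item~\eqref{item:estimate_u_01_3} follows by the chain rule, submultiplicativity of the operator norm, and item~\eqref{item:exp_decay_partial_vartheta_2} in Lemma~\ref{lem:exp_decay_partial_vartheta} with $t$ replaced by $T-t$. (There is a small typo in your final sentence where $L(R-t)$ should read $L(T-t)$, but the substance is unaffected.)
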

\begin{proof}[Proof of Lemma~\ref{lem:estimate_u_01}] 
	First, note that item~\eqref{item:Kolm_back_eq_main_1} in Lemma~\ref{lem:Kolm_back_eq_main} proves item~\eqref{item:estimate_u_01_1_2}.
	Next observe that item~\eqref{item:exp_decay_partial_vartheta_2} in Lemma~\ref{lem:exp_decay_partial_vartheta} implies that for all $t\in [0,T]$, $\vartheta\in\R^d$ we have that
	\begin{equation}
	\begin{split}
	\|(\pv u)(t,\vartheta)\|_{L(\R^d,\R)}
	 &= \|\psi'(\theta_{T-t}^{\vartheta})(\pv \theta_{T-t}^{\vartheta})\|_{L(\R^d,\R)}\\
	& \leq \|\psi'(\theta_{T-t}^{\vartheta})\|_{L(\R^d,\R)}\|(\pv \theta_{T-t}^{\vartheta})\|_{L(\R^d,\R^d)}\\
	& \leq \|\psi'(\theta_{T-t}^{\vartheta})\|_{L(\R^d,\R)}e^{L(T-t)}.
	\end{split}
	\end{equation}
	This establishes item~\eqref{item:estimate_u_01_3}. The proof of Lemma~\ref{lem:estimate_u_01} is thus completed.
\end{proof}

\section{Upper bounds for second-order spatial derivatives of certain deterministic flows}
\label{subsec:upper}
\sectionmark{}

\begin{lemma}\label{lem:exp_decay_pty_f'}
	Let $d\in\N$, $g\in C^1(\R^d,\R^d)$, $L\in \R$, and  assume  for all $x,y\in \R^d$ that
	\begin{equation}\label{eq:exp_decay_pty_f'_1}
	\langle g(x)-g(y), x-y\rangle_{\R^d} \leq L \|x-y\|^2_{\R^d}.
	\end{equation}
	Then we have for all $x,v\in\R^d$ that
	$
	\langle g'(v)x,x \rangle_{\R^d} \leq L\|x\|^2_{\R^d}
	$.
\end{lemma}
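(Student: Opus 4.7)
The plan is to reduce the inequality about the derivative $g'(v)$ to the hypothesis about $g$ by taking a directional limit. More precisely, I would fix arbitrary $x, v \in \R^d$ and $h \in (0, \infty)$, apply the hypothesis \eqref{eq:exp_decay_pty_f'_1} to the two points $v+hx$ and $v$, and then let $h \to 0^+$.

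First, applying the assumption with the pair $(v+hx, v)$ yields
\begin{equation}
\langle g(v+hx) - g(v), (v+hx) - v \rangle_{\R^d} \leq L \| (v+hx) - v \|_{\R^d}^2,
\end{equation}
which simplifies to $h \langle g(v+hx) - g(v), x \rangle_{\R^d} \leq L h^2 \|x\|_{\R^d}^2$. Dividing both sides by $h^2 > 0$ gives
\begin{equation}
\Big\langle \tfrac{g(v+hx) - g(v)}{h}, x \Big\rangle_{\R^d} \leq L \|x\|_{\R^d}^2.
\end{equation}
Since $g \in C^1(\R^d, \R^d)$, the difference quotient $\tfrac{g(v+hx) - g(v)}{h}$ converges to $g'(v)x$ as $h \to 0^+$, and by continuity of the inner product the left-hand side converges to $\langle g'(v)x, x \rangle_{\R^d}$. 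Passing to the limit preserves the inequality and yields $\langle g'(v)x, x \rangle_{\R^d} \leq L \|x\|_{\R^d}^2$, as desired.

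There is no real obstacle here: the argument is a one-step directional differentiation, and the only things used are the definition of the Fréchet derivative (which reduces to a directional derivative in the direction $x$), bilinearity/continuity of the Euclidean inner product, and the fact that $h > 0$ so that division preserves the inequality. The case $x = 0$ is trivial since both sides vanish.
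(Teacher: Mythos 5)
Your proposal is correct and is essentially the same argument the paper gives: restrict the Fr\'echet derivative to the direction $x$, apply the hypothesis \eqref{eq:exp_decay_pty_f'_1} to the pair $(v+hx,v)$, divide by $h^2$, and pass to the limit using continuity of the inner product. The only cosmetic difference is that you take a one-sided limit $h\to 0^+$ whereas the paper uses a two-sided $\lambda\to 0$, dividing by $\lambda^2>0$ so the inequality is preserved regardless of the sign of $\lambda$; both are valid.
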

\begin{proof}[Proof of Lemma~\ref{lem:exp_decay_pty_f'}]
	Throughout this proof let $x,v\in\R^d$. Note that
	\begin{equation}
	\limsup_{\substack{h \to 0\\
			h \in \R^d \backslash \{0\}}} \bigg\|\frac{1}{\|h\|_{\R^d}}(g(v+h)-g(v)-g'(v)h)\bigg\|_{\R^d}=0.
	\end{equation}
	This demonstrates that
	\begin{equation}
	\limsup_{\substack{\lambda \to 0\\
			\lambda \in \R \backslash \{0\}}}\bigg\|\frac{1}{\lambda}(g(v+\lambda x)-g(v)-\lambda g'(v)x)\bigg\|_{\R^d}=0.
	\end{equation}
	This reveals that
	\begin{equation}
	\lim_{\substack{\lambda \to 0\\
			\lambda \in \R \backslash \{0\}}} \left[ \frac{1}{\lambda}(g(v+\lambda x)-g(v)) \right] = g'(v)x.
	\end{equation}
	This and \eqref{eq:exp_decay_pty_f'_1} assure that
	\begin{align*}
	&\langle g'(v)x,x \rangle_{\R^d} \numberthis\\
	&= \left\langle \lim_{\substack{\lambda \to 0\\
			\lambda \in \R \backslash \{0\}}} \left[\frac{1}{\lambda} (g(v+\lambda x)-g(v)) \right], x \right\rangle_{\!\!\R^d}
	= \lim_{\substack{\lambda \to 0\\
			\lambda \in \R \backslash \{0\}}} \left[\frac{1}{\lambda}\langle  g(v+\lambda   x)-g(v), x \rangle_{\R^d} \right]\\
	&= \lim_{\substack{\lambda \to 0\\
			\lambda \in \R \backslash \{0\}}} \left[ \frac{1}{\lambda^2}\langle  g(v+\lambda x)-g(v), \lambda x \rangle_{\R^d} \right]
	\leq \limsup_{\substack{\lambda \to 0\\
			\lambda \in \R \backslash \{0\}}} \left[ \frac{1}{\lambda^2}L\norm{\lambda x}_{\R^d}^2 \right]
	 =  L\|x\|_{\R^d}^2.
	\end{align*}
	The proof of Lemma~\ref{lem:exp_decay_pty_f'} is thus completed.
\end{proof}

\begin{lemma}\label{lem:unicity_sol_Ax+b}
	Let $t,L\in\R$, $T\in (t,\infty)$, $d\in\N$, $b\in C([t,T],\R^d)$, let $A\in C([t,T],L(\R^d,\R^d))$ satisfy for all $s\in [t,T]$, $u\in \R^d$ that
	\begin{equation}\label{eq:unicity_sol_Ax+b_1}
	\langle A(s)u, u\rangle_{\R^d} \leq L\norm{u}_{\R^d}^2,
	\end{equation}
	and let $y_1, y_2 \in C^1([t,T],\R^d)$ satisfy for all $i \in \{1, 2\}$, $s\in [t,T]$ that
	\begin{equation}
	\label{eq:unicity_sol_Ax+b_2}
	y_i(t)=0  \qquad \text{and} \qquad 
	(y_i)'(s) = A(s)y_i(s)+b(s).
	\end{equation}
	Then we have that $y_1=y_2$.
\end{lemma}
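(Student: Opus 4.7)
The plan is to study the difference $z = y_1 - y_2$ and show that $\|z(\cdot)\|_{\R^d}^2$ satisfies a Gronwall-type differential inequality that forces it to vanish.

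First, I would observe that by linearity and \eqref{eq:unicity_sol_Ax+b_2} the function $z \in C^1([t,T], \R^d)$ satisfies $z(t) = 0$ and, for all $s \in [t,T]$, $z'(s) = A(s) z(s)$. Next, I would introduce the scalar map $f \colon [t,T] \to \R$ given by $f(s) = \|z(s)\|_{\R^d}^2 = \langle z(s), z(s)\rangle_{\R^d}$. Since $z \in C^1([t,T], \R^d)$ and the inner product is bilinear, it is standard that $f \in C^1([t,T], \R)$ with
\begin{equation}
f'(s) = 2 \langle z'(s), z(s)\rangle_{\R^d} = 2 \langle A(s) z(s), z(s)\rangle_{\R^d}
\end{equation}
for all $s \in [t,T]$. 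Applying the hypothesis \eqref{eq:unicity_sol_Ax+b_1} pointwise in $s$ with $u = z(s)$ then gives $f'(s) \leq 2L \|z(s)\|_{\R^d}^2 = 2L f(s)$ for all $s \in [t,T]$.

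The Gronwall-type differential inequality in Lemma~\ref{lem:Gronwall_differential} (with $b \equiv 2L$ and $f = f$) then yields, for every $s \in [t,T]$,
\begin{equation}
0 \leq f(s) \leq f(t) \exp\!\left( \int_t^s 2L \, du \right) = \|z(t)\|_{\R^d}^2 \exp(2L(s-t)) = 0,
\end{equation}
since $z(t) = 0$. Hence $\|z(s)\|_{\R^d}^2 = 0$ for all $s \in [t,T]$, which gives $y_1(s) = y_2(s)$ for all $s \in [t,T]$, completing the proof.

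I do not anticipate any serious obstacle: the only delicate point is to make sure that the one-sided bound \eqref{eq:unicity_sol_Ax+b_1} on $A(s)$ (rather than a two-sided operator norm bound) is enough, and this is exactly what makes the derivative of $\|z\|_{\R^d}^2$ controllable via the symmetric pairing $\langle A(s) z(s), z(s)\rangle_{\R^d}$. Note that the continuity hypotheses on $A$ and $b$ are not actually needed for the argument, beyond what is already encoded in the assumption $y_1, y_2 \in C^1([t,T], \R^d)$.
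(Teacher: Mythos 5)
Your proof is correct and follows essentially the same route as the paper: set $\varphi(s)=\|y_1(s)-y_2(s)\|_{\R^d}^2$, compute $\varphi'(s)=2\langle y_1(s)-y_2(s),A(s)(y_1(s)-y_2(s))\rangle_{\R^d}\leq 2L\varphi(s)$, and apply the Gronwall differential inequality of Lemma~\ref{lem:Gronwall_differential} together with $\varphi(t)=0$. Your extra remark that the continuity of $A$ and $b$ is not used beyond what $y_1,y_2\in C^1$ already encodes is a fair observation but does not change the argument.
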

\begin{proof}[Proof of Lemma~\ref{lem:unicity_sol_Ax+b}]
	Throughout this proof let $\varphi \in C^1([t,T],\R)$ satisfy for all $s\in [t,T]$ that
	\begin{equation}\label{eq:unicity_sol_Ax+b_4}
	\varphi(s)=\norm{y_1(s)-y_2(s)}_{\R^d}^2.
	\end{equation}
	Observe that \eqref{eq:unicity_sol_Ax+b_4}, \eqref{eq:unicity_sol_Ax+b_2}, and \eqref{eq:unicity_sol_Ax+b_1} imply that for all $s\in [t,T]$ we have that
	\begin{equation}\label{eq:unicity_sol_Ax+b_5}
	\begin{split}
	\varphi'(s)
	&= 2\langle y_1(s)-y_2(s), (y_1)'(s)-(y_2)'(s)\rangle_{\R^d}\\
	&= 2\langle y_1(s)-y_2(s), A(s)(y_1(s)-y_2(s))\rangle_{\R^d}\\
	& \leq 2 L\norm{y_1(s)-y_2(s)}_{\R^d}^2  = 2L \varphi(s).
	\end{split}
	\end{equation}
	This and the Gronwall differential inequality in Lemma~\ref{lem:Gronwall_differential} prove that for all $s\in [t,T]$ we have that
	\begin{equation}
	\varphi(s)\leq \varphi(t)e^{2L(s-t)}.
	\end{equation}
	This and \eqref{eq:unicity_sol_Ax+b_2} assure that for all $s\in [t,T]$ we have that $\varphi(s)=0$. The proof of Lemma~\ref{lem:unicity_sol_Ax+b} is thus completed.
\end{proof}

\begin{lemma}\label{lem:twice:diff}
	Assume Setting~\ref{sec:setting_1}.
	Then 
	\begin{enumerate}[(i)]
		\item\label{item:twice:diff:f} we have that $g \in C^2(\R^d,\R^d)$,
		\item\label{item:twice:diff:theta} we have that
		\begin{equation}\label{eq:twice:diff:theta}
		\big( [0,\infty)\times \R^d\ni (t,\vartheta)\mapsto \theta^{\vartheta}_t\in \R^d \big)\in C^{2}([0,\infty)\times\R^d,\R^d),
		\end{equation}
		\item\label{item:twice:diff:t} we have for all  $\vartheta \in \R^d $ that
		\begin{equation}\label{eq:twice:diff:t}
		\big( [0, \infty) \ni t \mapsto \tfrac{\partial^2}{\partial \vartheta^2}\theta_t^{\vartheta}  \in  L^{(2)}(\R^d, \R^d) \big)  \in C^1([0, \infty), L^{(2)} (\R^d, \R^d)),
		\end{equation}
		and
		\item\label{item:twice:diff:dt} we have for all $t \in [0, \infty)$, $\vartheta \in \R^d$ that
		\begin{equation}
		\tfrac{\partial}{\partial t}(\tfrac{\partial^2}{\partial \vartheta^2}\theta_t^{\vartheta} ) = \tfrac{\partial^2}{\partial \vartheta^2} (g(\theta_t^{\vartheta} )).
		\end{equation}
	\end{enumerate}
\end{lemma}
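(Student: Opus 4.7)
The plan is to establish items~\eqref{item:twice:diff:f}--\eqref{item:twice:diff:dt} in turn, with most of the work going into justifying an interchange of $\pt$ and $\pv$ twice.

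For item~\eqref{item:twice:diff:f}, I would apply Lemma~\ref{lem:f_C^n} with $n=2$. The required hypothesis \eqref{eq:f_C^n_0} is precisely the content of condition \eqref{eq:setting_4} in Setting~\ref{sec:setting_1} for $i \in \{1,2\}$, while the assumption that $G(\cdot,s) \in C^2$ is built into Setting~\ref{sec:setting_1}. Item~\eqref{item:twice:diff:theta} then follows directly from Corollary~\ref{cor:regularity_flow} applied with $n=2$ and $f$ replaced by $g$, using item~\eqref{item:twice:diff:f}.

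For items~\eqref{item:twice:diff:t} and~\eqref{item:twice:diff:dt}, I would set $u^{\vartheta}_t := \pv \theta^{\vartheta}_t$ and $v^{\vartheta}_t := \tfrac{\partial^2}{\partial \vartheta^2} \theta^{\vartheta}_t$, both well-defined and jointly continuous in $(t,\vartheta)$ by item~\eqref{item:twice:diff:theta}; moreover $u$ is $C^1$ jointly and $\theta$ is $C^2$ jointly. The strategy is first to differentiate the defining ODE $\pt \theta^{\vartheta}_t = g(\theta^{\vartheta}_t)$ once in $\vartheta$ and invoke Schwarz's theorem on equality of mixed partial derivatives (applicable since $\theta \in C^2$ jointly) to obtain the first variational identity $\pt u^{\vartheta}_t = g'(\theta^{\vartheta}_t)\, u^{\vartheta}_t$. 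Since the right-hand side is $C^1$ jointly in $(t,\vartheta)$ (by items~\eqref{item:twice:diff:f}--\eqref{item:twice:diff:theta} together with the chain and product rules), differentiating this identity once more in $\vartheta$ and applying Schwarz's theorem a second time gives
\begin{equation}
\pt v^{\vartheta}_t = \pv \! \left[ g'(\theta^{\vartheta}_t)\, u^{\vartheta}_t \right] = g''(\theta^{\vartheta}_t)(u^{\vartheta}_t, u^{\vartheta}_t) + g'(\theta^{\vartheta}_t)\, v^{\vartheta}_t.
\end{equation}
The right-hand side is jointly continuous in $(t,\vartheta)$, so $t \mapsto v^{\vartheta}_t$ is $C^1$, proving item~\eqref{item:twice:diff:t}. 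The second-order chain rule identifies this expression with $\tfrac{\partial^2}{\partial \vartheta^2}(g(\theta^{\vartheta}_t))$, yielding item~\eqref{item:twice:diff:dt}.

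The main obstacle is the careful bookkeeping of regularity at the second level, namely showing that $\pv \pt u = \pt \pv u$. The first-order variational identity $\pt u = g'(\theta)\, u$ serves a dual purpose here: it shows on the one hand that $\pt u$ is $C^1$ in $\vartheta$, so that $\pv \pt u$ exists and is continuous, and on the other hand that $u$ inherits enough joint regularity in $(t,\vartheta)$ for Schwarz's theorem to deliver the interchange. Once this is in place, the remainder is a routine combination of the chain rule, the product rule, and the regularity already established in items~\eqref{item:twice:diff:f}--\eqref{item:twice:diff:theta}.
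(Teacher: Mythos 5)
Your proposal is correct and takes essentially the same route as the paper's proof. Items~(i) and~(ii) are handled identically, and for items~(iii)--(iv) both arguments reduce to two applications of Schwarz's theorem: once on $\theta \in C^2$ to get $\partial_t\partial_\vartheta\theta = \partial_\vartheta\partial_t\theta$, and once more on $u = \partial_\vartheta\theta$ after observing that $\partial_\vartheta\partial_t u$ exists and is continuous (the paper cites K\"onigsberger for Schwarz, where you appeal to it directly). The only cosmetic difference is that you compute the explicit second variational equation $\partial_t v = g''(\theta)(u,u) + g'(\theta)v$ to recognize the right-hand side as $\partial_\vartheta^2(g(\theta))$, whereas the paper argues abstractly via the identity $\partial_\vartheta^2(\partial_t\theta) = \partial_t(\partial_\vartheta^2\theta)$ combined with $\partial_t\theta = g(\theta)$ — deferring the explicit variational equation to Lemma~\ref{lem:two_expressions}.
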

\begin{proof}[Proof of Lemma~\ref{lem:twice:diff}]
	First, note that Lemma~\ref{lem:f_C^n} proves item~\eqref{item:twice:diff:f}. This and Corollary~\ref{cor:regularity_flow} demonstrate item~\eqref{item:two_expressions_2}.
	Next observe that for all $t \in [0, \infty)$, $\vartheta \in \R^d$ we have that
	\begin{equation}
	\label{eq:theta:f}
	\tfrac{\partial}{\partial t} \theta_t^{\vartheta} = g(\theta_t^{\vartheta}).
	\end{equation}
	This, item~\eqref{item:twice:diff:f}, and item~\eqref{item:two_expressions_2}
	ensure that
	\begin{equation}
	\big( [0,\infty)\times \R^d\ni (t,\vartheta)\mapsto \tfrac{\partial}{\partial t} \theta_t^{\vartheta} \in \R^d \big)\in C^{2}([0,\infty)\times\R^d,\R^d).
	\end{equation}
	This reveals that
	\begin{align}
	\label{eq:mixed:deriv}
	\begin{split}
	\big( [0,\infty)\times \R^d\ni (t,\vartheta)\mapsto \tfrac{\partial^2}{\partial \vartheta^2} ( \tfrac{\partial}{\partial t} \theta_t^{\vartheta}) 
	\in L^{(2)}(\R^d, \R^d) \big) \\
	\in C([0,\infty)\times\R^d, L^{(2)}(\R^d, \R^d)).
	\end{split}
	\end{align}
	Schwarz's theorem (cf., e.g.,  K\"onigsberger~\cite[Section~2.3]{Koenigsberger2004}) hence proves that for all $t \in [0, \infty)$, $\vartheta \in \R^d$ we have that
	\begin{equation}
	\label{eq:mixed:deriv:2}
	\tfrac{\partial^2}{\partial \vartheta^2} ( \tfrac{\partial}{\partial t} \theta_t^{\vartheta}) = \tfrac{\partial}{\partial \vartheta} ( \tfrac{\partial^2}{\partial \vartheta \partial t} \theta_t^{\vartheta}) = \tfrac{\partial}{\partial \vartheta} ( \tfrac{\partial^2}{\partial t \partial \vartheta} \theta_t^{\vartheta})= \tfrac{\partial^2}{\partial \vartheta \partial t} ( \tfrac{\partial}{\partial \vartheta} \theta_t^{\vartheta}).
	\end{equation}
	This  and  \eqref{eq:mixed:deriv} assure that 
	\begin{align}
	\begin{split}
	\big( [0,\infty)\times \R^d\ni (t,\vartheta)\mapsto \tfrac{\partial^2}{\partial \vartheta \partial t} ( \tfrac{\partial}{\partial \vartheta} \theta_t^{\vartheta})
	\in L^{(2)}(\R^d, \R^d) \big) \\
	\in C([0,\infty)\times\R^d, L^{(2)}(\R^d, \R^d)).
	\end{split}
	\end{align}
	Schwarz's theorem (cf., e.g.,  K\"onigsberger~\cite[Section~2.3]{Koenigsberger2004}) therefore implies that for all $t \in [0, \infty)$, $\vartheta \in \R^d$ we have that $\tfrac{\partial^2}{\partial t \partial \vartheta } ( \tfrac{\partial}{\partial \vartheta} \theta_t^{\vartheta})$ exists and 
	\begin{equation}
	\tfrac{\partial^2}{\partial t \partial \vartheta } ( \tfrac{\partial}{\partial \vartheta} \theta_t^{\vartheta}) = \tfrac{\partial^2}{\partial \vartheta \partial t} ( \tfrac{\partial}{\partial \vartheta} \theta_t^{\vartheta}).
	\end{equation}
	This and \eqref{eq:mixed:deriv:2} 
	establish item~\eqref{item:twice:diff:t}  and that for all $t \in [0, \infty)$, $\vartheta \in \R^d$ we have that
	\begin{equation}
	\tfrac{\partial^2}{\partial \vartheta^2} ( \tfrac{\partial}{\partial t} \theta_t^{\vartheta}) = \tfrac{\partial^2}{\partial t \partial \vartheta } ( \tfrac{\partial}{\partial \vartheta} \theta_t^{\vartheta}) = \tfrac{\partial}{\partial t} ( \tfrac{\partial^2}{ \partial \vartheta^2} \theta_t^{\vartheta}).
	\end{equation}
	Combining this with \eqref{eq:theta:f} establishes item~\eqref{item:twice:diff:dt}. The proof of Lemma~\ref{lem:twice:diff} is thus completed.
\end{proof}

\begin{lemma}\label{lem:two_expressions}
	Assume Setting~\ref{sec:setting_1}, let $T\in (0,\infty)$, $L\in \R$, and  assume  for all $x,y\in\R^d$  that
	\begin{equation}\label{eq:two_expressions_1}
	\langle g(x)-g(y),x-y \rangle_{\R^d} \leq L\|x-y\|_{\R^d}^2.
	\end{equation}
	Then 
	\begin{enumerate}[(i)]
		\item\label{item:two_expressions_1} we have that $g\in C^2(\R^d,\R^d)$,
		\item\label{item:two_expressions_2} we have that
		$
		( [0,\infty)\times \R^d\ni (t,\vartheta)\mapsto \theta^{\vartheta}_t\in \R^d )\in C^{2}([0,\infty)\times\R^d,\R^d)
		$,
		\item\label{item:two_expressions_2_02} we have that there exist unique  $\chi^{\vartheta}\in C( \lb (s,t)\in [0,T]^2\colon s\leq t\rb \times \R^d,\R^d)$, $\vartheta \in\R^d$,  which satisfy  for all $\vartheta \in\R^d$, $s\in [0,T]$, $t\in [s,T]$, $x \in \R^d$ that
		\begin{equation}
		\chi^{\vartheta}(s,t,x) = x + \int_{s}^t g'(\theta_u^{\vartheta}) \, \chi^{\vartheta}(s,u,x)\, du,
		\end{equation}
		and
		\item\label{item:two_expressions_3} we have for all $t\in [0,T]$, $\vartheta, v,w\in\R^d$ that
		\begin{equation}\label{eq:two_expressions_2}
		(\tfrac{\partial^2}{\partial \vartheta^2}\theta_t^{\vartheta} )(v,w)
		=\int_0^t \chi^{\vartheta}\big(s,t,g''(\theta_s^{\vartheta})\big(( \pv \theta_s^{\vartheta}) v, (\pv \theta_s^{\vartheta})w\big)\big) \, ds.
		\end{equation}
	\end{enumerate}
\end{lemma}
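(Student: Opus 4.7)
My plan is to dispatch items~\eqref{item:two_expressions_1}, \eqref{item:two_expressions_2}, \eqref{item:two_expressions_2_02}, and~\eqref{item:two_expressions_3} in order. Items~\eqref{item:two_expressions_1} and~\eqref{item:two_expressions_2} are essentially already contained in Lemma~\ref{lem:twice:diff}: the moment hypothesis~\eqref{eq:setting_4} in Setting~\ref{sec:setting_1} allows one to invoke Lemma~\ref{lem:f_C^n} with $n=2$ to obtain $g\in C^2(\R^d,\R^d)$, after which \Cref{cor:regularity_flow} yields $([0,\infty)\times\R^d\ni(t,\vartheta)\mapsto\theta_t^{\vartheta}\in\R^d)\in C^2([0,\infty)\times\R^d,\R^d)$. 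For item~\eqref{item:two_expressions_2_02}, I fix $\vartheta\in\R^d$; item~\eqref{item:two_expressions_2} together with $g\in C^1(\R^d,\R^d)$ implies that $([0,T]\ni u\mapsto g'(\theta_u^{\vartheta})\in L(\R^d,\R^d))$ is continuous, so \Cref{prop:cont_fct_bnd_on_cmpcts} gives $\sup_{u\in[0,T]}\|g'(\theta_u^{\vartheta})\|_{L(\R^d,\R^d)}<\infty$. The map $f_{\vartheta}\colon[0,T]\times\R^d\to\R^d$ defined by $f_{\vartheta}(u,x)=g'(\theta_u^{\vartheta})x$ is therefore continuous and globally Lipschitz in $x$ uniformly in $u$, so \Cref{thm:regularity_flow_dynamical_system} provides a unique $\chi^{\vartheta}\in C(\{(s,t)\in[0,T]^2\colon s\leq t\}\times\R^d,\R^d)$ satisfying the required integral equation.

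For item~\eqref{item:two_expressions_3}, fix $\vartheta,v,w\in\R^d$ and define
\begin{equation}
Y(t)=\bigl(\tfrac{\partial^2}{\partial\vartheta^2}\theta_t^{\vartheta}\bigr)(v,w),\qquad B(t)=g''(\theta_t^{\vartheta})\bigl((\pv\theta_t^{\vartheta})v,(\pv\theta_t^{\vartheta})w\bigr)
\end{equation}
for $t\in[0,T]$. Item~\eqref{item:twice:diff:dt} in Lemma~\ref{lem:twice:diff} combined with two applications of the chain rule yields
\begin{equation}
Y'(t)=\bigl[\tfrac{\partial^2}{\partial\vartheta^2}\bigl(g(\theta_t^{\vartheta})\bigr)\bigr](v,w)=B(t)+g'(\theta_t^{\vartheta})\,Y(t),
\end{equation}
while $\theta_0^{\vartheta}=\vartheta$ gives $Y(0)=0$. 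Set $Z(t)=\int_0^t\chi^{\vartheta}(s,t,B(s))\, ds$. Differentiating the defining integral equation of $\chi^{\vartheta}$ in $t$ produces $\chi^{\vartheta}(s,s,x)=x$ and $\tfrac{\partial}{\partial t}\chi^{\vartheta}(s,t,x)=g'(\theta_t^{\vartheta})\chi^{\vartheta}(s,t,x)$, so a Leibniz-type differentiation under the integral sign gives
\begin{equation}
Z'(t)=\chi^{\vartheta}(t,t,B(t))+\int_0^t g'(\theta_t^{\vartheta})\chi^{\vartheta}(s,t,B(s))\, ds=B(t)+g'(\theta_t^{\vartheta})\,Z(t),
\end{equation}
together with $Z(0)=0$. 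Consequently $W\coloneqq Y-Z$ satisfies $W(0)=0$ and $W(t)=\int_0^t g'(\theta_s^{\vartheta})W(s)\, ds$, and Lemma~\ref{lem:unique} (applied with $A(s)=g'(\theta_s^{\vartheta})$, $\vartheta=0$, $y_1=W$, $y_2\equiv 0$) forces $W\equiv 0$, which is the identity~\eqref{eq:two_expressions_2}.

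The principal technical obstacle I expect is the rigorous justification of the Leibniz differentiation producing $Z'(t)$ above, since both the upper limit of integration and the integrand depend on $t$. The required ingredients are the joint continuity of $\chi^{\vartheta}$ (supplied by \Cref{thm:regularity_flow_dynamical_system}), the joint continuity of its $t$-derivative (which follows from $\tfrac{\partial}{\partial t}\chi^{\vartheta}(s,t,x)=g'(\theta_t^{\vartheta})\chi^{\vartheta}(s,t,x)$), and the continuity of $B$ on $[0,T]$ (a consequence of the $C^2$-regularity of the flow established in item~\eqref{item:two_expressions_2}). Given these, the standard decomposition $Z(t+h)-Z(t)=\int_t^{t+h}\chi^{\vartheta}(s,t+h,B(s))\, ds+\int_0^t[\chi^{\vartheta}(s,t+h,B(s))-\chi^{\vartheta}(s,t,B(s))]\, ds$ coupled with the fundamental theorem of calculus delivers the claimed form of $Z'(t)$, and the remaining steps reduce to the routine uniqueness argument described above.
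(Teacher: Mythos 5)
Your argument is correct and follows the same overall strategy as the paper: items~(i) and~(ii) come from the same $C^2$-regularity results, item~(iii) is proved by exactly the same reduction to \Cref{thm:regularity_flow_dynamical_system} after verifying a global Lipschitz bound via \Cref{prop:cont_fct_bnd_on_cmpcts}, and item~(iv) is proved by showing that $Y$ and $Z$ both solve the same linear inhomogeneous ODE with the same initial value. The one genuine divergence is in the uniqueness step: the paper verifies $\langle g'(\theta_t^{\vartheta})u,u\rangle_{\R^d}\leq L\|u\|_{\R^d}^2$ (Lemma~\ref{lem:exp_decay_pty_f'}, using the one-sided Lipschitz hypothesis~\eqref{eq:two_expressions_1}) and then invokes Lemma~\ref{lem:unicity_sol_Ax+b}, whereas you subtract $Z$ from $Y$ to reduce to the homogeneous integral equation and apply Lemma~\ref{lem:unique}, which needs only continuity of $s\mapsto g'(\theta_s^{\vartheta})$. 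Your route is marginally shorter, and it also exposes the fact that hypothesis~\eqref{eq:two_expressions_1} is never actually used in the proof of the identity~\eqref{eq:two_expressions_2} itself (the paper needs it because of the particular uniqueness lemma it chose; you show the conclusion holds without it). Your more careful treatment of the Leibniz differentiation of $Z$ is also sound: both $\chi^{\vartheta}$ and its $t$-derivative $g'(\theta_t^{\vartheta})\chi^{\vartheta}(s,t,\cdot)$ are jointly continuous, and $B$ is continuous on $[0,T]$, so the split $Z(t+h)-Z(t)=\int_t^{t+h}\chi^{\vartheta}(s,t+h,B(s))\,ds+\int_0^t[\chi^{\vartheta}(s,t+h,B(s))-\chi^{\vartheta}(s,t,B(s))]\,ds$ together with the fundamental theorem of calculus yields $Z'(t)=B(t)+g'(\theta_t^{\vartheta})Z(t)$, matching the paper's terser computation~\eqref{eq:two_expressions_9}.
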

\begin{proof}[Proof of Lemma~\ref{lem:two_expressions}] 
	First, observe that Lemma~\ref{lem:twice:diff} proves item~\eqref{item:two_expressions_1} and item~\eqref{item:two_expressions_2}.
	Next note that item~\eqref{item:two_expressions_1}, item~\eqref{item:two_expressions_2}, the fact that the set $[0,T] \subseteq \R$ is compact, and \Cref{prop:cont_fct_bnd_on_cmpcts} assure that  there exists $c \colon \R^d \to (0,\infty)$ which satisfies for all $\vartheta \in \R^d$, $t\in[0,T]$ that 
	\begin{equation}
	\|g'(\theta_{t}^{\vartheta})\|_{L(\R^d,\R^d)}\leq c_{\vartheta}.
	\end{equation}
	This proves that for all  $\vartheta \in \R^d$, $x,y\in\R^d$, $s\in [0,T]$ we have that
	\begin{equation}
	\|g'(\theta_{s}^{\vartheta})x-g'(\theta_{s}^{\vartheta})y\|_{\R^d}\leq c_{\vartheta} \|x-y\|_{\R^d}.
	\end{equation}
	\Cref{thm:regularity_flow_dynamical_system} hence ensures that there exist unique $\chi^{\vartheta}\in C( \lb (s,t)\in [0,T]^2\colon s\leq t\rb \times \R^d, \R^d)$, $\vartheta \in \R^d$, which satisfy for all $\vartheta \in \R^d$, $s\in [0,T]$, $t\in [s,T]$,  $ x\in\R^d$ that
	\begin{equation}\label{eq:two_expressions_3-1}
	\chi^{\vartheta}(s,t,x) = x + \int_{s}^t g'(\theta_u^{\vartheta}) \, \chi^{\vartheta}(s,u,x)\, du.
	\end{equation}
	This proves item~\eqref{item:two_expressions_2_02}.
	In the next step let $\vartheta, v,w\in\R^d$, let $A \in C([0,T], L(\R^d,\R^d))$ satisfy for all $t\in [0,T]$ that
	\begin{equation}\label{eq:two_expressions_3}
	A(t)=g'(\theta_t^{\vartheta}),
	\end{equation}
	let $b \in C([0,T], \R^d)$ satisfy for all $t\in [0,T]$ that
	\begin{equation}\label{eq:two_expressions_4}
	b(t)=g''(\theta_t^{\vartheta})\big((\pv \theta_t^{\vartheta})v, (\pv \theta_t^{\vartheta})w \big),
	\end{equation}
	let $y\in C^1([0,T],\R^d)$ satisfy for all $t\in [0,T]$ that
	\begin{equation}\label{eq:two_expressions_4_p}
	y(t)=(\tfrac{\partial^2}{\partial \vartheta^2}\theta_t^{\vartheta} )(v,w)
	\end{equation}
	(cf.~Lemma~\ref{lem:twice:diff}), and let $z\in C^1([0,T],\R^d)$ satisfy for all $t\in [0,T]$ that
	\begin{equation}\label{eq:two_expressions_5}
	z(t)
	=\int_0^t \chi^{\vartheta}\big(s,t,b(s)\big) \, ds.
	\end{equation}
	Note that \eqref{eq:two_expressions_4_p} and \eqref{eq:equation theta} imply that
	\begin{equation}\label{eq:two_expressions_6}
	y(0)  = (\tfrac{\partial^2}{\partial \vartheta^2}\theta_0^{\vartheta} )(v,w)
	= (\tfrac{\partial^2}{\partial \vartheta^2}\vartheta)(v,w)
	= 0.
	\end{equation}
	Moreover, observe that  \eqref{eq:two_expressions_4_p},  Lemma~\ref{lem:twice:diff}, and the chain rule ensure that for all $t\in [0,T]$ we have that
	\begin{equation}\label{eq:two_expressions_7}
	\begin{split}
	y'(t) &= \tfrac{\partial}{\partial t}\big((\tfrac{\partial^2}{\partial \vartheta^2}\theta_t^{\vartheta} )(v,w)\big)\\
	& = \big(\tfrac{\partial^2}{\partial \vartheta^2} (g(\theta_t^{\vartheta}))\big)(v,w)\\
	& = \big(\pv( g'(\theta_t^{\vartheta})(\pv \theta^{\vartheta}_t))\big)(v)(w)\\
	& = \big(\pv( g'(\theta_t^{\vartheta})(\pv \theta^{\vartheta}_t)v)\big)(w)\\
	& = g''(\theta_t^{\vartheta})\big((\pv \theta_t^{\vartheta})v, (\pv \theta_t^{\vartheta})w\big) + g'(\theta_t^{\vartheta})\big(\pv \big( (\tfrac{\partial}{\partial \vartheta} \theta_t^{\vartheta}) v \big) (w)\big)\\
	& = g''(\theta_t^{\vartheta})\big((\pv \theta_t^{\vartheta})v, (\pv \theta_t^{\vartheta})w\big) + g'(\theta_t^{\vartheta})\big((\tfrac{\partial^2}{\partial \vartheta^2} \theta_t^{\vartheta})(v,w)\big)\\
	& =  g'(\theta_t^{\vartheta})\big((\tfrac{\partial^2}{\partial \vartheta^2} \theta_t^{\vartheta})(v,w)\big)
	+ g''(\theta_t^{\vartheta})\big((\pv \theta_t^{\vartheta})v, (\pv \theta_t^{\vartheta})w \big)\\
	& = A(t)y(t)+b(t).
	\end{split}
	\end{equation}
	In addition, note that \eqref{eq:two_expressions_5} assures that
	\begin{equation}\label{eq:two_expressions_8}
	z(0)=0.
	\end{equation}
	In the next step we combine \eqref{eq:two_expressions_5}, \eqref{eq:two_expressions_3}, and \eqref{eq:two_expressions_3-1} to obtain that for all $t\in [0,T]$ we have that
	\begin{equation}\label{eq:two_expressions_9}
	\begin{split}
	z'(t) & = \chi(t,t,b(t)) + \int_0^tA(t)\chi^{\vartheta}\big(s,t,b(s)\big)\, ds\\
	& = b(t) + A(t)z(t)
	 = A(t)z(t) + b(t).
	\end{split}
	\end{equation}
	Furthermore, observe that Lemma~\ref{lem:exp_decay_pty_f'} and \eqref{eq:two_expressions_1} imply that for all $t\in [0,T]$, $u \in\R^d$ we have that
	\begin{equation}
	\langle A(t)u,u\rangle_{\R^d}\leq  L\|u\|_{\R^d}^2.
	\end{equation}
	Combining this, \eqref{eq:two_expressions_6}--\eqref{eq:two_expressions_9}, and Lemma~\ref{lem:unicity_sol_Ax+b} demonstrates that  for all $t\in [0,T]$ we have that
	\begin{equation}
	y(t)=z(t).
	\end{equation}
	This establishes item~\eqref{item:two_expressions_3}. The proof of Lemma~\ref{lem:two_expressions} is thus completed.
\end{proof}

\begin{lemma}\label{lem:special_cond_dyn_syst}
	Let $d\in\N$, $a,L\in\R$, $b\in (a,\infty)$, let $f \in C( [a,b]\times\R^d, \R^d)$  satisfy for all $x,y\in \R^d$, $s\in [a,b]$ that
	\begin{equation}\label{eq:special_cond_dyn_syst_1}
	\langle f(s,x)-f(s,y),x-y\rangle_{\R^d}\leq L \|x-y\|_{\R^d}^2,
	\end{equation}
	and let $\chi_{t,\cdot}^{x}\in C([t,b],\R^d)$, $x\in\R^d$, $t\in [a,b]$,  satisfy for all $t\in [a,b]$, $x\in\R^d$, $s\in [t,b]$ that
	\begin{equation}\label{eq:special_cond_dyn_syst_2}
	\chi_{t,s}^x = x + \int_{t}^s f(u,\chi_{t,u}^x)\, du.
	\end{equation}
	Then we have for all $x,y\in\R^d$, $t\in [a,b]$, $s\in [t,b]$ that
	\begin{equation}\label{eq:special_cond_dyn_syst_3}
	\| \chi_{t,s}^{x} - \chi_{t,s}^{y} \|_{\R^d}\leq \|x-y\|_{\R^d} e^{L(s-t)}.
	\end{equation}
\end{lemma}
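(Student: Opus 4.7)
The plan is to mimic the proof of Lemma~\ref{lem:flow_deriv_1} essentially verbatim, with the only modification being that now $f$ depends on time as well; the time-dependence will play no role because the monotonicity hypothesis \eqref{eq:special_cond_dyn_syst_1} holds uniformly in $s\in [a,b]$. More precisely, I would introduce the squared-distance function $E^{x,y}\colon [t,b]\to [0,\infty)$ defined by $E^{x,y}(s) = \|\chi^x_{t,s} - \chi^y_{t,s}\|_{\R^d}^2$.

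First I would observe that the integral equation \eqref{eq:special_cond_dyn_syst_2} together with the continuity of $f$ ensures that for every $x\in\R^d$, $t\in [a,b]$ the map $[t,b]\ni s \mapsto \chi^x_{t,s} \in \R^d$ is of class $C^1$ with derivative $\tfrac{d}{ds}\chi^x_{t,s} = f(s,\chi^x_{t,s})$. Hence $E^{x,y}\in C^1([t,b], [0,\infty))$, and for every $s\in [t,b]$ the chain rule together with the monotonicity hypothesis \eqref{eq:special_cond_dyn_syst_1} yields
\begin{equation}
(E^{x,y})'(s) = 2 \langle \chi^x_{t,s} - \chi^y_{t,s}, f(s,\chi^x_{t,s}) - f(s,\chi^y_{t,s})\rangle_{\R^d} \leq 2L \|\chi^x_{t,s} - \chi^y_{t,s}\|_{\R^d}^2 = 2L\, E^{x,y}(s).
\end{equation}

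Next I would apply the Gronwall-type differential inequality in Lemma~\ref{lem:Gronwall_differential} (with $t$ replaced by $t$, $T$ replaced by $b$, $b(\cdot)$ the constant function $2L$, $f$ replaced by $E^{x,y}$) to conclude that for all $s\in [t,b]$ we have
\begin{equation}
\|\chi^x_{t,s} - \chi^y_{t,s}\|_{\R^d}^2 = E^{x,y}(s) \leq E^{x,y}(t)\, \exp\!\bigg(\int_t^s 2L\, du\bigg) = \|x-y\|_{\R^d}^2\, e^{2L(s-t)}.
\end{equation}
Taking square roots then gives \eqref{eq:special_cond_dyn_syst_3} and completes the proof.

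There is no real obstacle: the argument is the time-dependent analogue of Lemma~\ref{lem:flow_deriv_1}, and every ingredient (the $C^1$-regularity of $s\mapsto \chi^x_{t,s}$ coming from \eqref{eq:special_cond_dyn_syst_2}, the monotonicity of $f$ uniform in $s$, and the Gronwall differential inequality already established in Lemma~\ref{lem:Gronwall_differential}) is already available. The only minor point to be careful about is that the constant $L$ may be negative, but the derivation above works for arbitrary $L\in \R$.
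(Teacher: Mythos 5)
Your proposal is correct and matches the paper's own proof: both introduce the squared-distance function $E(s)=\|\chi_{t,s}^x-\chi_{t,s}^y\|_{\R^d}^2$, differentiate it and apply the monotonicity hypothesis to obtain $E'(s)\leq 2LE(s)$, and then invoke Lemma~\ref{lem:Gronwall_differential} before taking square roots. The only difference is presentational: the paper silently uses the $C^1$-regularity of $s\mapsto\chi_{t,s}^x$ whereas you state it explicitly, which is a harmless improvement.
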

\begin{proof}[Proof of Lemma~\ref{lem:special_cond_dyn_syst}]
	Throughout this proof let $t\in [a,b]$, $x,y\in\R^d$, let $E\colon [t,b]\to\R$ satisfy for all $s\in [t,b]$ that
	\begin{equation}\label{eq:special_cond_dyn_syst_4}
	E(s)=\| \chi_{t,s}^{x} - \chi_{t,s}^{y} \|_{\R^d}^2.
	\end{equation}
	Note that \eqref{eq:special_cond_dyn_syst_4}, \eqref{eq:special_cond_dyn_syst_2}, and \eqref{eq:special_cond_dyn_syst_1} assure that for all $s\in [t,b]$ we have that
	\begin{equation}\label{eq:special_cond_dyn_syst_5}
	\begin{split}
	E'(s)
	& = 2\langle f(s,\chi_{t,s}^{x}) - f(s,\chi_{t,s}^{y}), \chi_{t,s}^{x} - \chi_{t,s}^{y} \rangle_{\R^d}
	 \leq 2 L \| \chi_{t,s}^{x} - \chi_{t,s}^{y} \|_{\R^d}^2
	 = 2L E(s).
	\end{split}
	\end{equation}
	The Gronwall differential inequality in Lemma~\ref{lem:Gronwall_differential} hence implies that for all $s\in [t,b]$ we have that
	\begin{equation}\label{eq:special_cond_dyn_syst_6}
	\| \chi_{t,s}^{x} - \chi_{t,s}^{y} \|_{\R^d}=|E(s)|^{1/2}\leq |E(t)|^{1/2} e^{L(s-t)} = \|x-y\|_{\R^d} e^{L(s-t)}.
	\end{equation}
	The proof of Lemma~\ref{lem:special_cond_dyn_syst} is thus completed.
\end{proof}

\begin{lemma}\label{lem:bound_partial_theta^2}
	Assume Setting~\ref{sec:setting_1}, let $T\in (0,\infty)$, $L \in \R$, and  assume  for all $x,y\in \R^d$ that 
	\begin{equation}\label{eq:bound_partial_theta^2_p}
	\langle g(x)-g(y), x-y\rangle_{\R^d} \leq L \|x-y\|_{\R^d}^2.
	\end{equation}
	Then 
	\begin{enumerate}[(i)]
		\item\label{item:bound_partial_theta^2_1} we have that $g\in C^2(\R^d,\R^d)$,
		\item\label{item:bound_partial_theta^2_2} we have that
		$
		( [0,\infty)\times \R^d\ni (t,\vartheta)\mapsto \theta^{\vartheta}_t\in \R^d )\in C^{2}([0,\infty)\times\R^d,\R^d)
		$,
		and
		\item\label{item:bound_partial_theta^2_3} we have for all $\vartheta\in\R^d$, $t\in [0,T]$ that
		\begin{equation}\label{eq:bound_partial_theta^2_3_3}
		\big\|  \tfrac{\partial^2}{\partial \vartheta^2} \theta_t^{\vartheta}  \big\|_{L^{(2)}(\R^d,\R^d)}\leq  \int_0^t \exp(L(t+s))\big\|g''(\theta_s^{\vartheta})\|_{L^{(2)}(\R^d,\R^d)} \, ds.
		\end{equation}
	\end{enumerate}
\end{lemma}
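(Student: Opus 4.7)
My proof plan is the following. Items~\eqref{item:bound_partial_theta^2_1} and \eqref{item:bound_partial_theta^2_2} are immediate from items~\eqref{item:two_expressions_1} and \eqref{item:two_expressions_2} of Lemma~\ref{lem:two_expressions}, which assume exactly the one-sided Lipschitz condition \eqref{eq:bound_partial_theta^2_p}. The substance of the statement is therefore item~\eqref{item:bound_partial_theta^2_3}, and the strategy is to estimate the variation-of-parameters representation for $\tfrac{\partial^2}{\partial \vartheta^2}\theta_t^\vartheta$ supplied by item~\eqref{item:two_expressions_3} of Lemma~\ref{lem:two_expressions}: for all $t\in[0,T]$ and $\vartheta,v,w\in\R^d$,
\begin{equation*}
(\tfrac{\partial^2}{\partial \vartheta^2}\theta_t^\vartheta)(v,w) = \int_0^t \chi^\vartheta\!\bigl(s,t,\,g''(\theta_s^\vartheta)\bigl((\pv\theta_s^\vartheta)v,(\pv\theta_s^\vartheta)w\bigr)\bigr)\,ds,
\end{equation*}
where $\chi^\vartheta$ denotes the flow of the linear non-autonomous ODE driven by $u\mapsto g'(\theta_u^\vartheta)$.

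Next I would derive a pointwise bound on $\chi^\vartheta$ by applying Lemma~\ref{lem:special_cond_dyn_syst} to the map $f(u,x):=g'(\theta_u^\vartheta)x$. Its dissipativity hypothesis holds: by linearity of $f$ in $x$ and Lemma~\ref{lem:exp_decay_pty_f'} (whose assumption is precisely \eqref{eq:bound_partial_theta^2_p}) we have $\langle f(u,x)-f(u,y),x-y\rangle_{\R^d} = \langle g'(\theta_u^\vartheta)(x-y),x-y\rangle_{\R^d} \leq L\|x-y\|_{\R^d}^2$. Moreover, the constant function $0$ trivially solves the defining integral equation of $\chi^\vartheta(s,\cdot,0)$, so by the uniqueness statement in Lemma~\ref{lem:two_expressions}\eqref{item:two_expressions_2_02} we get $\chi^\vartheta(s,t,0)=0$. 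Lemma~\ref{lem:special_cond_dyn_syst} then yields, for all $0\leq s\leq t\leq T$ and $x\in\R^d$,
\begin{equation*}
\|\chi^\vartheta(s,t,x)\|_{\R^d} = \|\chi^\vartheta(s,t,x)-\chi^\vartheta(s,t,0)\|_{\R^d} \leq \|x\|_{\R^d}\exp(L(t-s)).
\end{equation*}

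Combining this estimate with the submultiplicativity of multilinear operator norms, the bound $\|\pv\theta_s^\vartheta\|_{L(\R^d,\R^d)}\leq\exp(Ls)$ from Lemma~\ref{lem:exp_decay_partial_vartheta}\eqref{item:exp_decay_partial_vartheta_2}, and the identity $e^{L(t-s)}\cdot[e^{Ls}]^2 = e^{L(t+s)}$ gives, for all $v,w\in\R^d$,
\begin{equation*}
\|(\tfrac{\partial^2}{\partial \vartheta^2}\theta_t^\vartheta)(v,w)\|_{\R^d} \leq \|v\|_{\R^d}\|w\|_{\R^d}\int_0^t e^{L(t+s)}\|g''(\theta_s^\vartheta)\|_{L^{(2)}(\R^d,\R^d)}\,ds,
\end{equation*}
and dividing by $\|v\|_{\R^d}\|w\|_{\R^d}$ and taking the supremum over $v,w\in\R^d\setminus\{0\}$ yields \eqref{eq:bound_partial_theta^2_3_3}. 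None of the steps are genuinely delicate; the only point requiring a brief argument is the observation $\chi^\vartheta(s,t,0)=0$, which is what converts the stability estimate of Lemma~\ref{lem:special_cond_dyn_syst} into an a priori bound on $\chi^\vartheta$ itself.
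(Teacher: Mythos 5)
Your proof is correct and follows essentially the same route as the paper: items~\eqref{item:bound_partial_theta^2_1} and \eqref{item:bound_partial_theta^2_2} from Lemma~\ref{lem:two_expressions}, the variation-of-parameters representation from item~\eqref{item:two_expressions_3}, the bound $\|\chi^\vartheta(s,t,x)\|_{\R^d}\leq \|x\|_{\R^d}e^{L(t-s)}$ via Lemma~\ref{lem:exp_decay_pty_f'} together with $\chi^\vartheta(s,t,0)=0$ and Lemma~\ref{lem:special_cond_dyn_syst}, and finally the first-derivative decay $\|\pv\theta_s^\vartheta\|_{L(\R^d,\R^d)}\leq e^{Ls}$ from Lemma~\ref{lem:exp_decay_partial_vartheta}. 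The paper additionally packages the integrand as an auxiliary function $b(s)=g''(\theta_s^\vartheta)((\pv\theta_s^\vartheta)v,(\pv\theta_s^\vartheta)w)$ before combining the two exponential factors, but that is a purely notational difference.
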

\begin{proof}[Proof of Lemma~\ref{lem:bound_partial_theta^2}]
	First, observe that Lemma~\ref{lem:two_expressions} proves item~\eqref{item:bound_partial_theta^2_1} and item~\eqref{item:bound_partial_theta^2_2}.
	Next let $\vartheta,v,w\in \R^d$, let $A  \colon  [0,T]\to L(\R^d,\R^d)$  satisfy for all $t\in [0,T]$ that
	\begin{equation}\label{eq:bound_partial_theta^2_1}
	A(t)=g'(\theta_t^{\vartheta}),
	\end{equation}
	let $b \colon [0,T] \to \R^d$  satisfy for all $t\in [0,T]$ that
	\begin{equation}\label{eq:bound_partial_theta^2_2}
	b(t)=g''(\theta_t^{\vartheta})\big((\pv \theta_t^{\vartheta})v,(\pv \theta_t^{\vartheta})w\big),
	\end{equation} 
	and let $\chi \in C( \lb (s,t)\in [0,T]^2\colon s\leq t\rb \times \R^d,\R^d)$  satisfy for all  $s\in [0,T]$, $t\in [s,T]$, $ x\in\R^d$ that
	\begin{equation}\label{eq:bound_partial_theta^2_3}
	\chi(s,t,x) = x + \int_{s}^tA(u)\chi(s,u,x)\, du
	\end{equation} 
	(cf.~Lemma~\ref{lem:two_expressions}).
	Note that Lemma~\ref{lem:exp_decay_pty_f'} implies that for all $x,y\in \R^d$, $t\in [0,T]$ we have that
	\begin{equation}\label{eq:bound_partial_theta^2_5}
	\langle A(t)x-A(t)y,x-y\rangle_{\R^d}\leq L \|x-y\|_{\R^d}^2.
	\end{equation}
	Furthermore, observe that item~\eqref{item:two_expressions_2_02} in Lemma~\ref{lem:two_expressions} assures that for all $s\in [0,T]$, $t\in [s,T]$ we have that
	\begin{equation}
	\chi(s,t,0)=0.
	\end{equation}
	Combining this, \eqref{eq:bound_partial_theta^2_5}, \eqref{eq:bound_partial_theta^2_3}, and Lemma~\ref{lem:special_cond_dyn_syst} proves that for all $s\in [0,T]$, $t\in [s,T]$ we have that
	\begin{equation}\label{eq:bound_partial_theta^2_6}
	\begin{split}
	\|\chi(s,t,b(s))\|_{\R^d} &= \|\chi(s,t,b(s))-0\|_{\R^d}\\
	&= \|\chi(s,t,b(s))-\chi(s,t,0)\|_{\R^d}\leq e^{L(t-s)}\|b(s)\|_{\R^d}.
	\end{split}
	\end{equation}
	This, Lemma~\ref{lem:two_expressions}, and the triangle inequality for the Bochner integral ensure that for all $t\in [0,T]$ we have that
	\begin{equation}\label{eq:bound_partial_theta^2_7}
	\begin{split}
	\| ( \tfrac{\partial^2}{\partial \vartheta^2} \theta_t^{\vartheta} )(v,w) \|_{\R^d}
	& = \bigg\|\int_0^t \chi\big(s,t,g''(\theta_s^{\vartheta})\big( (\pv \theta_s^{\vartheta}) v,(\pv \theta_s^{\vartheta}) w\big)\big) \, ds\bigg\|_{\R^d}\\
	& = \norm{\int_0^t\chi(s,t,b(s))\, ds}_{\R^d} 
	 \leq \int_0^t\big\|\chi(s,t,b(s))\big\|_{\R^d} \, ds \\
	 &\leq \int_0^t e^{L(t-s)}\norm{ b(s) }_{\R^d}\, ds. 
	\end{split}
	\end{equation}
	Next observe that for all $s\in [0,T]$ we have that
	\begin{equation}\label{eq:bound_partial_theta^2_8}
	\begin{split}
	\norm{b(s)}_{\R^d} & = \big\|g''(\theta_s^{\vartheta})((\pv \theta_s^{\vartheta})v,(\pv \theta_s^{\vartheta})w)\big\|_{\R^d}\\
	& \leq \|(\pv \theta_s^{\vartheta})v\|_{\R^d}\|(\pv \theta_s^{\vartheta})w\|_{\R^d} \|g''(\theta_s^{\vartheta})\|_{L^{(2)}(\R^d,\R^d)}.
	\end{split}
	\end{equation}
	Lemma~\ref{lem:exp_decay_partial_vartheta} hence implies that for all $s\in [0,T]$ we have that
	\begin{equation}\label{eq:bound_partial_theta^2_9}
	\begin{split}
	\norm{b(s)}_{\R^d} & \leq \norm{v}_{\R^d}\norm{w}_{\R^d} \|g''(\theta_s^{\vartheta})\|_{L^{(2)}(\R^d,\R^d)} e^{2Ls}.
	\end{split}
	\end{equation}
	Combining this and \eqref{eq:bound_partial_theta^2_7} proves that for all $t\in [0,T]$ we have that
	\begin{equation}\label{eq:bound_partial_theta^2_10}
	\begin{split}
	\| ( \tfrac{\partial^2}{\partial \vartheta^2} \theta_t^{\vartheta} )(v,w) \|_{\R^d}
	& \leq \norm{v}_{\R^d}\norm{w}_{\R^d} \int_0^t e^{L(t-s)}e^{2Ls}\|g''(\theta_s^{\vartheta})\|_{L^{(2)}(\R^d,\R^d)} \, ds\\
	& =  \norm{v}_{\R^d}\norm{w}_{\R^d} \int_0^t e^{L(t+s)}\|g''(\theta_s^{\vartheta})\|_{L^{(2)}(\R^d,\R^d)} \, ds.
	\end{split}
	\end{equation}
	This establishes item~\eqref{item:bound_partial_theta^2_3}. The proof of Lemma~\ref{lem:bound_partial_theta^2} is thus completed.
\end{proof}

\section{Temporal regularity results for SAAs  in the case of general learning rates}
\label{subsec:temporal}
\sectionmark{}

\begin{lemma}\label{lem:partial_t_of_Theta}
	Assume Setting~\ref{sec:setting_1} and let $T\in (0,\infty)$.  Then
	\begin{enumerate}[(i)]
		\item\label{item:partial_t_of_Theta_1} we have for all $\omega\in \Omega$, $t \in [0, T)$ with $\gamma(t) = \emptyset$  that  $[0,T]\ni u\mapsto\Theta_u(\omega) \in \R^d$   is differentiable at $t$
		and
		\item\label{item:partial_t_of_Theta_2} we have for all $\omega\in \Omega$, $t \in [0, T)$ with $\gamma(t) = \emptyset$   that
		\begin{equation}\label{eq:partial_t_of_Theta}
		\tfrac{\partial}{\partial t}\Theta_t(\omega) = \tfrac{1}{\#_{\gamma(\floorgrid{t})}}\Big[ \textstyle\sum_{j\in \gamma(\floorgrid{t})} G(\Theta_{\floorgrid{t}}(\omega),Z_j(\omega)) \Big].
		\end{equation}
	\end{enumerate}
\end{lemma}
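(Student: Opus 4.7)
The plan is to exploit that, for fixed $\omega\in\Omega$, the map $u\mapsto\Theta_u(\omega)$ is piecewise affine on $[0,T]$, with the affine pieces being the maximal subintervals on which $\floorgrid{\cdot}$ is constant, and to show that any $t\in[0,T)$ with $\gamma(t)=\emptyset$ lies in the interior of one such piece.

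Fix $\omega\in\Omega$ and $t\in[0,T)$ with $\gamma(t)=\emptyset$. Since Setting~\ref{sec:setting_1} imposes $0<\#_{\{s\in[0,0]:\gamma(s)\neq\emptyset\}}$, we have $\gamma(0)\neq\emptyset$, hence necessarily $t>0$; moreover $\tau:=\floorgrid{t}$ satisfies $\gamma(\tau)\neq\emptyset$ and therefore $\tau<t$. The central step I would then carry out is to establish the local constancy $\floorgrid{u}=\tau$ for all $u$ in a suitable open neighborhood $(\tau,t+\delta)$ of $t$. This is where the finiteness hypothesis $\#_{\{s\in[0,t+1]:\gamma(s)\neq\emptyset\}}<\infty$ enters: letting $s_{\ast}$ denote the smallest element of that set strictly greater than $\tau$ (or $+\infty$ if no such element exists), the maximality of $\tau$ in $[0,t]$ forces $s_{\ast}>t$, so for any $\delta\in(0,s_{\ast}-t)$ and every $u\in(\tau,t+\delta)$ we have $\tau\leq u<s_{\ast}$, hence $\floorgrid{u}=\tau$.

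Once this is in hand, substituting $\floorgrid{u}=\tau$ into \eqref{eq: equation Theta 2} would yield, for all $u\in(\tau,t+\delta)$,
\begin{equation*}
\Theta_u(\omega)=\Theta_\tau(\omega)+\tfrac{u-\tau}{\#_{\gamma(\tau)}}\Bigl[\textstyle\sum_{j\in\gamma(\tau)}G(\Theta_\tau(\omega),Z_j(\omega))\Bigr],
\end{equation*}
an affine function of $u$ on an open interval containing $t$. Differentiability at $t$, together with the formula \eqref{eq:partial_t_of_Theta} for the derivative, then follows at once by reading off the constant slope and using $\tau=\floorgrid{t}$ together with $\Theta_\tau(\omega)=\Theta_{\floorgrid{t}}(\omega)$.

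The only genuinely nontrivial ingredient is the local constancy of $\floorgrid{\cdot}$ near $t$; this is really a pigeonhole-style observation built on the finiteness assumption on $\gamma$, and no analytic or continuity machinery beyond it is required. Everything else in the argument is direct verification from the piecewise affine formula \eqref{eq: equation Theta 2}.
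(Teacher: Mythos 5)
Your proof is correct and follows essentially the same approach as the paper: the paper's own proof is a one-line assertion that the claim follows from the finiteness hypothesis on $\gamma$ together with the defining relation \eqref{eq: equation Theta 2}, and your argument simply supplies the details of that assertion (local constancy of $\floorgrid{\cdot}$ on an open interval around $t$, deduced from finiteness, followed by reading off the slope of the resulting affine piece).
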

\begin{proof}[Proof of Lemma~\ref{lem:partial_t_of_Theta}]
	Combining the assumption that
	\begin{equation}
	\forall\, t\in [0,\infty)\colon 0<\#_{\lb s\in [0,t]\colon \gamma(s)\neq \emptyset \rb}<\infty
	\end{equation} 
	with \eqref{eq: equation Theta 2}
	establishes item~\eqref{item:partial_t_of_Theta_1} and item~\eqref{item:partial_t_of_Theta_2}. The proof of Lemma~\ref{lem:partial_t_of_Theta} is thus completed.
\end{proof}

\section{A priori estimates for SAAs in the case of general learning rates}
\label{subsec:apriori}
\sectionmark{}

\begin{lemma}\label{lem:bnd_(sum_xi)^p}
	Let $n\in\N$, $p\in\lb 0\rb\cup [1,\infty)$, $x_1, x_2, \dots,x_n\in\R$. Then we have that
	\begin{equation}\label{eq:bnd_(sum_xi)^p_1}
	\left|\textstyle\sum\limits_{i=1}^n x_i\right|^p\leq n^{p-1} \left[\textstyle\sum\limits_{i=1}^n|x_i|^p \right].
	\end{equation}
\end{lemma}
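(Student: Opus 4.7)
The plan is to split into the two cases $p = 0$ and $p \in [1, \infty)$, each of which reduces to a short standard argument.

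For $p = 0$ the inequality is an immediate equality: under the convention $0^0 = 1$ one has $|\sum_{i=1}^n x_i|^0 = 1$ and $n^{p-1}\sum_{i=1}^n |x_i|^p = n^{-1} \cdot n = 1$, so both sides agree. I would dispose of this case in a single sentence.

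The main case is $p \in [1, \infty)$. First I would apply the triangle inequality to obtain
\begin{equation}
\bigl| \textstyle\sum_{i=1}^n x_i \bigr|^p \leq \bigl( \textstyle\sum_{i=1}^n |x_i| \bigr)^p,
\end{equation}
which reduces the problem to estimating $(\sum_{i=1}^n |x_i|)^p$ by $n^{p-1} \sum_{i=1}^n |x_i|^p$. For this I would invoke the convexity of the map $[0,\infty) \ni y \mapsto y^p \in [0,\infty)$ (which holds precisely because $p \geq 1$) together with Jensen's inequality applied to the uniform probability measure on $\{1, 2, \ldots, n\}$, yielding
\begin{equation}
\bigl( \tfrac{1}{n} \textstyle\sum_{i=1}^n |x_i| \bigr)^p \leq \tfrac{1}{n} \textstyle\sum_{i=1}^n |x_i|^p.
\end{equation}
Multiplying through by $n^p$ gives $(\sum_{i=1}^n |x_i|)^p \leq n^{p-1} \sum_{i=1}^n |x_i|^p$, and combining with the bound from the triangle inequality establishes \eqref{eq:bnd_(sum_xi)^p_1}. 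Equivalently one could invoke Hölder's inequality with conjugate exponents $p$ and $p/(p-1)$ applied to the pair of vectors $(|x_i|)_{i=1}^n$ and $(1, 1, \ldots, 1)$; the two arguments are essentially the same and either works cleanly.

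There is no real obstacle here; the only small subtlety is the $p = 1$ boundary, where Jensen's inequality degenerates to an equality and the claim reduces to the triangle inequality alone. The argument as above handles $p = 1$ uniformly with the rest of the range $[1, \infty)$, so no separate treatment is needed.
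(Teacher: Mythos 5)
Your proof is correct and takes essentially the same approach as the paper: both start with the triangle inequality and then estimate $\bigl(\sum_{i=1}^n |x_i|\bigr)^p$ by a standard power-mean inequality, with the $p=0$ case disposed of trivially (the paper via ``w.l.o.g.\ $p\geq 1$'', you by direct computation). The paper applies H\"older's inequality with exponents $p$ and $p/(p-1)$ to $(|x_i|)_{i=1}^n$ and $(1,\ldots,1)$ --- exactly the variant you mention at the end --- whereas you default to Jensen; these are two faces of the same estimate, so there is nothing substantive to distinguish the two proofs.
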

\begin{proof}[Proof of Lemma~\ref{lem:bnd_(sum_xi)^p}]
	Throughout this proof assume w.l.o.g.~that $p\geq 1$.
Observe that the triangle inequality and Hölder's inequality imply that 
	\begin{equation}
	\begin{split}
	\left|\textstyle\sum\limits_{i=1}^n x_i\right|
	\leq \textstyle\sum\limits_{i=1}^n |x_i|
	\leq  \left| \textstyle\sum\limits_{i=1}^n 1\right|^{\frac{p-1}{p}}\left|\textstyle\sum\limits_{i=1}^n|x_i|^p\right|^{\frac{1}{p}} 
	=  n^{\frac{p-1}{p}}\left|\textstyle\sum\limits_{i=1}^n|x_i|^p\right|^{\frac{1}{p}}. 
	\end{split}
	\end{equation}
	This  establishes \eqref{eq:bnd_(sum_xi)^p_1}. The proof of Lemma~\ref{lem:bnd_(sum_xi)^p} is thus completed.
\end{proof}

\begin{lemma}\label{lem:Q_t_leq_Theta_floor_t}
	Assume Setting~\ref{sec:setting_1}, assume for all $v,w\in [0,\infty)$ with $v\neq w$ that 
	$
	\gamma(v)\cap \gamma(w)=\emptyset,
	$ 
	and let 
	$
	c\in [0,\infty)$, $p\in  [1,\infty)
	$, $\mathfrak{m} \in \R$
	satisfy for all $x\in\R^d$ that
	\begin{equation}\label{eq:Q_t_leq_Theta_floor_t_1}
	\E\big[ \|G(x,Z_1)\|_{\R^d}^p\big] 
	\leq 
	c (1 + \|x\|_{\R^d}^{\mathfrak{m}p}).
	\end{equation}
	Then we have for all $t\in [0,\infty)$ that
	\begin{equation}\label{eq:Q_t_leq_Theta_floor_t_2}
	\E\Big[\big\| \tfrac{1}{\#_{\gamma(\floorgrid{t})}}\textstyle\sum_{j\in \gamma(\floorgrid{t})}G(\Theta_{\floorgrid{t}}, Z_j)\big\|_{\R^d}^p\Big]
	\leq
	c \big( 1 + \E\big[\|\Theta_{\floorgrid{t}}\|_{\R^d}^{\mathfrak{m}p}\big]\big).
	\end{equation}
\end{lemma}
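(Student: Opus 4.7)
The plan is to combine the convexity bound in Lemma~\ref{lem:bnd_(sum_xi)^p} with an independence argument that relies on the i.i.d.\ assumption on $(Z_n)_{n\in\N}$ together with the hypothesis that $\gamma(v)\cap\gamma(w)=\emptyset$ for $v\neq w$. Writing $m=\#_{\gamma(\floorgrid{t})}\in\N$, the first step is to apply Lemma~\ref{lem:bnd_(sum_xi)^p} coordinatewise in $\R^d$ (equivalently, one uses the triangle inequality followed by H\"older's inequality with respect to the counting measure on $\gamma(\floorgrid{t})$) to obtain the pathwise estimate
\begin{equation*}
\Big\|\tfrac{1}{m}\textstyle\sum_{j\in\gamma(\floorgrid{t})} G(\Theta_{\floorgrid{t}},Z_j)\Big\|_{\R^d}^p\leq \tfrac{1}{m}\textstyle\sum_{j\in\gamma(\floorgrid{t})}\|G(\Theta_{\floorgrid{t}},Z_j)\|_{\R^d}^p.
\end{equation*}
Taking expectations, the desired bound~\eqref{eq:Q_t_leq_Theta_floor_t_2} reduces to showing that $\E[\|G(\Theta_{\floorgrid{t}},Z_j)\|_{\R^d}^p]\leq c(1+\E[\|\Theta_{\floorgrid{t}}\|_{\R^d}^{\mathfrak{m}p}])$ for each $j\in\gamma(\floorgrid{t})$.

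The key observation for the remaining step is that $\Theta_{\floorgrid{t}}$ is independent of $(Z_j)_{j\in\gamma(\floorgrid{t})}$. Indeed, the assumption that the set of jump times of $\gamma$ in $[0,t]$ is finite allows me to enumerate them as $s_0<s_1<\ldots<s_k=\floorgrid{t}$, and a straightforward induction on $\ell\in\{0,1,\ldots,k\}$ based on the recursion~\eqref{eq: equation Theta 2} and the continuity of the sample paths of $\Theta$ shows that $\Theta_{s_\ell}$ is a deterministic measurable function of $\xi$ together with $(Z_n)_{n\in\gamma(s_0)\cup\ldots\cup\gamma(s_{\ell-1})}$. In particular, $\Theta_{\floorgrid{t}}$ is measurable with respect to $\sigma\bigl((Z_n)_{n\in\gamma(s_0)\cup\ldots\cup\gamma(s_{k-1})}\bigr)$, and the disjointness hypothesis forces this index set to be disjoint from $\gamma(\floorgrid{t})$, so the i.i.d.\ assumption on $(Z_n)_{n\in\N}$ delivers the claimed independence. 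Given this, Fubini's theorem and the identical distribution of the $Z_n$ yield
\begin{equation*}
\E[\|G(\Theta_{\floorgrid{t}},Z_j)\|_{\R^d}^p]=\int_{\R^d}\E[\|G(x,Z_1)\|_{\R^d}^p]\,\P_{\Theta_{\floorgrid{t}}}(dx)\leq c\bigl(1+\E[\|\Theta_{\floorgrid{t}}\|_{\R^d}^{\mathfrak{m}p}]\bigr),
\end{equation*}
where the inequality applies the hypothesis~\eqref{eq:Q_t_leq_Theta_floor_t_1} pointwise in $x$ and then integrates. Summing over the $m$ indices $j\in\gamma(\floorgrid{t})$ and dividing by $m$ closes the argument.

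The main obstacle I anticipate is the careful bookkeeping needed for the independence claim. In particular one has to verify that the inductive construction of $\Theta$ at the finitely many jump times of $\gamma$ strictly before $\floorgrid{t}$ really does express $\Theta_{\floorgrid{t}}$ as a measurable function of only those $(Z_n)$ whose indices lie in batches indexed by times strictly smaller than $\floorgrid{t}$, so that no circular dependence on $(Z_j)_{j\in\gamma(\floorgrid{t})}$ is introduced. Once that measurability statement is in place, the remaining convexity step, the Fubini manipulation, and the application of the growth hypothesis are all routine.
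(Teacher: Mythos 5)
Your proposal is correct and mirrors the paper's argument: the same application of Lemma~\ref{lem:bnd_(sum_xi)^p} for the pathwise bound, the same independence of $\Theta_{\floorgrid{t}}$ from $(Z_j)_{j\in\gamma(\floorgrid{t})}$ (which the paper asserts directly from the i.i.d.\ assumption, the disjointness hypothesis, and~\eqref{eq: equation Theta 2}, whereas you flesh out the inductive measurability argument), and the same Fubini-type manipulation followed by the growth hypothesis.
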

\begin{proof}[Proof of Lemma~\ref{lem:Q_t_leq_Theta_floor_t}]
	Throughout this proof let $t\in [0,\infty)$, $\mathfrak{j}\in\gamma(\floorgrid{t})$.
	Note that Lemma~\ref{lem:bnd_(sum_xi)^p} and the triangle inequality assure that 
	\begin{equation}\label{eq:Q_t_leq_Theta_floor_t_3}
	\begin{split}
	&\E\Big[ \big\|\tfrac{1}{\#_{\gamma(\floorgrid{t})}}\textstyle\sum_{j\in\gamma(\floorgrid{t})}G(\Theta_{\floorgrid{t}},Z_j)\big\|_{\R^d}^p \Big]\\
	 &\leq \tfrac{1}{|\#_{\gamma(\floorgrid{t})}|^p}\E\Big[ \Big(\textstyle\sum_{j\in\gamma(\floorgrid{t})}\|G(\Theta_{\floorgrid{t}},Z_j)\|_{\R^d}\Big)^p \Big]\\
	& \leq \tfrac{1}{|\#_{\gamma(\floorgrid{t})}|^p}\E\Big[ |\#_{\gamma(\floorgrid{t})}|^{p-1}\textstyle\sum_{j\in\gamma(\floorgrid{t})}\|G(\Theta_{\floorgrid{t}},Z_j)\|_{\R^d}^p \Big]\\
	&  = \tfrac{1}{\#_{\gamma(\floorgrid{t})}}\E\Big[\textstyle\sum_{j\in\gamma(\floorgrid{t})}\|G(\Theta_{\floorgrid{t}},Z_j)\|_{\R^d}^p \Big].
	\end{split}
	\end{equation}
	Moreover, observe that combining the assumption that $Z_j$, $j \in \N$, are i.i.d.\ random variables, the assumption that $\forall\, (v,w)\in\lb (a,b)\in [0,\infty)^2\colon  a\neq b \rb\colon \gamma(v)\cap \gamma(w)=\emptyset $, and \eqref{eq: equation Theta 2} proves that for all  $j\in\gamma(\floorgrid{t})$ we have that $Z_{j}$ and $\Theta_{\floorgrid{t}}$ are independent. This, \eqref{eq:Q_t_leq_Theta_floor_t_3}, the assumption that $\mathfrak{j}\in\gamma(\floorgrid{t})$, and the assumption that $Z_j$, $j\in\N$, are i.i.d.\ random variables ensure that 
	\begin{equation}\label{eq:Q_t_leq_Theta_floor_t_4}
	\begin{split}
	& \E\Big[  \Big\|\tfrac{1}{\#_{\gamma(\floorgrid{t})}}\textstyle\sum_{j\in\gamma(\floorgrid{t})}G(\Theta_{\floorgrid{t}},Z_j)\Big\|_{\R^d}^p \Big]\\
	& \leq \tfrac{1}{\#_{\gamma(\floorgrid{t})}}\textstyle\sum_{j\in\gamma(\floorgrid{t})}\E\big[\|G(\Theta_{\floorgrid{t}},Z_{\mathfrak{j}})\|_{\R^d}^p \big]\\
	& = \E\big[\|G(\Theta_{\floorgrid{t}},Z_{\mathfrak{j}})\|_{\R^d}^p \big]\\
	& = \int_{\Omega} \|G(\Theta_{\floorgrid{t}}(\omega),Z_{\mathfrak{j}}(\omega))\|_{\R^d}^p\, \P(d\omega)\\
	& = \int_{\Omega}\int_{\Omega} \|G(\Theta_{\floorgrid{t}}(\omega),Z_{\mathfrak{j}}(\tilde{\omega}))\|_{\R^d}^p\, \P(d\tilde{\omega})\, \P(d\omega).
	\end{split}
	\end{equation}
	Combining this and \eqref{eq:Q_t_leq_Theta_floor_t_1}  demonstrates that
	\begin{align}
	 \label{eq:Q_t_leq_Theta_floor_t_5}
	 \begin{split}
	\E\Big[ \Big\|\tfrac{1}{\#_{\gamma(\floorgrid{t})}}\textstyle\sum_{j\in\gamma(\floorgrid{t})}G(\Theta_{\floorgrid{t}},Z_j)\Big\|_{\R^d}^p \Big]
	& \leq \int_{\Omega} c(1 + \|\Theta_{\floorgrid{t}}(\omega)\|_{\R^d}^{\mathfrak{m}p})\, \P(d\omega)\\
	& = \E\big[c(1 + \|\Theta_{\floorgrid{t}}\|_{\R^d}^{\mathfrak{m}p})\big]\\
	& = c\big(1  +\E\big[\|\Theta_{\floorgrid{t}}\|_{\R^d}^{\mathfrak{m}p}\big]\big). 
	\end{split}
	\end{align}
	This establishes \eqref{eq:Q_t_leq_Theta_floor_t_2}. The proof of Lemma~\ref{lem:Q_t_leq_Theta_floor_t} is thus completed.
\end{proof}

\begin{lemma}\label{lem:Theta_L^2_sp_case}
	Assume Setting~\ref{sec:setting_1},
	assume for all $v,w\in [0,\infty)$ with $v\neq w$ that 
	$
	\gamma(v)\cap \gamma(w)=\emptyset,
	$  let $p\in  [1,\infty)$, 
	and assume that
	\begin{equation}\label{eq:Theta_L^2_sp_case_0}
	\sup_{x\in\R^d}\left(\frac{\E\big[\|G(x,Z_1)\|_{\R^d}^p\big]}{\big[ 1 + \|x\|_{\R^d} \big]^p}\right)<\infty.
	\end{equation}
	Then we have for all $t\in [0,\infty)$ that
	\begin{equation}\label{eq:Theta_L^2_sp_case_1_2}
	\E\big[\|\Theta_{\floorgrid{t}}\|_{\R^d}^p\big]<\infty.
	\end{equation}
\end{lemma}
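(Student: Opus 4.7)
My plan is to reduce the claim to a finite induction along the update grid points of $\gamma$ in $[0,t]$. Applying the hypothesis on $\gamma$ at $r=0$ shows that $\gamma(0)\neq\emptyset$, and applying it at $r=t$ shows that the set $\{s\in[0,t]\colon\gamma(s)\neq\emptyset\}$ admits a finite increasing enumeration $0 = s_1 < s_2 < \cdots < s_N = \floorgrid{t}$. It therefore suffices to prove $\E[\|\Theta_{s_k}\|_{\R^d}^p]<\infty$ for every $k\in\{1,2,\ldots,N\}$.

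Before the induction, I would first convert the growth hypothesis~\eqref{eq:Theta_L^2_sp_case_0} into the form required by Lemma~\ref{lem:Q_t_leq_Theta_floor_t}. The elementary inequality $(1+\|x\|_{\R^d})^p\leq 2^{p-1}(1+\|x\|_{\R^d}^p)$, which is the $n=2$ special case of Lemma~\ref{lem:bnd_(sum_xi)^p}, yields a constant $c\in[0,\infty)$ with $\E[\|G(x,Z_1)\|_{\R^d}^p]\leq c(1+\|x\|_{\R^d}^p)$ for all $x\in\R^d$. This is exactly the hypothesis of Lemma~\ref{lem:Q_t_leq_Theta_floor_t} with $\mathfrak{m}=1$ (and the disjointness assumption $\gamma(v)\cap\gamma(w)=\emptyset$ matches the standing assumption here). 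The second preparatory step is to extract a clean one-step recursion for $\Theta$ at the grid points: for $u\in[s_k,s_{k+1})$ we have $\floorgrid{u}=s_k$, so \eqref{eq: equation Theta 2} gives $\Theta_u = \Theta_{s_k} + \tfrac{u-s_k}{\#_{\gamma(s_k)}}\sum_{n\in\gamma(s_k)}G(\Theta_{s_k},Z_n)$, and sending $u\nearrow s_{k+1}$ while invoking the continuity of sample paths produces
\[
\Theta_{s_{k+1}} = \Theta_{s_k} + \tfrac{s_{k+1}-s_k}{\#_{\gamma(s_k)}}\sum_{n\in\gamma(s_k)}G(\Theta_{s_k},Z_n).
\]

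With this recursion in hand, I would induct on $k$. The base case $k=1$ is immediate since $\Theta_{s_1} = \xi\in\R^d$. For the inductive step, Lemma~\ref{lem:bnd_(sum_xi)^p} (applied with $n=2$) bounds $\|\Theta_{s_{k+1}}\|_{\R^d}^p$ by a constant multiple of $\|\Theta_{s_k}\|_{\R^d}^p$ plus a constant multiple of $\|\tfrac{1}{\#_{\gamma(s_k)}}\sum_{n\in\gamma(s_k)}G(\Theta_{s_k},Z_n)\|_{\R^d}^p$; taking expectations and applying Lemma~\ref{lem:Q_t_leq_Theta_floor_t} at time $s_k$ bounds the expectation of the second term by $c(1+\E[\|\Theta_{s_k}\|_{\R^d}^p])$, which is finite by the induction hypothesis. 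I expect no step to be genuinely difficult: the only mild subtlety is noticing that sample-path continuity is what transfers the open-interval formula~\eqref{eq: equation Theta 2} to an equality at the endpoint $s_{k+1}$, and that Lemma~\ref{lem:Q_t_leq_Theta_floor_t} can be applied with its time parameter specialized to the grid point $s_k$ (where $\floorgrid{s_k} = s_k$).
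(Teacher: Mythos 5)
Your proof is correct and follows essentially the same route as the paper: enumerate the grid points, reduce the growth hypothesis to the form required by Lemma~\ref{lem:Q_t_leq_Theta_floor_t}, and induct using the one-step recursion with base case $\Theta_0 = \xi$. The only cosmetic difference is that you control the inductive step with the elementary bound $\|a+b\|_{\R^d}^p \leq 2^{p-1}(\|a\|_{\R^d}^p + \|b\|_{\R^d}^p)$ (the $n=2$ case of Lemma~\ref{lem:bnd_(sum_xi)^p}) where the paper uses the Minkowski inequality; both give finiteness equally well.
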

\begin{proof}[Proof of Lemma~\ref{lem:Theta_L^2_sp_case}]
	Throughout this proof 
	let $\mathfrak{t}\colon \N_0\to [0,\infty)$ be a non-decreasing function which satisfies that
	\begin{equation}\label{eq:Theta_L^2_sp_case_1_3}
	\lb t\in [0,\infty)\colon \gamma(t)\neq \emptyset\rb = \lb \mathfrak{t}_n \colon n \in \N_0 \rb. 
	\end{equation}
	Observe that \eqref{eq:Theta_L^2_sp_case_0} implies that there exists  $c\in [0,\infty)$ which satisfies for all $x\in\R^d$  that
	\begin{equation}\label{eq:Theta_L^2_sp_case_1}
	\E\big[\|G(x,Z_1)\|_{\R^d}^p \big]\leq c(1+\|x\|_{\R^d}^p).
	\end{equation}
	Next note that the assumption that $\forall\, t\in [0,\infty)\colon 0<\#_{\lb s\in [0,t]\colon \gamma(s)\neq \emptyset \rb}$ assures that $\mathfrak{t}_0=0$. This and the assumption that $\Theta_0=\xi$ imply that
	\begin{equation}\label{eq:Theta_L^2_sp_case_4}
	\E\big[\|\Theta_{\mathfrak{t}_0}\|_{\R^d}^p\big]
	=\E\big[\|\Theta_0\|_{\R^d}^p\big]=\|\xi\|_{\R^d}^p<\infty.
	\end{equation}
	Furthermore, observe that the Minkowski inequality and \eqref{eq: equation Theta 2} ensure that for all $n \in \N_0$ we have that
	\begin{align}
	\begin{split}
	&\big|\E\big[\|\Theta_{\mathfrak{t}_{n+1}}\|_{\R^d}^p\big]\big|^{1/p}
	 = \Big|\E\Big[\|\Theta_{\mathfrak{t}_{n}}+\tfrac{(\mathfrak{t}_{n+1}-\mathfrak{t}_n)}{\#_{\gamma(\mathfrak{t}_n)}} \textstyle\sum_{j\in \gamma(\mathfrak{t}_n)} G(\Theta_{\mathfrak{t}_n},Z_j)\|_{\R^d}^p\Big]\Big|^{1/p}    \\
	& \leq \big|\E\big[\|\Theta_{\mathfrak{t}_{n}}\|_{\R^d}^p\big]\big|^{1/p} +  (\mathfrak{t}_{n+1}-\mathfrak{t}_n)\Big|\E\Big[\big\|\tfrac{1}{\#_{\gamma(\mathfrak{t}_n)}} \textstyle\sum_{j\in \gamma(\mathfrak{t}_n)}G(\Theta_{\mathfrak{t}_n},Z_j)\big\|_{\R^d}^p\Big]\Big|^{1/p}.
	\end{split}
	\end{align}
	This, \eqref{eq:Theta_L^2_sp_case_1}, and Lemma~\ref{lem:Q_t_leq_Theta_floor_t} prove that  for all $n \in \N_0$ we have that
	\begin{equation}
	\begin{split}
	\big|\E\big[\|\Theta_{\mathfrak{t}_{n+1}}\|_{\R^d}^p\big]\big|^{1/p}
	& \leq \big|\E\big[\|\Theta_{\mathfrak{t}_{n}}\|_{\R^d}^p\big]\big|^{1/p} +  (\mathfrak{t}_{n+1}-\mathfrak{t}_n)\big|c\big(1  +\E\big[\|\Theta_{\mathfrak{t}_n}\|_{\R^d}^{p}\big]\big)\big|^{1/p}.
	\end{split}
	\end{equation}
	Induction and \eqref{eq:Theta_L^2_sp_case_4} therefore assure that for all $n \in \N_0$ we have that
	\begin{equation}\label{eq:Theta_L^2_sp_case_2}
	\E\big[\|\Theta_{\mathfrak{t}_n}\|_{\R^d}^p\big]<\infty.
	\end{equation}
	This establishes  \eqref{eq:Theta_L^2_sp_case_1_2}. The proof of Lemma~\ref{lem:Theta_L^2_sp_case} is thus completed.
\end{proof}

\begin{cor}\label{cor:sup_Theta_L^2_bnd_on_0_T}
	Assume Setting~\ref{sec:setting_1},
	assume for all $v,w\in [0,\infty)$ with $v\neq w$ that 
	$
	\gamma(v)\cap \gamma(w)=\emptyset
	$, 
	and let $p\in [1,\infty)$, $c\in [0,\infty)$ satisfy for all $x\in\R^d$  that
	\begin{equation}\label{eq:sup_Theta_L^2_bnd_on_0_T_0}
	\E\big[\|G(x,Z_1)\|_{\R^d}^p\big]
	\leq c\big( 1 + \|x\|_{\R^d}^p \big).
	\end{equation}
	Then 
	\begin{enumerate}[(i)]
		\item\label{item:sup_Theta_L^2_bnd_on_0_T_1} we have for all $t\in [0,\infty)$ that
		\begin{equation}
		\E\big[\|\Theta_t\|_{\R^d}^p\big]
		\leq 2^{p - 1} c (t - \floorgrid{t})^p + 2^{p - 1}(1 + c(t - \floorgrid{t})^p)\E\big[\|\Theta_{\floorgrid{t}}\|_{\R^d}^p\big]\\
		\end{equation}
		and
		\item\label{item:sup_Theta_L^2_bnd_on_0_T_2} we have for all $T\in [0,\infty)$ that
		\begin{equation}\label{eq:sup_Theta_L^2_bnd_on_0_T_2}
		\sup_{t\in[0,T]}\E\big[ \|\Theta_t\|_{\R^d}^p \big]<\infty.
		\end{equation}
	\end{enumerate}
\end{cor}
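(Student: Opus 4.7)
To prove item~\eqref{item:sup_Theta_L^2_bnd_on_0_T_1}, the plan is to start directly from the defining recursion \eqref{eq: equation Theta 2}, write
\begin{equation*}
\Theta_t = \Theta_{\floorgrid{t}} + (t-\floorgrid{t})\Big[ \tfrac{1}{\#_{\gamma(\floorgrid{t})}} \textstyle\sum_{n\in\gamma(\floorgrid{t})} G(\Theta_{\floorgrid{t}},Z_n)\Big],
\end{equation*}
and bound $\|\Theta_t\|_{\R^d}^p$ by the convexity estimate
$\|a+b\|_{\R^d}^p\leq 2^{p-1}\|a\|_{\R^d}^p+2^{p-1}\|b\|_{\R^d}^p$, which is just the case $n=2$ of Lemma~\ref{lem:bnd_(sum_xi)^p} applied coordinatewise. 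Taking expectations then yields a bound whose second term is of exactly the form handled by Lemma~\ref{lem:Q_t_leq_Theta_floor_t} with $\mathfrak{m}=1$, namely
\begin{equation*}
\E\Big[\big\|\tfrac{1}{\#_{\gamma(\floorgrid{t})}}\textstyle\sum_{n\in\gamma(\floorgrid{t})}G(\Theta_{\floorgrid{t}},Z_n)\big\|_{\R^d}^p\Big]\leq c\big(1+\E[\|\Theta_{\floorgrid{t}}\|_{\R^d}^p]\big).
\end{equation*}
Substituting and regrouping the terms gives exactly the claimed inequality; the step is entirely mechanical once Lemma~\ref{lem:Q_t_leq_Theta_floor_t} is in place.

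To prove item~\eqref{item:sup_Theta_L^2_bnd_on_0_T_2}, fix $T\in[0,\infty)$. The hypothesis on $\gamma$ in Setting~\ref{sec:setting_1} ensures that the set $\{s\in[0,T]\colon\gamma(s)\neq\emptyset\}$ is finite, so by definition of $\floorgrid{\cdot}$ in \eqref{eq:setting_2} the function $[0,T]\ni t\mapsto\floorgrid{t}$ takes only finitely many values, say $0=\mathfrak{t}_0<\mathfrak{t}_1<\dots<\mathfrak{t}_N\leq T$. Lemma~\ref{lem:Theta_L^2_sp_case} (applicable because \eqref{eq:sup_Theta_L^2_bnd_on_0_T_0} trivially implies \eqref{eq:Theta_L^2_sp_case_0}) then gives $\E[\|\Theta_{\mathfrak{t}_i}\|_{\R^d}^p]<\infty$ for every $i\in\{0,1,\dots,N\}$, so
\begin{equation*}
M := \max\nolimits_{i\in\{0,1,\dots,N\}}\E[\|\Theta_{\mathfrak{t}_i}\|_{\R^d}^p]<\infty.
\end{equation*}
For any $t\in[0,T]$ we have $\floorgrid{t}\in\{\mathfrak{t}_0,\dots,\mathfrak{t}_N\}$ and $0\leq t-\floorgrid{t}\leq T$, so the bound from item~\eqref{item:sup_Theta_L^2_bnd_on_0_T_1} yields
\begin{equation*}
\E[\|\Theta_t\|_{\R^d}^p]\leq 2^{p-1}cT^p + 2^{p-1}(1+cT^p)M<\infty,
\end{equation*}
uniformly in $t\in[0,T]$, establishing \eqref{eq:sup_Theta_L^2_bnd_on_0_T_2}.

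There is no real obstacle here. The only place one needs to think is the verification that the hypotheses of Lemma~\ref{lem:Q_t_leq_Theta_floor_t} and Lemma~\ref{lem:Theta_L^2_sp_case} are met: the growth bound \eqref{eq:sup_Theta_L^2_bnd_on_0_T_0} is precisely their hypothesis (with $\mathfrak{m}=1$), and the disjointness assumption on $\gamma$ is inherited directly from this corollary. Everything else is the convexity inequality $\|a+b\|^p\leq 2^{p-1}(\|a\|^p+\|b\|^p)$ and a finiteness argument enabled by the local finiteness of the grid.
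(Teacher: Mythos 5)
Your proposal is correct and follows essentially the same route as the paper: bound $\|\Theta_t\|^p$ in terms of $\|\Theta_{\floorgrid{t}}\|^p$ and the mini-batch gradient, control the latter via Lemma~\ref{lem:Q_t_leq_Theta_floor_t}, and invoke Lemma~\ref{lem:Theta_L^2_sp_case} plus local finiteness of the grid for the uniform bound on $[0,T]$. The only (immaterial) difference in item~\eqref{item:sup_Theta_L^2_bnd_on_0_T_1} is the order of operations: the paper first applies Minkowski's inequality for $L^p$ and then removes the $\nicefrac{1}{p}$-powers with Lemma~\ref{lem:bnd_(sum_xi)^p} at the end, whereas you apply the pointwise inequality $\|a+b\|_{\R^d}^p\leq 2^{p-1}(\|a\|_{\R^d}^p+\|b\|_{\R^d}^p)$ first and then take expectations; both give the identical final bound. (One small wording quibble: the estimate $\|a+b\|_{\R^d}^p\leq 2^{p-1}(\|a\|_{\R^d}^p+\|b\|_{\R^d}^p)$ does not come from applying Lemma~\ref{lem:bnd_(sum_xi)^p} ``coordinatewise''; rather one first uses the triangle inequality $\|a+b\|_{\R^d}\leq\|a\|_{\R^d}+\|b\|_{\R^d}$ and then the scalar $n=2$ case of that lemma.)
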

\begin{proof}[Proof of Corollary~\ref{cor:sup_Theta_L^2_bnd_on_0_T}]
	First, note that \eqref{eq: equation Theta 2} and the Minkowski inequality imply that for all $t\in [0,\infty)$ we have that
	\begin{align}
	\label{eq:sup_Theta_L^2_bnd_on_0_T_3}
	\begin{split}
	&\big|\E\big[\|\Theta_t\|_{\R^d}^p\big]\big|^{1/p}
	 = \Big|\E\Big[\big\|\Theta_{\floorgrid{t}} + \tfrac{(t-\floorgrid{t})}{\#_{\gamma(\floorgrid{t})}}\textstyle\sum_{j\in \gamma(\floorgrid{t})}G(\Theta_{\floorgrid{t}},Z_j)\big\|_{\R^d}^p\Big]\Big|^{1/p}  \\
	& \leq \big|\E\big[\|\Theta_{\floorgrid{t}}\|_{\R^d}^p\big]\big|^{1/p} 
	+ (t-\floorgrid{t})\Big|\E\Big[\big\|\tfrac{1}{\#_{\gamma(\floorgrid{t})}} \textstyle\sum_{j\in \gamma(\floorgrid{t})}G(\Theta_{\floorgrid{t}},Z_j)\big\|_{\R^d}^p\Big]\Big|^{1/p}. 
	\end{split}
	\end{align}
	Lemma~\ref{lem:Q_t_leq_Theta_floor_t} and \eqref{eq:sup_Theta_L^2_bnd_on_0_T_0} hence prove that for all $t\in [0,\infty)$ we have that
	\begin{equation}\label{eq:sup_Theta_L^2_bnd_on_0_T_4}
	\begin{split}
	\big|\E\big[\|\Theta_t\|_{\R^d}^p\big]\big|^{1/p}
	& \leq \big|\E\big[\|\Theta_{\floorgrid{t}}\|_{\R^d}^p\big]\big|^{1/p} 
	+ (t-\floorgrid{t})\big|c \big( 1 + \E\big[\|\Theta_{\floorgrid{t}}\|_{\R^d}^{p}\big]\big)\big|^{1/p}.
	\end{split}
	\end{equation}
	Lemma~\ref{lem:bnd_(sum_xi)^p} therefore demonstrates that for all $t\in [0,\infty)$ we have that
	\begin{equation}\label{eq:sup_Theta_L^2_bnd_on_0_T_5}
	\begin{split}
	\E\big[\|\Theta_t\|_{\R^d}^p\big]
	& \leq 2^{p-1}\E\big[\|\Theta_{\floorgrid{t}}\|_{\R^d}^p\big] 
	+ (t-\floorgrid{t})^p2^{p-1}c \big( 1 + \E\big[\|\Theta_{\floorgrid{t}}\|_{\R^d}^{p}\big]\big)\\
	& = 
	2^{p-1} c (t-\floorgrid{t})^p + 2^{p-1}(1+c(t-\floorgrid{t})^p)\E\big[\|\Theta_{\floorgrid{t}}\|_{\R^d}^p\big].
	\end{split}
	\end{equation}
	This proves item~\eqref{item:sup_Theta_L^2_bnd_on_0_T_1}. Next note that \eqref{eq:sup_Theta_L^2_bnd_on_0_T_5}  ensures that for all $T\in [0,\infty)$ we have that
	\begin{equation}\label{eq:sup_Theta_L^2_bnd_on_0_T_6}
	\sup_{t\in[0,T]}\E\big[ \|\Theta_t\|_{\R^d}^p \big]
	\leq  2^{p - 1} c T^p + 2^{p - 1}(1 + cT^p)\sup_{t\in[0,T]}\E\big[\|\Theta_{\floorgrid{t}}\|_{\R^d}^p\big].
	\end{equation}
	The assumption that $\forall \, T\in [0,\infty)\colon 0< \#_{\lb s \in [0,T]\colon\gamma(s)\neq \emptyset\rb} <\infty$ and Lemma~\ref{lem:Theta_L^2_sp_case} hence imply that for all $T\in [0,\infty)$ we have that
	\begin{equation}\label{eq:sup_Theta_L^2_bnd_on_0_T_7}
	\sup_{t\in[0,T]}\E\big[ \|\Theta_t\|_{\R^d}^p \big]
	< \infty.
	\end{equation}
	This establishes item~\eqref{item:sup_Theta_L^2_bnd_on_0_T_2}. The proof of Corollary~\ref{cor:sup_Theta_L^2_bnd_on_0_T} is thus completed.
\end{proof}

\begin{lemma}\label{lem:equiv_sup_L^p_Theta_t}
	Assume Setting~\ref{sec:setting_1}, assume that $\#_{\lb t\in [0,\infty)\colon \gamma(t)\neq \emptyset\rb}=\infty$,
	and let $p\in [1,\infty)$.
	Then the following two statements are equivalent:
	\begin{enumerate}[(i)]
		\item\label{item:equiv_sup_L^p_Theta_t_1} It holds that 
		\begin{equation}
		\sup_{t\in [0,\infty)}\E\big[\|\Theta_{t}\|_{\R^d}^p\big]<\infty.
		\end{equation}
		\item\label{item:equiv_sup_L^p_Theta_t_2} It holds that 
		\begin{equation}
		\sup_{t\in [0,\infty)}\E\big[\|\Theta_{\floorgrid{t}}\|_{\R^d}^p\big]<\infty.
		\end{equation}
	\end{enumerate}
\end{lemma}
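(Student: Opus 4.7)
The implication $\eqref{item:equiv_sup_L^p_Theta_t_1} \Rightarrow \eqref{item:equiv_sup_L^p_Theta_t_2}$ is immediate from monotonicity of the supremum: since $\floorgrid{t}\in[0,\infty)$ for every $t\in[0,\infty)$, one obtains $\sup_{t\in[0,\infty)}\E[\|\Theta_{\floorgrid{t}}\|_{\R^d}^p]\leq \sup_{s\in[0,\infty)}\E[\|\Theta_s\|_{\R^d}^p]$. My plan for the converse direction $\eqref{item:equiv_sup_L^p_Theta_t_2}\Rightarrow\eqref{item:equiv_sup_L^p_Theta_t_1}$ is to exploit the piecewise-affine structure of $t\mapsto\Theta_t$ between consecutive update times and reduce matters to convexity of $x\mapsto\|x\|_{\R^d}^p$.

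First I would use the hypothesis $\#_{\lb t\in[0,\infty)\colon\gamma(t)\neq\emptyset\rb}=\infty$ together with the standing assumption in Setting~\ref{sec:setting_1} that $\lb s\in[0,T]\colon\gamma(s)\neq\emptyset\rb$ is finite and non-empty for every $T\in[0,\infty)$ to enumerate the update times in strictly increasing order as $0=\mathfrak{t}_0<\mathfrak{t}_1<\mathfrak{t}_2<\ldots$ with $\mathfrak{t}_n\to\infty$. For any $n\in\N_0$ and any $t\in[\mathfrak{t}_n,\mathfrak{t}_{n+1}]$, equation~\eqref{eq: equation Theta 2} gives
\[
\Theta_t=\Theta_{\mathfrak{t}_n}+(t-\mathfrak{t}_n)\,Q_n \qquad\text{with}\qquad Q_n=\tfrac{1}{\#_{\gamma(\mathfrak{t}_n)}}\sum_{j\in\gamma(\mathfrak{t}_n)}G(\Theta_{\mathfrak{t}_n},Z_j),
\]
while the continuity of $\Theta$ at $\mathfrak{t}_{n+1}$ forces $\Theta_{\mathfrak{t}_{n+1}}=\Theta_{\mathfrak{t}_n}+(\mathfrak{t}_{n+1}-\mathfrak{t}_n)\,Q_n$. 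Eliminating $Q_n$ between these two identities produces the convex-combination representation
\[
\Theta_t=(1-\lambda)\,\Theta_{\mathfrak{t}_n}+\lambda\,\Theta_{\mathfrak{t}_{n+1}}, \qquad \lambda=\tfrac{t-\mathfrak{t}_n}{\mathfrak{t}_{n+1}-\mathfrak{t}_n}\in[0,1].
\]

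From here the conclusion would be routine. I would invoke convexity of $x\mapsto\|x\|_{\R^d}^p$ (valid since $p\geq 1$) to obtain $\|\Theta_t\|_{\R^d}^p\leq (1-\lambda)\|\Theta_{\mathfrak{t}_n}\|_{\R^d}^p+\lambda\|\Theta_{\mathfrak{t}_{n+1}}\|_{\R^d}^p$, take expectations, and use the fact that each $\mathfrak{t}_k$ is a fixed point of $\floorgrid{\cdot}$ to deduce $\E[\|\Theta_t\|_{\R^d}^p]\leq\sup_{s\in[0,\infty)}\E[\|\Theta_{\floorgrid{s}}\|_{\R^d}^p]$. Passing to the supremum over $t\in[0,\infty)$ would close the implication.

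I do not expect a serious obstacle. The only conceptually non-trivial step is the convex-combination identity, and that is purely algebraic: equation~\eqref{eq: equation Theta 2} already exhibits $\Theta$ as affine in $t$ on each inter-update interval, and continuity of $\Theta$ identifies the endpoint values. Crucially, no growth or integrability hypothesis on $G$ is required for $\eqref{item:equiv_sup_L^p_Theta_t_2}\Rightarrow\eqref{item:equiv_sup_L^p_Theta_t_1}$, because the term $Q_n$ is cancelled before any probabilistic estimate is invoked, so the argument never needs to control $\|Q_n\|_{L^p}$.
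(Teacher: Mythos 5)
Your proof is correct and follows essentially the same route as the paper's: both derive the convex-combination identity $\Theta_t=(1-\lambda)\Theta_{\floorgrid{t}}+\lambda\,\Theta_{\ceil{t}}$ from the piecewise-affine form of \eqref{eq: equation Theta 2} and the fact that the grid points are fixed points of $\floorgrid{\cdot}$. The only difference is cosmetic: you apply convexity of $x\mapsto\|x\|_{\R^d}^p$ directly (yielding a constant $1$), whereas the paper uses the triangle inequality followed by Lemma~\ref{lem:bnd_(sum_xi)^p} (yielding a constant $2^p$) — a marginally sharper but not conceptually different step.
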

\begin{proof}[Proof of Lemma~\ref{lem:equiv_sup_L^p_Theta_t}]
	Throughout  this proof let $\ceil{\cdot}\colon [0,\infty)\to [0,\infty]$ satisfy for all $t\in [0,\infty)$ that
	\begin{equation}\label{eq:equiv_sup_L^p_Theta_t_2}
	\ceil{t}=\inf \big(\lb s\in (t,\infty)\colon \gamma(s)\neq\emptyset \rb\cup \lb \infty \rb \big).
	\end{equation}
	Note that it is clear that $(\eqref{item:equiv_sup_L^p_Theta_t_1} \Rightarrow \eqref{item:equiv_sup_L^p_Theta_t_2})$. Next we prove that $(\eqref{item:equiv_sup_L^p_Theta_t_2} \Rightarrow \eqref{item:equiv_sup_L^p_Theta_t_1})$. 
	Observe that the assumption that $\forall\, t\in[0,\infty)\colon \#_{\lb s \in [0,t]\colon\gamma(s)\neq \emptyset\rb}<\infty$ and the assumption that $\#_{\lb t\in [0,\infty)\colon \gamma(t)\neq \emptyset\rb}=\infty$ assure that for all $t\in [0,\infty)$ we have that
	\begin{equation}\label{eq:equiv_sup_L^p_Theta_t_3}
	\ceil{t}<\infty.
	\end{equation}
	This and \eqref{eq: equation Theta 2} prove that for all $t\in [0,\infty)$ we have that
	\begin{equation}\label{eq:equiv_sup_L^p_Theta_t_4}
	\Theta_{\ceil{t}}
	=\Theta_{\floorgrid{t}} + (\ceil{t} - \floorgrid{t})\tfrac{1}{\#_{ \gamma(\floorgrid{t})}}\textstyle\sum_{j\in \gamma(\floorgrid{t})} G(\Theta_{\floorgrid{t}}, Z_j).
	\end{equation}
	Combining this and \eqref{eq: equation Theta 2}  ensures that for all $t\in [0,\infty)$ we have that
	\begin{align}
	\label{eq:equiv_sup_L^p_Theta_t_5}
	\begin{split}
	&\tfrac{\ceil{t} - t}{\ceil{t} - \floorgrid{t}}\Theta_{\floorgrid{t}} + \tfrac{t - \floorgrid{t}}{\ceil{t} - \floorgrid{t}}\Theta_{\ceil{t}}\\
	& = \tfrac{\ceil{t} - t}{\ceil{t} - \floorgrid{t}}\Theta_{\floorgrid{t}} + \tfrac{t - \floorgrid{t}}{\ceil{t} - \floorgrid{t}}\big(\Theta_{\floorgrid{t}} + (\ceil{t} - \floorgrid{t})\tfrac{1}{\#_{\gamma(\floorgrid{t})}}\textstyle\sum_{j\in \gamma(\floorgrid{t})} G(\Theta_{\floorgrid{t}}, Z_j)\big)\\
	& = \tfrac{\ceil{t} - t}{\ceil{t} - \floorgrid{t}}\Theta_{\floorgrid{t}} + \tfrac{t - \floorgrid{t}}{\ceil{t} - \floorgrid{t}}\Theta_{\floorgrid{t}} + (t - \floorgrid{t})\tfrac{1}{\#_{\gamma(\floorgrid{t})}}\textstyle\sum_{j\in \gamma(\floorgrid{t})} G(\Theta_{\floorgrid{t}}, Z_j)\\
	& = \Theta_{\floorgrid{t}} + (t - \floorgrid{t})\tfrac{1}{\#_{\gamma(\floorgrid{t})}}\textstyle\sum_{j\in \gamma(\floorgrid{t})} G(\Theta_{\floorgrid{t}}, Z_j) = \Theta_t. 
	\end{split}
	\end{align}
	This and the triangle inequality imply that  for all $t\in[0,\infty)$ we have that
	\begin{equation}
	\| \Theta_t \|_{\R^d}\leq \|\Theta_{\floorgrid{t}}\|_{\R^d} + \|\Theta_{\ceil{t}}\|_{\R^d}.
	\end{equation}
	Lemma~\ref{lem:bnd_(sum_xi)^p} therefore demonstrates that
	\begin{equation}
	\begin{split}
	\sup_{t\in [0,\infty)}\E\big[\|\Theta_{t}\|_{\R^d}^p\big]
	&\leq \sup_{t\in [0,\infty)}\E\big[\big(\|\Theta_{\floorgrid{t}}\|_{\R^d} + \|\Theta_{\ceil{t}}\|_{\R^d}\big)^p\big]\\
	&\leq 2^{p-1}\sup_{t\in [0,\infty)}\E\big[\|\Theta_{\floorgrid{t}}\|_{\R^d}^p + \|\Theta_{\ceil{t}}\|_{\R^d}^p\big]\\
	&\leq 2^{p}\sup_{t\in [0,\infty)}\E\big[\|\Theta_{\floorgrid{t}}\|_{\R^d}^p \big].
	\end{split}
	\end{equation}
	This reveals that $(\eqref{item:equiv_sup_L^p_Theta_t_2} \Rightarrow \eqref{item:equiv_sup_L^p_Theta_t_1})$. The proof of Lemma~\ref{lem:equiv_sup_L^p_Theta_t} is thus completed.
\end{proof}

\section{Weak error estimates for SAAs in the case of general learning rates with mini-batches}
\label{subsec:weak:SAA:general}
\sectionmark{}

\begin{prop}\label{prop:main_theorem}
Assume Setting~\ref{sec:setting_1},  assume for all $v,w\in [0,\infty)$ with $v\neq w$ that 
$
\gamma(v)\cap \gamma(w)=\emptyset,
$ 
let $\psi\in C^2(\R^d,\R)$, $ L\in (0,\infty)$,  assume for all $y,z\in\R^d$ that
\begin{equation}
\langle g(y)-g(z), y-z\rangle_{\R^d} \leq -L \|y-z\|_{\R^d}^2,
\end{equation}
assume that
\begin{equation}\label{eq:main_theorem_001}
\sup_{x\in\R^d}\Bigg( \frac{\E\big[ \|G(x,Z_1)\|_{\R^d}^2 \big]}{\big[1+\|x\|_{\R^d}\big]^2}
+ \frac{\big\|\E\big[(\tfrac{\partial}{\partial x}G)(x,Z_1)\big]\big\|_{L(\R^d,\R^d)}}{\big[1+\|x\|_{\R^d}\big]}
+ \|\psi'(x)\|_{L(\R^d, \R)} \Bigg) < \infty,
\end{equation}
and let $Q\colon [0,\infty)\times\Omega\to\R^d$ be the stochastic process which satisfies for all $t\in [0,\infty)$ that
\begin{equation}\label{eq:main_theorem_09}
Q_t=\tfrac{1}{\#_{\gamma(\floorgrid{t})}} \Big[\textstyle\sum_{j\in \gamma(\floorgrid{t})} G(\Theta_{\floorgrid{t}},Z_j) \Big].
\end{equation}
Then 
\begin{enumerate}[(i)]
\item\label{item:main_theorem_1} we have that $g \in C^2(\R^d,\R^d)$,
\item\label{item:main:limit} we have that there exists a unique  $\Xi \in \R^d$ which satisfies that
\begin{equation}
\limsup_{t \to \infty} \|\theta_t^{\xi} - \Xi\|_{\R^d} = 0,
\end{equation}
and
\item\label{item:main_theorem_2} we have for all $T\in (0,\infty)$ that
\end{enumerate}
\begin{align*}
& | \E[\psi(\Theta_T)]- \psi(\Xi)|  \leq  \sup_{s,v\in [0,T]}\E \bigg[ \|Q_s-g(\Theta_{\floorgrid{s}})\|_{\R^d}\norm{Q_s}_{\R^d} \\
& \cdot \bigg(\int_0^1  \exp(-L(T-s)) \big\|\psi''( \theta_{T-s}^{\lambda \Theta_s + (1-\lambda)\Theta_{\floorgrid{s}}})\big\|_{L^{(2)}(\R^d,\R)} \numberthis \label{eq:main_theorem} \\
&+ \big\|\psi'( \theta_{T-s}^{\lambda \Theta_s + (1-\lambda)\Theta_{\floorgrid{s}}} )\big\|_{L(\R^d,\R)} \int_0^{T-s} \exp(-Lu)\big\|g''(\theta_u^{\lambda \Theta_s + (1-\lambda)\Theta_{\floorgrid{s}}})\big\|_{L^{(2)}(\R^d,\R^d)}\, du\, d\lambda \bigg)  \\
& + \big\|\psi'(\theta_{T-s}^{\Theta_s})\big\|_{L(\R^d,\R)} \big\|g'(\Theta_{v})Q_v \big\|_{\R^d}\bigg] \int_0^T \exp(-L(T-t))(t-\floorgrid{t})\, dt\\
& + \sup\left\lbrace \big\| \psi'(\lambda\theta_T^{\xi}+(1-\lambda)\Xi) \big\|_{L(\R^d,\R)} \in \R\colon \lambda\in [0,1] \right\rbrace \! \|\xi-\Xi\|_{\R^d} \exp(-LT).
\end{align*}
\end{prop}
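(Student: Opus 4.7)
My plan is to prove \eqref{eq:main_theorem} by combining a Kolmogorov backward PDE argument with the flow estimates from Sections~\ref{subsec:spatial} and~\ref{subsec:upper}. Items~\eqref{item:main_theorem_1} and~\eqref{item:main:limit} are immediate: Lemma~\ref{lem:twice:diff} gives $g\in C^2(\R^d,\R^d)$, and Lemma~\ref{lem:motionless_pt_f} supplies the unique $\Xi$ with $g(\Xi)=0$, $\theta^\Xi_t=\Xi$, and $\limsup_{t\to\infty}\|\theta^\xi_t-\Xi\|_{\R^d}=0$. For~\eqref{item:main_theorem_2} I would split
\[
\E[\psi(\Theta_T)]-\psi(\Xi)=\bigl(\E[\psi(\Theta_T)]-\psi(\theta_T^\xi)\bigr)+\bigl(\psi(\theta_T^\xi)-\psi(\theta_T^\Xi)\bigr).
\]
The second summand is controlled by Lemma~\ref{lem:estimate_diff_phi} applied with constant $-L$, giving exactly the last line of the bound. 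For the first summand I set $u(t,\vartheta)=\psi(\theta^\vartheta_{T-t})$ so that $\E[\psi(\Theta_T)]-\psi(\theta_T^\xi)=\E[u(T,\Theta_T)-u(0,\Theta_0)]$ and, thanks to Lemma~\ref{lem:Kolm_back_eq_main}, $(\tfrac{\partial}{\partial t}u)(t,\vartheta)=-(\tfrac{\partial}{\partial \vartheta}u)(t,\vartheta)g(\vartheta)$.

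Using Lemma~\ref{lem:partial_t_of_Theta} and integrating $\tfrac{d}{dt}u(t,\Theta_t)$ over the (finitely many) intervals between consecutive grid points, I get
\[
u(T,\Theta_T)-u(0,\Theta_0)=\int_0^T (\tfrac{\partial}{\partial \vartheta}u)(t,\Theta_t)\bigl(Q_t-g(\Theta_t)\bigr)\,dt,
\]
and Fubini (justified via the a priori $L^p$-bound Corollary~\ref{cor:sup_Theta_L^2_bnd_on_0_T} together with the growth assumption~\eqref{eq:main_theorem_001}) lets me exchange expectation and $dt$-integral. Then I decompose the integrand as $Q_t-g(\Theta_t)=[Q_t-g(\Theta_{\floorgrid{t}})]+[g(\Theta_{\floorgrid{t}})-g(\Theta_t)]$. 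For the first piece, the crucial observation is that the random variables $\{Z_j\}_{j\in\gamma(\floorgrid{t})}$ are independent of $\Theta_{\floorgrid{t}}$ (this is where the disjointness hypothesis on $\gamma$ enters), so $\E[(Q_t-g(\Theta_{\floorgrid{t}}))\mid\Theta_{\floorgrid{t}}]=0$, whence
\[
\E\bigl[(\tfrac{\partial}{\partial \vartheta}u)(t,\Theta_{\floorgrid{t}})(Q_t-g(\Theta_{\floorgrid{t}}))\bigr]=0
\]
and only the difference $(\tfrac{\partial}{\partial \vartheta}u)(t,\Theta_t)-(\tfrac{\partial}{\partial \vartheta}u)(t,\Theta_{\floorgrid{t}})$ survives. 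Applying the fundamental theorem of calculus to this difference and using $\Theta_t-\Theta_{\floorgrid{t}}=(t-\floorgrid{t})Q_t$ produces the factor $(t-\floorgrid{t})\|Q_t\|\,\|Q_t-g(\Theta_{\floorgrid{t}})\|$ times an integral over $\lambda\in[0,1]$ of $(\tfrac{\partial^2}{\partial \vartheta^2}u)(t,\lambda\Theta_t+(1-\lambda)\Theta_{\floorgrid{t}})$.

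The remaining work is bounding $(\tfrac{\partial^2}{\partial \vartheta^2}u)$ and handling Term B. Differentiating $u(t,\vartheta)=\psi(\theta^\vartheta_{T-t})$ twice in $\vartheta$, applying Lemma~\ref{lem:exp_decay_partial_vartheta} to the first derivative (giving $e^{-L(T-t)}$) and Lemma~\ref{lem:bound_partial_theta^2} to the second (giving $\int_0^{T-t}e^{-L((T-t)+u)}\|g''(\theta^\vartheta_u)\|\,du$), and using $e^{-L(T-t)}\le 1$ in the second summand, I obtain the two contributions inside the $\int_0^1 d\lambda$. For Term B, I rewrite $g(\Theta_t)-g(\Theta_{\floorgrid{t}})=(t-\floorgrid{t})\int_0^1 g'(\lambda\Theta_t+(1-\lambda)\Theta_{\floorgrid{t}})Q_t\,d\lambda$ by the fundamental theorem, bound $(\tfrac{\partial}{\partial \vartheta}u)(t,\Theta_t)$ by $e^{-L(T-t)}\|\psi'(\theta^{\Theta_t}_{T-t})\|$ via Lemma~\ref{lem:estimate_u_01}, and exploit the interpolation identity that for $v=\floorgrid{t}+\lambda(t-\floorgrid{t})\in[\floorgrid{t},t]$ one has $\Theta_v=\lambda\Theta_t+(1-\lambda)\Theta_{\floorgrid{t}}$ and $Q_v=Q_t$, so $g'(\lambda\Theta_t+(1-\lambda)\Theta_{\floorgrid{t}})Q_t=g'(\Theta_v)Q_v$, allowing the $\lambda$-integral to be replaced by a supremum over $v\in[0,T]$. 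Pulling the common factor $(t-\floorgrid{t})e^{-L(T-t)}$ outside, taking the sup over $s,v\in[0,T]$ of the resulting expectation, and integrating in $t$ yields the stated bound.

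The main obstacle, I expect, is the justification of the two places where one differentiates through the expectation and interchanges operations: first, $u(t,\Theta_t)$ must be shown to be absolutely continuous in $t$ in order to write $u(T,\Theta_T)-u(0,\Theta_0)$ as a Lebesgue integral of the derivative (piecewise smoothness plus continuity at the grid points suffices, but it has to be assembled carefully), and second, the Fubini step requires bookkeeping of moments through Corollary~\ref{cor:sup_Theta_L^2_bnd_on_0_T} and the growth condition~\eqref{eq:main_theorem_001} to ensure integrability of the product $\|(\tfrac{\partial}{\partial \vartheta}u)(t,\Theta_t)\|\|Q_t-g(\Theta_t)\|$ uniformly on $[0,T]$. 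The algebraic splitting that exploits $\E[Q_t-g(\Theta_{\floorgrid{t}})\mid\Theta_{\floorgrid{t}}]=0$, the interpolation identity for Term~B, and the PDE-based chain of estimates on $(\tfrac{\partial^2}{\partial \vartheta^2}u)$ are otherwise quite mechanical once the tools of Sections~\ref{subsec:spatial}--\ref{subsec:upper} are in place.
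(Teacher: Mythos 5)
Your proposal is correct and follows essentially the same route as the paper: items (i)--(ii) via Lemma~\ref{lem:twice:diff} and Lemma~\ref{lem:motionless_pt_f}; the decomposition $\E[\psi(\Theta_T)]-\psi(\Xi)=(\E[\psi(\Theta_T)]-\psi(\theta_T^\xi))+(\psi(\theta_T^\xi)-\psi(\Xi))$; the Kolmogorov PDE identity $\pt u=-(\pv u)g$ yielding $\E[\psi(\Theta_T)]-\psi(\theta_T^\xi)=\int_0^T\E[(\pv u)(t,\Theta_t)(Q_t-g(\Theta_t))]\,dt$; the split into the conditionally centered part $Q_t-g(\Theta_{\floorgrid t})$ (handled by orthogonality plus FTC on the difference $(\pv u)(t,\Theta_t)-(\pv u)(t,\Theta_{\floorgrid t})$ and the flow bounds of Lemmas~\ref{lem:exp_decay_partial_vartheta} and~\ref{lem:bound_partial_theta^2}) and the drift part $g(\Theta_{\floorgrid t})-g(\Theta_t)$ (handled via the path-integral FTC, which is exactly the paper's $-\int_{\floorgrid t}^t g'(\Theta_s)Q_s\,ds$); and Lemma~\ref{lem:estimate_diff_phi} for the last term. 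The parenthetical ``using $e^{-L(T-t)}\le 1$'' is unnecessary --- after applying Lemmas~\ref{lem:exp_decay_partial_vartheta} and~\ref{lem:bound_partial_theta^2} the factor $e^{-L(T-t)}$ simply factors out of both summands, with the $\psi''$ summand retaining an additional $e^{-L(T-s)}$ as in the stated bound --- but this does not affect the validity of the argument.
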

\begin{proof}[Proof of Proposition~\ref{prop:main_theorem}]
Throughout this proof
let $T\in (0,\infty)$,  let $E \subseteq [0,T]$ be the set given by
\begin{equation}
E=\lb t\in [0,T]\colon \gamma(t)\neq \emptyset \rb\cup\lb T\rb,
\end{equation} 
let $u  = (u(t,\vartheta))_{(t, \vartheta) \in [0, T] \times \R^d} \in C([0,T]\times \R^d, \R)$  satisfy for all $t\in [0,T]$, $\vartheta \in \R^d$ that
\begin{equation}\label{eq:def_u_main_theorem}
u(t,\vartheta)=\psi (\theta_{T-t}^{\vartheta}),
\end{equation}
let $u_{1,0}  = (u_{1,0}(t,\vartheta))_{(t, \vartheta) \in [0, T] \times \R^d}  \in C( [0,T]\times\R^d, L(\R,\R))$  satisfy for all $t\in [0,T]$, $\vartheta\in \R^d$ that
\begin{equation}\label{eq:main_theorem_01}
u_{1,0}(t,\vartheta)=(\pt u)(t,\vartheta)
\end{equation}
(cf.~item \eqref{item:Kolm_back_eq_main_1} in Lemma~\ref{lem:Kolm_back_eq_main}),
let $u_{0,1}  = (u_{0,1}(t,\vartheta))_{(t, \vartheta) \in [0, T] \times \R^d} \in C( [0,T]\times \R^d, \allowbreak L(\R^d,\R))$  satisfy for all $t\in [0,T]$, $\vartheta\in \R^d$ that
\begin{equation}\label{eq:main_theorem_02}
u_{0,1}(t,\vartheta)=(\pv u)(t,\vartheta)
\end{equation}
(cf.~item \eqref{item:Kolm_back_eq_main_1} in Lemma~\ref{lem:Kolm_back_eq_main}),
let $\delta\colon [0,T]\times\Omega\to \R^d$ be the stochastic process which satisfies for all $t\in [0,T]$ that
\begin{equation}\label{eq:main_theorem_05}
\delta_t = \Theta_t-\Theta_{\floorgrid{t}},
\end{equation}
let $\theta^{1,\vartheta}\in C( [0,\infty), L(\R^d, \R^d))$, $\vartheta\in\R^d$,  satisfy for all $t\in [0,\infty)$, $\vartheta\in\R^d$ that 
\begin{equation}\label{eq:main_theorem_07}
\theta_t^{1,\vartheta}=\pv \theta_t^{\vartheta}
\end{equation}
(cf.~item \eqref{item:two_expressions_2} in  Lemma~\ref{lem:two_expressions}),
let $\theta^{2,\vartheta}\in C( [0,\infty), L^{(2)}(\R^d, \R^d))$, $\vartheta\in\R^d$,   satisfy for all $t\in [0,\infty)$, $\vartheta\in\R^d$ that 
\begin{equation}\label{eq:main_theorem_08}
\theta_t^{2,\vartheta}=\tfrac{\partial^2}{\partial \vartheta^2}\theta_t^{\vartheta}
\end{equation}
(cf.~item \eqref{item:two_expressions_2} in  Lemma~\ref{lem:two_expressions}),
let $a^\lambda\colon [0,\infty)\times\Omega\to\R^d$, $\lambda\in[0,1]$, be the  stochastic processes which satisfy for all $\lambda\in [0,1]$, $t\in [0,\infty)$ that
\begin{equation}\label{eq:main_theorem_04}
a_t^{\lambda}=\lambda \Theta_t + (1-\lambda)\Theta_{\floorgrid{t}},
\end{equation}
and let $\Delta\colon [0,\infty)\times\Omega\to\R^d$ be the stochastic process which satisfies for all $t\in [0,\infty)$ that
\begin{equation}\label{eq:main_theorem_06}
\Delta_t = \tfrac{1}{\#_{\gamma(\floorgrid{t})}}\Big[ \textstyle\sum_{j\in \gamma(\floorgrid{t})} G(\Theta_{\floorgrid{t}},Z_j) \Big] -g(\Theta_{\floorgrid{t}}) = Q_t-g(\Theta_{\floorgrid{t}}).
\end{equation}
Observe that item~\eqref{item:twice:diff:f} in  Lemma~\ref{lem:twice:diff} establishes item~\eqref{item:main_theorem_1}. 
Next note that  Lemma~\ref{lem:motionless_pt_f} ensures that there exists a unique $\Xi \in \R^d$ which satisfies that $g(\Xi)=0$ and 
\begin{equation}
\limsup_{t \to \infty} \|\theta_t^{\xi} - \Xi\|_{\R^d} = 0.
\end{equation}
This establishes item~\eqref{item:main:limit}.
Next observe that the assumption that $\forall\, t\in [0,\infty) \colon 0<\#_{\lb s\in [0,t]\colon \gamma(s)\neq \emptyset \rb}<\infty$ ensures that there exist $k\in \N$, $\mathfrak{t}_1, \mathfrak{t}_2, \dots,\mathfrak{t}_k \in [0,T]$ which satisfy that 
\begin{equation}\label{eq:main_theorem_8}
0=\mathfrak{t}_1<\mathfrak{t}_2<\dots<\mathfrak{t}_k=\floorgrid{T}\quad \text{and}\quad \lb t\in [0,T]\colon \gamma(t)\neq \emptyset\rb = \lb \mathfrak{t}_1, \mathfrak{t}_2, \dots,\mathfrak{t}_k\rb.
\end{equation}
Note that \eqref{eq:main_theorem_8} implies that there exists  $\mathfrak{j}\colon\lb 1, 2, \dots,k\rb\to\N$ which satisfies for all $n\in\lb 1, 2, \dots,k\rb$ that
\begin{equation}\label{eq:main_theorem_8p}
\mathfrak{j}_n\in\gamma(\mathfrak{t}_n).
\end{equation}
Next note that \eqref{eq:main_theorem_001} and Lemma~\ref{lem:f_C^1} assure that there exists  $c\in (0,\infty)$ which satisfies for all $x\in\R^d$  that
\begin{equation}\label{eq:main_theorem_0012}
\begin{split}
& \max\big\lb \E\big[ \|G(x,Z_1)\|_{\R^d}^2 \big]^{1/2}, \|g(x)\|_{\R^d}, \|g'(x)\|_{L(\R^d,\R^d)}\big\rb \leq c(1+\|x\|_{\R^d}),
\end{split}
\end{equation}
\begin{equation}
\label{eq:main:pol:growth}
\E\big[\|G(x,Z_1)\|_{\R^d}^2\big]\leq c(1+\|x\|_{\R^d}^2),
\end{equation}
and 
\begin{equation}
\label{eq:main:psi}
 \|\psi'(x)\|_{L(\R^d,\R)}\leq c.
\end{equation}
Item~\eqref{item:estimate_u_01_3} in Lemma~\ref{lem:estimate_u_01} hence ensures that for all $t\in [0, T]$, $\vartheta\in\R^d$ we have that
\begin{equation}\label{eq:main_theorem_3_2_2}
\begin{split}
\|u_{0,1}(t,\vartheta)\|_{L(\R^d,\R)} &= \|(\pv u)(t,\vartheta)\|_{L(\R^d,\R)}\leq \|\psi'(\theta_{T-t}^{\vartheta})\|_{L(\R^d,\R)}e^{-L(T-t)}\leq c.
\end{split}
\end{equation}
Next note  Lemma~\ref{lem:Q_t_leq_Theta_floor_t}  and \eqref{eq:main:pol:growth} imply that for all $t \in [0,T]$ we have that
\begin{equation}
\E\big[\|Q_t\|_{\R^d}^2\big] \leq c\big(1+\E\big[\|\Theta_{\floorgrid{t}}\|_{\R^d}^2\big]\big).
\end{equation}
Jensen's inequality therefore proves that for all $t \in [0,T]$ we have that
\begin{equation}
\label{eq:main:Q}
 \E\big[\|Q_t\|_{\R^d} \big]  \leq \big| \E\big[\|Q_t\|_{\R^d}^2 \big] \big|^{1/2} \leq \big|c \big(1+\E\big[\|\Theta_{\floorgrid{t}}\|_{\R^d}^2\big]\big) \big|^{1/2}.
\end{equation}
Moreover, observe that \eqref{eq:main_theorem_0012} and Jensen's inequality ensure that for all $t \in [0,T]$ we have that
\begin{equation}
\begin{split}
\E\big[  \|g(\Theta_{t})\|_{\R^d} \big] \leq c\big(1 + \E\big[\|\Theta_{t}\|_{\R^d}\big]\big)  \leq c\big(1 + \big|\E\big[\|\Theta_{t}\|_{\R^d}^2\big]\big|^{1/2}\big).
\end{split}
\end{equation}
This, \eqref{eq:main_theorem_3_2_2}, and \eqref{eq:main:Q}  assure that
\begin{align*}
\label{eq:main:u01}
& \int_0^T\E\big[(\|u_{0,1}(t,\Theta_t)\|_{L(\R^d,\R)} + \|u_{0,1}(t,\Theta_{\floorgrid{t}})\|_{L(\R^d,\R)}) \\
& \quad \cdot (\|Q_t\|_{\R^d} + \|g(\Theta_{\floorgrid{t}})\|_{\R^d} + \|g(\Theta_{t})\|_{\R^d} ) \big]\, dt\\
& \leq \int_0^T 2c \Big(\big|c\big(1 + \E\big[\|\Theta_{\floorgrid{t}}\|_{\R^d}^2\big]\big)\big|^{1/2} + c\big(1 + \big|\E\big[\|\Theta_{\floorgrid{t}}\|_{\R^d}^2\big]\big|^{1/2}\big) \numberthis \\
& \quad + c\big(1 + \big|\E\big[\|\Theta_{t}\|_{\R^d}^2\big]\big|^{1/2}\big)\Big) \, dt\\
& \leq  2cT \Big(\big|c\big(1 + \big(\sup_{t\in [0,T]}\E\big[\|\Theta_{\floorgrid{t}}\|_{\R^d}^2\big]\big)\big)\big|^{1/2} + 2c\big(1 + \big|\big(\sup_{t\in [0,T]}\E\big[\|\Theta_{t}\|_{\R^d}^2\big]\big)\big|^{1/2}\big) \Big).
\end{align*}
In addition, observe that Corollary~\ref{cor:sup_Theta_L^2_bnd_on_0_T} and \eqref{eq:main:pol:growth}  prove that
\begin{equation}
\label{eq:main:sup}
\sup_{t\in [0,T]}\E\big[\|\Theta_{t}\|_{\R^d}^2\big] < \infty.
\end{equation}
This and \eqref{eq:main:u01} demonstrate that
\begin{equation}\label{eq:main_theorem_7_02}
\begin{split}
& \int_0^T\E\big[ (\|u_{0,1}(t,\Theta_t)\|_{L(\R^d,\R)} + \|u_{0,1}(t,\Theta_{\floorgrid{t}})\|_{L(\R^d,\R)})\\
 & \quad \cdot (\|Q_t\|_{\R^d} + \|g(\Theta_{\floorgrid{t}})\|_{\R^d} + \|g(\Theta_{t})\|_{\R^d} ) \big]\, dt <\infty.
\end{split}
\end{equation}
Furthermore, note that \eqref{eq:main_theorem_3_2_2} and Jensen's inequality imply that for all $s,t\in [0,T]$, $j\in \N$ we have that
\begin{align}
\label{eq:main_theorem_7_03}
\begin{split}
& \E\big[ | u_{0,1}(t,\Theta_{s})  G(\Theta_{s},Z_j)| + |u_{0,1}(t,\Theta_{s}) g(\Theta_{s})| \big]\\
& \leq \E\big[ \| u_{0,1}(t,\Theta_{s})\|_{L(\R^d,\R)}\|  G(\Theta_{s},Z_j)\|_{\R^d} + \|u_{0,1}(t,\Theta_{s})\|_{L(\R^d,\R)} \| g(\Theta_{s})\|_{\R^d} \big]\\
& \leq c\,\E\big[ \|  G(\Theta_{s},Z_j)\|_{\R^d}\big] + c\,\E\big[ \| g(\Theta_{s})\|_{\R^d} \big]\\
& \leq c\,\big|\E\big[ \|  G(\Theta_{s},Z_j)\|_{\R^d}^2\big]\big|^{1/2} + c\,\E\big[ \| g(\Theta_{s})\|_{\R^d} \big]. 
\end{split}
\end{align}
Moreover, observe that the assumption that $Z_j$, $j \in \N$, are i.i.d.\ random variables, the assumption that $\forall\, (v,w)\in\lb (a,b)\in [0,\infty)^2\colon  a\neq b \rb\colon \gamma(v)\cap \gamma(w)=\emptyset $, and \eqref{eq: equation Theta 2} prove that for all  $s\in [0,T]$, $j\in\gamma(\floorgrid{s})$ we have that $Z_j$ and $\Theta_{\floorgrid{s}}$
are independent. This and the assumption that $Z_j$, $j \in \N$, are i.i.d.\ random variables ensure that for all $s\in [0,T]$, $j\in\gamma(\floorgrid{s})$ we have that
\begin{equation}\label{eq:main_theorem_7_04}
\begin{split}
 \E\big[ \| G(\Theta_{\floorgrid{s}},Z_j) \|_{\R^d}^2\big]
& = \int_{\Omega}\| G(\Theta_{\floorgrid{s}}(\omega),Z_j(\omega)) \|_{\R^d}^2\, \P(d\omega)\\
& = \int_{\Omega}\int_{\Omega}\| G(\Theta_{\floorgrid{s}}(\omega),Z_j(\tilde{\omega})) \|_{\R^d}^2\, \P(d\tilde{\omega})\, \P(d\omega)\\
& = \int_{\Omega}\int_{\Omega}\| G(\Theta_{\floorgrid{s}}(\omega),Z_1(\tilde{\omega})) \|_{\R^d}^2\, \P(d\tilde{\omega})\, \P(d\omega).
\end{split}
\end{equation}
Combining this with \eqref{eq:main:pol:growth} implies that for all $s\in [0,T]$, $j\in\gamma(\floorgrid{s})$ we have that
\begin{equation}\label{eq:main_theorem_7_05}
\begin{split}
 \E\big[ \| G(\Theta_{\floorgrid{s}},Z_j) \|_{\R^d}^2\big]
& \leq \int_{\Omega}c\big(1 + \| \Theta_{\floorgrid{s}}(\omega) \|_{\R^d}^2\big) \, \P(d\omega) = c\big(1 + \E\big[\| \Theta_{\floorgrid{s}} \|_{\R^d}^2\big]\big).
\end{split}
\end{equation}
This and \eqref{eq:main_theorem_7_03} demonstrate that for all $s, t \in [0,T]$, $j\in\gamma(\floorgrid{s})$ we have that
\begin{equation}\label{eq:main_theorem_7_06}
\begin{split}
 &\E\big[ | u_{0,1}(t,\Theta_{\floorgrid{s}})  G(\Theta_{\floorgrid{s}},Z_j)| + |u_{0,1}(t,\Theta_{\floorgrid{s}}) g(\Theta_{\floorgrid{s}})| \big]\\
& \leq c^{3/2}\big|1 + \E\big[\| \Theta_{\floorgrid{s}} \|_{\R^d}^2\big]\big|^{1/2} + c\,\E\big[ \| g(\Theta_{\floorgrid{s}})\|_{\R^d} \big].
\end{split}
\end{equation}
Combining this and \eqref{eq:main_theorem_0012}  assures that for all $s, t \in [0,T]$, $j\in\gamma(\floorgrid{s})$ we have that
\begin{equation}\label{eq:main_theorem_7_07}
\begin{split}
& \E\big[ | u_{0,1}(t,\Theta_{\floorgrid{s}})  G(\Theta_{\floorgrid{s}},Z_j)| + |u_{0,1}(t,\Theta_{\floorgrid{s}}) g(\Theta_{\floorgrid{s}})| \big]\\
& \leq c^{3/2}\big|1 + \E\big[\| \Theta_{\floorgrid{s}} \|_{\R^d}^2\big]\big|^{1/2} + c^2\big(1 + \E\big[ \| \Theta_{\floorgrid{s}} \|_{\R^d} \big]\big).
\end{split}
\end{equation}
Jensen's inequality and \eqref{eq:main:sup} hence prove that for all $s, t\in [0,T]$, $j\in\gamma(\floorgrid{s})$ we have that 
\begin{align}
\label{eq:main_theorem_7_08}
\begin{split}
& \E\big[ | u_{0,1}(t,\Theta_{\floorgrid{s}})  G(\Theta_{\floorgrid{s}},Z_j)| + |u_{0,1}(t,\Theta_{\floorgrid{s}}) g(\Theta_{\floorgrid{s}})| \big]   \\
& \leq c^{3/2}\big|1 + \sup_{u\in [0,T]}\E\big[\| \Theta_{u} \|_{\R^d}^2\big]\big|^{1/2} + c^2\big(1 + \sup_{u\in [0,T]} \big| \E\big[ \| \Theta_{u} \|_{\R^d}^2 \big] \big|^{1/2}\big)<\infty.
\end{split}
\end{align}
Next note that item~\eqref{item:motionless_pt_f_1_2} in Lemma~\ref{lem:motionless_pt_f} and Lemma~\ref{lem:estimate_diff_phi} ensure that
\begin{align}
\label{eq:main_theorem_2}
\begin{split}
&  |\psi(\theta_T^{\xi})- \psi(\Xi)| = |\psi(\theta_T^{\xi})- \psi(\theta_T^{\Xi})|  \\
&\leq \sup\left\lbrace \| \psi'(\lambda\theta_T^{\xi}+(1-\lambda)\theta_T^{\Xi}) \|_{L(\R^d,\R)} \in \R\colon \lambda\in [0,1] \right\rbrace \! \|\xi-\Xi\|_{\R^d}e^{-LT}\\
& = \sup\left\lbrace \| \psi'(\lambda\theta_T^{\xi}+(1-\lambda)\Xi) \|_{L(\R^d,\R)} \in \R\colon \lambda\in [0,1] \right\rbrace \! \|\xi-\Xi\|_{\R^d}e^{-LT}.
\end{split}
\end{align}
In the next step we combine \eqref{eq:def_u_main_theorem} and  \eqref{eq:equation theta} to obtain that for all $\vartheta\in\R^d$ we have that
\begin{equation}\label{eq:main_theorem_u_psi}
\psi(\vartheta)=\psi(\theta_0^{\vartheta})=u(T,\vartheta).
\end{equation}
This and the assumption that $\forall\, \omega\in \Omega\colon\Theta_0(\omega)=\xi$ prove that for all $\omega\in \Omega$ we have that
\begin{equation}\label{eq:main_theorem_2_02}
\psi(\Theta_T(\omega)) = u(T,\Theta_T(\omega))
\end{equation}
and
\begin{equation}\label{eq:main_theorem_2_03}
\psi(\theta^{\xi}_T)=\psi(\theta^{\Theta_0(\omega)}_T)=u(0,\Theta_0(\omega)).
\end{equation}
Next observe that Lemma~\ref{lem:partial_t_of_Theta} and \eqref{eq:main_theorem_09} assure that for all $\omega\in\Omega$, $t\in [0,T]\backslash E$ we have that 
\begin{equation}\label{eq:main_theorem_3_0_2}
[0,T]\backslash E \subseteq \left\lbrace u\in [0,T]\colon [0,T]\ni s\mapsto\Theta_s(\omega) \in \R^d \text{  is differentiable at }u \right\rbrace
\end{equation}
and
\begin{equation}\label{eq:main_theorem_3_0_22}
\pt \Theta_t(\omega) = Q_t(\omega).
\end{equation}
This, item~\eqref{item:Kolm_back_eq_main_1} in Lemma~\ref{lem:Kolm_back_eq_main}, and the fact that $\psi\in C^1(\R^d,\R)$ imply that for all $\omega\in\Omega$ we have that 
\begin{equation}\label{eq:main_theorem_3_0_3}
[0,T]\backslash E \subseteq \left\lbrace t\in [0,T]\colon [0,T]\ni s\mapsto u(s,\Theta_s(\omega)) \in \R \text{  is differentiable at }t \right\rbrace.
\end{equation}
This reveals that for all $\omega\in\Omega$, $t\in [0,T]\backslash E$  it holds that
\begin{equation}\label{eq:main_theorem_3_2}
\pt [u(t,\Theta_t(\omega))]= (\pt u)(t,\Theta_t(\omega))+(\pv u)(t,\Theta_t(\omega))\pt \Theta_t(\omega).
\end{equation}
Next note that \eqref{eq:main_theorem_3_0_22} and \eqref{eq:main_theorem_02} demonstrate that
\begin{equation}
\begin{split}
& \int_{[0,T]\backslash E} \E\big[|(\pv u)(t,\Theta_t)\pt \Theta_t|\big]\, dt\\
& \leq \int_{[0,T]\backslash E} \E\big[\|(\pv u)(t,\Theta_t)\|_{L(\R^d,\R)}\|\pt \Theta_t\|_{\R^d}\big]\, dt\\
& = \int_{[0,T]\backslash E} \E\big[\|u_{0,1}(t,\Theta_t)\|_{L(\R^d,\R)}\|Q_t\|_{\R^d}\big]\, dt.
\end{split}
\end{equation}
This and \eqref{eq:main_theorem_7_02} assure that
\begin{equation}\label{eq:main_theorem_4_02}
\int_{[0,T]\backslash E} \E\big[|(\pv u)(t,\Theta_t)\pt \Theta_t|\big]\, dt < \infty.
\end{equation}
Next observe that \eqref{eq:def_u_main_theorem} ensures that for all $t\in [0,T],\vartheta\in\R^d$ we have that
\begin{equation}
\begin{split}
|(\pt u)(t,\vartheta)|& =|\psi'(\theta_{T-t}^{\vartheta})\pt(\theta_{T-t}^{\vartheta})|
=|\psi'(\theta_{T-t}^{\vartheta})g(\theta_{T-t}^{\vartheta})|
\\
&\leq \|\psi'(\theta_{T-t}^{\vartheta})\|_{L(\R^d,\R)}\|g(\theta_{T-t}^{\vartheta})\|_{\R^d}.
\end{split}
\end{equation}
This, \eqref{eq:main:psi}, and \eqref{eq:main_theorem_0012} imply that for all $t\in [0,T],\vartheta\in\R^d$ we have that
\begin{equation}
|(\pt u)(t,\vartheta)| \leq  c \|g(\theta_{T-t}^{\vartheta})\|_{\R^d}\leq c^2(1+\|\theta_{T-t}^{\vartheta}\|_{\R^d}).
\end{equation}
The triangle inequality hence proves that for all $t\in [0,T]$ we have that 
\begin{equation}
\begin{split}
\E\big[|(\pt u)(t,\Theta_t)|\big]
& \leq c^2\big(1+\E\big[\|\theta_{T-t}^{\Theta_t}\|_{\R^d}\big]\big)\\
& = c^2\big(1+\E\big[\|\theta_{T-t}^{\Xi +(\Theta_t-\Xi)}\|_{\R^d}\big]\big)\\
& \leq c^2\big(1+\|\Xi\|_{\R^d}+\E\big[\|\theta_{T-t}^{\Xi +(\Theta_t-\Xi)}-\Xi\|_{\R^d}\big]\big).
\end{split}
\end{equation}
This, item~\eqref{item:motionless_pt_f_2} in Lemma~\ref{lem:motionless_pt_f}, and the triangle inequality ensure that for all $t\in [0,T]$ we have that 
\begin{equation}
\begin{split}
\E\big[|(\pt u)(t,\Theta_t)|\big]
& \leq c^2\big(1+\|\Xi\|_{\R^d}+\E\big[\|\Theta_t-\Xi\|_{\R^d}\big]e^{-L(T-t)}\big)\\
& \leq c^2\big(1+\|\Xi\|_{\R^d}+\E\big[\|\Theta_t - \Xi\|_{\R^d}\big]\big)\\
& \leq c^2\big(1+2\|\Xi\|_{\R^d}+\E\big[\|\Theta_t\|_{\R^d}\big]\big).
\end{split}
\end{equation}
This reveals that
\begin{equation}\label{eq:main_theorem_finite_nb_step}
\begin{split}
\int_0^T\E\big[|(\pt u)(t,\Theta_t)| \big]\, dt
&\leq \int_0^T c^2\big(1+2\|\Xi\|_{\R^d}+\E\big[\|\Theta_t\|_{\R^d}\big]\big) \, dt\\
& = Tc^2(1+2\|\Xi\|_{\R^d})+c^2\int_0^T\E\big[\|\Theta_t\|_{\R^d}\big] \, dt\\
&\leq Tc^2(1+2\|\Xi\|_{\R^d})+Tc^2\sup_{t\in [0,T]}\E\big[\|\Theta_t\|_{\R^d}\big]. 
\end{split}
\end{equation}
Jensen's inequality and \eqref{eq:main:sup} hence imply that
\begin{equation}
\begin{split}
&\int_0^T\E\big[|(\pt u)(t,\Theta_t)| \big]\, dt\\
&\leq Tc^2(1+2\|\Xi\|_{\R^d})+Tc^2\sup_{t\in [0,T]}\big(\E\big[\|\Theta_t\|_{\R^d}^2\big]\big)^{1/2} < \infty.
\end{split}
\end{equation}
Combining this, \eqref{eq:main_theorem_4_02}, \eqref{eq:main_theorem_3_2}, and the triangle inequality demonstrates that
\begin{align}
\begin{split}
&\int_{[0,T]\backslash E}\int_{\Omega}\big| \pt [u(t,\Theta_t(\omega))]\big|\, \P(d\omega)\, dt  \\
&\leq \int_{[0,T]\backslash E}\int_{\Omega} \big|(\pt u)(t,\Theta_t(\omega))\big|+\big|(\pv u)(t,\Theta_t(\omega))\pt \Theta_t(\omega)\big| \, \P(d\omega)\, dt <\infty.
\end{split}
\end{align} 
This, \eqref{eq:main_theorem_2_02}, \eqref{eq:main_theorem_2_03}, the fundamental theorem of calculus, the fact that $\#_E<\infty$, and Tonelli's theorem prove
that
\begin{equation}
\begin{split}
\E\big[|\psi(\Theta_T) - \psi(\theta_T^{\xi})|\big]
& = \int_{\Omega} |u(T,\Theta_T(\omega)) - u(0,\Theta_0(\omega))|\, \P(d\omega)\\
& \leq \int_{\Omega} \int_{[0,T]\backslash E}\big|\pt[u(t,\Theta_t(\omega))]\big|\,dt\, \P(d\omega)\\
& = \int_{[0,T]\backslash E}\int_{\Omega} \big|\tfrac{\partial}{\partial t}[u(t,{\Theta_t(\omega)})]\big| \, \P(d\omega)\, dt < \infty.
\end{split}
\end{equation}
The fundamental theorem of calculus, \eqref{eq:main_theorem_2_02}, \eqref{eq:main_theorem_2_03},  the fact that $\#_E<\infty$, and Fubini's theorem therefore assure that 
\begin{equation}\label{eq:main_theorem_4_2}
\begin{split}
\E[\psi(\Theta_T)]- \psi(\theta_T^{\xi}) 
& = \int_{\Omega} u(T,\Theta_T(\omega)) - u(0,\Theta_0(\omega))\, \P(d\omega)\\
& = \int_{\Omega} \int_{[0,T]\backslash E}\pt[u(t,\Theta_t(\omega))]\,dt\, \P(d\omega)\\
& = \int_{[0,T]\backslash E}\int_{\Omega} \tfrac{\partial}{\partial t}[u(t,{\Theta_t(\omega)})]\, \P(d\omega)\, dt .
\end{split}
\end{equation}
Furthermore, note that item~\eqref{item:Kolm_back_eq_main_2} in Lemma~\ref{lem:Kolm_back_eq_main} implies that for all $t\in[0,T]$, $\vartheta\in \R^d$ we have that
\begin{equation}\label{eq:main_theorem_5}
u_{1,0}(t,\vartheta)=-u_{0,1}(t,\vartheta)g(\vartheta).
\end{equation}
This and \eqref{eq:main_theorem_3_2}  assure that for all $t\in[0,T]\backslash E$, $\omega\in \Omega$ we have that
\begin{equation}\label{eq:main_theorem_6}
\begin{split}
\tfrac{\partial}{\partial t}[u(t,{\Theta_t(\omega)})] &= u_{1,0}(t,\Theta_t(\omega)) + u_{0,1}(t,\Theta_t(\omega)) \pt \Theta_t(\omega)\\
&= u_{0,1}(t,\Theta_t(\omega))\left( (\pt \Theta_t(\omega)) -g(\Theta_t(\omega)) \right).
\end{split}
\end{equation}
Combining this and \eqref{eq:main_theorem_3_0_22} demonstrates that for all $t\in[0,T]\backslash E$,  $\omega\in \Omega$ we have that
\begin{equation}
\tfrac{\partial}{\partial t}[u(t,{\Theta_t(\omega)})]
= u_{0,1}(t,\Theta_t(\omega))\left( Q_t(\omega) -g(\Theta_t(\omega)) \right).
\end{equation}
The fact that  $\#_E<\infty$ and \eqref{eq:main_theorem_4_2} hence imply that
\begin{align*}
\E[\psi(\Theta_T)]- \psi(\theta_T^{\xi}) 
& = \int_{[0,T]\backslash E}\int_{\Omega} \tfrac{\partial}{\partial t}[u(t,{\Theta_t(\omega)})]\, \P(d\omega)\, dt \numberthis \\
&=  \int_{[0,T]\backslash E} \int_{\Omega} u_{0,1}(t,\Theta_t(\omega))\left( Q_t(\omega) -g(\Theta_t(\omega)) \right) \P(d\omega)\, dt \\
&=  \int_{[0,T]} \int_{\Omega} u_{0,1}(t,\Theta_t(\omega))\left( Q_t(\omega) -g(\Theta_t(\omega)) \right) \P(d\omega)\, dt. 
\end{align*}
This reveals that
\begin{equation}\label{eq:main_theorem_7}
\begin{split}
\E[\psi(\Theta_T)]- \psi(\theta_T^{\xi}) 
&=  \int_0^T \E\big[ u_{0,1}(t,\Theta_t)\left( Q_t -g(\Theta_t) \right)\! \big]\, dt.
\end{split}
\end{equation}
This and \eqref{eq:main_theorem_7_02} ensure that
\begin{align}
\label{eq:main_theorem_15_03}
&\E[\psi(\Theta_T)]- \psi(\theta_T^{\xi})  \\
&= \int_0^T \E\big[ u_{0,1}(t,\Theta_t)( Q_t -g(\Theta_{\floorgrid{t}})) \big]\, dt + 
\int_0^T \E\big[ u_{0,1}(t,\Theta_t)( g(\Theta_{\floorgrid{t}}) - g(\Theta_t)) \big]\, dt. \nonumber
\end{align}
Combining this with \eqref{eq:main_theorem_06}  assures that
\begin{equation}\label{eq:main_theorem_15_2}
\begin{split}
&\E[\psi(\Theta_T)]- \psi(\theta_T^{\xi})\\
&= \int_0^T \E[ u_{0,1}(t,\Theta_t) \Delta_t ]\, dt 
+ \int_0^T \E\big[ u_{0,1}(t,\Theta_t) \big( g(\Theta_{\floorgrid{t}}) - g(\Theta_t) \big) \big]\, dt.
\end{split}
\end{equation}
Next note that \eqref{eq:main_theorem_8} and \eqref{eq:setting_2} demonstrate that for all $i\in \lbrace 1, 2, \dots,k-1\rbrace$, $t\in[\mathfrak{t}_i,\mathfrak{t}_{i+1})$ we have that $\floorgrid{t}=\mathfrak{t}_i$. This assures that
\begin{equation}
\begin{split}
 &\int_0^T \E\big[ u_{0,1}(t,\Theta_{\floorgrid{t}})(Q_t - g(\Theta_{\floorgrid{t}})) \big]\, dt\\
& =\Bigg[\textstyle\sum\limits_{i=1}^{k-1} \displaystyle \int_{\mathfrak{t}_i}^{\mathfrak{t}_{i+1}} \E\big[ u_{0,1}(t,\Theta_{\floorgrid{t}})(Q_t - g(\Theta_{\floorgrid{t}})) \big]\, dt\Bigg]\\
& \quad+ \int_{\mathfrak{t}_k}^{T} \E\!\left[ u_{0,1}(t,\Theta_{\floorgrid{t}})\left(Q_t - g(\Theta_{\floorgrid{t}})\right) \right] dt\\
& =\Bigg[\textstyle\sum\limits_{i=1}^{k-1} \displaystyle \int_{\mathfrak{t}_i}^{\mathfrak{t}_{i+1}} \E\big[ u_{0,1}(t,\Theta_{\mathfrak{t}_i})(Q_{\mathfrak{t}_i} - g(\Theta_{\mathfrak{t}_i})) \big]\, dt\Bigg] \\
&\quad + \int_{\mathfrak{t}_k}^{T} \E\big[ u_{0,1}(t,\Theta_{\mathfrak{t}_k})( Q_{\mathfrak{t}_k} - g(\Theta_{\mathfrak{t}_k})) \big]\, dt.
\end{split}
\end{equation}
Combining \eqref{eq:main_theorem_09} and \eqref{eq:main_theorem_7_08} hence proves that
\begin{align*}\label{eq:main_theorem_12}
& \int_0^T \E\big[ u_{0,1}(t,\Theta_{\floorgrid{t}})(Q_t - g(\Theta_{\floorgrid{t}})) \big]\, dt \numberthis\\
& =\Bigg[\textstyle\sum\limits_{i=1}^{k-1} \displaystyle \int_{\mathfrak{t}_i}^{\mathfrak{t}_{i+1}}\bigg(\bigg(\frac{1}{\#_{\gamma(\mathfrak{t}_i)}}\textstyle\sum\limits_{j\in \gamma(\mathfrak{t}_i)} \E\big[ u_{0,1}(t,\Theta_{\mathfrak{t}_i})  G(\Theta_{\mathfrak{t}_i},Z_j)\big]\bigg) - \E\big[u_{0,1}(t,\Theta_{\mathfrak{t}_i}) g(\Theta_{\mathfrak{t}_i}) \big] \bigg)\, dt\Bigg]\\
& \quad + \int_{t_k}^{T}\bigg(\bigg(\frac{1}{\#_{\gamma(\mathfrak{t}_k)}}\textstyle\sum\limits_{j\in \gamma(\mathfrak{t}_k)} \displaystyle \E\big[ u_{0,1}(t,\Theta_{\mathfrak{t}_k})  G(\Theta_{\mathfrak{t}_k},Z_j)\big]\bigg) - \E\big[u_{0,1}(t,\Theta_{\mathfrak{t}_k}) g(\Theta_{\mathfrak{t}_k}) \big] \bigg)\, dt.
\end{align*}
Furthermore, note that the assumption that $Z_j$, $j\in\N$, are i.i.d.\ random variables, the fact that  for all  $s\in [0,T]$, $j\in\gamma(\floorgrid{s})$ it holds that $Z_j$ and $\Theta_{\floorgrid{s}}$
are independent, and  \eqref{eq:main_theorem_8p} ensure that for all $i \in \{1, 2, \ldots, k\}$, $t \in [0,T]$ we have that
\begin{equation}\label{eq:main_theorem_13}
\begin{split}
&\frac{1}{\#_{\gamma(\mathfrak{t}_i)}}\textstyle\sum\limits_{j\in \gamma(\mathfrak{t}_i)} \E\big[ u_{0,1}(t,\Theta_{\mathfrak{t}_i})  G(\Theta_{\mathfrak{t}_i},Z_j)\big]
 = \frac{1}{\#_{\gamma(\mathfrak{t}_i)}}\textstyle\sum\limits_{j\in \gamma(\mathfrak{t}_i)} \E\big[ u_{0,1}(t,\Theta_{\mathfrak{t}_i}) G(\Theta_{\mathfrak{t}_i},Z_{\mathfrak{j}_i})\big]\\
&= \E\big[ u_{0,1}(t,\Theta_{\mathfrak{t}_i})  G(\Theta_{\mathfrak{t}_i},Z_{\mathfrak{j}_i})\big]
= \int_{\Omega} u_{0,1}(t,\Theta_{\mathfrak{t}_i}(\omega)) G(\Theta_{\mathfrak{t}_i}(\omega),Z_{\mathfrak{j}_i}(\omega))\, \P(d\omega)\\
&= \int_{\Omega}\int_{\Omega} u_{0,1}(t,\Theta_{\mathfrak{t}_i}(\omega)) G(\Theta_{\mathfrak{t}_i}(\omega),Z_{\mathfrak{j}_i}(\omega'))\, \P(d\omega')\, \P(d\omega)\\
&= \int_{\Omega} u_{0,1}(t,\Theta_{\mathfrak{t}_i}(\omega))\bigg(\int_{\Omega} G(\Theta_{\mathfrak{t}_i}(\omega),Z_{\mathfrak{j}_i}(\omega'))\, \P(d\omega')\bigg)\P(d\omega)\\
&= \int_{\Omega} u_{0,1}(t,\Theta_{\mathfrak{t}_i}(\omega))\bigg(\int_{\Omega}G(\Theta_{\mathfrak{t}_i}(\omega),Z_1(\omega'))\, \P(d\omega')\bigg)\, \P(d\omega)\\
&= \int_{\Omega} u_{0,1}(t,\Theta_{\mathfrak{t}_i}(\omega)) \E[G(\Theta_{\mathfrak{t}_i}(\omega),Z_1)]\, \P(d\omega)\\
&= \int_{\Omega} u_{0,1}(t,\Theta_{\mathfrak{t}_i}(\omega)) g(\Theta_{\mathfrak{t}_i}(\omega)) \, \P(d\omega)= \E\big[u_{0,1}(t,\Theta_{\mathfrak{t}_i}) g(\Theta_{\mathfrak{t}_i}) \big].
\end{split}
\end{equation}
This and \eqref{eq:main_theorem_12} imply that 
\begin{equation}\label{eq:main_theorem_14}
\int_0^T \E\big[ u_{0,1}(t,\Theta_{\floorgrid{t}})( Q_t - g(\Theta_{\floorgrid{t}})) \big]\, dt=0.
\end{equation}
Next observe that the fundamental theorem of calculus, Lemma~\ref{lem:f_C^1}, the fact that $\#_E<\infty$, and \eqref{eq:main_theorem_3_0_22}  ensure that
\begin{equation}\label{eq:main_theorem_16}
\begin{split}
& \int_0^T \E\big[ u_{0,1}(t,\Theta_t) \big( 		 g(\Theta_{\floorgrid{t}}) - g(\Theta_t) \big)\big]\, dt\\
&  = - \int_0^T \E\Bigg[ u_{0,1}(t,\Theta_t)	\int_{[\floorgrid{t},t]\backslash E} g'(\Theta_{s})Q_s\, ds \Bigg]\, dt\\
 & = - \int_0^T \E\Bigg[ u_{0,1}(t,\Theta_t)	\int_{\floorgrid{t}}^t g'(\Theta_{s})Q_s\, ds \Bigg]\, dt\\
 &  = - \int_0^T \E\Bigg[\int_{\floorgrid{t}}^t u_{0,1}(t,\Theta_t)	 g'(\Theta_{s})Q_s\, ds \Bigg]\, dt.
\end{split}
\end{equation}
This and Tonelli's theorem assure that
\begin{equation}\label{eq:main_theorem_18}
\begin{split}
&\left| \int_0^T \E\!\left[ u_{0,1}(t,\Theta_t) \big( g(\Theta_{\floorgrid{t}}) - g(\Theta_t) \big)\right] dt \right| \\
 &= \Bigg| \int_0^T \E\Bigg[\int_{\floorgrid{t}}^t u_{0,1}(t,\Theta_t)	 g'(\Theta_{s})Q_s\, ds \Bigg]\, dt \Bigg| \\
& \leq \int_0^T \E\Bigg[\int_{\floorgrid{t}}^t | u_{0,1}(t,\Theta_t)	 g'(\Theta_{s})Q_s |\, ds \Bigg]\, dt \\
 &=\int_0^T \int_{\floorgrid{t}}^t\E\big[| u_{0,1}(t,\Theta_t) g'(\Theta_{s})Q_s |\big] \, ds\, dt\\
& \leq \int_0^T \int_{\floorgrid{t}}^t\E\big[\| u_{0,1}(t,\Theta_t)\|_{L(\R^d,\R)} \|g'(\Theta_{s})Q_s \|_{\R^d}\big] \, ds\, dt.
\end{split}
\end{equation}
Item~\eqref{item:estimate_u_01_3} in Lemma~\ref{lem:estimate_u_01} hence implies that
\begin{equation}\label{eq:main_theorem_19}
\begin{split}
&\left| \int_0^T \E\big[ u_{0,1}(t,\Theta_t) \big( g(\Theta_{\floorgrid{t}}) - g(\Theta_t) \big)\big]\, dt \right|\\
&\leq  \int_0^T\! e^{-L(T-t)}\int_{\floorgrid{t}}^t\!\E\big[\|\psi'(\theta_{T-t}^{\Theta_t})\|_{L(\R^d,\R)} \|g'(\Theta_{s})Q_s \|_{\R^d}\big] \, ds\, dt\\
&\leq \int_0^T\! e^{-L(T-t)}(t-{\floorgrid{t}})\sup_{s\in [0,T]}\E\big[\|\psi'(\theta_{T-t}^{\Theta_t})\|_{L(\R^d,\R)} \|g'(\Theta_{s})Q_s \|_{\R^d}\big] \, dt.
\end{split}
\end{equation}
Next note that the fundamental theorem of calculus for the Bochner integral (see, e.g., \cite[Lemma 2.1]{JentzenBochner}) demonstrates that for all $t\in [0,T]$ we have that
\begin{equation}\label{eq:main_theorem_20}
\begin{split}
&u_{0,1}(t,\Theta_t)-u_{0,1}(t,\Theta_{\floorgrid{t}})\\
&=\int_0^1 \big( (\pv u_{0,1}) (t,\lambda \Theta_t + (1-\lambda)\Theta_{\floorgrid{t}}) \big)(\Theta_t - \Theta_{\floorgrid{t}}) \, d\lambda.
\end{split}
\end{equation}
This, \eqref{eq:main_theorem_05}, and \eqref{eq:main_theorem_04} ensure that for all $t\in [0,T]$ we have that
\begin{equation}
\label{eq:u_0_1}
u_{0,1}(t,\Theta_t)-u_{0,1}(t,\Theta_{\floorgrid{t}})=\int_0^1 \big( (\pv u_{0,1})(t,a_t^{\lambda}) \big) \delta_t\, d\lambda.
\end{equation}
Moreover, observe that the chain rule,  \eqref{eq:main_theorem_07}, and \eqref{eq:main_theorem_08} assure that for all $t \in [0, T]$, $\vartheta, y, z \in \R^d$ we have that
\begin{equation}
\begin{split}
\big((\pv u_{0,1})(t,\vartheta)\big)(y,z) &= \big((\tfrac{\partial^2}{\partial \vartheta^2} u )(t,\vartheta)\big)(y,z) =  \big(\tfrac{\partial^2}{\partial \vartheta^2} \psi(\theta^{\vartheta}_{T-t})\big)(y,z)\\
& = \big(\pv (\psi'(\theta^{\vartheta}_{T-t}) \, \theta^{1,\vartheta}_{T-t} y)\big)(z) \\
& = \psi''(\theta^{\vartheta}_{T-t})(\theta^{1,\vartheta}_{T-t}z, \theta^{1,\vartheta}_{T-t}y) + \psi'(\theta^{\vartheta}_{T-t})(\theta^{2,\vartheta}_{T-t}(z,y)).
\end{split}
\end{equation}
This and \eqref{eq:u_0_1} imply that for all $t\in [0,T]$ we have that
\begin{equation}\label{eq:main_theorem_22}
\begin{split}
&(u_{0,1}(t,\Theta_t)-u_{0,1}(t,\Theta_{\floorgrid{t}})) \Delta_t\\
& = \int_0^1  \psi''( \theta_{T-t}^{a_t^{\lambda}} ) ( \theta_{T-t}^{1,a_t^{\lambda}} \Delta_t ,\theta_{T-t}^{1,a_t^{\lambda}} \delta_t )
 + \psi'( \theta_{T-t}^{a_t^{\lambda}} ) (\theta_{T-t}^{2,a_t^{\lambda}}(\Delta_t, \delta_t)) \, d\lambda.
\end{split}
\end{equation}
Furthermore, note that \eqref{eq:main_theorem_07} and Lemma~\ref{lem:exp_decay_partial_vartheta} assure that for all $t\in [0,T]$, $\lambda\in [0,1]$ we have that
\begin{equation}\label{eq:main_theorem_23}
\begin{split}
\big| \psi''( \theta_{T-t}^{a_t^{\lambda}} ) ( \theta_{T-t}^{1,a_t^{\lambda}} \Delta_t ,\theta_{T-t}^{1, a_t^{\lambda}} \delta_t )\big| 
& \leq \big\| \psi''( \theta_{T-t}^{a_t^{\lambda}})\big\|_{L^{(2)}(\R^d,\R)}\big\|\theta_{T-t}^{1,a_t^{\lambda}} \Delta_t\big\|_{\R^d}\big\|\theta_{T-t}^{1,a_t^{\lambda}} \delta_t\big\|_{\R^d}\\
& \leq \big\|\psi''( \theta_{T-t}^{a_t^{\lambda}})\big\|_{L^{(2)}(\R^d,\R)}\big\|\theta_{T-t}^{1,a_t^{\lambda}}\big\|_{L(\R^d,\R^d)}^2 \|\Delta_t\|_{\R^d}\big\| \delta_t\big\|_{\R^d}\\
& \leq e^{-2L(T-t)} \big\|\psi''( \theta_{T-t}^{a_t^{\lambda}})\big\|_{L^{(2)}(\R^d,\R)} \|\Delta_t\|_{\R^d}\|\delta_t\|_{\R^d}.
\end{split}
\end{equation}
Next observe that Lemma~\ref{lem:bound_partial_theta^2}   and \eqref{eq:main_theorem_08} ensure that for all $\lambda\in [0,1]$, $t\in [0,T]$ we have that
\begin{align*}
&\big|\psi'\big( \theta_{T-t}^{a_t^{\lambda}} \big) \big( \theta_{T-t}^{2,a_t^{\lambda}}\big(\Delta_t, \delta_t\big)\big)\big|\\
& \leq \big\|\psi'( \theta_{T-t}^{a_t^{\lambda}} )\big\|_{L(\R^d,\R)} \big\|\theta_{T-t}^{2, a_t^{\lambda}}(\Delta_t, \delta_t)\big\|_{\R^d} \numberthis \label{eq:main_theorem_24}\\
& \leq \big\|\psi'( \theta_{T-t}^{a_t^{\lambda}} )\big\|_{L(\R^d,\R)} \big\|\theta_{T-t}^{2, a_t^{\lambda}}\big\|_{L^{(2)}(\R^d,\R^d)} \|\Delta_t\|_{\R^d}\|\delta_t\|_{\R^d} \\
& \leq e^{-L(T-t)}\big\|\psi'( \theta_{T-t}^{a_t^{\lambda}} )\big\|_{L(\R^d,\R)} \|\Delta_t\|_{\R^d}\|\delta_t\|_{\R^d} \int_0^{T-t} e^{-Ls}\big\|g''(\theta_s^{a_t^{\lambda}})\big\|_{L^{(2)}(\R^d,\R^d)}\, ds.
\end{align*}
Combining this, \eqref{eq:main_theorem_14}, Jensen's inequality, \eqref{eq:main_theorem_22}, and \eqref{eq:main_theorem_23} demonstrates that
\begin{align*}
\label{eq:main_theorem_25}
& \left|\int_0^T \E\!\left[ u_{0,1}(t,\Theta_t) \Delta_t \right] dt\right| = \left|\int_0^T\E\big[ (u_{0,1}(t,\Theta_t)-u_{0,1}(t,\Theta_{\floorgrid{t}})) \Delta_t \big]\, dt\right|\\
& \leq \int_0^T\big|\E \big[ (u_{0,1}(t,\Theta_t)-u_{0,1}(t,\Theta_{\floorgrid{t}})) \Delta_t \big]\big|\, dt\\
& \leq \int_0^T\E\big[ \big| (u_{0,1}(t,\Theta_t)-u_{0,1}(t,\Theta_{\floorgrid{t}})) \Delta_t \big|\big]\, dt \\
& = \int_0^T\E \bigg[ \bigg| 
\int_0^1 \psi''( \theta_{T-t}^{a_t^{\lambda}} ) ( \theta_{T-t}^{1, a_t^{\lambda}} \Delta_t ,\theta_{T-t}^{1,a_t^{\lambda}} \delta_t )
 + \psi'( \theta_{T-t}^{a_t^{\lambda}} ) ( \theta_{T-t}^{2,a_t^{\lambda}}(\Delta_t, \delta_t)) \, d\lambda
 \bigg|\bigg]\, dt \numberthis\\
& \leq \int_0^T\E\bigg[  
\int_0^1\big| \psi''( \theta_{T-t}^{a_t^{\lambda}} ) ( \theta_{T-t}^{1, a_t^{\lambda}} \Delta_t ,\theta_{T-t}^{1,a_t^{\lambda}} \delta_t )\big|
 + \big|\psi'( \theta_{T-t}^{a_t^{\lambda}} ) ( \theta_{T-t}^{2,a_t^{\lambda}}(\Delta_t, \delta_t)) \big| \, d\lambda
\bigg]\, dt\\
& \leq \int_0^T\E\bigg[  
\int_0^1  e^{-2L(T-t)} \big\|\psi''( \theta_{T-t}^{a_t^{\lambda}})\big\|_{L^{(2)}(\R^d,\R)} \norm{\Delta_t}_{\R^d}\norm{\delta_t}_{\R^d}\\
& \quad + e^{-L(T-t)}\big\|\psi'( \theta_{T-t}^{a_t^{\lambda}} )\big\|_{L(\R^d,\R)} \norm{\Delta_t}_{\R^d} \norm{\delta_t}_{\R^d} \int_0^{T-t} e^{-Ls}\big\|g''(\theta_s^{a_t^{\lambda}})\big\|_{L^{(2)}(\R^d,\R^d)}\, ds\, d\lambda
 \bigg]\, dt.
\end{align*}
Next note that \eqref{eq: equation Theta 2} and \eqref{eq:main_theorem_09} assure that for all $t\in [0,T]$, $\omega\in\Omega$ we have that
\begin{equation}\label{eq:main_theorem_26}
\begin{split}
\norm{\delta_t(\omega)}_{\R^d} 
 &= \big\|\Theta_t(\omega)-\Theta_{\floorgrid{t}}(\omega)\big\|_{\R^d}\\
 &= (t-\floorgrid{t})\bigg\| \frac{1}{\#_{\gamma(\floorgrid{t})}} \textstyle\sum\limits_{j\in\gamma(\floorgrid{t})} G(\Theta_{\floorgrid{t}}(\omega),Z_j(\omega))\bigg\|_{\R^d}\\
& = (t-\floorgrid{t})\norm{Q_t(\omega)}_{\R^d}.
\end{split}
\end{equation}
Combining this and \eqref{eq:main_theorem_25} proves that 
\begin{align}
\begin{split}
&\left|\int_0^T \E\!\left[ u_{0,1}(t,\Theta_t) \Delta_t \right] dt\right|\\
 & \leq \int_0^T e^{-L(T-t)}(t-\floorgrid{t}) \E \bigg[ \norm{\Delta_t}_{\R^d}\norm{Q_t}_{\R^d}\bigg( 
\int_0^1  e^{-L(T-t)} \big\|\psi''( \theta_{T-t}^{a_t^{\lambda}})\big\|_{L^{(2)}(\R^d,\R)} \\
&\quad + \big\|\psi'( \theta_{T-t}^{a_t^{\lambda}})\big\|_{L(\R^d,\R)} \int_0^{T-t} e^{-Ls}\big\|g''(\theta_s^{a_t^{\lambda}})\big\|_{L^{(2)}(\R^d,\R^d)}\, ds\, d\lambda \bigg)
 \bigg]\, dt. 
 \end{split}
\end{align}
This, \eqref{eq:main_theorem_15_2}, and  \eqref{eq:main_theorem_19} imply that
\begin{align*}
& \big|\E[\psi(\Theta_T)]- \psi(\theta_T^{\xi})\big| \\
&\leq \left| \int_0^T \E\!\left[ u_{0,1}(t,\Theta_t) \Delta_t \right] dt\right|  + \left|\int_0^T \E\!\left[ u_{0,1}(t,\Theta_t) \left( g(\Theta_{\floorgrid{t}}) - g(\Theta_t) \right) \right] dt\right|\\
& \leq \int_0^T e^{-L(T-t)}(t-\floorgrid{t}) \E \bigg[ \norm{\Delta_t}_{\R^d}\norm{Q_t}_{\R^d}\bigg( 
\int_0^1  e^{-L(T-t)} \big\|\psi''( \theta_{T-t}^{a_t^{\lambda}})\big\|_{L^{(2)}(\R^d,\R)} \\
& \quad+ \big\|\psi'( \theta_{T-t}^{a_t^{\lambda}} )\big\|_{L(\R^d,\R)} \int_0^{T-t} e^{-Lu}\big\|g''(\theta_u^{a_t^{\lambda}})\big\|_{L^{(2)}(\R^d,\R^d)}\, du\, d\lambda \bigg)
 \bigg]\, dt \numberthis\\
&\quad + \int_0^T\! e^{-L(T-t)}(t-{\floorgrid{t}})\sup_{v\in [0,T]}\E\Big[\big\|\psi'(\theta_{T-t}^{\Theta_t})\big\|_{L(\R^d,\R)} \big\|g'(\Theta_{v})Q_v \big\|_{\R^d}\Big] \, dt \\
& = \int_0^T e^{-L(T-t)}(t-\floorgrid{t}) \bigg(\E \bigg[ \norm{\Delta_t}_{\R^d}\norm{Q_t}_{\R^d}\bigg( 
\int_0^1  e^{-L(T-t)} \big\|\psi''( \theta_{T-t}^{a_t^{\lambda}})\big\|_{L^{(2)}(\R^d,\R)} \\
& \quad+ \big\|\psi'( \theta_{T-t}^{a_t^{\lambda}} )\big\|_{L(\R^d,\R)} \int_0^{T-t} e^{-Lu}\big\|g''(\theta_u^{a_t^{\lambda}})\big\|_{L^{(2)}(\R^d,\R^d)}\, du\, d\lambda \bigg)
 \bigg]\\
 & \quad+ \sup_{v\in [0,T]}\E\Big[\big\|\psi'(\theta_{T-t}^{\Theta_t})\big\|_{L(\R^d,\R)} \big\|g'(\Theta_{v})Q_v \big\|_{\R^d}\Big]\bigg) \, dt.
\end{align*}
This reveals that
\begin{align}
\begin{split}
 &|\E[\psi(\Theta_T)]- \psi(\theta_T^{\xi})|  \leq \int_0^T e^{-L(T-t)}(t-\floorgrid{t})\\
& \quad \cdot \sup_{s,v\in [0,T]}\E \bigg[ \norm{\Delta_s}_{\R^d}\norm{Q_s}_{\R^d}\bigg( 
\int_0^1  e^{-L(T-s)} \big\|\psi''( \theta_{T-s}^{a_s^{\lambda}})\big\|_{L^{(2)}(\R^d,\R)} \\
& \quad + \big\|\psi'( \theta_{T-s}^{a_s^{\lambda}} )\big\|_{L(\R^d,\R)} \int_0^{T-s} e^{-Lu}\big\|g''(\theta_u^{a_s^{\lambda}})\big\|_{L^{(2)}(\R^d,\R^d)}\, du\, d\lambda \bigg)\\
& \quad+  \big\|\psi'(\theta_{T-s}^{\Theta_s})\big\|_{L(\R^d,\R)} \big\|g'(\Theta_{v})Q_v \big\|_{\R^d}\bigg] \, dt \\
& =  \sup_{s,v\in [0,T]}\E \bigg[ \norm{\Delta_s}_{\R^d}\norm{Q_s}_{\R^d}\bigg( 
\int_0^1  e^{-L(T-s)} \big\|\psi''( \theta_{T-s}^{a_s^{\lambda}})\big\|_{L^{(2)}(\R^d,\R)} \\
& \quad + \big\|\psi'( \theta_{T-s}^{a_s^{\lambda}})\big\|_{L(\R^d,\R)} \int_0^{T-s} e^{-Lu}\big\|g''(\theta_u^{a_s^{\lambda}})\big\|_{L^{(2)}(\R^d,\R^d)}\, du\, d\lambda \bigg)\\
& \quad+ \big\|\psi'(\theta_{T-s}^{\Theta_s})\big\|_{L(\R^d,\R)} \big\|g'(\Theta_{v})Q_v \big\|_{\R^d}\bigg]\int_0^T e^{-L(T-t)}(t-\floorgrid{t}) \, dt. 
\end{split}
\end{align}
Combining this, the triangle inequality, \eqref{eq:main_theorem_04}, and \eqref{eq:main_theorem_2} proves that
\begin{align*}
 & |\E[\psi(\Theta_T)]- \psi(\Xi)|  \leq |\E[\psi(\Theta_T)]- \psi(\theta_T^{\xi})| + |\psi(\theta_T^{\xi})- \psi(\Xi)|\\
& \leq \sup_{s,v\in [0,T]}\E \bigg[ \|Q_s-g(\Theta_{\floorgrid{s}})\|_{\R^d}\norm{Q_s}_{\R^d} \bigg(\int_0^1  e^{-L(T-s)} \big\|\psi''( \theta_{T-s}^{\lambda \Theta_s + (1-\lambda)\Theta_{\floorgrid{s}}})\big\|_{L^{(2)}(\R^d,\R)} \\
& \quad + \big\|\psi'( \theta_{T-s}^{\lambda \Theta_s + (1-\lambda)\Theta_{\floorgrid{s}}} )\big\|_{L(\R^d,\R)} \int_0^{T-s} e^{-Lu}\big\|g''(\theta_u^{\lambda \Theta_s + (1-\lambda)\Theta_{\floorgrid{s}}})\big\|_{L^{(2)}(\R^d,\R^d)}\, du\, d\lambda \bigg)\\
& \quad + \big\|\psi'(\theta_{T-s}^{\Theta_s})\big\|_{L(\R^d,\R)} \big\|g'(\Theta_{v})Q_v \big\|_{\R^d}\bigg] \int_0^T e^{-L(T-t)}(t-\floorgrid{t})\, dt\\
& \quad + \sup\left\lbrace \big\| \psi'(\lambda\theta_T^{\xi}+(1-\lambda)\Xi) \big\|_{L(\R^d,\R)} \in \R\colon \lambda\in [0,1] \right\rbrace \! \big\|\xi-\Xi\big\|_{\R^d}e^{-LT}. \numberthis
\end{align*}
This establishes item~\eqref{item:main_theorem_2}. This completes the proof of Proposition~\ref{prop:main_theorem}.
\end{proof}

\section{Upper bounds for integrals of certain exponentially decaying functions}
\label{subsec:upper:integrals}
\sectionmark{}

\begin{lemma}\label{lem:comp_int}
	Assume Setting~\ref{sec:setting_1}, let $L\in [0,\infty)$, let $\mathfrak{t}\colon\N \to [0,\infty)$ be  non-decreasing, and assume that
	\begin{equation}
	\label{eq:sequence}
	\lb t\in [0,\infty)\colon \gamma(t)\neq \emptyset\rb = \lb \mathfrak{t}_n\colon n\in\N\rb.
	\end{equation}
	Then we have for all $k\in\lb 2,3,\dots\rb$ that
	\begin{equation}
	\int_0^{\mathfrak{t}_k}\exp(-L(\mathfrak{t}_k-t))(t-\floorgrid{t})\, dt 
	\leq \frac{1}{2}\textstyle\sum\limits_{n=1}^{k-1} \exp(-L(\mathfrak{t}_k-\mathfrak{t}_{n+1}))(\mathfrak{t}_{n+1}-\mathfrak{t}_n)^2.
	\end{equation}
\end{lemma}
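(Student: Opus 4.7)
The plan is to split the integration interval $[0, \mathfrak{t}_k]$ into the sub-intervals determined by the grid $\mathfrak{t}_1 < \mathfrak{t}_2 < \ldots < \mathfrak{t}_k$, use the fact that $\floorgrid{\cdot}$ is constant on each such sub-interval, and then bound the exponential factor by its maximum on that sub-interval.

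First I would observe that the hypothesis $0 < \#_{\{s \in [0,t] \colon \gamma(s) \neq \emptyset\}}$ for all $t \in [0,\infty)$ (from Setting~\ref{sec:setting_1}) together with the monotonicity of $\mathfrak{t}$ and \eqref{eq:sequence} implies that $\mathfrak{t}_1 = 0$ and that the sequence $(\mathfrak{t}_n)_{n \in \N}$ is strictly increasing. Next I would note that \eqref{eq:setting_2} together with \eqref{eq:sequence} implies that for all $n \in \{1, 2, \ldots, k-1\}$, $t \in [\mathfrak{t}_n, \mathfrak{t}_{n+1})$ we have that $\floorgrid{t} = \mathfrak{t}_n$. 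These two facts allow me to write
\begin{equation}
\int_0^{\mathfrak{t}_k} \exp(-L(\mathfrak{t}_k - t))(t - \floorgrid{t}) \, dt = \textstyle\sum\limits_{n=1}^{k-1}\displaystyle \int_{\mathfrak{t}_n}^{\mathfrak{t}_{n+1}} \exp(-L(\mathfrak{t}_k - t))(t - \mathfrak{t}_n) \, dt.
\end{equation}

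Then, since $L \geq 0$ and $t \leq \mathfrak{t}_{n+1}$ on each sub-interval, the function $t \mapsto \exp(-L(\mathfrak{t}_k - t))$ is non-decreasing and bounded by $\exp(-L(\mathfrak{t}_k - \mathfrak{t}_{n+1}))$. Pulling this constant out and computing the elementary integral $\int_{\mathfrak{t}_n}^{\mathfrak{t}_{n+1}} (t - \mathfrak{t}_n)\, dt = \tfrac{1}{2}(\mathfrak{t}_{n+1} - \mathfrak{t}_n)^2$ gives
\begin{equation}
\int_{\mathfrak{t}_n}^{\mathfrak{t}_{n+1}} \exp(-L(\mathfrak{t}_k - t))(t - \mathfrak{t}_n) \, dt \leq \tfrac{1}{2} \exp(-L(\mathfrak{t}_k - \mathfrak{t}_{n+1}))(\mathfrak{t}_{n+1} - \mathfrak{t}_n)^2.
\end{equation}
Summing over $n \in \{1, 2, \ldots, k-1\}$ yields the claimed estimate.

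The argument is entirely routine and I do not anticipate any real obstacle; the only minor point worth checking carefully is the identification $\mathfrak{t}_1 = 0$ and the partition of $[0, \mathfrak{t}_k]$ into the intervals $[\mathfrak{t}_n, \mathfrak{t}_{n+1}]$, which relies on the fact that the set $\{t \in [0, \mathfrak{t}_k] \colon \gamma(t) \neq \emptyset\}$ equals $\{\mathfrak{t}_1, \mathfrak{t}_2, \ldots, \mathfrak{t}_k\}$ by \eqref{eq:sequence} and the hypothesis $\#_{\{s \in [0,t] \colon \gamma(s) \neq \emptyset\}} < \infty$ from Setting~\ref{sec:setting_1}.
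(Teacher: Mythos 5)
Your argument matches the paper's own proof: both establish $\mathfrak{t}_1 = 0$ from $\gamma(0)\neq\emptyset$, split $[0,\mathfrak{t}_k]$ into the grid intervals $[\mathfrak{t}_n,\mathfrak{t}_{n+1}]$, replace $\floorgrid{t}$ by $\mathfrak{t}_n$ on each, bound the exponential by its value at the right endpoint using $L\geq 0$, and compute the elementary integral. Note only that the hypotheses give $\mathfrak{t}$ non-decreasing rather than strictly increasing, but this is immaterial since any degenerate interval $[\mathfrak{t}_n,\mathfrak{t}_{n+1}]$ with $\mathfrak{t}_n=\mathfrak{t}_{n+1}$ contributes zero to both sides.
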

\begin{proof}[Proof of Lemma~\ref{lem:comp_int}]
	First, observe that the assumption that	$\forall \, t \in [0, \infty) \colon 0< \allowbreak \#_{\lb s \in [0,t]\colon\gamma(s)\neq \emptyset\rb}<\infty$ ensures that
	\begin{equation}
	\gamma(0) \neq  \emptyset.
	\end{equation}
	This, \eqref{eq:sequence}, and the assumption that 	$\mathfrak{t}\colon\N \to [0,\infty)$ is  non-decreasing  imply that
	\begin{equation}
	\mathfrak{t}_1 = 0.
	\end{equation}
	This reveals that for all 	$k\in\lb 2,3,\dots\rb$  it holds  that 
	\begin{equation}
	\begin{split}
	&\int_0^{\mathfrak{t}_k}e^{-L(\mathfrak{t}_k-t)}(t-\floorgrid{t})\, dt
	= \textstyle\sum\limits_{n=1}^{k-1} \displaystyle \int_{\mathfrak{t}_n}^{\mathfrak{t}_{n+1}}e^{-L(\mathfrak{t}_k-t)}(t-\floorgrid{t})\, dt \\
	&\leq  \textstyle\sum\limits_{n=1}^{k-1} \displaystyle e^{-L(\mathfrak{t}_k-\mathfrak{t}_{n+1})}\int_{\mathfrak{t}_n}^{\mathfrak{t}_{n+1}}(t-\mathfrak{t}_n)\, dt\\
	&= \textstyle\sum\limits_{n=1}^{k-1} \displaystyle e^{-L(\mathfrak{t}_k-\mathfrak{t}_{n+1})}\left[\frac{1}{2}(t-\mathfrak{t}_n)^2\right]_{t=\mathfrak{t}_n}^{t=\mathfrak{t}_{n+1}}= \frac{1}{2}\textstyle\sum\limits_{n=1}^{k-1} \displaystyle e^{-L(\mathfrak{t}_k-\mathfrak{t}_{n+1})}(\mathfrak{t}_{n+1}-\mathfrak{t}_n)^2.
	\end{split}
	\end{equation}
	The proof of Lemma~\ref{lem:comp_int} is thus completed.
\end{proof}

\begin{lemma}\label{lem:bounds_sum}
	Let $\nu\in [0,1)$. Then 
	\begin{enumerate}[(i)]
		\item\label{item:bounds_sum_1} we have for all $l\in\N$ that
		\begin{equation}\label{eq:bounds_sum_1_1}
		\textstyle\sum\limits_{n=1}^l \displaystyle \frac{1}{n^{\nu}}\geq \frac{1}{1-\nu}\left( (l+1)^{1-\nu}-1 \right)
		\end{equation}
		and
		\item\label{item:bounds_sum_2} we have for all $l\in\N$ that 
		\begin{equation}\label{eq:bounds_sum_1_2}
	\textstyle\sum\limits_{n=1}^l \displaystyle \frac{1}{n^{\nu}}
		\leq 1 + \frac{1}{1-\nu}\left(l^{1-\nu} -1\right) = \frac{1}{1-\nu}\left( l^{1-\nu}-\nu \right) .
		\end{equation}
	\end{enumerate}
\end{lemma}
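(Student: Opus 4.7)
The plan is to prove both bounds by comparing the sum to the integral of the decreasing function $x\mapsto x^{-\nu}$ on $(0,\infty)$, which is allowed since $\nu\in[0,1)$ (for $\nu=0$ the function is constant and both inequalities degenerate to equality; the argument below still covers this case).

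For item~\eqref{item:bounds_sum_1} I would note that the monotonicity of $x\mapsto x^{-\nu}$ yields, for every $n\in\N$ and every $x\in[n,n+1]$, the inequality $n^{-\nu}\geq x^{-\nu}$. Integrating over $[n,n+1]$ gives $\frac{1}{n^{\nu}}\geq \int_n^{n+1}\frac{1}{x^{\nu}}\,dx$, and summing from $n=1$ to $l$ together with the fundamental theorem of calculus produces
\begin{equation}
\textstyle\sum\limits_{n=1}^{l}\frac{1}{n^{\nu}} \geq \int_1^{l+1}\frac{1}{x^{\nu}}\,dx = \frac{(l+1)^{1-\nu}-1}{1-\nu},
\end{equation}
which is exactly \eqref{eq:bounds_sum_1_1}.

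For item~\eqref{item:bounds_sum_2} I would use the dual comparison: for every $n\in\{2,3,\ldots,l\}$ and every $x\in[n-1,n]$ the monotonicity gives $n^{-\nu}\leq x^{-\nu}$, hence $\frac{1}{n^{\nu}}\leq \int_{n-1}^{n}\frac{1}{x^{\nu}}\,dx$. Summing from $n=2$ to $l$ and adding the $n=1$ term (which equals $1$) yields
\begin{equation}
\textstyle\sum\limits_{n=1}^{l}\frac{1}{n^{\nu}} \leq 1 + \int_1^l \frac{1}{x^{\nu}}\,dx = 1 + \frac{l^{1-\nu}-1}{1-\nu}.
\end{equation}
The final identity in \eqref{eq:bounds_sum_1_2} follows from the elementary computation $1+\frac{l^{1-\nu}-1}{1-\nu} = \frac{(1-\nu)+l^{1-\nu}-1}{1-\nu}=\frac{l^{1-\nu}-\nu}{1-\nu}$.

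There is no real obstacle here; the only mild subtlety is handling $\nu=0$, where $x^{-\nu}=1$ is still non-increasing (constant) so the same integral comparisons apply and both bounds reduce to the equality $\sum_{n=1}^{l}1=l$. No induction, no limiting argument, and no case distinction beyond this are needed.
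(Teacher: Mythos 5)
Your proof is correct and follows essentially the same route as the paper's: both parts are obtained by integral comparison against $x\mapsto x^{-\nu}$, with the lower bound from $\frac{1}{n^{\nu}}\geq\int_n^{n+1}x^{-\nu}\,dx$ and the upper bound from $\frac{1}{n^{\nu}}\leq\int_{n-1}^{n}x^{-\nu}\,dx$ applied to $n\geq 2$. The only cosmetic difference is that the paper verifies the $l=1$ case of \eqref{eq:bounds_sum_1_2} by a separate one-line check, whereas you rely on the empty-sum and empty-integral conventions; both handlings are fine.
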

\begin{proof}[Proof of Lemma~\ref{lem:bounds_sum}]
	First, observe that for all $l\in \N$ we have that
	\begin{equation}\label{eq:bounds_sum_3}
	\begin{split}
		\textstyle\sum\limits_{n=1}^l \displaystyle\frac{1}{n^{\nu}}
	& \geq 	\textstyle\sum\limits_{n=1}^l \displaystyle\int_{n}^{n+1} \frac{1}{x^{\nu}}\, dx = 	\textstyle\sum\limits_{n=1}^l \displaystyle\left[ \frac{1}{1-\nu}x^{1-\nu} \right]_{x=n}^{x=n+1}\\
	& = \frac{1}{1-\nu}	\textstyle\sum\limits_{n=1}^l \displaystyle\left[ (n+1)^{1-\nu} - n^{1-\nu}\right] = \frac{1}{1-\nu}\left( (l+1)^{1-\nu}-1 \right).
	\end{split}
	\end{equation}
	This proves item~\eqref{item:bounds_sum_1}.
	Moreover, note that for all $l\in \lb 2,3,\dots\rb$ we have that
	\begin{equation}\label{eq:bounds_sum_2}
	\begin{split}
		\textstyle\sum\limits_{n=1}^l \displaystyle\frac{1}{n^{\nu}} 
	& = 1 + 	\textstyle\sum\limits_{n=2}^l \displaystyle\frac{1}{n^{\nu}}
	\leq 1 + 	\textstyle\sum\limits_{n=2}^l \displaystyle\int_{n-1}^n \frac{1}{x^{\nu}}\, dx
	= 1 + 	\textstyle\sum\limits_{n=2}^l \displaystyle\left[ \frac{1}{1-\nu}x^{1-\nu} \right]_{x=n-1}^{x=n}\\
	& = 1 + 	\textstyle\sum\limits_{n=2}^l \displaystyle\left[ \frac{1}{1-\nu}n^{1-\nu} - \frac{1}{1-\nu}(n-1)^{1-\nu} \right]\\
	&= 1 + \frac{1}{1-\nu}\left(l^{1-\nu} -1 \right)
	= \frac{1}{1-\nu}\left(l^{1-\nu} -\nu\right).
	\end{split}
	\end{equation}
	Next observe that
	\begin{equation}
	\textstyle\sum\limits_{n=1}^1 \displaystyle \frac{1}{n^{\nu}}=1=\frac{1}{1-\nu}\left(1^{1-\nu} -\nu\right).
	\end{equation}
	This and \eqref{eq:bounds_sum_2} establish item~\eqref{item:bounds_sum_2}. The proof of Lemma~\ref{lem:bounds_sum} is thus completed.
\end{proof}

\begin{lemma}\label{lem:bnd_special_case}
	Assume Setting~\ref{sec:setting_1}, let $L\in[0,\infty)$, $\eta\in (0,\infty)$, $\nu\in [0,1)$, 
	let $\mathfrak{t}\colon \N\to [0,\infty)$  satisfy for all $m\in \lb 2, 3,\dots\rb$ that $\mathfrak{t}_1=0$ and 
	\begin{equation}
	\mathfrak{t}_m = \textstyle\sum\limits_{n=1}^{m-1} \displaystyle \frac{\eta}{n^{\nu}},
	\end{equation}
	and assume that
	\begin{equation}
	\lb s\in [0,\infty)\colon \gamma(s)\neq \emptyset\rb = \lb \mathfrak{t}_m\colon m\in\N \rb .
	\end{equation}
	Then we have for all $k\in \lb 2,3,\dots\rb$ that
	\begin{equation}\label{eq:bnd_special_case_4}
	\begin{split}
	&\int_0^{\mathfrak{t}_k} \exp(-L(\mathfrak{t}_k-t))(t-\floorgrid{t})\, dt\\
	&\leq  \frac{\eta^2 \exp(L\eta)}{2}\left[\textstyle\sum\limits_{n=1}^{k-2} \displaystyle \frac{\exp\!\left( -\tfrac{L\eta}{1-\nu}(k^{1-\nu}-n^{1-\nu}) \right) }{n^{2\nu}}\right] + \frac{\eta^2 (k-1)^{-2\nu}}{2}.
	\end{split}
	\end{equation}
\end{lemma}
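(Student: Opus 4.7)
The plan is to first apply Lemma~\ref{lem:comp_int} to reduce the integral to the finite sum
\[
\tfrac{1}{2}\textstyle\sum\limits_{n=1}^{k-1} \exp(-L(\mathfrak{t}_k-\mathfrak{t}_{n+1}))(\mathfrak{t}_{n+1}-\mathfrak{t}_n)^2,
\]
and then exploit the explicit form of the grid points. Since $\mathfrak{t}_{n+1}-\mathfrak{t}_n = \eta/n^{\nu}$ for all $n\in\N$, each squared spacing becomes $\eta^2/n^{2\nu}$, so the task reduces to bounding $\sum_{n=1}^{k-1} n^{-2\nu}\exp(-L(\mathfrak{t}_k-\mathfrak{t}_{n+1}))$.

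Next, I would isolate the terminal summand $n=k-1$, where $\mathfrak{t}_k-\mathfrak{t}_{n+1}=0$ and hence the exponential factor equals $1$; this contributes exactly $(k-1)^{-2\nu}$, which after multiplication by $\eta^2/2$ yields the second summand on the right-hand side of \eqref{eq:bnd_special_case_4}. For the remaining indices $n\in\{1,2,\dots,k-2\}$, write
\[
\mathfrak{t}_k-\mathfrak{t}_{n+1}=\eta\textstyle\sum\limits_{m=n+1}^{k-1} m^{-\nu}=\eta\!\left(\textstyle\sum\limits_{m=1}^{k-1} m^{-\nu}\right) -\eta\!\left(\textstyle\sum\limits_{m=1}^{n} m^{-\nu}\right)\!,
\]
and then apply item~\eqref{item:bounds_sum_1} of Lemma~\ref{lem:bounds_sum} to the first sum and item~\eqref{item:bounds_sum_2} of Lemma~\ref{lem:bounds_sum} to the second sum. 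This yields
\[
\mathfrak{t}_k-\mathfrak{t}_{n+1}\geq \tfrac{\eta}{1-\nu}\big(k^{1-\nu}-n^{1-\nu}\big)-\eta,
\]
where the $-\eta$ comes from combining the $-1$ and the $+\nu$ produced by the two bounds (using $\nu<1$). Exponentiating gives $\exp(-L(\mathfrak{t}_k-\mathfrak{t}_{n+1}))\leq \exp(L\eta)\exp\!\big(-\tfrac{L\eta}{1-\nu}(k^{1-\nu}-n^{1-\nu})\big)$, and multiplying by $n^{-2\nu}$ and summing reproduces the first summand on the right-hand side of \eqref{eq:bnd_special_case_4}.

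This is essentially a careful chain of elementary estimates, so I do not anticipate a genuine obstacle; the only delicate point is to track the additive constant $-\eta$ arising from combining the lower bound $\tfrac{1}{1-\nu}(k^{1-\nu}-1)$ with the upper bound $\tfrac{1}{1-\nu}(n^{1-\nu}-\nu)$ in such a way that it exponentiates to precisely the prefactor $\exp(L\eta)$ that appears in the statement. Once that bookkeeping is done, combining the two contributions (the terminal $n=k-1$ term and the bound on the $n\leq k-2$ terms) and multiplying through by $\eta^2/2$ gives \eqref{eq:bnd_special_case_4} directly.
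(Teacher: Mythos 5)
Your proposal is correct and follows essentially the same route as the paper's proof: both apply Lemma~\ref{lem:comp_int} to pass from the integral to the discrete sum, isolate the $n=k-1$ term (where the exponential equals $1$), and then use items~(i) and~(ii) of Lemma~\ref{lem:bounds_sum} to obtain $\mathfrak{t}_k-\mathfrak{t}_{n+1}\geq\tfrac{\eta}{1-\nu}(k^{1-\nu}-n^{1-\nu})-\eta$, which exponentiates to the prefactor $\exp(L\eta)$. Your explicit telescoping decomposition of $\mathfrak{t}_k-\mathfrak{t}_{n+1}$ as a difference of two partial sums is just a rewriting of the paper's direct bounds on $\mathfrak{t}_{n+1}$ and $\mathfrak{t}_k$, not a different argument.
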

\begin{proof}[Proof of Lemma~\ref{lem:bnd_special_case}] Throughout this proof let $k\in\lb 2,3,\dots\rb$. 
	Observe that \linebreak Lemma~\ref{lem:bounds_sum} implies that  for all $n\in\lbrace 1, 2, \dots, k-1\rbrace$ we have that
	\begin{equation}\label{eq:bnd_special_case_6}
	\mathfrak{t}_{n+1}\leq \eta +\frac{\eta}{1-\nu}(n^{1-\nu}-1) \qquad \text{and}\qquad \mathfrak{t}_k\geq \frac{\eta}{1-\nu}\left( k^{1-\nu}-1 \right).
	\end{equation}
	Lemma~\ref{lem:comp_int} hence ensures that
	\begin{align*}
	\label{eq:bnd_special_case_8}
	&\int_0^{\mathfrak{t}_k}e^{-L(\mathfrak{t}_k-t)}(t-\floorgrid{t})\, dt
	\leq \frac{1}{2}\textstyle\sum\limits_{n=1}^{k-1} \displaystyle e^{-L(\mathfrak{t}_k-\mathfrak{t}_{n+1})}(\mathfrak{t}_{n+1}-\mathfrak{t}_n)^2 \numberthis\\
	& \leq \frac{1}{2}\left[\textstyle\sum\limits_{n=1}^{k-2} \displaystyle \exp\!\left(-L\left( \tfrac{\eta}{1-\nu}(k^{1-\nu}-1)-\big( \eta+\tfrac{\eta}{1-\nu}(n^{1-\nu}-1) \big) \right)\right)\frac{\eta^2}{n^{2\nu}}\right]
	+ \frac{\eta^2}{2(k-1)^{2\nu}}\\
	& = \frac{\eta^2 e^{L\eta}}{2}\left[\textstyle\sum\limits_{n=1}^{k-2} \displaystyle \frac{\exp\!\left( \tfrac{-L\eta}{1-\nu}(k^{1-\nu}-n^{1-\nu}) \right)}{n^{2\nu}}\right]+ \frac{\eta^2 (k-1)^{-2\nu}}{2}.
	\end{align*}
	This establishes \eqref{eq:bnd_special_case_4}. The proof of Lemma~\ref{lem:bnd_special_case} is thus completed.
\end{proof}

\begin{lemma}\label{lem:min_fct_bnd}
	Let $c\in (0,\infty)$, $\epsilon\in (0,\nicefrac{1}{2})$, $\alpha=\left( \frac{2-2\epsilon}{c\epsilon} \right)^{1/\epsilon}$ and let $v\colon (0,\infty)\to \R$ satisfy for all $x\in (0,\infty)$ that
	\begin{equation}
	v(x)=x^{2\epsilon-2}\exp(cx^{\epsilon}).
	\end{equation}
	Then $v$ is non-increasing on $(0,\alpha]$ and non-decreasing on $[\alpha,\infty)$.
\end{lemma}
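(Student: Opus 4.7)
The plan is to reduce the statement to a direct sign analysis of $v'$. First I would note that $v \in C^1((0,\infty), \R)$, and compute $v'$ via the product and chain rules to obtain, for all $x \in (0, \infty)$,
\begin{equation}
v'(x) = (2\epsilon - 2) x^{2\epsilon - 3} \exp(cx^{\epsilon}) + c \epsilon \, x^{2\epsilon - 2} x^{\epsilon - 1} \exp(cx^{\epsilon}) = x^{2\epsilon - 3} \exp(cx^{\epsilon}) \bigl[ c \epsilon \, x^{\epsilon} - (2 - 2\epsilon) \bigr].
\end{equation}

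Next, since $x^{2\epsilon - 3} \exp(cx^{\epsilon}) > 0$ for all $x \in (0, \infty)$, the sign of $v'(x)$ coincides with the sign of $h(x) := c\epsilon\, x^{\epsilon} - (2 - 2\epsilon)$. The unique positive root of $h$ is the value $x$ satisfying $x^{\epsilon} = \frac{2 - 2\epsilon}{c\epsilon}$, i.e.\ $x = \bigl(\tfrac{2-2\epsilon}{c\epsilon}\bigr)^{1/\epsilon} = \alpha$. Because $\epsilon > 0$, the function $(0,\infty) \ni x \mapsto x^{\epsilon} \in (0, \infty)$ is strictly increasing, so $h$ is strictly increasing on $(0,\infty)$; hence $h(x) \leq 0$ for $x \in (0, \alpha]$ and $h(x) \geq 0$ for $x \in [\alpha, \infty)$.

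Combining these facts yields $v'(x) \leq 0$ for all $x \in (0, \alpha]$ and $v'(x) \geq 0$ for all $x \in [\alpha, \infty)$, from which the claimed monotonicity properties of $v$ on $(0, \alpha]$ and $[\alpha, \infty)$ follow by a standard consequence of the mean value theorem. There is no real obstacle here; the only point demanding any care is the algebraic rearrangement $h(\alpha) = 0 \Leftrightarrow \alpha = (\tfrac{2-2\epsilon}{c\epsilon})^{1/\epsilon}$, which uses $\epsilon \in (0, 1/2) \subseteq (0, \infty)$ and $c \in (0, \infty)$ to ensure that $\tfrac{2-2\epsilon}{c\epsilon}$ is positive so that its $1/\epsilon$-th power is well-defined.
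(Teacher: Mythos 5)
Your proof is correct, and it takes a cleaner route than the paper's. Both proofs begin with the same factorization $v'(x) = x^{2\epsilon-3}\exp(cx^{\epsilon})\bigl[c\epsilon x^{\epsilon} - (2-2\epsilon)\bigr]$ and both identify $\alpha$ as the unique zero of $v'$. Where you diverge is in how you turn that into a sign statement about $v'$ on the two intervals: you observe that the bracket $h(x) = c\epsilon x^{\epsilon} - (2-2\epsilon)$ is strictly increasing (because $\epsilon, c > 0$), so the positive prefactor $x^{2\epsilon-3}\exp(cx^{\epsilon})$ forces $v' \leq 0$ on $(0,\alpha]$ and $v' \geq 0$ on $[\alpha,\infty)$ immediately. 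The paper instead goes on to compute $v''$ explicitly, evaluates $v''(\alpha) = e^{c\alpha^{\epsilon}}\alpha^{2\epsilon-4}(2\epsilon-2\epsilon^2) > 0$, and combines this second-derivative test with the uniqueness of the critical point to deduce the same monotonicity. Your direct sign analysis of $v'$ is shorter and more elementary; it sidesteps a fairly involved $v''$ calculation and does not require the implicit argument that a continuous $v'$ with a unique zero at which $v''>0$ must change sign from negative to positive. Both are valid, but your monotone-bracket observation is the tighter argument.
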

\begin{proof}[Proof of Lemma~\ref{lem:min_fct_bnd}] 
	First, observe that for all $x\in (0,\infty)$  we have that
	\begin{equation}
	\begin{split}
	v'(x) &=c\epsilon x^{\epsilon-1}\exp(cx^{\epsilon})x^{2\epsilon-2} + \exp(cx^{\epsilon})(-2+2\epsilon)x^{2\epsilon-3}\\
	& = c\epsilon \exp(cx^{\epsilon})x^{3\epsilon-3}-(2-2\epsilon)\exp(cx^{\epsilon})x^{2\epsilon-3}\\
	& = \exp(cx^{\epsilon})x^{2\epsilon-3}\big[ c\epsilon x^{\epsilon} -(2-2\epsilon)\big].
	\end{split}
	\end{equation}
	This reveals that
	\begin{equation}
	\label{eq:derivative:zero}
	\{ x \in (0, \infty) \colon v'(x) = 0 \} = \{\alpha\}.
	\end{equation}
	Next note that for all $x\in (0,\infty)$ we have that
	\begin{align}
	\begin{split}
	v''(x)&=(c\epsilon)^2 x^{\epsilon-1}\exp(cx^{\epsilon})x^{3\epsilon-3}-c\epsilon \exp(cx^{\epsilon})(3-3\epsilon)x^{3\epsilon-4}\\
	& \quad-(2-2\epsilon)c\epsilon x^{\epsilon-1}\exp(cx^{\epsilon}) x^{2\epsilon-3}+(2-2\epsilon)\exp(cx^{\epsilon})(3-2\epsilon)x^{2\epsilon-4}\\
	& = (c\epsilon)^2 \exp(cx^{\epsilon})x^{4\epsilon-4} - c\epsilon \exp(cx^{\epsilon}) (3-3\epsilon)x^{3\epsilon-4}\\
	& \quad -(2-2\epsilon)c\epsilon \exp(cx^{\epsilon})x^{3\epsilon-4}+(2-2\epsilon)(3-2\epsilon)\exp(cx^{\epsilon})x^{2\epsilon-4} \\
	& = (c\epsilon)^2 \exp(cx^{\epsilon})x^{4\epsilon-4} - (5-5\epsilon)c\epsilon \exp(cx^{\epsilon}){x^{3\epsilon-4}} \\
	& \quad +(2-2\epsilon)(3-2\epsilon)\exp(cx^{\epsilon})x^{2\epsilon-4}\\
	& = c\epsilon \exp(cx^{\epsilon})x^{2\epsilon-4}\left( c\epsilon x^{2\epsilon} -  (5-5\epsilon)x^{\epsilon} +\tfrac{(2-2\epsilon)(3-2\epsilon)}{c\epsilon} \right).
	\end{split}
	\end{align}
	This implies that
	\begin{align}
	\begin{split}
	v''(\alpha) & = c\epsilon e^{c\alpha^{\epsilon}}\alpha^{2\epsilon-4}\left( c\epsilon \alpha^{2\epsilon} -  (5-5\epsilon)\alpha^{\epsilon} +\tfrac{(2-2\epsilon)(3-2\epsilon)}{c\epsilon} \right)\\
	& = c\epsilon e^{c\alpha^{\epsilon}}\alpha^{2\epsilon-4}\left( c\epsilon \left( \tfrac{2-2\epsilon}{c\epsilon} \right)^{2} -  (5-5\epsilon)\left( \tfrac{2-2\epsilon}{c\epsilon} \right) +\tfrac{(2-2\epsilon)(3-2\epsilon)}{c\epsilon} \right)\\
	& =  e^{c\alpha^{\epsilon}}\alpha^{2\epsilon-4}\left(  ( 4-8\epsilon+4\epsilon^2 ) -  (10-20\epsilon + 10\epsilon^2) +(6-10\epsilon + 4\epsilon^2) \right)\\
	& =  e^{c\alpha^{\epsilon}}\alpha^{2\epsilon-4}(2\epsilon-2\epsilon^2)>0. 
	\end{split}
	\end{align}
	Combining this with \eqref{eq:derivative:zero} verifies that $v$ is non-increasing on $(0,\alpha]$ and non-decreasing on $[\alpha,\infty)$. The proof of Lemma~\ref{lem:min_fct_bnd} is thus completed.
\end{proof}

\begin{lemma}\label{lem:int_bnd_on_sum}
	Let $a\in (0,\infty)$, $\epsilon\in (0,\nicefrac{1}{2})$.
	Then we have for all $l\in \N$ that
	\begin{equation}\label{eq:int_bnd_on_sum_4}
	\textstyle\sum\limits_{n=1}^{l}n^{2\epsilon-2} \displaystyle \exp(an^{\epsilon})\leq e^a + \int_{1}^{l+1}x^{2\epsilon-2}\exp(ax^{\epsilon})\, dx.
	\end{equation}
\end{lemma}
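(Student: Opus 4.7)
The plan is to exploit the unimodal structure of the function $v\colon(0,\infty)\to\R$ given by $v(x)=x^{2\epsilon-2}\exp(ax^\epsilon)$ established in Lemma~\ref{lem:min_fct_bnd}. Setting $\alpha=((2-2\epsilon)/(a\epsilon))^{1/\epsilon}$, that lemma tells us that $v$ is non-increasing on $(0,\alpha]$ and non-decreasing on $[\alpha,\infty)$. Observing that the left-hand side of \eqref{eq:int_bnd_on_sum_4} equals $\sum_{n=1}^{l}v(n)$ and that $v(1)=e^a$, it therefore suffices to bound $\sum_{n=2}^{l}v(n)$ by $\int_1^{l+1}v(x)\,dx$ (using that $v\geq 0$).

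I would split according to three exhaustive cases. First, if $\alpha\leq 1$, then $v$ is non-decreasing on $[1,\infty)$, so that for every $n\in\{1,2,\dots,l\}$ we have $v(n)\leq\int_n^{n+1}v(x)\,dx$; summing and bounding crudely yields $\sum_{n=1}^l v(n)\leq \int_1^{l+1}v(x)\,dx\leq e^a+\int_1^{l+1}v(x)\,dx$. Second, if $\alpha\geq l$, then $v$ is non-increasing on $[1,l]$, so that for every $n\in\{2,3,\dots,l\}$ we have $v(n)\leq \int_{n-1}^n v(x)\,dx$; summing gives $\sum_{n=2}^l v(n)\leq \int_1^l v(x)\,dx\leq \int_1^{l+1}v(x)\,dx$, which combined with $v(1)=e^a$ yields the claim.

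Third, in the mixed case $1<\alpha<l$, let $N\in\N$ be the unique integer with $N\leq\alpha<N+1$. For $n\in\{2,3,\dots,N\}$ the interval $[n-1,n]$ lies in $(0,\alpha]$, so monotonicity gives $v(n)\leq\int_{n-1}^n v(x)\,dx$, hence $\sum_{n=2}^N v(n)\leq \int_1^N v(x)\,dx$. For $n\in\{N+1,N+2,\dots,l\}$ the interval $[n,n+1]$ lies in $[\alpha,\infty)$ (since $n\geq N+1>\alpha$), so $v(n)\leq \int_n^{n+1}v(x)\,dx$, and summing gives $\sum_{n=N+1}^l v(n)\leq \int_{N+1}^{l+1}v(x)\,dx$. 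Adding $v(1)=e^a$ and using non-negativity of $v$ on the missing slice $[N,N+1]$ yields
\begin{equation*}
\textstyle\sum_{n=1}^{l}v(n) \leq e^a + \int_1^{N}v(x)\,dx + \int_{N+1}^{l+1}v(x)\,dx \leq e^a+\int_1^{l+1}v(x)\,dx,
\end{equation*}
which is \eqref{eq:int_bnd_on_sum_4}.

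The only subtlety is the clean treatment of the transition point $\alpha$ and the degenerate subcases (empty sums when $N\in\{0,1\}$ or $N=l$); this is handled by the case split above, and otherwise the argument is a routine comparison of a sum with the integral of a monotone function. No deeper analysis is required beyond Lemma~\ref{lem:min_fct_bnd} and non-negativity of $v$.
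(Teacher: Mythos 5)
Your proposal is correct and follows essentially the same approach as the paper: split the sum $\sum_n v(n)$ at the minimizer $\alpha$ from Lemma~\ref{lem:min_fct_bnd}, compare the non-increasing part with $\int_{n-1}^n v$ and the non-decreasing part with $\int_n^{n+1} v$, and absorb the gap using non-negativity of $v$. The only difference is cosmetic: the paper compresses your three cases into a single chain by working with $\min\{\mathfrak{N},l\}$ where $\mathfrak{N}=\lfloor\alpha\rfloor$ (so that out-of-range values produce empty sums automatically), whereas you spell out the degenerate positions of $\alpha$ explicitly.
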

\begin{proof}[Proof of Lemma~\ref{lem:int_bnd_on_sum}] 
	Throughout this proof let $v\colon (0,\infty)\to \R$ satisfy for all $x\in (0,\infty)$ that
	\begin{equation}\label{eq:int_bnd_on_sum_1}
	v(x)=x^{2\epsilon-2}\exp(ax^{\epsilon})
	\end{equation}
	and let
	\begin{equation}
	\mathfrak{N}=\max\!\left\{ \Big( -\infty, \left| \tfrac{2-2\epsilon}{a\epsilon} \right|^{1/\epsilon} \Big] \cap \Z \right\}.
	\end{equation}
	Note that for all $l\in \N$ we have that
	\begin{equation}\label{eq:int_bnd_on_sum_5}
	\begin{split}
	\textstyle\sum\limits_{n=1}^{l} \displaystyle n^{2\epsilon-2}\exp(an^{\epsilon}) 
	& = 	\textstyle\sum\limits_{n=1}^l \displaystyle v(n) \leq                                                                                                                                                                                                                                                                                                                                                                                                                                                                                                                                                                                           v(1)+\textstyle\sum\limits_{n=2}^{\min\{\mathfrak{N}, l\}} \displaystyle v(n)+\textstyle\sum\limits_{n=\min\{\mathfrak{N}, l\}+1}^l \displaystyle v(n).
	\end{split}
	\end{equation}
	Combining this and Lemma~\ref{lem:min_fct_bnd} assures that for all $l\in \N$ we have that
	\begin{equation}\label{eq:int_bnd_on_sum_6}
	\begin{split}
	&\textstyle\sum\limits_{n=1}^{l} \displaystyle n^{2\epsilon-2}\exp(an^{\epsilon})\\
	&  \leq v(1)+\textstyle\sum\limits_{n=2}^{\min\{\mathfrak{N}, l\}} \displaystyle \int_{n-1}^n v(n)\, dx+\textstyle\sum\limits_{n=\min\{\mathfrak{N}, l\}+1}^l \displaystyle \int_{n}^{n+1} v(n)\, dx\\
	& \leq v(1) + \textstyle\sum\limits_{n=2}^{\min\{\mathfrak{N}, l\}} \displaystyle \int_{n-1}^n v(x)\, dx + \textstyle\sum\limits_{n=\min\{\mathfrak{N}, l\}+1}^l \displaystyle \int_{n}^{n+1} v(x)\, dx \\
	& = v(1) + \textstyle\sum\limits_{n=1}^{\min\{\mathfrak{N}, l\}-1} \displaystyle \int_{n}^{n+1} v(x)\, dx + \textstyle\sum\limits_{n=\min\{\mathfrak{N}, l\}+1}^l \displaystyle \int_{n}^{n+1} v(x)\, dx \\
	& \leq v(1) + \textstyle\sum\limits_{n=1}^{l} \displaystyle \int_{n}^{n+1} v(x)\, dx = v(1) + \int_{1}^{l+1} v(x)\, dx .
	\end{split}
	\end{equation}
	This demonstrates \eqref{eq:int_bnd_on_sum_4}. The proof of Lemma~\ref{lem:int_bnd_on_sum} is thus completed.
\end{proof}

\begin{lemma}\label{lem:rate_conv}
	Let $n\in \N$, $a\in[0,\infty)$.
	Then we have for all $\epsilon\in (0,\nicefrac{1}{2})$, $\lambda\in (0,1]$ that
	\begin{equation}\label{eq:rate_conv_2}
	\int_1^nx^{2\epsilon-2}\exp(a(x^{\epsilon}-n^{\epsilon}))\, dx\leq \frac{1}{1-2\epsilon}\big(\exp(a(\lambda^{\epsilon}n^{\epsilon}-n^{\epsilon})) + (\lambda n)^{2\epsilon-1}\big).
	\end{equation}
\end{lemma}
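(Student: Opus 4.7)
The plan is to split the interval of integration at the point $\lambda n$ and estimate the integrand in a way that is tailored to each piece: on $[1,\lambda n]$ we exploit the fact that $x\mapsto x^{\epsilon}$ is non-decreasing to pull out the exponential factor, and on $[\lambda n, n]$ we use that $x^{\epsilon}-n^{\epsilon}\leq 0$ to bound the exponential by $1$. In both cases, what remains is an elementary integral of $x^{2\epsilon-2}$, whose antiderivative is $\tfrac{x^{2\epsilon-1}}{2\epsilon-1}$.

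First, I would treat the main case $\lambda n\geq 1$. For $x\in[1,\lambda n]$, monotonicity of $x\mapsto x^{\epsilon}$ gives $x^{\epsilon}\leq (\lambda n)^{\epsilon}=\lambda^{\epsilon}n^{\epsilon}$, so
\begin{equation}
\int_1^{\lambda n} x^{2\epsilon-2}\exp(a(x^{\epsilon}-n^{\epsilon}))\,dx \leq \exp(a(\lambda^{\epsilon}n^{\epsilon}-n^{\epsilon}))\int_1^{\lambda n}x^{2\epsilon-2}\,dx.
\end{equation}
Since $2\epsilon-1<0$, the remaining integral equals $\tfrac{1-(\lambda n)^{2\epsilon-1}}{1-2\epsilon}\leq \tfrac{1}{1-2\epsilon}$. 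For $x\in[\lambda n, n]$, the factor $\exp(a(x^{\epsilon}-n^{\epsilon}))$ is at most $1$, and the elementary integral $\int_{\lambda n}^n x^{2\epsilon-2}\,dx = \tfrac{(\lambda n)^{2\epsilon-1}-n^{2\epsilon-1}}{1-2\epsilon}$ is bounded by $\tfrac{(\lambda n)^{2\epsilon-1}}{1-2\epsilon}$. Summing the two pieces yields exactly the right-hand side of \eqref{eq:rate_conv_2}.

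It remains to handle the degenerate case $\lambda n<1$. Here the whole integral is bounded directly by $\int_1^n x^{2\epsilon-2}\,dx\leq \tfrac{1}{1-2\epsilon}$. On the other hand, $\lambda n<1$ together with $2\epsilon-1<0$ gives $(\lambda n)^{2\epsilon-1}\geq 1$, so the right-hand side of \eqref{eq:rate_conv_2} is at least $\tfrac{1}{1-2\epsilon}$, which closes this case.

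There is no real obstacle: the only point of minor care is the sign of $2\epsilon-1$ (which flips inequalities when we integrate $x^{2\epsilon-2}$) and the fact that the split point $\lambda n$ may fall below $1$, which is why the short second case is needed. Everything else is direct monotonicity of $x\mapsto x^{\epsilon}$ and of the exponential.
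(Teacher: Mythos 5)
Your proof is correct and takes essentially the same route as the paper: split the integral at $\lambda n$, pull out $\exp(a(\lambda^{\epsilon}n^{\epsilon}-n^{\epsilon}))$ from the first piece and bound the exponential by $1$ on the second, then evaluate the elementary $\int x^{2\epsilon-2}\,dx$ integrals. The only cosmetic difference is your separate case $\lambda n<1$; the paper avoids it by noting that the bounds $\int_1^{\lambda n}x^{2\epsilon-2}\,dx\leq\int_1^{\infty}x^{2\epsilon-2}\,dx$ and $\int_{\lambda n}^{n}x^{2\epsilon-2}\,dx\leq\int_{\lambda n}^{\infty}x^{2\epsilon-2}\,dx$ remain valid even when the signed integral $\int_1^{\lambda n}$ is negative, but your more explicit treatment is equally valid.
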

\begin{proof}[Proof of Lemma~\ref{lem:rate_conv}]
	Observe that for all $\epsilon\in (0,\nicefrac{1}{2})$, $\lambda\in (0,1]$ we have that
	\begin{equation}
	\begin{split}
	&\int_1^n {x^{2\epsilon-2}}\exp(a(x^{\epsilon}-n^{\epsilon}))\, dx\\
	& = \int_1^{\lambda n}{x^{2\epsilon-2}}\exp(a(x^{\epsilon}-n^{\epsilon}))\, dx + \int_{\lambda n}^n{x^{2\epsilon-2}}\exp(a(x^{\epsilon}-n^{\epsilon}))\, dx\\
	& \leq \exp(a(\lambda^{\epsilon}n^{\epsilon}-n^{\epsilon}))\int_1^{\lambda n} {x^{2\epsilon-2}}\, dx + \int_{\lambda n}^{n} {x^{2\epsilon-2}}\, dx\\
	& \leq \exp(a(\lambda^{\epsilon}n^{\epsilon}-n^{\epsilon}))\int_1^{\infty}{x^{2\epsilon-2}}\, dx + \left[ \frac{1}{2\epsilon-1}{x^{2\epsilon-1}}\right]_{x=\lambda n}^{x=\infty}\\
	& = \exp(a(\lambda^{\epsilon}n^{\epsilon}-n^{\epsilon}))\frac{1}{1-2\epsilon} + \frac{1}{1-2\epsilon}(\lambda n)^{2\epsilon-1}\\
	& = \frac{1}{1-2\epsilon}\big(\exp(a(\lambda^{\epsilon}n^{\epsilon}-n^{\epsilon})) + (\lambda n)^{2\epsilon-1}\big).
	\end{split}
	\end{equation}
	The proof of Lemma~\ref{lem:rate_conv} is thus completed.
\end{proof}

\begin{lemma}\label{lem:bnd_sum_main}
	Assume Setting~\ref{sec:setting_1}, let $L,\eta\in (0,\infty)$,  $\epsilon\in (0,\nicefrac{1}{2})$,   let $ K \colon (0,1)\to (0,\infty]$ satisfy for all $\lambda\in (0,1)$ that
	\begin{align*}
	\label{eq:bnd_sum_main_6}
	&K(\lambda) \numberthis \\
	&= \sup_{n \in \N \cap [2,\infty)} \bigg[ \frac{\eta^2 \exp(L\eta+\frac{L\eta}{\epsilon}) }{2(1-2\epsilon)} \Big(  n^{1-2\epsilon} \Big[ 2\exp\!\big(\!-\tfrac{L\eta}{\epsilon}(1-\lambda^{\epsilon})n^{\epsilon} \big) + (n-1)^{2\epsilon-2} \Big] + \lambda^{2\epsilon-1}\Big)  \bigg],
	\end{align*}
	and assume that
	\begin{equation}
	\lb s\in [0,\infty)\colon \gamma(s)\neq \emptyset\rb = \lb 0 \rb \cup \left\lb \textstyle\sum\limits_{n=1}^m \displaystyle \frac{\eta}{n^{1-\epsilon}}\colon m\in\N  \right\rb.
	\end{equation}
	Then we have for all $\lambda\in (0,1)$, $k\in\N$ that
	\begin{equation}\label{eq:bnd_sum_main_10}
	\int_0^{\sum_{n=1}^{k-1}  \frac{\eta}{n^{1-\epsilon}}} (t-\floorgrid{t}) \exp\!\big(\!-L\big(\textstyle\sum_{n=1}^{k-1}  \frac{\eta}{n^{1-\epsilon}}\big) +Lt \big) \, dt\leq K(\lambda){k^{2\epsilon-1}} < \infty.
	\end{equation}
\end{lemma}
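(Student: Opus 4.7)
The plan is to chain together Lemmas~\ref{lem:bnd_special_case}, \ref{lem:int_bnd_on_sum}, and \ref{lem:rate_conv}, then verify by direct algebraic comparison that the resulting bound can be absorbed into $K(\lambda) k^{2\epsilon-1}$, and finally check that $K(\lambda) < \infty$.

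First, I would dispose of the case $k = 1$: the upper limit of integration is $\sum_{n=1}^{0} \eta/n^{1-\epsilon} = 0$, so the integral is $0$, which is at most $K(\lambda) \cdot 1^{2\epsilon-1} = K(\lambda)$, and $K(\lambda) \geq 0$ since the expression inside the supremum is a sum of nonnegative terms (note $\lambda^{2\epsilon-1} > 0$). For the remainder I would fix $k \in \N \cap [2,\infty)$, set $\nu = 1-\epsilon \in (1/2, 1) \subseteq [0,1)$, and let $\mathfrak{t}_m = \sum_{n=1}^{m-1} \eta/n^\nu$ so that the hypotheses of Lemma~\ref{lem:bnd_special_case} are satisfied with $\mathfrak{t}_1 = 0$. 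Applying that lemma and using $2\nu = 2-2\epsilon$ and $1-\nu = \epsilon$ yields
\begin{equation}
\int_0^{\mathfrak{t}_k} e^{-L(\mathfrak{t}_k - t)}(t - \floorgrid{t})\, dt \leq \tfrac{\eta^2 e^{L\eta}}{2} \sum_{n=1}^{k-2} n^{2\epsilon-2} e^{-\frac{L\eta}{\epsilon}(k^\epsilon - n^\epsilon)} + \tfrac{\eta^2 (k-1)^{2\epsilon-2}}{2}.
\end{equation}

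Next, setting $a = L\eta/\epsilon$, I would invoke Lemma~\ref{lem:int_bnd_on_sum} with $l = k-2$ to bound $\sum_{n=1}^{k-2} n^{2\epsilon-2} e^{an^\epsilon} \leq e^a + \int_1^{k-1} x^{2\epsilon-2} e^{ax^\epsilon}\, dx$, then multiply through by $e^{-ak^\epsilon}$, enlarge the upper limit via $\int_1^{k-1} \leq \int_1^k$ (the integrand being positive), and apply Lemma~\ref{lem:rate_conv} (with $n$ replaced by $k$) to obtain
\begin{equation}
e^{-ak^\epsilon} \sum_{n=1}^{k-2} n^{2\epsilon-2} e^{an^\epsilon} \leq e^{a(1-k^\epsilon)} + \tfrac{1}{1-2\epsilon}\bigl(e^{-a(1-\lambda^\epsilon)k^\epsilon} + (\lambda k)^{2\epsilon-1}\bigr).
\end{equation}
Combining the two displays gives an upper bound on the integral that is a sum of four explicit terms.

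The remaining task is the algebraic matching: I would denote $C = \frac{\eta^2 e^{L\eta + a}}{2(1-2\epsilon)}$ and show termwise that the four-term bound above is dominated by $C \bigl[ 2 e^{-a(1-\lambda^\epsilon)k^\epsilon} + (k-1)^{2\epsilon-2} + \lambda^{2\epsilon-1} k^{2\epsilon-1} \bigr]$. The key observations are: (a) $e^{-ak^\epsilon} \leq e^{-a(1-\lambda^\epsilon)k^\epsilon}$ since $\lambda^\epsilon \geq 0$; (b) $\tfrac{\eta^2 e^{L\eta}}{2} \leq \tfrac{\eta^2 e^{L\eta+a}}{2} \leq C$ because $1-2\epsilon \leq 1$ and $e^a \geq 1$; and (c) $\tfrac{\eta^2}{2} \leq C$. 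These together let the first two LHS terms be absorbed into $2C e^{-a(1-\lambda^\epsilon)k^\epsilon}$, the third into $C\lambda^{2\epsilon-1}k^{2\epsilon-1}$, and the fourth into $C(k-1)^{2\epsilon-2}$. Since for $k \geq 2$ the value $k \in \N \cap [2,\infty)$ is an admissible index in the supremum defining $K(\lambda)$, the inequality $\int_0^{\mathfrak{t}_k} \cdots dt \leq K(\lambda) k^{2\epsilon-1}$ then follows by factoring out $k^{2\epsilon-1}$ from the bound $C [2 e^{-a(1-\lambda^\epsilon)k^\epsilon} + (k-1)^{2\epsilon-2} + \lambda^{2\epsilon-1} k^{2\epsilon-1}]$.

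Finally, to show $K(\lambda) < \infty$ I would bound each contribution to the supremand uniformly in $n \geq 2$: the exponential $n^{1-2\epsilon} e^{-\frac{L\eta}{\epsilon}(1-\lambda^\epsilon) n^\epsilon}$ is bounded since $1-\lambda^\epsilon > 0$ and exponential decay dominates any polynomial; the product $n^{1-2\epsilon}(n-1)^{2\epsilon-2}$ satisfies $n \leq 2(n-1)$ for $n \geq 2$, giving $n^{1-2\epsilon}(n-1)^{2\epsilon-2} \leq 2^{1-2\epsilon}(n-1)^{-1} \leq 2^{1-2\epsilon}$; and $\lambda^{2\epsilon-1}$ is a constant. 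The main obstacle is really just bookkeeping---once the three invoked lemmas are lined up correctly with $\nu = 1-\epsilon$ and $a = L\eta/\epsilon$, the rest is elementary manipulation, and care is only required to ensure that each of the four residual terms is absorbed into the corresponding piece of $K(\lambda) k^{2\epsilon-1}$.
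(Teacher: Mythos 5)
Your proposal is correct and follows the same chain of lemmas and the same algebraic decomposition as the paper's proof (Lemma~\ref{lem:bnd_special_case} $\to$ Lemma~\ref{lem:int_bnd_on_sum} $\to$ Lemma~\ref{lem:rate_conv}, then termwise absorption into $K(\lambda)k^{2\epsilon-1}$). The one small difference is that you dispose of the $k=1$ case explicitly and bound the supremand uniformly in $n\geq 2$ for the finiteness of $K(\lambda)$, whereas the paper starts at $k\geq 2$ and argues finiteness via the vanishing $\limsup_{n\to\infty}\kappa(n)$; both are valid and the substance is identical.
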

\begin{proof}[Proof of Lemma~\ref{lem:bnd_sum_main}]
	Throughout this proof let $\lambda\in (0,1)$, $k\in\N\cap [2,\infty)$, let $a=\frac{L\eta}{\epsilon} \in (0, \infty)$, let $v\colon (0,\infty)\to \R$ satisfy for all $x\in (0,\infty)$ that
	\begin{equation}
	v(x)=x^{2\epsilon-2}\exp(ax^{\epsilon}),
	\end{equation} 
	let $\kappa\colon \N \cap [2,\infty)\to (0,\infty)$ satisfy for all $n \in \N \cap [2,\infty)$ that
	\begin{equation}
	\kappa(n) = \frac{\eta^2e^{L\eta+a} }{2(1-2\epsilon)} \big(  n^{1-2\epsilon} \big[ 2e^{-a(1-\lambda^{\epsilon})n^{\epsilon}} + (n-1)^{2\epsilon-2} \big] + \lambda^{2\epsilon-1}\big),
	\end{equation}
	and let $\mathfrak{t}\colon \N\to[0,\infty)$ satisfy for all $n\in\lb 2,3,\dots\rb$ that $\mathfrak{t}_1=0$ and
	\begin{equation}
	\mathfrak{t}_n=\textstyle\sum\limits_{m=1}^{n-1} \displaystyle \frac{\eta}{m^{1-\epsilon}}.
	\end{equation}
	Note that Lemma~\ref{lem:bnd_special_case} implies that
	\begin{equation}\label{eq:bnd_sum_main_11}
	\begin{split}
	&\int_0^{\mathfrak{t}_k}e^{-L(\mathfrak{t}_k-t)}(t-\floorgrid{t})\, dt \\
	& \leq \left[ \frac{\eta^2 e^{L\eta}}{2}\textstyle\sum\limits_{n=1}^{k-2} \displaystyle \frac{\exp\!\left( -\tfrac{L\eta}{\epsilon}(k^{\epsilon}-n^{\epsilon}) \right)}{n^{2-2\epsilon}}\right] + \frac{\eta^2 (k-1)^{2\epsilon-2}}{2}\\
	& = \left[ \frac{\eta^2 e^{L\eta}}{2}\textstyle\sum\limits_{n=1}^{k-2} \displaystyle n^{2\epsilon-2}\exp({a(n^{\epsilon}-k^{\epsilon})})\right] + \frac{\eta^2 (k-1)^{2\epsilon-2}}{2}.
	\end{split}
	\end{equation}
	Next observe that Lemma~\ref{lem:int_bnd_on_sum} ensures that
	\begin{equation}\label{eq:bnd_sum_main_12}
	\begin{split}
	\textstyle\sum\limits_{n=1}^{k-2} \displaystyle n^{2\epsilon-2}\exp({a(n^{\epsilon}-k^{\epsilon})})
	& \leq e^{-ak^{\epsilon}}\left( e^a +\int_1^{k-1} v(x)\, dx \right).
	\end{split}
	\end{equation}
	Combining this and \eqref{eq:bnd_sum_main_11} demonstrates that
	\begin{equation}\label{eq:bnd_sum_main_13}
	\begin{split}
	\int_0^{\mathfrak{t}_k}e^{-L(\mathfrak{t}_k-t)}(t-\floorgrid{t})\, dt 
	&\leq \frac{\eta^2  e^{L\eta}}{2} e^{-ak^{\epsilon}}\left( e^a +\int_1^{k-1} v(x)\, dx \right) + \frac{\eta^2 (k-1)^{2\epsilon-2}}{2}\\
	& \leq \frac{\eta^2  e^{L\eta -ak^{\epsilon}}}{2}\left( e^a +\int_1^{k} v(x)\, dx \right) + \frac{\eta^2 (k-1)^{2\epsilon-2}}{2}.
	\end{split}
	\end{equation}
	Lemma~\ref{lem:rate_conv} hence assures that
	\begin{align}
	\begin{split}
	&\int_0^{\mathfrak{t}_k}e^{-L(\mathfrak{t}_k-t)}(t-\floorgrid{t})\, dt \\
	&\leq \frac{\eta^2 e^{L\eta-ak^{\epsilon}+a} }{2}
	+ \frac{\eta^2e^{L\eta}}{2}\int_1^kx^{2\epsilon-2}\exp(a(x^{\epsilon}-k^{\epsilon}))\, dx 
	+ \frac{\eta^2 (k-1)^{2\epsilon-2}}{2}\\
	& \leq \frac{\eta^2 e^{L\eta-ak^{\epsilon}+a} }{2}
	+ \frac{\eta^2e^{L\eta}}{2(1-2\epsilon)}\big(\exp(a(\lambda^{\epsilon}k^{\epsilon}-k^{\epsilon})) + (\lambda k)^{2\epsilon-1}\big) 
	+ \frac{\eta^2 (k-1)^{2\epsilon-2}}{2}\\
	& \leq \frac{\eta^2e^{L\eta+a}}{2(1-2\epsilon)} \Big(\!\exp(-ak^{\varepsilon}) + \exp(-a(1-\lambda^{\epsilon})k^{\epsilon}) + (\lambda k)^{2\epsilon-1} + (k-1)^{2\epsilon-2} \Big) .
	\end{split}
	\end{align}
	This reveals that
	\begin{align}
	\label{eq:bnd_sum_main_14} 
	\begin{split}
	&\int_0^{\mathfrak{t}_k}e^{-L(\mathfrak{t}_k-t)}(t-\floorgrid{t})\, dt    \\
	& \leq \frac{\eta^2e^{L\eta+a}}{2(1-2\epsilon)} \Big( 2\exp(-a(1-\lambda^{\epsilon})k^{\epsilon}) + (\lambda k)^{2\epsilon-1} + (k-1)^{2\epsilon-2} \Big)\\
	& = \frac{\eta^2e^{L\eta+a} k^{2\epsilon-1}}{2(1-2\epsilon)} \Big(  k^{1-2\epsilon} \Big[ 2\exp(-a(1-\lambda^{\epsilon})k^{\epsilon}) + (k-1)^{2\epsilon-2} \Big] + \lambda^{2\epsilon-1}\Big)\\
	& \leq  K(\lambda) {k^{2\epsilon-1}}.
	\end{split}
	\end{align}
	In addition, note that the fact that
	\begin{equation}
	\limsup_{n \to \infty} \Big( n^{1-2\epsilon}\exp(-a(1-\lambda^{\epsilon})n^{\epsilon}) + n^{1-2\epsilon}(n-1)^{2\epsilon-2}\Big) = 0
	\end{equation}
	ensures that
	\begin{equation}
	\limsup_{n\to\infty}\kappa(n)=\frac{\eta^2e^{L\eta+a}\lambda^{2\epsilon-1}}{2(1-2\epsilon)} < \infty. 
	\end{equation}
	This reveals that
	\begin{equation}
	\sup_{n \in \N \cap [2,\infty) } \kappa(n) < \infty.
	\end{equation}
	Combining this and \eqref{eq:bnd_sum_main_14}  establishes \eqref{eq:bnd_sum_main_10}.   The proof of Lemma~\ref{lem:bnd_sum_main} is thus completed.
\end{proof}

\section{Weak error estimates for SAAs in the case of polynomially decaying learning rates with mini-batches}
\label{subsec:SAA:poly}
\sectionmark{}

\begin{cor}\label{cor:sp_case_gamma}
Assume Setting~\ref{sec:setting_1},  assume for all $v,w\in [0,\infty)$ with $v\neq w$  that 
$
\gamma(v)\cap \gamma(w)=\emptyset
$, let $\psi\in C^2(\R^d,\R)$,  $\epsilon\in (0,\nicefrac{1}{2})$,  $L,\eta \in (0,\infty)$, assume  for all $y, z\in\R^d$  that
\begin{equation}
\langle g(y)-g(z), y-z\rangle_{\R^d} \leq -L \|y-z\|_{\R^d}^2,
\end{equation}
\begin{equation}
\sup_{x\in\R^d}\Bigg( \frac{\E\big[ \|G(x,Z_1)\|_{\R^d}^2 \big]}{\big[1+\|x\|_{\R^d}\big]^2}
+ \frac{\big\|\E\big[\big(\tfrac{\partial}{\partial x}G\big)(x,Z_1)\big]\big\|_{L(\R^d,\R^d)}}{\big[1+\|x\|_{\R^d}\big]}
+ \|\psi'(x)\|_{L(\R^d,\R)} \Bigg) < \infty,
\end{equation}
and
\begin{equation}
\lb s\in [0,\infty)\colon \gamma(s)\neq \emptyset\rb = \lb 0 \rb \cup \left\lb \textstyle\sum\limits_{n=1}^m \displaystyle \frac{\eta}{n^{1-\epsilon}}\colon m\in\N \right\rb,
\end{equation}
let $Q\colon [0,\infty)\times\Omega\to\R^d$ be the stochastic process which satisfies for all $t\in [0,\infty)$ that
\begin{equation}
Q_t=\tfrac{1}{\#_{\gamma(\floorgrid{t})}}\textstyle\sum_{j\in \gamma(\floorgrid{t})} G(\Theta_{\floorgrid{t}},Z_j) ,
\end{equation}\\
let $K\colon (0,1)\to (0,\infty]$ satisfy for all $\lambda\in (0,1)$ that
\begin{align*}
&K(\lambda) \numberthis \\
&= \sup_{n \in \N \cap [2,\infty)} \bigg[ \frac{\eta^2\exp(L\eta+\frac{L\eta}{\epsilon}) }{2(1-2\epsilon)} \Big(  n^{1-2\epsilon} \Big[ 2 \exp\! \big(-\tfrac{L\eta}{\epsilon}(1-\lambda^{\epsilon})n^{\epsilon} \big) + (n-1)^{2\epsilon-2} \Big] + \lambda^{2\epsilon-1}\Big)  \bigg],
\end{align*}
and let $C\colon [0,\infty)\to [0,\infty]$ satisfy for all $T\in [0,\infty)$ that
\begin{align*}
\label{eq:cor:C(T)}
& C(T)
 = \sup_{s,v\in [0,T]}\E \bigg[ \|Q_s-g(\Theta_{\floorgrid{s}})\|_{\R^d} \norm{Q_s}_{\R^d} \\
 &   \cdot \bigg(\int_0^1  \exp(-L(T-s)) \|\psi''( \theta_{T-s}^{\lambda \Theta_s + (1-\lambda)\Theta_{\floorgrid{s}}})\|_{L^{(2)}(\R^d,\R)} \numberthis \\
&  + \|\psi'( \theta_{T-s}^{\lambda \Theta_s + (1-\lambda)\Theta_{\floorgrid{s}}} )\|_{L(\R^d,\R)} \int_0^{T-s} \exp(-Lu)\big\|g''(\theta_u^{\lambda \Theta_s + (1-\lambda)\Theta_{\floorgrid{s}}})\big\|_{L^{(2)}(\R^d,\R^d)}\, du\, d\lambda \bigg)\\
&  + \|\psi'(\theta_{T-s}^{\Theta_s})\|_{L(\R^d,\R)} \big\|g'(\Theta_{v})Q_v \big\|_{\R^d}\bigg]  
\end{align*}
(cf.~item~\eqref{item:twice:diff:f} in  Lemma~\ref{lem:twice:diff}).
Then
\begin{enumerate}[(i)]
\item\label{item:cor:limit} we have that there exists a unique  $\Xi \in \R^d$ which satisfies that
\begin{equation}
\limsup_{t \to \infty} \|\theta_t^{\xi} - \Xi\|_{\R^d} = 0
\end{equation}
and
\item\label{item:cor:error} we have for all $\lambda\in (0,1)$, $k\in\N$ that
\begin{align}
\label{eq:sp_case_gamma_13}
\begin{split}
& \la\E[\psi(\Theta_{\sum_{n=1}^{k-1}\frac{\eta}{n^{1-\epsilon}}})]- \psi(\Xi)\ra   \\
& \leq k^{2\epsilon-1} \Bigg[ K(\lambda)C\big(\textstyle\sum_{n=1}^{k-1}\frac{\eta}{n^{1-\epsilon}}\big)
  + k^{1-2\epsilon} \exp\! \big(-L \big(\textstyle\sum_{n=1}^{k-1}\frac{\eta}{n^{1-\epsilon}}\big)\big)\\
 & \quad  \cdot\sup_{\alpha \in [0,1]}\Big( \| \psi'(\alpha\theta_{\sum_{n=1}^{k-1}\frac{\eta}{n^{1-\epsilon}}}^{\xi}+(1-\alpha)\Xi) \|_{L(\R^d,\R)}\Big) \|\xi-\Xi\|_{\R^d}\Bigg].
 \end{split}
\end{align}
\end{enumerate}
\end{cor}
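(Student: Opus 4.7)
The plan is to derive the corollary as an immediate combination of \Cref{prop:main_theorem} with the sharp integral bound in \Cref{lem:bnd_sum_main}. First, I would verify that the hypotheses of \Cref{prop:main_theorem} are satisfied under the assumptions of the corollary: the coercivity assumption $\langle g(y)-g(z),y-z\rangle_{\R^d}\leq -L\|y-z\|_{\R^d}^2$, the polynomial/derivative growth hypothesis \eqref{eq:main_theorem_001}, the mini-batch disjointness assumption $\gamma(v)\cap\gamma(w)=\emptyset$ for $v\neq w$, and the $C^2$ regularity of $\psi$ are all assumed verbatim. This immediately yields item~\eqref{item:cor:limit} of the corollary via item~\eqref{item:main:limit} of \Cref{prop:main_theorem}.

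For item~\eqref{item:cor:error}, I would fix $k\in\N$ and set
\begin{equation}
T_k = \textstyle\sum_{n=1}^{k-1}\frac{\eta}{n^{1-\epsilon}},
\end{equation}
and then apply item~\eqref{item:main_theorem_2} of \Cref{prop:main_theorem} with $T=T_k$. Inspecting the right-hand side of \eqref{eq:main_theorem}, I would observe that the first two lines (the supremum over $s,v\in[0,T_k]$) are exactly the quantity $C(T_k)$ defined in \eqref{eq:cor:C(T)}, while the remaining factor involving the integral is $\int_0^{T_k}\exp(-L(T_k-t))(t-\floorgrid{t})\,dt$, and the final line is the exponentially decaying initial-error term. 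This rewrites the bound as
\begin{equation}
\label{eq:plan:rewrite}
|\E[\psi(\Theta_{T_k})]-\psi(\Xi)|\leq C(T_k)\int_0^{T_k}\exp(-L(T_k-t))(t-\floorgrid{t})\,dt + R_k,
\end{equation}
where $R_k = \sup_{\alpha\in[0,1]}\|\psi'(\alpha\theta_{T_k}^{\xi}+(1-\alpha)\Xi)\|_{L(\R^d,\R)}\|\xi-\Xi\|_{\R^d}\exp(-LT_k)$.

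Next, I would apply \Cref{lem:bnd_sum_main} (whose hypotheses on $\gamma$ and on the grid $\{0\}\cup\{\sum_{n=1}^{m}\frac{\eta}{n^{1-\epsilon}}:m\in\N\}$ match those of the corollary verbatim) to conclude that
\begin{equation}
\int_0^{T_k}\exp(-L(T_k-t))(t-\floorgrid{t})\,dt\leq K(\lambda)\,k^{2\epsilon-1}
\end{equation}
for every $\lambda\in(0,1)$. Substituting this into \eqref{eq:plan:rewrite} and factoring out $k^{2\epsilon-1}$ from the resulting two summands (multiplying and dividing the $R_k$ term by $k^{2\epsilon-1}$) gives exactly \eqref{eq:sp_case_gamma_13}. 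No genuine obstacle arises: the only step that is more than pure bookkeeping is the matching of the suprema and integrals produced by \Cref{prop:main_theorem} against the definitions of $C(T_k)$ and $K(\lambda)$, and this is a direct notational comparison.
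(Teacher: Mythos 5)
Your plan is essentially the paper's own argument: item~\eqref{item:cor:limit} follows from item~\eqref{item:main:limit} of Proposition~\ref{prop:main_theorem}, and item~\eqref{item:cor:error} follows from item~\eqref{item:main_theorem_2} of Proposition~\ref{prop:main_theorem}, the identification of the supremum factor with $C(T_k)$, the integral bound from Lemma~\ref{lem:bnd_sum_main}, and then factoring out $k^{2\epsilon-1}$.

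There is one small gap. Item~\eqref{item:main_theorem_2} of Proposition~\ref{prop:main_theorem} is stated only for $T\in(0,\infty)$, whereas the corollary asserts the bound for all $k\in\N$, and for $k=1$ you have $T_1=\sum_{n=1}^{0}\eta n^{\epsilon-1}=0$, which is not covered by the proposition. In that case $\Theta_0=\xi$ is deterministic and $\theta_0^{\xi}=\xi$, so the claimed bound reduces to $|\psi(\xi)-\psi(\Xi)|\leq\sup_{\alpha\in[0,1]}\|\psi'(\alpha\xi+(1-\alpha)\Xi)\|_{L(\R^d,\R)}\|\xi-\Xi\|_{\R^d}$, which is the mean value inequality; the paper handles this by also invoking Lemma~\ref{lem:estimate_diff_phi} (at $t=0$) so that the estimate extends to all $T\in[0,\infty)$. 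You should add that one step; otherwise the argument is complete.
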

\begin{proof}[Proof of Corollary~\ref{cor:sp_case_gamma}]
First, observe that item~\eqref{item:main:limit} in Proposition~\ref{prop:main_theorem} implies that  there exists a unique  $\Xi \in \R^d$ which satisfies that
\begin{equation}
\limsup_{t \to \infty} \|\theta_t^{\xi} - \Xi\|_{\R^d} = 0.
\end{equation}
This establishes item~\eqref{item:cor:limit}. Next note that item~\eqref{item:main_theorem_2} in Proposition~\ref{prop:main_theorem} and Lemma~\ref{lem:estimate_diff_phi} demonstrate that for all $T\in [0,\infty)$ we have that
\begin{align*}
& \la\E\big[\psi(\Theta_T)\big]- \psi(\Xi)\ra  \leq  \sup_{s,v\in [0,T]}\E \bigg[ \|Q_s-g(\Theta_{\floorgrid{s}})\|_{\R^d}\norm{Q_s}_{\R^d}\\
& \quad \cdot \bigg(\int_0^1  e^{-L(T-s)} \big\|\psi''( \theta_{T-s}^{\lambda \Theta_s + (1-\lambda)\Theta_{\floorgrid{s}}})\big\|_{L^{(2)}(\R^d,\R)} + \big\|\psi'( \theta_{T-s}^{\lambda \Theta_s + (1-\lambda)\Theta_{\floorgrid{s}}} )\big\|_{L(\R^d,\R)}\\
& \quad \cdot \int_0^{T-s} e^{-Lu}\big\|g''(\theta_u^{\lambda \Theta_s + (1-\lambda)\Theta_{\floorgrid{s}}})\big\|_{L^{(2)}(\R^d,\R^d)}\, du\, d\lambda \bigg) \numberthis  \\
& \quad + \big\|\psi'(\theta_{T-s}^{\Theta_s})\big\|_{L(\R^d,\R)} \big\|g'(\Theta_{v})Q_v \big\|_{\R^d}\bigg] \int_0^T e^{-L(T-t)}(t-\floorgrid{t})\, dt\\
& \quad + \sup\left\lbrace \big\| \psi'(\alpha\theta_T^{\xi}+(1-\alpha)\Xi) \big\|_{L(\R^d,\R)} \in \R\colon \alpha \in [0,1] \right\rbrace \! \|\xi-\Xi\|_{\R^d}e^{-LT}.
\end{align*}
This and \eqref{eq:cor:C(T)} imply that for all $T\in [0,\infty)$ we have that
\begin{align}
\begin{split}
& \la\E\big[\psi(\Theta_T)\big]- \psi(\Xi)\ra  \leq C(T) \int_0^T e^{-L(T-t)}(t-\floorgrid{t})\, dt\\
& \quad+ \sup_{\alpha \in [0,1]} \Big( \big\| \psi'(\alpha\theta_T^{\xi}+(1-\alpha)\Xi) \big\|_{L(\R^d,\R)}\Big) \|\xi-\Xi\|_{\R^d}e^{-LT}.
\end{split}
\end{align}
Lemma~\ref{lem:bnd_sum_main} therefore ensures that for all $\lambda\in (0,1)$, $k\in\N$ we have that
\begin{align}
\begin{split}
&\la\E[\psi(\Theta_{\sum_{n=1}^{k-1}\frac{\eta}{n^{1-\epsilon}}})]- \psi(\Xi)\ra   \leq C\big(\textstyle\sum_{n=1}^{k-1}\frac{\eta}{n^{1-\epsilon}}\big) K(\lambda) k^{2\epsilon-1} \\
& \quad +   \sup_{\alpha \in [0,1]}\Big( \| \psi'(\alpha\theta_{\sum_{n=1}^{k-1}\frac{\eta}{n^{1-\epsilon}}}^{\xi}+(1-\alpha)\Xi) \|_{L(\R^d,\R)}\Big) \|\xi-\Xi\|_{\R^d} e^{-L(\sum_{n=1}^{k-1}\frac{\eta}{n^{1-\epsilon}})}\\
& = k^{2\epsilon-1} \Bigg[ K(\lambda)C\big(\textstyle\sum_{n=1}^{k-1}\frac{\eta}{n^{1-\epsilon}}\big)
+ k^{1-2\epsilon}e^{-L(\sum_{n=1}^{k-1}\frac{\eta}{n^{1-\epsilon}})}\\
& \quad \cdot\sup_{\alpha \in [0,1]}\Big( \| \psi'(\alpha\theta_{\sum_{n=1}^{k-1}\frac{\eta}{n^{1-\epsilon}}}^{\xi}+(1-\alpha)\Xi) \|_{L(\R^d,\R)}\Big) \|\xi-\Xi\|_{\R^d}\Bigg]. 
\end{split}
\end{align}
This establishes item~\eqref{item:cor:error}. The proof of Corollary~\ref{cor:sp_case_gamma} is thus completed.
\end{proof}

\chapter{Weak error estimates for SAAs in the case of polynomially decaying learning rates}
\label{sec:SAA:poly}
\chaptermark{}

In 
this chapter we specialize the weak error analysis for SAAs in the case of general learning rates from Chapter~\ref{sec:SAA:general} to accomplish weak error estimates for SAAs in the case of polynomially decaying learning rates. In particular, we present and prove in this chapter the main result of this paper, Theorem~\ref{cor:no_setting_discrete_version} in Section~\ref{subsec:weak:SAA:poly} below, which establishes weak convergence rates for SAAs in the case of polynomially decaying learning rates with  mini-batches. In Section~\ref{subsec:SAA:without} we apply Theorem~\ref{cor:no_setting_discrete_version} to establish in \Cref{cor:no_setting_discrete_version_one_sample} in Section~\ref{subsec:SAA:without}  below weak convergence rates for SAAs in the case of polynomially decaying learning rates without mini-batches. In Section~\ref{subsec:example} below
 we illustrate   \Cref{cor:no_setting_discrete_version_one_sample} by means of an elementary example.
Our proof of Theorem~\ref{cor:no_setting_discrete_version}  employs the weak error analysis result in
Corollary~\ref{cor:sp_case_gamma} in Section~\ref{subsec:SAA:poly} above,
the elementary results on suitable sequences of uniformly bounded functions in Section~\ref{subsec:sequence} below,   the elementary result on differentiable functions with bounded derivatives in \Cref{lem:diff_bnded_map_linear_growth} in Section~\ref{subsec:apriori:constant} below, and the a priori estimates for suitable approximation error constants associated to SAAs in \Cref{lem:bnd_C_case_f''_bnded} in Section~\ref{subsec:apriori:constant} below.
Our proof of \Cref{lem:bnd_C_case_f''_bnded}, in turn, uses
 the result on the possibility of interchanging derivatives and expectations in Lemma~\ref{lem:f_C^n} in Section~\ref{subsec:sufficient:exp} above, the a  priori estimates for SAAs in the case of general learning rates in  Lemma~\ref{lem:Theta_L^2_sp_case} in Section~\ref{subsec:apriori} above, 
 the  a priori estimates for SAAs in the case of polynomially decaying learning rates in Section~\ref{subsec:apriori:poly} below, 
 and the a posteriori estimates for conditional variances associated to SAAs in Lemma~\ref{lem:iterated_error} in Section~\ref{subsec:aposteriori}  below. 
In the scientific literature a posteriori estimates similar to the ones as in Lemma~\ref{lem:iterated_error}   can, e.g., be found in~\cite[(3) in Theorem 1.1, (169) in Corollary~3.5, and (217) in Theorem~3.7]{Wurstemberger2018}). Our proof of Lemma~\ref{lem:iterated_error}, in turn,   employs the elementary growth bound estimates  in Lemma~\ref{lem:(1)+(2)=>(*)} in Section~\ref{subsec:aposteriori}   below. In the scientific literature
 similar results to Lemma~\ref{lem:(1)+(2)=>(*)}   can, e.g., be found in  Dereich \& M\"uller-Gronbach \cite[Remark 2.1]{DereichMuellerGronbach2019}.
 In Setting~\ref{sec:setting_2}  in Section~\ref{subsec:setting:4} below we present a mathematical framework for describing SAAs  in the case of  polynomially decaying learning rates. In the results of this chapter  we frequently employ Setting~\ref{sec:setting_2}.

\section{Mathematical description for SAAs in the case of  polynomially decaying learning rates}
\label{subsec:setting:4}
\sectionmark{}

\begin{setting}
\label{sec:setting_2}
 Let $d\in \N$, $ \xi,\,\Xi\in \R^d$, $\epsilon\in (0,\nicefrac{1}{2})$, $\eta,L \in(0,\infty)$,
$(\mathfrak{M}_n)_{n \in \N_0} \subseteq \N$, 
let $(S,\mathcal{S})$ be a measurable space, let $(\Omega,\F,\P)$ be a probability space, let $Z_{m, n}\colon\Omega \to S$, $(m, n) \in  \N^2$, be i.i.d.\ random variables,  let $\mathfrak{t}\colon \N_0 \to [0,\infty)$ satisfy for all $m \in \N_0$ that
$
\mathfrak{t}_m = \eta [\sum_{n=1}^{m} \displaystyle n^{\epsilon-1}],
$
let  $G  = (G(x, s))_{(x, s) \in \R^d \times S} \colon \R^d\times S\to \R^d$ be   $(\mathcal{B}(\R^d)\otimes \mathcal{S})$/$\mathcal{B}(\R^d)$-measurable,  let $g\colon \mathbb{R}^d \to \R^d$ be a function, assume for all $s \in S$ that $( \R^d\ni x \mapsto G(x,s)\in\R^d)\in C^2(\R^d,\R^d)$, assume for all $x, y\in\R^d$ that 
\begin{equation}
\label{eq:setting_2_2}
\vspace{-.8cm}
\max_{i \in \{1,2\}} \inf_{\delta\in(0,\infty)}\sup_{u\in [-\delta,\delta]^d}\E\Big[ \|G(x,Z_{1,1})\|_{\R^d} + \| (\tfrac{\partial^i}{\partial x^i}G)(x+u,Z_{1,1})\|_{L^{(i)}(\R^d,\R^d)}^{1+\delta} \Big]<\infty,
\end{equation} 
\begin{equation}\label{eq:setting_2_3_2}
 \langle x-\Xi,g(x)\rangle_{\R^d} \leq -L \|g(x)\|_{\R^d}^2,
\qquad
\langle x-y,g(x)-g(y)\rangle_{\R^d} \leq -L \| x-y\|_{\R^d}^2,
\end{equation}
and $g(x)=\E[ G(x,Z_{1,1}) ]$
(cf.~Corollary~\ref{cor:derivative:gen}),
let $\theta^{\vartheta}\in C( [0,\infty), \R^d)$, $\vartheta\in \R^d$,  satisfy for all $t\in [0,\infty)$, $\vartheta\in\R^d$ that
\begin{equation}\label{eq:setting_2_4}
\theta_t^{\vartheta}= \vartheta + \int_0^t g(\theta_s^{\vartheta})\, ds
\end{equation}
(cf.~item~\eqref{item1:diff:induction} in Lemma~\ref{lem:diff:induction}), and
 let $\Theta \colon [0,\infty)\times \Omega \to \R^d$ be the stochastic process w.c.s.p.\ which satisfies for all
$m \in \N_0$, $t \in [\mathfrak{t}_m, \mathfrak{t}_{m+1})$ that
 $\Theta_0 = \xi$ and 
\begin{equation}\label{eq:setting_2_6}
\Theta_t = \Theta_{\mathfrak{t}_m} + \frac{(t-\mathfrak{t}_m)}{\mathfrak{M}_m} \left[ \textstyle\sum\limits_{n=1}^{\mathfrak{M}_m} \displaystyle G(\Theta_{\mathfrak{t}_m},Z_{m+1, n}) \right].
\end{equation}
\end{setting}

\section{On a sequence of uniformly bounded functions}
\label{subsec:sequence}
\sectionmark{}

\begin{lemma}\label{lem:cont_max}
	Let $d_1,d_2,d_3\in\N$, $f\in C(\R^{d_1}\times \R^{d_2},\R^{d_3})$ and let $K\subseteq\R^{d_2}$ be a non-empty compact set.
	Then 
	\begin{enumerate}[(i)]
		\item\label{item:cont_max_1} we have for all $x\in\R^{d_1}$ that
		$
		\sup_{y \in K} \norm{f(x,y)}_{\R^{d_3}} <\infty
		$
		and
		\item\label{item:cont_max_2} we have that 
		$
		\R^{d_1}\ni x\mapsto \sup_{y \in K} \norm{f(x,y)}_{\R^{d_3}} \in \R
		$
		is continuous.
	\end{enumerate}
\end{lemma}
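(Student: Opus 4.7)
The plan is to deduce item~\eqref{item:cont_max_1} directly from \Cref{prop:cont_fct_bnd_on_cmpcts} and to deduce item~\eqref{item:cont_max_2} from the uniform continuity of $f$ on compact subsets of $\R^{d_1} \times \R^{d_2}$. For item~\eqref{item:cont_max_1}, fix $x \in \R^{d_1}$; the map $K \ni y \mapsto \|f(x,y)\|_{\R^{d_3}} \in [0,\infty)$ is continuous as a composition of continuous functions, the set $K$ is a non-empty compact topological space by assumption, and hence \Cref{prop:cont_fct_bnd_on_cmpcts} ensures that this map is bounded, which is exactly $\sup_{y\in K}\|f(x,y)\|_{\R^{d_3}} < \infty$.

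For item~\eqref{item:cont_max_2}, let $F \colon \R^{d_1} \to [0,\infty)$ satisfy for all $x \in \R^{d_1}$ that $F(x) = \sup_{y \in K} \|f(x,y)\|_{\R^{d_3}}$, fix $x_0 \in \R^{d_1}$, and fix $\varepsilon \in (0,\infty)$. The idea is that the set $\overline{B}(x_0, 1) \times K \subseteq \R^{d_1} \times \R^{d_2}$ is compact, so that the restriction of $f$ to this set is uniformly continuous. Hence there exists $\delta \in (0,1)$ such that for all $(x,y), (x',y') \in \overline{B}(x_0,1) \times K$ with $\|x-x'\|_{\R^{d_1}} + \|y-y'\|_{\R^{d_2}} < \delta$ we have that $\|f(x,y) - f(x',y')\|_{\R^{d_3}} < \varepsilon$.

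In particular, for all $x \in \R^{d_1}$ with $\|x-x_0\|_{\R^{d_1}} < \delta$ and all $y \in K$ we would then have $\big| \|f(x,y)\|_{\R^{d_3}} - \|f(x_0,y)\|_{\R^{d_3}} \big| \leq \|f(x,y) - f(x_0,y)\|_{\R^{d_3}} < \varepsilon$, which gives $\|f(x,y)\|_{\R^{d_3}} \leq \|f(x_0,y)\|_{\R^{d_3}} + \varepsilon \leq F(x_0) + \varepsilon$. Taking the supremum over $y \in K$ yields $F(x) \leq F(x_0) + \varepsilon$, and the symmetric argument yields $F(x_0) \leq F(x) + \varepsilon$, so that $|F(x) - F(x_0)| \leq \varepsilon$. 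Since $\varepsilon$ was arbitrary, this establishes continuity of $F$ at $x_0$, and since $x_0$ was arbitrary, item~\eqref{item:cont_max_2} follows. I do not anticipate any real obstacle here; the only thing to track carefully is the sup–swap step, which is handled by the two-sided comparison above.
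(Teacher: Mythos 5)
Your proof is correct, and it takes a genuinely different route from the paper. For item~\eqref{item:cont_max_1} you use \Cref{prop:cont_fct_bnd_on_cmpcts} exactly as the paper does. For item~\eqref{item:cont_max_2}, however, the paper gives a sequential argument in the style of Berge's maximum theorem: given $x_n \to x_0$ it attains the supremum at points $y_n \in K$, extracts a convergent subsequence $y_{l(n)} \to \mathbbm{y}$ by Bolzano--Weierstrass, shows that the $g$-values along that subsubsequence converge to $g(x_0)$ (via a two-sided comparison using the fixed maximizer $y_0$), and then invokes the subsequence principle (that a sequence converges once every subsequence has a further subsequence converging to the same limit) to conclude $g(x_n) \to g(x_0)$. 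Your argument instead exploits uniform continuity of $f$ on the compact set $\overline{B}(x_0,1)\times K$ (Heine--Cantor) and deduces directly that $\|x-x_0\|_{\R^{d_1}} < \delta$ implies $|F(x)-F(x_0)| \leq \varepsilon$, with the sup--swap handled by the two-sided comparison you describe. Both are valid; your $\varepsilon$--$\delta$ argument is shorter and avoids the subsequence machinery, while the paper's approach would generalize more readily to a varying compact set $K(x)$ (as in the genuine maximum theorem), which is not needed here since $K$ is fixed.
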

\begin{proof}[Proof of  Lemma~\ref{lem:cont_max}]
	Throughout this proof let $g\colon\R^{d_1}\to\R\cup\lb\infty\rb$ satisfy for all $x\in\R^{d_1}$ that
	\begin{equation}\label{eq:cont_max_1_2}
	g(x)=\sup_{y \in K} \norm{f(x,y)}_{\R^{d_3}},
	\end{equation}
	let $x = (x_n)_{n \in \N_0} \colon \N_0 \to\R^{d_1}$ satisfy that
	\begin{equation}\label{eq:cont_max_1_3}
	\limsup_{n \to \infty}\| x_n-x_0 \|_{\R^{d_1}}=0,
	\end{equation}
	and let $k\colon \N\to\N$ be  strictly increasing.
	Note that the assumption that $f$ is continuous ensures that for all $z \in \R^{d_1}$ we have that 
	\begin{equation}
	\label{eq:prop:cont}
	\big( \R^{d_2} \ni y \mapsto f(z, y) \in \R^{d_3}\big) \in C(\R^{d_2}, \R^{d_3}).
	\end{equation}
	\Cref{prop:cont_fct_bnd_on_cmpcts} and the assumption that $K$ is a non-empty compact set hence establish item~\eqref{item:cont_max_1}.
	Next observe that \eqref{eq:prop:cont} and the assumption that $K$ is a non-empty compact set assure that there exists   $y = (y_n)_{n \in \N_0} \colon\N_0\to K$ which satisfies for all $n\in\N$ that 
	\begin{equation}\label{eq:cont_max_2}
	g(x_0)=\norm{f(x_0,y_0)}_{\R^{d_3}}
	\end{equation}
	and
	\begin{equation}\label{eq:cont_max_3}
	g(x_{k(n)})=\| f(x_{k(n)},y_n) \|_{\R^{d_3}}
	\end{equation}
	(see, e.g., Coleman~\cite[Theorem 1.3]{Coleman2012}).
	Furthermore, observe that the assumption that $K$ is a non-empty compact set and the Bolzano-Weierstrass theorem demonstrate that there exist   $\mathbbm{y} \in K$ and strictly increasing  $l\colon \N\to\N$   which satisfy that
	\begin{equation}
	\limsup_{n \to \infty}  \| y_{l(n)} - \mathbbm{y}\|_{\R^{d_2}} = 0.
	\end{equation}
	This and \eqref{eq:cont_max_1_3} imply that $ (x_{k(l(n))}, y_{l(n)} )\in \R^{d_1}\times \R^{d_2},n\in\N,$ is a convergent sequence in $\R^{d_1}\times \R^{d_2}$. The  assumption that $f$ is continuous,  \eqref{eq:cont_max_1_3}, and \eqref{eq:cont_max_3} hence assure that $g(x_{k(l(n))})\in \R, n\in\N,$ is a convergent sequence in $\R$ and 
	\begin{equation}\label{eq:cont_max_3_2}
	\lim_{n\to\infty}g(x_{k(l(n))})=\lim_{n\to\infty}\| f(x_{k(l(n))},y_{l(n)})\|_{\R^{d_3}}=\| f(x_0,\mathbbm{y} )\|_{\R^{d_3}}.
	\end{equation}
	This and \eqref{eq:cont_max_1_2} prove that
	\begin{equation}\label{eq:cont_max_g1}
	\begin{split}
	\lim_{n\to\infty}g(x_{k(l(n))})  = \| f(x_0,\mathbbm{y} )\|_{\R^{d_3}}
	\leq g(x_0). 
	\end{split}
	\end{equation}
	Moreover, note that \eqref{eq:cont_max_2}, \eqref{eq:cont_max_1_3}, the assumption that $f$ is continuous, and \eqref{eq:cont_max_1_2} imply that
	\begin{equation}\label{eq:cont_max_4}
	\begin{split}
	g(x_0) & = \norm{ f(x_0,y_0) }_{\R^{d_3}}\\
	& = \big\| f\big(\lim_{n\to\infty}x_{k(l(n))},y_0\big)  \big\|_{\R^{d_3}}\\
	& =  \big\| \lim_{n\to\infty}f(x_{k(l(n))},y_0)  \big\|_{\R^{d_3}}\\
	& = \lim_{n\to\infty}\| f(x_{k(l(n))},y_0) \|_{\R^{d_3}}\\
	& \leq \lim_{n\to\infty}g(x_{k(l(n))}). 
	\end{split}
	\end{equation}
	Combining this and \eqref{eq:cont_max_g1} assures that 
	\begin{equation}
	\limsup_{n\to\infty} | g(x_{k(l(n))}) - g(x_0)| = 0.
	\end{equation}
	This and, e.g., \cite[Lemma 3.2]{JentzenBochner}
	prove that the sequence $g(x_n)\in\R$, $n\in\N$,  converges to $g(x_0)$. This reveals that  $g$ is continuous at $x_0$. 
	This establishes item~\eqref{item:cont_max_2}. The proof of  Lemma~\ref{lem:cont_max} is thus completed.
\end{proof}

\begin{lemma}\label{lem:bnd_exp_term}
Assume Setting~\ref{sec:setting_2} and let $\psi\in C^2(\R^d,\R)$.
Then 
\begin{enumerate}[(i)]
\item\label{item:lem:lim} we have that 
$
\limsup_{t \to \infty} \|\theta_t^{\xi} - \Xi\|_{\R^d} = 0
$
and 
\item  we have that
\begin{equation}\label{eq:bnd_exp_term_1}
\sup_{n\in\N}\sup_{\lambda\in[0,1]}\Big( n^{1-2\epsilon} \exp(-L\mathfrak{t}_{n-1})  \| \psi'(\lambda\theta_{\mathfrak{t}_{n-1}}^{\xi}+(1-\lambda)\Xi) \|_{L(\R^d,\R)}\Big) < \infty.
\end{equation}
\end{enumerate}
\end{lemma}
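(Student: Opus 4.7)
The plan is to handle items (i) and (ii) in sequence, with item (i) providing the compactness input needed for item (ii). For item (i), I would first observe that substituting $x = \Xi$ into the one-sided bound $\langle x - \Xi, g(x)\rangle_{\R^d} \leq -L\|g(x)\|_{\R^d}^2$ from \eqref{eq:setting_2_3_2} gives $0 \leq -L\|g(\Xi)\|_{\R^d}^2$, so $g(\Xi) = 0$. Combined with the strict monotonicity of $g$ also in \eqref{eq:setting_2_3_2}, Lemma~\ref{lem:motionless_pt_f} (applied after identifying Setting~\ref{sec:setting_2} with Setting~\ref{sec:setting_1} by reindexing the i.i.d.\ family $(Z_{m,n})_{(m,n)\in\N^2}$ and choosing an appropriate $\gamma$) then yields both that $\Xi$ is the unique zero of $g$ in $\R^d$ and that $\limsup_{t\to\infty}\|\theta_t^{\xi} - \Xi\|_{\R^d} = 0$.

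For item (ii), I would first use item (i) together with the fact that $(\theta_{\mathfrak{t}_{n-1}}^{\xi})_{n\in\N}$ is a convergent sequence in $\R^d$ with limit $\Xi$ to conclude that the set
\begin{equation}
K = \{\lambda \theta_{\mathfrak{t}_{n-1}}^{\xi} + (1-\lambda)\Xi \colon n \in \N,\, \lambda \in [0,1]\} \subseteq \R^d
\end{equation}
is bounded, hence contained in a compact subset of $\R^d$. The hypothesis $\psi \in C^2(\R^d,\R)$ ensures that $\R^d \ni x \mapsto \|\psi'(x)\|_{L(\R^d,\R)} \in [0,\infty)$ is continuous, so \Cref{prop:cont_fct_bnd_on_cmpcts} gives
\begin{equation}
M := \sup_{n \in \N,\, \lambda \in [0,1]}\|\psi'(\lambda\theta_{\mathfrak{t}_{n-1}}^{\xi} + (1-\lambda)\Xi)\|_{L(\R^d,\R)} < \infty.
\end{equation}

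It then remains to show that $\sup_{n\in\N} n^{1-2\epsilon}\exp(-L\mathfrak{t}_{n-1}) < \infty$. Applying item~\eqref{item:bounds_sum_1} of Lemma~\ref{lem:bounds_sum} with $\nu = 1-\epsilon$ yields $\mathfrak{t}_{n-1} = \eta\sum_{k=1}^{n-1} k^{\epsilon-1} \geq \frac{\eta}{\epsilon}(n^{\epsilon} - 1)$ for every $n \in \N$, whence
\begin{equation}
n^{1-2\epsilon}\exp(-L\mathfrak{t}_{n-1}) \leq e^{L\eta/\epsilon}\, n^{1-2\epsilon}\exp\!\bigl(-\tfrac{L\eta}{\epsilon}n^{\epsilon}\bigr).
\end{equation}
Since $\epsilon, L\eta/\epsilon > 0$, the right-hand side tends to $0$ as $n \to \infty$, so its supremum over $n \in \N$ is finite. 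Multiplying through by $M$ then gives the desired estimate \eqref{eq:bnd_exp_term_1}.

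No step should present a real obstacle: the only mildly delicate point is verifying that Lemma~\ref{lem:motionless_pt_f} is applicable, which reduces to reconciling the indexing of the noise random variables between Setting~\ref{sec:setting_1} and Setting~\ref{sec:setting_2} (the lemma's conclusion depends only on $g$, $\theta^{\vartheta}$, and the monotonicity estimate, all of which are available here). Everything else is routine: compactness from convergence, continuity of $\psi'$, and the standard fact that $x^{1-2\epsilon}e^{-cx^{\epsilon}}$ is bounded on $[1,\infty)$ for any $c > 0$.
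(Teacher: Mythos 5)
Your proof is correct and follows essentially the same outline as the paper: establish item (i) via Lemma~\ref{lem:motionless_pt_f}, then combine the lower bound $\mathfrak{t}_{n-1}\geq\frac{\eta}{\epsilon}(n^{\epsilon}-1)$ from Lemma~\ref{lem:bounds_sum} (which forces $n^{1-2\epsilon}e^{-L\mathfrak{t}_{n-1}}\to 0$) with a bound on the $\psi'$-factor. The one substantive difference lies in how the supremum of $\|\psi'(\lambda\theta_{\mathfrak{t}_{n-1}}^{\xi}+(1-\lambda)\Xi)\|_{L(\R^d,\R)}$ over $n$ and $\lambda$ is controlled. The paper invokes Lemma~\ref{lem:cont_max} to show that $y\mapsto\sup_{\lambda\in[0,1]}\|\nabla\psi(\lambda y+(1-\lambda)\Xi)\|_{\R^d}$ is continuous, and then uses $\theta_{\mathfrak{t}_{k-1}}^{\xi}\to\Xi$ to conclude that the sequence of suprema converges (to $\|\nabla\psi(\Xi)\|_{\R^d}$), hence is bounded. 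You instead observe directly that the set $K=\{\lambda\theta_{\mathfrak{t}_{n-1}}^{\xi}+(1-\lambda)\Xi\colon n\in\N,\ \lambda\in[0,1]\}$ is bounded (as the union of segments from $\Xi$ to points of a convergent, hence bounded, sequence), so $\bar K$ is compact and \Cref{prop:cont_fct_bnd_on_cmpcts} applied to the continuous map $\|\psi'(\cdot)\|_{L(\R^d,\R)}$ already gives the uniform bound $M$. This bypasses Lemma~\ref{lem:cont_max} entirely and is a bit more elementary; the paper's route pays for the extra machinery with the slightly stronger conclusion that the suprema actually converge to $\|\nabla\psi(\Xi)\|_{\R^d}$, which is not needed here. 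Your explicit remark about reconciling the indexing of the noise family between Setting~\ref{sec:setting_2} and Setting~\ref{sec:setting_1} before quoting Lemma~\ref{lem:motionless_pt_f} is more careful than the paper, which cites the lemma without comment; your reduction (the conclusion depends only on $g$, $\theta^{\vartheta}$, and the monotonicity estimate) is the correct justification.
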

\begin{proof}[Proof of Lemma~\ref{lem:bnd_exp_term}]
First, note that  \eqref{eq:setting_2_3_2} assures that
\begin{equation}
\{x \in \R^d \colon g(x) =0 \} = \{\Xi\}.
\end{equation}
 This and item~\eqref{item:motionless_pt_f_3} in Lemma~\ref{lem:motionless_pt_f} establish item~\eqref{item:lem:lim}. In the next step observe that item~\eqref{item:bounds_sum_1} in Lemma~\ref{lem:bounds_sum} assures that for all $k\in \lb 2,3,\dots\rb$ we have that
\begin{equation}
\mathfrak{t}_{k-1} = \textstyle\sum\limits_{n=1}^{k-1} \displaystyle \frac{\eta}{n^{1-\epsilon}} \geq \frac{\eta}{\varepsilon} \big( k^{\varepsilon} -1\big).
\end{equation}
This implies that for all $k\in \lb 2,3,\dots\rb$ we have that
\begin{equation}\label{eq:bnd_exp_term_3}
\begin{split}
k^{1-2\epsilon}e^{-L\mathfrak{t}_{k-1}}
 &\leq k^{1-2\epsilon}e^{-\frac{L \eta}{\epsilon}(k^{\epsilon}-1)}
 = k^{1-2\epsilon}e^{-\frac{L \eta}{\epsilon}k^{\epsilon}}e^{\frac{L \eta}{\epsilon}} = (k^{\epsilon})^{\frac{1-2\epsilon}{\epsilon}}e^{-\frac{L \eta}{\epsilon}k^{\epsilon}}e^{\frac{L \eta}{\epsilon}}.
\end{split}
\end{equation}
This reveals that
\begin{equation}\label{eq:bnd_exp_term_4}
\limsup_{k\to\infty} k^{1-2\epsilon}e^{-L\mathfrak{t}_{k-1}}=0.
\end{equation}
Moreover, note that for all $t\in [0,\infty)$ we have that
\begin{equation}\label{eq:bnd_exp_term_5_02}
\sup_{\lambda\in[0,1]} \| \psi'(\lambda\theta_{t}^{\xi}+(1-\lambda)\Xi) \|_{L(\R^d,\R)}
 = \sup_{\lambda\in[0,1]} \| \nabla\psi(\lambda\theta_{t}^{\xi}+(1-\lambda)\Xi) \|_{\R^d}.
\end{equation}
In the next step we combine item~\eqref{item:lem:lim} and the fact that $\liminf_{k\to\infty}\mathfrak{t}_{k}=\infty$ to obtain that 
\begin{equation}\label{eq:bnd_exp_term_6}
\limsup_{k\to\infty}\|\theta_{\mathfrak{t}_{k}}^{\xi} - \Xi\|_{\R^d} =0.
\end{equation}
Furthermore, observe that  Lemma~\ref{lem:cont_max} and the fact that $\nabla\psi$ is continuous assure that for all $x \in \R^d$  we have that
\begin{equation}
\sup_{\lambda\in[0,1]} \| \nabla\psi(\lambda x+(1-\lambda)\Xi) \|_{\R^d} < \infty
\end{equation}
and
\begin{equation}
\Big(\R^d\ni y \mapsto \sup_{\lambda\in[0,1]} \| \nabla\psi(\lambda y+(1-\lambda)\Xi) \|_{\R^d}\in \R \Big)\in C(\R^d,\R).
\end{equation}
This and \eqref{eq:bnd_exp_term_6} prove that the sequence
\begin{equation}
\Big(\sup\nolimits_{\lambda\in[0,1]} \| \nabla\psi(\lambda\theta_{\mathfrak{t}_{k-1}}^{\xi}+(1-\lambda)\Xi) \|_{\R^d}\Big)_{k\in\N}
\end{equation}
is convergent in $\R$ and
\begin{equation}\label{eq:bnd_exp_term_7}
\begin{split}
 \limsup_{k\to\infty}\sup_{\lambda\in[0,1]} \| \nabla\psi(\lambda\theta_{\mathfrak{t}_{k-1}}^{\xi}+(1-\lambda)\Xi) \|_{\R^d} &= \sup_{\lambda\in[0,1]} \| \nabla\psi(\lambda\Xi+(1-\lambda)\Xi) \|_{\R^d} \\
 &= \|\nabla\psi(\Xi)\|_{\R^d}.
\end{split}
\end{equation}
This, \eqref{eq:bnd_exp_term_5_02}, and \eqref{eq:bnd_exp_term_4} establish \eqref{eq:bnd_exp_term_1}. The proof of Lemma~\ref{lem:bnd_exp_term} is thus completed.
\end{proof}

\section{A priori estimates for SAAs in the case of polynomially decaying learning rates}
\label{subsec:apriori:poly}
\sectionmark{}

\begin{lemma}\label{lem:Theta_adapted}
Assume Setting~\ref{sec:setting_2} and let $\mathbb{G}_t \subseteq \mathcal{F}$, $t\in [0,\infty)$, satisfy for all $t\in(0,\infty)$ that  
\begin{equation}\label{eq:Theta_adapted_1}
\mathbb{G}_0= \{\emptyset, \Omega \} \qquad \text{and} \qquad \mathbb{G}_t=\sigma_{\Omega}( Z_{m+1, n} \colon (m, n) \in \N_0 \times \N,\, n \leq \mathfrak{M}_m,\, \mathfrak{t}_m <t ).
\end{equation}
Then we have for all $t\in [0,\infty)$ that $\Theta_t$ is $\mathbb{G}_{t}/\mathcal{B}(\R^d)$-measurable.
\end{lemma}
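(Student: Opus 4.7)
The plan is to prove the measurability statement by induction on the discrete time indices $m \in \N_0$ for the grid points $\mathfrak{t}_m$ and then to extend the conclusion to all $t \in [0,\infty)$ using the explicit update rule \eqref{eq:setting_2_6}. Before starting the induction, I will record the trivial monotonicity property that for all $s, t \in [0, \infty)$ with $s \leq t$ we have $\mathbb{G}_s \subseteq \mathbb{G}_t$: every generator $Z_{m+1,n}$ of $\mathbb{G}_s$ satisfies $\mathfrak{t}_m < s \leq t$ and is therefore also a generator of $\mathbb{G}_t$.

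The core of the proof is the claim that for every $m \in \N_0$ the random variable $\Theta_{\mathfrak{t}_m}$ is $\mathbb{G}_{\mathfrak{t}_m}/\mathcal{B}(\R^d)$-measurable. The base case $m = 0$ is immediate since $\mathfrak{t}_0 = 0$ and $\Theta_0 = \xi$ is constant. For the inductive step, I will exploit the assumed continuity of sample paths and \eqref{eq:setting_2_6} on the half-open interval $[\mathfrak{t}_m, \mathfrak{t}_{m+1})$ to pass to the limit $t \uparrow \mathfrak{t}_{m+1}$, yielding
\begin{equation}
\Theta_{\mathfrak{t}_{m+1}} = \Theta_{\mathfrak{t}_m} + \tfrac{\mathfrak{t}_{m+1} - \mathfrak{t}_m}{\mathfrak{M}_m} \textstyle\sum_{n=1}^{\mathfrak{M}_m} G(\Theta_{\mathfrak{t}_m}, Z_{m+1, n}).
\end{equation}
By the inductive hypothesis and monotonicity, $\Theta_{\mathfrak{t}_m}$ is $\mathbb{G}_{\mathfrak{t}_{m+1}}$-measurable; since $\mathfrak{t}_m < \mathfrak{t}_{m+1}$, each $Z_{m+1, n}$ with $n \leq \mathfrak{M}_m$ is $\mathbb{G}_{\mathfrak{t}_{m+1}}/\mathcal{S}$-measurable by the very definition of $\mathbb{G}_{\mathfrak{t}_{m+1}}$. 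Composing with the jointly $(\mathcal{B}(\R^d) \otimes \mathcal{S})/\mathcal{B}(\R^d)$-measurable map $G$ then shows that each summand $G(\Theta_{\mathfrak{t}_m}, Z_{m+1, n})$ is $\mathbb{G}_{\mathfrak{t}_{m+1}}$-measurable, and linear combinations preserve measurability, completing the induction.

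For general $t \in [0, \infty)$, the divergence of $\sum_{n=1}^\infty n^{\epsilon-1}$ (valid since $\epsilon - 1 > -1$) guarantees that there exists a unique $m = m(t) \in \N_0$ with $t \in [\mathfrak{t}_m, \mathfrak{t}_{m+1})$. Applying \eqref{eq:setting_2_6} once more and using that $\Theta_{\mathfrak{t}_m}$ is $\mathbb{G}_{\mathfrak{t}_m}$-measurable (hence $\mathbb{G}_t$-measurable by monotonicity) together with the observation that when $t > \mathfrak{t}_m$ the generators $Z_{m+1, n}$ ($n \leq \mathfrak{M}_m$) lie in $\mathbb{G}_t$ by definition (and when $t = \mathfrak{t}_m$ the prefactor $t - \mathfrak{t}_m$ vanishes so $\Theta_t = \Theta_{\mathfrak{t}_m}$), I will conclude that $\Theta_t$ is $\mathbb{G}_t/\mathcal{B}(\R^d)$-measurable. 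There is no real obstacle here; the only mildly subtle point is the use of continuity of sample paths to define $\Theta_{\mathfrak{t}_{m+1}}$ as the limit from the left of the formula on $[\mathfrak{t}_m, \mathfrak{t}_{m+1})$, which is guaranteed by the standing assumption that $\Theta$ has continuous paths.
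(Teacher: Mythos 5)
Your proposal is correct and takes essentially the same approach as the paper's proof: both proceed by induction on the grid indices and use continuity of sample paths to extend the one-step recursion \eqref{eq:setting_2_6} to the right endpoint, then combine with the monotonicity of the filtration to obtain measurability at arbitrary $t$. The only cosmetic difference is that you induct over the grid values $\Theta_{\mathfrak{t}_m}$ and handle general $t$ in a separate final step, whereas the paper inducts directly over the half-open intervals $(\mathfrak{t}_m, \mathfrak{t}_{m+1}]$ using the fully unrolled recursion; the underlying argument is the same.
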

\begin{proof}[Proof of Lemma~\ref{lem:Theta_adapted}]
First, observe that \eqref{eq:setting_2_6} ensures that for all $m \in \N_0$, $t \in (\mathfrak{t}_m, \mathfrak{t}_{m+1})$ we have that
\begin{equation}\label{eq:Theta_adapted_2}
\begin{split}
\Theta_t
&=\xi + \bigg[\textstyle\sum\limits_{n=0}^{m-1} \displaystyle \frac{(\mathfrak{t}_{n+1}-\mathfrak{t}_{n})}{\mathfrak{M}_n}\textstyle\sum\limits_{j=1}^{\mathfrak{M}_n} \displaystyle G(\Theta_{\mathfrak{t}_n},Z_{n+1, j})\bigg]\\
& \quad +\frac{(t-\mathfrak{t}_m)}{\mathfrak{M}_m} \textstyle\sum\limits_{n=1}^{\mathfrak{M}_m} \displaystyle G(\Theta_{\mathfrak{t}_m},Z_{m+1, n}).
\end{split}
\end{equation}
Next we claim that for all $m\in\N_0$, $t\in (\mathfrak{t}_m, \mathfrak{t}_{m+1}] $ we have that 
$\Theta_t$ is $\mathbb{G}_{t}/\mathcal{B}(\R^d)$-measurable. We prove this by induction on $m \in \N_0$. For the base case $m=0$ note that \eqref{eq:Theta_adapted_1}  assures that for all $t\in (\mathfrak{t}_0,\mathfrak{t}_1]$ we have that
\begin{equation}\label{eq:Theta_adapted_2_02}
\mathbb{G}_t=\sigma_{\Omega}( Z_{1,n} \colon  n \in \{1, 2, \ldots, \mathfrak{M}_0\}).
\end{equation}
Furthermore, observe that \eqref{eq:Theta_adapted_2} and the assumption that $\Theta \colon [0, \infty) \times \Omega \to \R^d$ is a stochastic process w.c.s.p.\  prove that for all $t\in (\mathfrak{t}_0,\mathfrak{t}_1]$ we have that
\begin{equation}\label{eq:Theta_adapted_2_03}
\Theta_t
=\xi 
+\frac{t}{\mathfrak{M}_0}\textstyle\sum\limits_{n=1}^{\mathfrak{M}_0} \displaystyle G(\xi,Z_{1, n}).
\end{equation}
This and \eqref{eq:Theta_adapted_2_02} prove that for all $t\in (\mathfrak{t}_0,\mathfrak{t}_1]$ we have that $\Theta_t$ is $\mathbb{G}_{t}/\mathcal{B}(\R^d)$-measurable. For the induction step $\N_0\ni m\to m+1\in\N_0$ observe that \eqref{eq:Theta_adapted_1}  assures that for all $t\in (\mathfrak{t}_{m+1}, \mathfrak{t}_{m+2}] $ we have that
\begin{equation}\label{eq:Theta_adapted_2_04}
\mathbb{G}_t= \sigma_{\Omega}( Z_{k+1, n} \colon (k, n) \in \N_0 \times \N,\, n \leq \mathfrak{M}_k,\, k \leq m+1 ).
\end{equation}
Moreover, note that \eqref{eq:Theta_adapted_2} and the assumption that $\Theta \colon [0, \infty) \times \Omega \to \R^d$ is a stochastic process w.c.s.p.\ demonstrate that for all $t\in (\mathfrak{t}_{m+1}, \mathfrak{t}_{m+2}] $ we have that
\begin{equation}\label{eq:Theta_adapted_2_05}
\begin{split}
\Theta_t
&= \xi + \bigg[\textstyle\sum\limits_{n=0}^{m} \displaystyle \frac{(\mathfrak{t}_{n+1}-\mathfrak{t}_{n})}{\mathfrak{M}_n} \textstyle\sum\limits_{j=1}^{\mathfrak{M}_n} \displaystyle G(\Theta_{\mathfrak{t}_n},Z_{n+1, j})\bigg]\\
& \quad +\frac{(t-\mathfrak{t}_{m+1})}{\mathfrak{M}_{m+1}} \textstyle\sum\limits_{n=1}^{\mathfrak{M}_{m+1}} \displaystyle G(\Theta_{\mathfrak{t}_{m+1}},Z_{m+2, n}).
\end{split}
\end{equation}
The induction hypothesis and  \eqref{eq:Theta_adapted_2_04} hence imply that for all $t\in (\mathfrak{t}_{m+1}, \mathfrak{t}_{m+2}] $ we have that $\Theta_t$ is $\mathbb{G}_{t}/\mathcal{B}(\R^d)$-measurable. 
This finishes the proof of the induction step. 
Induction therefore establishes that for all $m\in\N_0$, $t\in (\mathfrak{t}_m, \mathfrak{t}_{m+1}] $ we have that 
$\Theta_t$ is $\mathbb{G}_{t}/\mathcal{B}(\R^d)$-measurable. Combining this with the fact that $\Theta_0$ is $\mathbb{G}_{0}/\mathcal{B}(\R^d)$-measurable ensures that for all $t\in [0,\infty)$ we have that $\Theta_t$ is $\mathbb{G}_{t}/\mathcal{B}(\R^d)$-measurable. The proof of Lemma~\ref{lem:Theta_adapted} is thus completed.
\end{proof}

\begin{lemma}\label{lem:diff_martingale}
Assume Setting~\ref{sec:setting_2},  let $\mathbb{F}_n \subseteq \mathcal{F}$, $n\in\N_0$, be the sigma-algebras which satisfy for all $n\in\N$ that 
\begin{equation}\label{eq:diff_martingale_1}
\mathbb{F}_0= \{\emptyset, \Omega \} \qquad \text{and} \qquad \mathbb{F}_n= \sigma_{\Omega}( Z_{m+1, j} \colon (m, j) \in \N_0 \times \N,\, j \leq \mathfrak{M}_m,\, m \leq n-1 ),
\end{equation}
and  assume for all $n \in \N_0$, $j \in \{1, 2, \ldots, \mathfrak{M}_n\}$  that
\begin{equation}
\label{eq:regularity:F}
\E\big[ \|G(\Theta_{\mathfrak{t}_{n}},Z_{n+1, j})\|_{\R^d}\big] < \infty. 
\end{equation}
Then we have for all $n\in\N_0$, $A \in \mathbb{F}_{n}$  that
\begin{equation}\label{eq:diff_martingale_2}
\E\Big[\tfrac{1}{\mathfrak{M}_n} \textstyle\sum_{j=1}^{\mathfrak{M}_n}(G(\Theta_{\mathfrak{t}_{n}},Z_{n+1,j})-g(\Theta_{\mathfrak{t}_{n}})) \mathbbm{1}_A \Big]=0.
\end{equation}
\end{lemma}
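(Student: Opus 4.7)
The plan is to reduce the claim to a single-summand statement: if we can show that for each fixed $n \in \N_0$, $A \in \mathbb{F}_n$, and $j \in \{1,2,\ldots,\mathfrak{M}_n\}$ it holds that
\begin{equation}
\E\big[ G(\Theta_{\mathfrak{t}_n}, Z_{n+1,j}) \mathbbm{1}_A \big] = \E\big[ g(\Theta_{\mathfrak{t}_n}) \mathbbm{1}_A \big],
\end{equation}
then linearity of expectation together with averaging over $j$ immediately yields \eqref{eq:diff_martingale_2}. The two ingredients that drive this reduction are (a) the measurability of $\Theta_{\mathfrak{t}_n}$ with respect to $\mathbb{F}_n$ and (b) the independence of $Z_{n+1,j}$ from $\mathbb{F}_n$.

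For (a), note that comparing the definitions of $\mathbb{G}_{\mathfrak{t}_n}$ in \eqref{eq:Theta_adapted_1} and of $\mathbb{F}_n$ in \eqref{eq:diff_martingale_1} gives that $\mathbb{F}_n=\mathbb{G}_{\mathfrak{t}_n}$, since the condition $\mathfrak{t}_m<\mathfrak{t}_n$ is (by strict monotonicity of $m\mapsto\mathfrak{t}_m$) equivalent to $m\leq n-1$. Lemma~\ref{lem:Theta_adapted} therefore implies that $\Theta_{\mathfrak{t}_n}$ is $\mathbb{F}_n$/$\mathcal{B}(\R^d)$-measurable. For (b), observe that $Z_{n+1,j}$ does not appear among the generators of $\mathbb{F}_n$, and since the whole family $(Z_{m+1,k})_{(m,k)\in \N_0\times \N}$ is i.i.d., the standard grouping lemma for independence implies that $Z_{n+1,j}$ is independent of $\mathbb{F}_n$, and in particular of the pair $(\Theta_{\mathfrak{t}_n}, \mathbbm{1}_A)$.

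Next I would carry out the integration itself. Using independence and Tonelli/Fubini (justified by the integrability assumption \eqref{eq:regularity:F} together with the fact that $|\mathbbm{1}_A|\leq 1$), for each $j \in \{1, 2, \ldots, \mathfrak{M}_n\}$ one obtains
\begin{equation}
\E\big[ G(\Theta_{\mathfrak{t}_n}, Z_{n+1,j})\, \mathbbm{1}_A \big]
= \int_{\Omega}\int_{\Omega} G(\Theta_{\mathfrak{t}_n}(\omega), Z_{n+1,j}(\tilde{\omega}))\, \mathbbm{1}_A(\omega) \, \P(d\tilde{\omega})\, \P(d\omega).
\end{equation}
Since $Z_{n+1,j}$ has the same distribution as $Z_{1,1}$, the inner integral equals $\E[G(\Theta_{\mathfrak{t}_n}(\omega), Z_{1,1})]=g(\Theta_{\mathfrak{t}_n}(\omega))$, and plugging in gives $\E[g(\Theta_{\mathfrak{t}_n})\mathbbm{1}_A]$. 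Subtracting and averaging $1/\mathfrak{M}_n$ times the sum over $j$ establishes \eqref{eq:diff_martingale_2}.

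The only real obstacle is bookkeeping: one has to verify carefully that $\mathbb{F}_n$ indeed coincides with the sigma-algebra generated by the $Z$-variables used to construct $\Theta_{\mathfrak{t}_n}$ (which is immediate from the telescoping formula \eqref{eq:Theta_adapted_2} and the fact that the index shift $m+1$ in the definition of $\mathbb{F}_n$ matches the index shift in \eqref{eq:setting_2_6}), and to check the integrability needed for Fubini. Both tasks reduce to unpacking definitions rather than to any new estimate.
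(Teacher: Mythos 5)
Your proposal is correct and structurally parallel to the paper's proof: you reduce to the single-summand identity $\E[G(\Theta_{\mathfrak{t}_n},Z_{n+1,j})\mathbbm{1}_A]=\E[g(\Theta_{\mathfrak{t}_n})\mathbbm{1}_A]$, establish $\mathbb{F}_n$-measurability of $\Theta_{\mathfrak{t}_n}$ via Lemma~\ref{lem:Theta_adapted} and the observation $\mathbb{F}_n=\mathbb{G}_{\mathfrak{t}_n}$ (the paper records the same fact by saying $Y_n=\Theta_{\mathfrak{t}_n}$ is $(\mathbb{F}_n)$-adapted), and use independence of $Z_{n+1,j}$ from $\mathbb{F}_n$. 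The one genuine difference is in how the key identity is produced: the paper does not carry out the Fubini computation itself but instead cites an external disintegration result, \cite[Corollary~2.9]{Wurstemberger2018}, to pass from independence plus $\mathbb{F}_n$-measurability of $(\Theta_{\mathfrak{t}_n},\mathbbm{1}_A)$ directly to the equality $\E[G(Y_n,Z_{n+1,j})\mathbbm{1}_A]=\E[g(Y_n)\mathbbm{1}_A]$. Your version unpacks that black box by hand: you factor the joint law of $(\Theta_{\mathfrak{t}_n},\mathbbm{1}_A,Z_{n+1,j})$ into a product measure, write the expectation as an iterated integral, integrate out $Z_{n+1,j}$ (using that it has the same law as $Z_{1,1}$) to get $g(\Theta_{\mathfrak{t}_n})$, and justify the interchange by the integrability assumption \eqref{eq:regularity:F} together with $|\mathbbm{1}_A|\leq 1$. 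This buys self-containedness at the cost of a small amount of extra measure-theoretic bookkeeping that the paper delegates to the citation; both are fully rigorous. Everything you say checks out, including the identification $\mathbb{F}_n=\mathbb{G}_{\mathfrak{t}_n}$ (valid because $m\mapsto\mathfrak{t}_m$ is strictly increasing), so there is no gap.
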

\begin{proof}[Proof of Lemma~\ref{lem:diff_martingale}]
Throughout this proof let $D\colon \N_0\times\Omega\to\R^d$ be the  stochastic process which satisfies for all $n\in \N_0$ that
\begin{equation}\label{eq:diff_martingale_3}
\begin{split}
D_n &=\frac{1}{\mathfrak{M}_n} \textstyle\sum\limits_{j=1}^{\mathfrak{M}_n} \displaystyle (G(\Theta_{\mathfrak{t}_{n}},Z_{n+1,j})-g(\Theta_{\mathfrak{t}_{n}}))\\
\end{split}
\end{equation}
and let $Y\colon\N_0\times\Omega\to\R^d$ be the  stochastic process which satisfies for all $n\in \N_0$ that
\begin{equation}\label{eq:diff_martingale_4}
Y_n=\Theta_{\mathfrak{t}_n}.
\end{equation} 
Observe that the assumption that $Z_{m, j}$, $(m, j)\in \N^2$, are i.i.d.\ random variables and \eqref{eq:diff_martingale_1} ensure that for all $n \in \N_0$, $j \in \{1, 2, \ldots, \mathfrak{M}_n\}$ we have that 
\begin{equation}
\label{eq:Zj:independent}
Z_{n+1, j} \quad \text{is independent of} \quad \mathbb{F}_n.
\end{equation}
Moreover, note that Lemma~\ref{lem:Theta_adapted} proves that $Y$ is an $(\mathbb{F}_n)_{n\in\N_0}/\mathcal{B}(\R^d)$-adapted stochastic process. This, \eqref{eq:Zj:independent}, the fact that $\forall \, x \in \R^d, (n, j)\in \N^2 \colon \E[ \|G(x, Z_{n, j})\|_{\R^d}] < \infty$, the fact that $\forall \, x \in \R^d, (n, j)\in \N^2 \colon g(x) = \E[G(x, Z_{n, j})]$, \eqref{eq:regularity:F}, and, e.g.,  \cite[Corollary~2.9]{Wurstemberger2018} establish that for all $n \in \N_0$, $j \in \{1, 2, \ldots, \mathfrak{M}_n\}$, $ A \in \mathbb{F}_{n}$  we have that  
\begin{equation}
\E[G(Y_{n},Z_{n+1, j}) \mathbbm{1}_A]=\E[g(Y_{n}) \mathbbm{1}_A].
\end{equation}
Hence, we obtain  that  for all $n\in\N_0$, $ A \in \mathbb{F}_{n}$  it holds that
\begin{equation}
\begin{split}
\E[D_n \mathbbm{1}_A]
 &= \frac{1}{\mathfrak{M}_{n}} \textstyle\sum\limits_{j=1}^{\mathfrak{M}_{n}}\E\big[ \big(G(\Theta_{\mathfrak{t}_{n}},Z_{n+1,j})-g(\Theta_{\mathfrak{t}_{n}}) \big) \mathbbm{1}_A\big]\\
& = \frac{1}{\mathfrak{M}_{n}} \textstyle\sum\limits_{j=1}^{\mathfrak{M}_{n}}\big(\E[G(\Theta_{\mathfrak{t}_{n}},Z_{n+1, j}) \mathbbm{1}_A]-\E[g(\Theta_{\mathfrak{t}_{n}}) \mathbbm{1}_A]\big)\\
& = \frac{1}{\mathfrak{M}_{n}} \textstyle\sum\limits_{j=1}^{\mathfrak{M}_{n}}\big(\E[G(Y_{n},Z_{n+1, j}) \mathbbm{1}_A]-\E[g(Y_{n}) \mathbbm{1}_A]\big)  =0.
\end{split}
\end{equation}
The proof of Lemma~\ref{lem:diff_martingale} is thus completed.
\end{proof}

\begin{prop}\label{prop:PvW_L^2}
Let $d\in\N$, $c,\kappa\in (0,\infty)$, $\Xi\in\R^d$, let  $\gamma = (\gamma_n)_{n \in \N_0} \colon \N_0 \to (0, \infty)$ be a function, let $g\colon\R^d\to\R^d$ be  $\mathcal{B}(\R^d)/\mathcal{B}(\R^d)$-measurable,  let $(\Omega,\mathcal{F},\mathbb{P}, (\mathbb{F}_n)_{n\in\N_0})$ be a filtered probability space, let $\Theta\colon \N_0\times\Omega\to\R^d$ be an $(\mathbb{F}_n)_{n\in\N_0}/\mathcal{B}(\R^d)$-adapted stochastic process, and assume for all $x\in\R^d$, $n\in\N$, $ A \in \mathbb{F}_{n-1}$ that
\begin{equation}
\E\big[ \|\Theta_0\|_{\R^d}^2+\|\Theta_n-(\Theta_{n-1}+\gamma_ng(\Theta_{n-1}))\|_{\R^d}^2 \big]<\infty,
\end{equation}
\begin{equation}\label{eq:PvW_L^2_1}
\E\big[ \big(\Theta_n-(\Theta_{n-1}+\gamma_ng(\Theta_{n-1})) \big) \mathbbm{1}_A \big]=0,
\end{equation}
\begin{equation}\label{eq:PvW_L^2_2}
\langle x-\Xi,g(x)\rangle_{\R^d} \leq -c \max\big\{\|x-\Xi\|_{\R^d}^2, \|g(x)\|_{\R^d}^2 \big\},
\end{equation}
\begin{equation}\label{eq:PvW_L^2_4}
\E\big[\|\Theta_n-(\Theta_{n-1}+\gamma_ng(\Theta_{n-1}))\|_{\R^d}^2\big]  \leq (\gamma_n)^2\kappa\big(1+\E\big[\|\Theta_{n-1}-\Xi\|_{\R^d}^2\big]\big),
\end{equation}
and
\begin{equation}\label{eq:PvW_L^2_5}
\limsup_{k \to \infty} \gamma_k = 0 < \liminf_{k\to\infty} \left[ \tfrac{\gamma_k-\gamma_{k-1}}{(\gamma_k)^2} + \tfrac{c\gamma_{k-1}}{2\gamma_k} \right].
\end{equation}
Then there exists $C\in(0,\infty)$ such that for all $n\in\N_0$ we have that
\begin{equation}
\E\big[\|\Theta_n-\Xi\|_{\R^d}^2\big]\leq C\gamma_n
\end{equation}
and 
\begin{equation}\label{eq:PvW_L^2_8}
\sup_{m\in\N_0}\E\big[\|\Theta_m-\Xi\|_{\R^d}^2\big]<\infty.
\end{equation}
\end{prop}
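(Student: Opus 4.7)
I would set $\Delta_n = \Theta_n - \Xi$, $a_n = \E[\|\Delta_n\|_{\R^d}^2]$, and $M_n = \Theta_n - \Theta_{n-1} - \gamma_n g(\Theta_{n-1})$, so that $\Delta_n = \Delta_{n-1} + \gamma_n g(\Theta_{n-1}) + M_n$, and then derive a scalar recursion for $(a_n)_{n \in \N_0}$. In the expansion of $\|\Delta_n\|^2$ the cross term vanishes in expectation because $\Delta_{n-1} + \gamma_n g(\Theta_{n-1})$ is $\mathbb{F}_{n-1}$-measurable while \eqref{eq:PvW_L^2_1} gives, componentwise, $\E[M_n \mid \mathbb{F}_{n-1}] = 0$; the required integrability is supplied by $\E[\|M_n\|^2] < \infty$ combined with Cauchy--Schwarz. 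Expanding the squared norm and applying \eqref{eq:PvW_L^2_2} in two ways --- the strong monotonicity $\langle \Delta_{n-1}, g(\Theta_{n-1})\rangle_{\R^d} \leq -c\|\Delta_{n-1}\|_{\R^d}^2$, and the derived Lipschitz-type bound $\|g(\Theta_{n-1})\|_{\R^d} \leq c^{-1}\|\Delta_{n-1}\|_{\R^d}$ (which follows from $\langle x - \Xi, g(x)\rangle_{\R^d} \leq -c\|g(x)\|_{\R^d}^2$ and Cauchy--Schwarz) --- I would obtain $\|\Delta_{n-1} + \gamma_n g(\Theta_{n-1})\|_{\R^d}^2 \leq (1 - 2c\gamma_n + \gamma_n^2/c^2)\|\Delta_{n-1}\|_{\R^d}^2$. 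Together with the variance bound \eqref{eq:PvW_L^2_4} this delivers the scalar recursion
\begin{equation*}
a_n \leq \big(1 - 2c\gamma_n + \gamma_n^2(c^{-2}+\kappa)\big)\, a_{n-1} + \gamma_n^2 \kappa,
\end{equation*}
which in particular proves $a_n < \infty$ for every $n \in \N_0$ by induction starting from $a_0 < \infty$.

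\textbf{Induction and conclusion.} I would then fix $\alpha \in (0, \liminf_{k\to\infty}[(\gamma_k - \gamma_{k-1})/\gamma_k^2 + c\gamma_{k-1}/(2\gamma_k)])$ and choose $N \in \N$ such that for all $n \geq N$ one has simultaneously $(\gamma_n - \gamma_{n-1})/\gamma_n^2 + c\gamma_{n-1}/(2\gamma_n) \geq \alpha$ and $\gamma_{n-1}(c^{-2}+\kappa) \leq \alpha/2$, which is possible because $\gamma_k \to 0$. Writing $2c\gamma_{n-1}/\gamma_n = c\gamma_{n-1}/(2\gamma_n) + 3c\gamma_{n-1}/(2\gamma_n)$, these two inequalities combine to yield
\begin{equation*}
\frac{\gamma_n - \gamma_{n-1}}{\gamma_n^2} + \frac{2c\gamma_{n-1}}{\gamma_n} - \gamma_{n-1}(c^{-2}+\kappa) \geq \alpha - \tfrac{\alpha}{2} = \tfrac{\alpha}{2}.
\end{equation*}
Setting $C = \max\{2\kappa/\alpha,\, \max_{0 \leq k < N}(a_k / \gamma_k)\}$, which is finite by the first paragraph, an elementary rearrangement of the scalar recursion (divide by $\gamma_n^2$) shows that for $n \geq N$ the implication $a_{n-1} \leq C\gamma_{n-1} \Rightarrow a_n \leq C\gamma_n$ is equivalent to $C \cdot [\text{the bracketed quantity above}] \geq \kappa$, which holds thanks to $C \geq 2\kappa/\alpha$. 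The base cases $0 \leq k < N$ are immediate from the choice of $C$, so induction yields $a_n \leq C\gamma_n$ for all $n \in \N_0$; since $\gamma_k \to 0$ implies $\sup_k \gamma_k < \infty$, the bound $\sup_n a_n < \infty$ then follows at once.

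\textbf{Main obstacle.} The subtle point is aligning the hypothesized lower bound on $(\gamma_k - \gamma_{k-1})/\gamma_k^2 + c\gamma_{k-1}/(2\gamma_k)$, which carries the specific coefficient $c/2$, with the coefficient $2c$ that appears naturally from expanding the quadratic Lyapunov function $\|\cdot\|^2$. These two coefficients do not match, but the hypothesis is in fact the weaker of the two statements because $2c\gamma_{k-1}/\gamma_k \geq c\gamma_{k-1}/(2\gamma_k)$; absorbing the lower-order correction $\gamma_{n-1}(c^{-2}+\kappa)$ into $\alpha/2$ uses only $\gamma_k \to 0$, and the initial finite segment is handled by enlarging $C$. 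The remaining technicalities --- the vanishing of the martingale cross term, the finiteness of $a_n$ step-by-step, and the positivity of $1 - 2c\gamma_n + \gamma_n^2(c^{-2}+\kappa)$ for large $n$ --- are all routine consequences of the hypotheses.
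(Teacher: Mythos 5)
Your proposal is essentially correct, but it differs from the paper in a fundamental structural way: the paper does not prove Proposition~\ref{prop:PvW_L^2} from scratch at all. The paper's ``proof'' is a single application of an external result, \cite[Corollary~3.5]{Wurstemberger2018}, which delivers the bound $\E[\|\Theta_n - \Xi\|_{\R^d}^2]\leq C\gamma_n$ directly; the final claim $\sup_{m}\E[\|\Theta_m-\Xi\|^2_{\R^d}]<\infty$ then follows from $\limsup_n\gamma_n = 0$ exactly as you note. Your argument, by contrast, is a self-contained reconstruction of what a proof of that cited corollary must look like: you reduce to the scalar recursion
\begin{equation*}
a_n \leq \big(1 - 2c\gamma_n + \gamma_n^2(c^{-2}+\kappa)\big) a_{n-1} + \gamma_n^2\kappa
\end{equation*}
by expanding the quadratic Lyapunov function, killing the martingale cross term via \eqref{eq:PvW_L^2_1}, and applying \eqref{eq:PvW_L^2_2} twice (once as a monotonicity bound, once as a growth bound on $g$), and you then run an induction using the $\liminf$ condition \eqref{eq:PvW_L^2_5}. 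This is sound, and your observation about the coefficient mismatch ($2c$ from the quadratic expansion versus $c/2$ in the hypothesis, reconciled by discarding the surplus) is the right way to see why the hypothesis is strong enough. What the paper's citation buys is brevity and reuse of an already-vetted lemma; what your argument buys is transparency about which of the five hypotheses is used where, in particular the exact role of \eqref{eq:PvW_L^2_5} in the induction step. One small point worth tightening in a final write-up: in the induction step you implicitly split into the cases where $1 - 2c\gamma_n + \gamma_n^2(c^{-2}+\kappa)$ is nonnegative or negative, and in the negative case the bound $a_n\leq\gamma_n^2\kappa\leq C\gamma_n$ requires $\gamma_n\leq C/\kappa$; this is another consequence of $\gamma_n\to 0$ that should be folded into the choice of $N$, as you acknowledge in the ``main obstacle'' paragraph but do not spell out.
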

\begin{proof}[Proof of Proposition~\ref{prop:PvW_L^2}]
First, note that   \cite[Corollary~3.5]{Wurstemberger2018} (with $d=d$, $(\gamma_n)_{n \in \N_0} \allowbreak = (\gamma_n)_{n \in \N_0}$, $g = g$, $(\Omega,\mathcal{F},\mathbb{P}, (\mathbb{F}_n)_{n\in\N_0}) = (\Omega,\mathcal{F},\mathbb{P}, (\mathbb{F}_n)_{n\in\N_0})$,  $D_k = \allowbreak  \nicefrac{[\Theta_k-(\Theta_{k-1}+\gamma_k g(\Theta_{k-1}))]}{\gamma_k}$, $\Theta = \Theta$,
$\langle \cdot, \cdot \rangle = \langle \cdot, \cdot \rangle_{\R^d}$,
 $c = c$, $\kappa = \kappa$, $\vartheta = \Xi$   for $k \in \N$ in the notation of   \cite[Corollary~3.5]{Wurstemberger2018}) implies that  there exists $C\in(0,\infty)$ such that for all $n\in\N_0$ we have that
\begin{equation}\label{eq:PvW_L^2_7}
\E\big[\|\Theta_n-\Xi\|_{\R^d}^2\big]\leq C\gamma_n.
\end{equation}
The assumption that $\limsup_{n \to \infty} \gamma_n = 0$ hence ensures that
\begin{equation}
\sup_{m\in\N_0}\E\big[\|\Theta_m-\Xi\|_{\R^d}^2\big]<\infty.
\end{equation}
Combining this with \eqref{eq:PvW_L^2_7} completes the proof of
 Proposition~\ref{prop:PvW_L^2}.
\end{proof}

\begin{lemma}
\label{lem:liminf}
It holds for all $\varepsilon \in (-\infty,1)$ that
\begin{equation}
\label{eq:lem:liminf}
\limsup_{n\to\infty} |n^{\varepsilon} - (n-1)^{\varepsilon}| = 0.
\end{equation}
\end{lemma}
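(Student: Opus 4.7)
My plan is to apply the mean value theorem directly to the function $f \colon (0, \infty) \to \R$ with $f(x) = x^{\varepsilon}$ on each interval $[n-1, n]$ for integers $n \geq 2$. Since $f$ is continuously differentiable on $(0, \infty)$ with $f'(x) = \varepsilon x^{\varepsilon - 1}$, the mean value theorem yields, for every $n \in \N \cap [2, \infty)$, a point $\xi_n \in (n-1, n)$ such that
\begin{equation}
n^{\varepsilon} - (n-1)^{\varepsilon} = f(n) - f(n-1) = \varepsilon \, \xi_n^{\varepsilon - 1}.
\end{equation}
Taking absolute values gives $|n^{\varepsilon} - (n-1)^{\varepsilon}| = |\varepsilon| \, \xi_n^{\varepsilon - 1}$.

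The key observation is that the assumption $\varepsilon < 1$ makes the exponent $\varepsilon - 1$ strictly negative, so the map $(0, \infty) \ni x \mapsto x^{\varepsilon - 1} \in (0, \infty)$ is strictly decreasing. Since $\xi_n > n - 1 \geq 1$, this yields the bound
\begin{equation}
\xi_n^{\varepsilon - 1} \leq (n-1)^{\varepsilon - 1},
\end{equation}
and hence $|n^{\varepsilon} - (n-1)^{\varepsilon}| \leq |\varepsilon| \, (n-1)^{\varepsilon - 1}$ for all $n \geq 2$. Since $\varepsilon - 1 < 0$, we have $\limsup_{n \to \infty} (n-1)^{\varepsilon - 1} = 0$, from which \eqref{eq:lem:liminf} follows.

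There is no genuine obstacle in this argument; it is uniform in $\varepsilon \in (-\infty, 1)$ and requires no case distinction on the sign of $\varepsilon$. The degenerate case $\varepsilon = 0$ is handled automatically by the factor $|\varepsilon|$ on the right-hand side, which then makes the bound identically zero.
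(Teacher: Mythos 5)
Your proof is correct. It takes a genuinely different route from the paper's: you apply the mean value theorem to $x\mapsto x^{\varepsilon}$ on $[n-1,n]$ to obtain the clean bound $|n^{\varepsilon}-(n-1)^{\varepsilon}|=|\varepsilon|\,\xi_n^{\varepsilon-1}\le|\varepsilon|(n-1)^{\varepsilon-1}$, which handles all $\varepsilon\in(-\infty,1)$ in one stroke. The paper instead splits into two cases: for $\varepsilon\in(-\infty,0)$ it observes that $n^{\varepsilon}\to 0$ and $(n-1)^{\varepsilon}\to 0$ separately, and for $\varepsilon\in[0,1)$ it carries out the algebraic manipulation
\begin{equation}
n^{\varepsilon}-(n-1)^{\varepsilon}=\frac{n-n^{1-\varepsilon}(n-1)^{\varepsilon}}{n^{1-\varepsilon}}\le\frac{n-(n-1)}{n^{1-\varepsilon}}=n^{\varepsilon-1},
\end{equation}
exploiting monotonicity of $x\mapsto x^{1-\varepsilon}$. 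Your MVT argument is more uniform and arguably cleaner; the paper's is slightly more elementary in that it avoids the mean value theorem and relies only on monotonicity and basic limit facts. Both are fully rigorous.
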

\begin{proof}[Proof of Lemma~\ref{lem:liminf}]
First, observe that for all $\varepsilon \in (- \infty, 0)$ we have that
\begin{equation}
\label{eq:liminf:neg}
\limsup_{n\to\infty} n^\varepsilon = 0.
\end{equation}
This reveals that for all $\varepsilon \in (-\infty,0)$  it holds that
\begin{equation}
\label{eq:liminf:1}
\limsup_{n\to\infty} |n^{\varepsilon} - (n-1)^{\varepsilon}| = 0.
\end{equation}
Next note that for all  $\varepsilon \in [0,1)$, $n \in \N \cap [2, \infty)$ we have that
\begin{equation}
\begin{split}
0 & \leq |n^{\varepsilon} - (n-1)^{\varepsilon}| = n^{\varepsilon} - (n-1)^{\varepsilon} = \frac{n^{1-\varepsilon}(n^{\varepsilon} - (n-1)^{\varepsilon})}{n^{1-\varepsilon}}\\
& =  \frac{n - n^{1-\varepsilon} (n-1)^{\varepsilon}}{n^{1-\varepsilon}} \leq \frac{n-(n-1)}{n^{1-\varepsilon}} = \frac{1}{n^{1-\varepsilon}} = n^{\varepsilon -1}.
\end{split}
\end{equation}
This and \eqref{eq:liminf:neg} imply that for all $\varepsilon \in [0,1)$ we have that 
\begin{equation}
\limsup_{n\to\infty} |n^{\varepsilon} - (n-1)^{\varepsilon}| = 0.
\end{equation}
Combining this and \eqref{eq:liminf:1} establishes \eqref{eq:lem:liminf}.  The proof of Lemma~\ref{lem:liminf} is thus completed.
\end{proof}

\begin{cor}\label{cor:PvW_bnd_L^2_Theta-Xi}
Assume Setting~\ref{sec:setting_2},  let $\kappa\in(0,\infty)$, and assume for all $n\in\N$, $m \in \N_0$, $j \in \{1, 2, \ldots, \mathfrak{M}_m\}$
 that
\begin{equation}\label{eq:PvW_bnd_L^2_Theta-Xi_1}
\E\big[\|\Theta_{\mathfrak{t}_{n}}-(\Theta_{\mathfrak{t}_{n-1}}+(\mathfrak{t}_{n}-\mathfrak{t}_{n-1})g(\Theta_{\mathfrak{t}_{n-1}}))\|_{\R^d}^2 + \|G(\Theta_{\mathfrak{t}_{m}},Z_{m+1, j})\|_{\R^d}  \big]<\infty
\end{equation}
and
\begin{equation}
\label{eq:PvW_bnd_L^2_Theta-Xi_2}
\begin{split}
&\E\big[\|\Theta_{\mathfrak{t}_n}-(\Theta_{\mathfrak{t}_{n-1}}+(\mathfrak{t}_{n}-\mathfrak{t}_{n-1})g(\Theta_{\mathfrak{t}_{n-1}}))\|_{\R^d}^2\big]  \leq (\mathfrak{t}_{n}-\mathfrak{t}_{n-1})^2\kappa\big(1+\E\big[\|\Theta_{\mathfrak{t}_{n-1}}-\Xi\|_{\R^d}^2\big] \big).
\end{split}
\end{equation}
Then we have that
\begin{equation}\label{eq:PvW_bnd_L^2_Theta-Xi_4}
\sup_{n\in\N_0}\E\big[\|\Theta_{\mathfrak{t}_n}-\Xi\|_{\R^d}^2\big]<\infty.
\end{equation}
\end{cor}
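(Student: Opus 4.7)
The plan is to apply Proposition~\ref{prop:PvW_L^2} to the sampled process $(\Theta_{\mathfrak{t}_n})_{n \in \N_0}$ with learning rate $\gamma_n = \mathfrak{t}_n - \mathfrak{t}_{n-1} = \eta n^{\epsilon - 1}$ for $n \in \N$ and with the filtration $(\mathbb{F}_n)_{n \in \N_0}$ introduced in Lemma~\ref{lem:diff_martingale}. Both the integrability hypothesis and the second-moment bound \eqref{eq:PvW_L^2_4} will be supplied directly by \eqref{eq:PvW_bnd_L^2_Theta-Xi_1} and \eqref{eq:PvW_bnd_L^2_Theta-Xi_2} respectively, so the real work is to verify the martingale-type identity \eqref{eq:PvW_L^2_1}, the one-sided growth condition \eqref{eq:PvW_L^2_2}, and the asymptotic learning-rate condition \eqref{eq:PvW_L^2_5}.

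First I would use \eqref{eq:setting_2_6} together with the continuity of the sample paths of $\Theta$ to obtain the identity
\begin{equation*}
\Theta_{\mathfrak{t}_n} - \bigl(\Theta_{\mathfrak{t}_{n-1}} + (\mathfrak{t}_n - \mathfrak{t}_{n-1})\,g(\Theta_{\mathfrak{t}_{n-1}})\bigr) = \tfrac{\mathfrak{t}_n - \mathfrak{t}_{n-1}}{\mathfrak{M}_{n-1}} \textstyle\sum_{j=1}^{\mathfrak{M}_{n-1}} \bigl( G(\Theta_{\mathfrak{t}_{n-1}}, Z_{n, j}) - g(\Theta_{\mathfrak{t}_{n-1}}) \bigr)
\end{equation*}
for every $n \in \N$. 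Lemma~\ref{lem:Theta_adapted} shows that the process $(\Theta_{\mathfrak{t}_n})_{n \in \N_0}$ is $(\mathbb{F}_n)_{n \in \N_0}$-adapted, and pulling the deterministic factor $(\mathfrak{t}_n - \mathfrak{t}_{n-1})/\mathfrak{M}_{n-1}$ out of Lemma~\ref{lem:diff_martingale} yields the zero-mean property \eqref{eq:PvW_L^2_1} against any $A \in \mathbb{F}_{n-1}$.

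Next, to establish \eqref{eq:PvW_L^2_2} with $c = L$, I would substitute $x = \Xi$ into the first inequality in \eqref{eq:setting_2_3_2} to deduce $g(\Xi) = 0$, then substitute $y = \Xi$ into the second inequality to obtain $\langle x - \Xi, g(x) \rangle \leq -L \|x - \Xi\|_{\R^d}^2$; combining this with the first inequality gives $\langle x - \Xi, g(x) \rangle \leq -L \max\{\|x - \Xi\|_{\R^d}^2, \|g(x)\|_{\R^d}^2\}$ for all $x \in \R^d$.

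Finally, for \eqref{eq:PvW_L^2_5}, since $\varepsilon - 1 < 0$ we have $\gamma_k = \eta k^{\varepsilon - 1} \to 0$. Using Lemma~\ref{lem:liminf} (or an elementary mean-value estimate) one obtains $|k^{\varepsilon - 1} - (k-1)^{\varepsilon - 1}| = O(k^{\varepsilon - 2})$, whence
\begin{equation*}
\limsup_{k \to \infty} \Big| \tfrac{\gamma_k - \gamma_{k-1}}{\gamma_k^2} \Big| = \limsup_{k \to \infty} \tfrac{|k^{\varepsilon - 1} - (k-1)^{\varepsilon - 1}|}{\eta\, k^{2\varepsilon - 2}} = 0,
\end{equation*}
while $\lim_{k\to\infty} \tfrac{c\, \gamma_{k-1}}{2 \gamma_k} = \tfrac{L}{2}\lim_{k\to\infty}(1 - 1/k)^{\varepsilon - 1} = \tfrac{L}{2} > 0$. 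Thus the required liminf equals $L/2 > 0$ and Proposition~\ref{prop:PvW_L^2} applies, yielding the claimed uniform bound \eqref{eq:PvW_bnd_L^2_Theta-Xi_4}.

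The only delicate point is essentially bookkeeping: confirming that the increment $\Theta_{\mathfrak{t}_n} - \Theta_{\mathfrak{t}_{n-1}}$ has exactly the martingale-difference structure that Proposition~\ref{prop:PvW_L^2} requires (so that the proposition's zero-mean hypothesis \eqref{eq:PvW_L^2_1} follows cleanly from Lemma~\ref{lem:diff_martingale}) and that the limit analysis for $\gamma_n = \eta n^{\varepsilon - 1}$ with $\varepsilon \in (0,\nicefrac{1}{2})$ produces the strict positivity of the liminf; there is no substantial obstacle beyond these routine checks.
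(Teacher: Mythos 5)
Your proposal is correct and follows essentially the same route as the paper: apply Proposition~\ref{prop:PvW_L^2} to $Y_n = \Theta_{\mathfrak{t}_n}$ with $\gamma_n = \mathfrak{t}_n - \mathfrak{t}_{n-1} = \eta n^{\varepsilon-1}$ and the filtration from Lemma~\ref{lem:diff_martingale}, use Lemma~\ref{lem:Theta_adapted} for adaptedness, Lemma~\ref{lem:diff_martingale} (after pulling out the deterministic factor) for the zero-mean hypothesis, derive $g(\Xi)=0$ from the first half of \eqref{eq:setting_2_3_2} and combine with the second half to get the coercivity with $c=L$, and check the asymptotic learning-rate condition. The one small imprecision is that Lemma~\ref{lem:liminf} by itself only gives $|k^{\varepsilon-1}-(k-1)^{\varepsilon-1}| \to 0$, not the rate $O(k^{\varepsilon-2})$ that your raw quotient $\tfrac{|\gamma_k - \gamma_{k-1}|}{\gamma_k^2}$ requires because the denominator $k^{2\varepsilon-2}$ also tends to $0$; the parenthetical mean-value estimate you offer supplies the needed rate, whereas the paper instead first rewrites $\tfrac{\alpha_n - \alpha_{n-1}}{\alpha_n^2} = \tfrac{1}{\eta}\big[1 + \tfrac{1}{n-1}\big]^{1-\varepsilon}\big[(n-1)^{1-\varepsilon} - n^{1-\varepsilon}\big]$ so that Lemma~\ref{lem:liminf} (applied with exponent $1-\varepsilon$) can be used directly without any rate information.
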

\begin{proof}[Proof of Corollary~\ref{cor:PvW_bnd_L^2_Theta-Xi}]
Throughout this proof let $\alpha = (\alpha_n)_{n \in \N_0} \colon \N_0 \to (0, \infty)$ satisfy for all $n \in \N$ that
\begin{equation}
\label{eq:gamma}
\alpha_n= \mathfrak{t}_n - \mathfrak{t}_{n-1} = \frac{\eta}{n^{1-\varepsilon}},
\end{equation} 
let $\mathbb{F}_n \subseteq \mathcal{F}$, $n \in \N_0$, be the sigma-algebras which satisfy for all $n\in\N$ that 
\begin{equation}\label{eq:PvW_bnd_L^2_Theta-Xi_5}
\mathbb{F}_0= \{\emptyset, \Omega \} \qquad \text{and} \qquad \mathbb{F}_n= \sigma_{\Omega}( Z_{m+1, j} \colon (m, j) \in \N_0 \times \N,\, j \leq \mathfrak{M}_m,\, m \leq n-1 ),
\end{equation}
and let $Y\colon\N_0\times\Omega\to\R^d$ be the stochastic process which satisfies for all $n\in \N_0$ that
\begin{equation}\label{eq:PvW_bnd_L^2_Theta-Xi_7}
Y_n=\Theta_{\mathfrak{t}_n}.
\end{equation} 
Note that Lemma~\ref{lem:Theta_adapted} ensures that $Y$ is an $(\mathbb{F}_n)_{n\in\N_0}/\mathcal{B}(\R^d)$-adapted stochastic process. Next observe that \eqref{eq:PvW_bnd_L^2_Theta-Xi_1}, \eqref{eq:gamma}, and  \eqref{eq:PvW_bnd_L^2_Theta-Xi_7} imply that  for all $n\in\N$ we have that
\begin{equation}\label{eq:PvW_bnd_L^2_Theta-Xi_8}
\begin{split}
&\E\big[\|Y_n-(Y_{n-1}+\alpha_ng(Y_{n-1}))\|^2_{\R^d}\big]  \\
&= \E\big[\|\Theta_{\mathfrak{t}_{n}}-(\Theta_{\mathfrak{t}_{n-1}}+(\mathfrak{t}_{n}-\mathfrak{t}_{n-1})g(\Theta_{\mathfrak{t}_{n-1}}))\|_{\R^d}^2\big] <\infty.
\end{split}
\end{equation}
Moreover, note that combining Lemma~\ref{lem:diff_martingale} and \eqref{eq:PvW_bnd_L^2_Theta-Xi_1} assures that for all $n\in\N$, $A \in \mathbb{F}_{n-1}$ we have that
\begin{equation}\label{eq:PvW_bnd_L^2_Theta-Xi_9}
\begin{split}
&\E\big[ \big(Y_n-(Y_{n-1}+\alpha_ng(Y_{n-1})) \big) \mathbbm{1}_A\big]\\
&= \alpha_n \, \E\Big[\tfrac{1}{\mathfrak{M}_{n-1}}\textstyle\sum_{j=1}^{\mathfrak{M}_{n-1}} \big( G(\Theta_{\mathfrak{t}_{n-1}},Z_{n, j}) - g(\Theta_{\mathfrak{t}_{n-1}}) \big) \mathbbm{1}_A\Big]=0.
\end{split}
\end{equation}
Next observe that  \eqref{eq:setting_2_3_2} ensures that $g(\Xi) =0$. This and  \eqref{eq:setting_2_3_2} establish that for all  $x \in \R^d$ we have that
\begin{equation}
\label{eq:Xi:c}
\begin{split}
\langle x-\Xi,g(x)\rangle_{\R^d} & = \langle x-\Xi,g(x) - g(\Xi)\rangle_{\R^d}  \leq -L \|x-\Xi\|_{\R^d}^2.
\end{split}
\end{equation}
Combining this with  \eqref{eq:setting_2_3_2} implies that for all $x \in \R^d$ we have that
\begin{equation}
\label{eq:with:c}
\langle x-\Xi,g(x)\rangle_{\R^d} \leq -L \max\big\{\|x-\Xi\|_{\R^d}^2, \|g(x)\|_{\R^d}^2 \big\}.
\end{equation}
Next note that  \eqref{eq:gamma} assures that for all $n \in \N \cap [2, \infty)$ we have that
\begin{equation}
\begin{split}
\frac{\alpha_n-\alpha_{n-1}}{(\alpha_n)^2} + \frac{L\alpha_{n-1}}{2\alpha_n}  &= \frac{\frac{\eta}{n^{1-\epsilon}}-\frac{\eta}{(n-1)^{1-\epsilon}}}{\big(\frac{\eta}{n^{1-\epsilon}}\big)^2} + \frac{\frac{L\eta}{2(n-1)^{1-\epsilon}}}{\frac{\eta}{n^{1-\epsilon}}} \\
& = \frac{n^{1-\varepsilon} \big[(n-1)^{1-\varepsilon} - n^{1-\varepsilon}\big]}{\eta (n-1)^{1-\varepsilon}}  +\frac{L n^{1-\varepsilon}}{2(n-1)^{1-\varepsilon}} \\
& = \tfrac{1}{\eta}\! \left[1+ \tfrac{1}{n-1}\right]^{1-\varepsilon} \big[(n-1)^{1-\varepsilon} - n^{1-\varepsilon}\big] + \tfrac{L}{2} \!\left[1+ \tfrac{1}{n-1}\right]^{1-\varepsilon}.
\end{split}
\end{equation}
Lemma~\ref{lem:liminf} (with $\varepsilon = (1-\varepsilon) \in (-\infty,1)$ in the notation of  Lemma~\ref{lem:liminf}) hence ensures that 
\begin{equation}
\begin{split}
&\liminf_{n\to\infty} \left[ \frac{\alpha_n-\alpha_{n-1}}{(\alpha_n)^2} + \frac{L\alpha_{n-1}}{2\alpha_n} \right]  = \frac{L}{2} > 0 = \limsup_{n \to \infty} \alpha_n.
\end{split}
\end{equation}
Combining this, \eqref{eq:PvW_bnd_L^2_Theta-Xi_8}, \eqref{eq:PvW_bnd_L^2_Theta-Xi_9}, \eqref{eq:with:c}, and Proposition~\ref{prop:PvW_L^2}  (with $d=d$, $\gamma_n=\alpha_n$, $c=L$, $\kappa = \kappa$, $\Xi=\Xi$, $g=g$,  $(\Omega,\mathcal{F},\mathbb{P}, (\mathbb{F}_k)_{k\in\N_0})=(\Omega,\mathcal{F},\mathbb{P}, (\mathbb{F}_k)_{k\in\N_0})$,   $\Theta_n = Y_n$ for $n \in \N_0$ in the notation of Proposition~\ref{prop:PvW_L^2}) establishes \eqref{eq:PvW_bnd_L^2_Theta-Xi_4}. The proof of Corollary~\ref{cor:PvW_bnd_L^2_Theta-Xi} is thus completed.
\end{proof}

\section{A posteriori estimates for conditional variances associated to SAAs}
\label{subsec:aposteriori}
\sectionmark{}

\begin{lemma}\label{lem:(1)+(2)=>(*)}
	Let $d\in\N$, $\Xi\in \R^d$, $M,L\in (0,\infty)$ and let $f\colon \R^d\to\R^d$ satisfy for all $x\in\R^d$ that
	\begin{equation}\label{eq:(1)+(2)=>(*)_1}
	\langle x-\Xi,f(x)\rangle_{\R^d}\leq -\max\lb L\|x-\Xi\|_{\R^d}^2, M\|f(x)\|_{\R^d}^2\rb.
	\end{equation}
	Then we have for all $x\in\R^d$ that
	\begin{equation}\label{eq:(1)+(2)=>(*)_3}
	\begin{split}
	L\|x\|_{\R^d}-L\|\Xi\|_{\R^d}&\leq L\|x-\Xi\|_{\R^d} \leq\|f(x)\|_{\R^d} \\
	&\leq \tfrac{1}{M}\|x-\Xi\|_{\R^d}\leq \tfrac{\max\lb 1,\norm{\Xi}_{\R^d}\rb}{M}(1+\|x\|_{\R^d}).
	\end{split}
	\end{equation}
\end{lemma}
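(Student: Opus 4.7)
The plan is to chain four elementary inequalities, three of which follow essentially from a single application of the Cauchy–Schwarz inequality combined with the hypothesis, and the fourth from the triangle inequality.

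First I would fix $x\in\R^d$ and observe, via the Cauchy–Schwarz inequality, that
\begin{equation*}
-\langle x-\Xi,f(x)\rangle_{\R^d}\leq \|x-\Xi\|_{\R^d}\,\|f(x)\|_{\R^d}.
\end{equation*}
Combined with the hypothesis $\langle x-\Xi,f(x)\rangle_{\R^d}\leq -L\|x-\Xi\|_{\R^d}^2$, this yields $L\|x-\Xi\|_{\R^d}^2\leq \|x-\Xi\|_{\R^d}\,\|f(x)\|_{\R^d}$; dividing by $\|x-\Xi\|_{\R^d}$ in the case $x\neq\Xi$, and noting that the inequality is trivial in the case $x=\Xi$, produces the middle estimate $L\|x-\Xi\|_{\R^d}\leq \|f(x)\|_{\R^d}$. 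A symmetric argument using the bound $\langle x-\Xi,f(x)\rangle_{\R^d}\leq -M\|f(x)\|_{\R^d}^2$ and a case split on whether $f(x)=0$ gives $M\|f(x)\|_{\R^d}\leq \|x-\Xi\|_{\R^d}$, i.e.\ $\|f(x)\|_{\R^d}\leq \tfrac{1}{M}\|x-\Xi\|_{\R^d}$.

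For the outer bounds, I would invoke the reverse triangle inequality to obtain $\|x\|_{\R^d}-\|\Xi\|_{\R^d}\leq \|x-\Xi\|_{\R^d}$ (and multiply by $L$) for the leftmost inequality, and the ordinary triangle inequality to obtain
\begin{equation*}
\|x-\Xi\|_{\R^d}\leq \|x\|_{\R^d}+\|\Xi\|_{\R^d}\leq \max\{1,\|\Xi\|_{\R^d}\}(1+\|x\|_{\R^d})
\end{equation*}
for the rightmost one, after which division by $M$ completes the chain.

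There is no substantial obstacle here; the only mild care required is handling the degenerate cases $x=\Xi$ and $f(x)=0$ when dividing through in the Cauchy–Schwarz step, but both are immediate since the target inequalities then read $0\leq\|f(x)\|_{\R^d}$ and $\|f(x)\|_{\R^d}\leq 0$ respectively, and in the latter case the hypothesis forces $\|x-\Xi\|_{\R^d}=0$ as well by the bound $L\|x-\Xi\|_{\R^d}^2\leq -\langle x-\Xi,f(x)\rangle_{\R^d}=0$.
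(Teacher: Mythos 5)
Your proof is correct and follows essentially the same route as the paper: Cauchy--Schwarz applied to the two branches of the hypothesis \eqref{eq:(1)+(2)=>(*)_1} gives the inner bounds $L\|x-\Xi\|_{\R^d}\leq\|f(x)\|_{\R^d}\leq\tfrac{1}{M}\|x-\Xi\|_{\R^d}$, and the (reverse) triangle inequality gives the outer ones. One small imprecision in your final remark: when $f(x)=0$, the target inequality $\|f(x)\|_{\R^d}\leq\tfrac{1}{M}\|x-\Xi\|_{\R^d}$ reads $0\leq\tfrac{1}{M}\|x-\Xi\|_{\R^d}$ (trivially true for any $x$), not $\|f(x)\|_{\R^d}\leq 0$, so the extra deduction that $\|x-\Xi\|_{\R^d}=0$ is correct but unnecessary.
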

\begin{proof}[Proof of Lemma~\ref{lem:(1)+(2)=>(*)}]
	First, note that \eqref{eq:(1)+(2)=>(*)_1} assures that  for all $x\in\R^d\backslash \lb\Xi\rb$ we have that
	\begin{equation}\label{eq:(1)+(2)=>(*)_4}
	f(x)\neq 0
	\end{equation}
	and
	\begin{equation}\label{eq:lem:xi:zero}
	f(\Xi)=0.
	\end{equation}
	Furthermore, observe that \eqref{eq:(1)+(2)=>(*)_1} and the Cauchy-Schwartz inequality imply that for all $x\in\R^d$ we have that
	\begin{equation}\label{eq:(1)+(2)=>(*)_5}
	\begin{split}
	\|f(x)\|_{\R^d}^2 & \leq -\tfrac{1}{M}\langle x-\Xi,f(x)\rangle_{\R^d}\leq \tfrac{1}{M}\|x-\Xi\|_{\R^d}\|f(x)\|_{\R^d}.
	\end{split}
	\end{equation}
	This, \eqref{eq:(1)+(2)=>(*)_4}, \eqref{eq:lem:xi:zero}, and the triangle inequality ensure that for all $x\in\R^d$ we have that
	\begin{equation}\label{eq:(1)+(2)=>(*)_6}
	\begin{split}
	\|f(x)\|_{\R^d} &\leq \tfrac{1}{M}\|x-\Xi\|_{\R^d}\leq \tfrac{1}{M}(\|x\|_{\R^d} +\|\Xi\|_{\R^d})
	\leq \tfrac{\max\lb 1,\norm{\Xi}_{\R^d}\rb}{M}(1+\|x\|_{\R^d}).
	\end{split}
	\end{equation}
	Moreover, note that \eqref{eq:(1)+(2)=>(*)_1} and the Cauchy-Schwartz inequality demonstrate that for all $x\in\R^d$ we have that
	\begin{equation}\label{eq:(1)+(2)=>(*)_7}
	\begin{split}
	\|x-\Xi\|_{\R^d}^2 & \leq -\tfrac{1}{L}\langle x-\Xi,f(x)\rangle_{\R^d}
	 \leq \tfrac{1}{L}\| x-\Xi\|_{\R^d}\|f(x)\|_{\R^d}.
	\end{split}
	\end{equation}
	This reveals that for all $x\in\R^d$  it holds that 
	\begin{equation}\label{eq:(1)+(2)=>(*)_8}
	L\|x-\Xi\|_{\R^d} \leq \|f(x)\|_{\R^d}.
	\end{equation}
	This, \eqref{eq:(1)+(2)=>(*)_6}, and the triangle inequality establish \eqref{eq:(1)+(2)=>(*)_3}. The proof of Lemma~\ref{lem:(1)+(2)=>(*)} is thus completed.
\end{proof}

\begin{lemma}\label{lem:iterated_error}
Assume Setting~\ref{sec:setting_2} and assume that
\begin{equation}\label{eq:iterated_error_0}
\sup_{x\in\R^d}\left(\frac{\E\big[\|G(x,Z_{1,1})\|_{\R^d}^2 \big]}{\big[1+\|x\|_{\R^d}\big]^2}\right)<\infty.
\end{equation}
Then there exists  $\kappa\in (0,\infty)$ such that for all $n\in\N$ we have that
\begin{equation}\label{eq:iterated_error_3}
\begin{split}
& \E\big[\|\Theta_{\mathfrak{t}_n}-(\Theta_{\mathfrak{t}_{n-1}}+(\mathfrak{t}_n - \mathfrak{t}_{n-1})g(\Theta_{\mathfrak{t}_{n-1}}))\|_{\R^d}^2\big] \\
& \leq (\mathfrak{t}_{n}-\mathfrak{t}_{n-1})^2\kappa(1+\E\big[\|\Theta_{\mathfrak{t}_{n-1}}-\Xi\|_{\R^d}^2\big]) < \infty.
\end{split}
\end{equation}
\end{lemma}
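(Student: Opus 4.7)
The plan is to unroll the SGD update to express the quantity of interest as a sum of conditionally centered, conditionally independent random vectors, and then apply the standard "sum of independent centered vectors" identity conditional on $\Theta_{\mathfrak{t}_{n-1}}$, followed by the growth hypothesis \eqref{eq:iterated_error_0} on $G$. Concretely, formula \eqref{eq:setting_2_6} with $t=\mathfrak{t}_n$ and $m=n-1$ yields
\begin{equation}
\Theta_{\mathfrak{t}_n} - \Theta_{\mathfrak{t}_{n-1}} - (\mathfrak{t}_n-\mathfrak{t}_{n-1})g(\Theta_{\mathfrak{t}_{n-1}})
= \frac{\mathfrak{t}_n-\mathfrak{t}_{n-1}}{\mathfrak{M}_{n-1}}\sum_{j=1}^{\mathfrak{M}_{n-1}} \bigl[ G(\Theta_{\mathfrak{t}_{n-1}},Z_{n,j}) - g(\Theta_{\mathfrak{t}_{n-1}}) \bigr] .
\end{equation}

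Next I would introduce the sigma-algebra $\mathbb{F}_{n-1}$ from Lemma~\ref{lem:diff_martingale} (generated by the $Z$-variables used up to and including step $n-1$). Lemma~\ref{lem:Theta_adapted} gives that $\Theta_{\mathfrak{t}_{n-1}}$ is $\mathbb{F}_{n-1}$-measurable, while the assumption that the $Z_{m,j}$ are i.i.d.\ gives that $Z_{n,1},\dots,Z_{n,\mathfrak{M}_{n-1}}$ are independent of $\mathbb{F}_{n-1}$ and i.i.d.\ among themselves. Conditional on $\mathbb{F}_{n-1}$, the summands $G(\Theta_{\mathfrak{t}_{n-1}},Z_{n,j})-g(\Theta_{\mathfrak{t}_{n-1}})$, $j\in\{1,\dots,\mathfrak{M}_{n-1}\}$, are therefore i.i.d.\ with mean zero (since $g(x)=\E[G(x,Z_{1,1})]$). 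Applying coordinate-wise the identity $\E[\|\sum_j Y_j\|_{\R^d}^2]=\sum_j\E[\|Y_j\|_{\R^d}^2]$ for independent centered $\R^d$-valued random vectors conditional on $\mathbb{F}_{n-1}$, then taking the outer expectation and using the elementary variance bound $\E[\|Y-\E Y\|_{\R^d}^2]\le \E[\|Y\|_{\R^d}^2]$, I obtain
\begin{equation}
\E\Bigl[\bigl\|\Theta_{\mathfrak{t}_n}-\Theta_{\mathfrak{t}_{n-1}}-(\mathfrak{t}_n-\mathfrak{t}_{n-1})g(\Theta_{\mathfrak{t}_{n-1}})\bigr\|_{\R^d}^2\Bigr]
\leq \frac{(\mathfrak{t}_n-\mathfrak{t}_{n-1})^2}{\mathfrak{M}_{n-1}} \, \E\bigl[\|G(\Theta_{\mathfrak{t}_{n-1}},Z_{n,1})\|_{\R^d}^2\bigr].
\end{equation}

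To conclude, I would use the hypothesis \eqref{eq:iterated_error_0} to extract a constant $c\in(0,\infty)$ with $\E[\|G(x,Z_{1,1})\|_{\R^d}^2]\leq c(1+\|x\|_{\R^d})^2$ for every $x\in\R^d$, and then condition on $\Theta_{\mathfrak{t}_{n-1}}$ (which is independent of $Z_{n,1}$) to pass this bound inside the expectation, giving the estimate by $c\,\E[(1+\|\Theta_{\mathfrak{t}_{n-1}}\|_{\R^d})^2]$. A triangle-inequality step $1+\|x\|_{\R^d}\leq (1+\|\Xi\|_{\R^d})+\|x-\Xi\|_{\R^d}$ followed by $(a+b)^2\leq 2a^2+2b^2$ converts this into a constant multiple of $1+\E[\|\Theta_{\mathfrak{t}_{n-1}}-\Xi\|_{\R^d}^2]$, so I can take $\kappa = 2c\max\{1,(1+\|\Xi\|_{\R^d})^2\}$ and $\mathfrak{M}_{n-1}\geq 1$ absorbs trivially. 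Finally, finiteness of the right-hand side follows from Lemma~\ref{lem:Theta_L^2_sp_case} (applied with $p=2$, using \eqref{eq:iterated_error_0}), which gives $\E[\|\Theta_{\mathfrak{t}_{n-1}}\|_{\R^d}^2]<\infty$ and hence $\E[\|\Theta_{\mathfrak{t}_{n-1}}-\Xi\|_{\R^d}^2]<\infty$.

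The only delicate point is the conditional-variance identity in the second step: one must check carefully that, with the $\sigma$-algebra $\mathbb{F}_{n-1}$ of Lemma~\ref{lem:diff_martingale}, the summands are genuinely conditionally i.i.d.\ and conditionally centered, so that cross terms vanish; everything else is a routine combination of the growth condition, Cauchy-Schwarz/triangle inequalities, and Lemma~\ref{lem:Theta_L^2_sp_case}. Since $Z_{n,j}$ is independent of $(Z_{m,i})_{m\leq n-1}$ and hence of $\mathbb{F}_{n-1}$, and since $g(x)=\E[G(x,Z_{1,1})]=\E[G(x,Z_{n,j})]$ by the i.i.d.\ assumption, the conditional-mean-zero property follows from a direct Fubini/independence computation as in Lemma~\ref{lem:diff_martingale}, and I would simply cite that lemma to conclude the orthogonality.
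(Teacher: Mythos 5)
Your proof is correct, but it takes a genuinely different route from the paper's. You decompose the error as a sum of conditionally i.i.d.\ centered summands and then invoke the conditional orthogonality identity (given the $\sigma$-algebra $\mathbb{F}_{n-1}$ of Lemma~\ref{lem:diff_martingale}), which yields the sharper bound $\frac{(\mathfrak{t}_n-\mathfrak{t}_{n-1})^2}{\mathfrak{M}_{n-1}}\,\E[\|G(\Theta_{\mathfrak{t}_{n-1}},Z_{n,1})\|_{\R^d}^2]$, i.e.\ the standard mini-batch variance-reduction factor $1/\mathfrak{M}_{n-1}$; you then only need the growth hypothesis \eqref{eq:iterated_error_0} on $G$. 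The paper instead applies the crude split $\|a-b\|_{\R^d}^2 \leq 2\|a\|_{\R^d}^2 + 2\|b\|_{\R^d}^2$ to separate $\tfrac{1}{\mathfrak{M}_{n-1}}\sum_j G(\Theta_{\mathfrak{t}_{n-1}},Z_{n,j})$ from $g(\Theta_{\mathfrak{t}_{n-1}})$, bounds the first term via Lemma~\ref{lem:Q_t_leq_Theta_floor_t} and the second via the linear growth bound $\|g(x)\|_{\R^d}\leq \tfrac{\max\{1,\|\Xi\|_{\R^d}\}}{L}(1+\|x\|_{\R^d})$ extracted from Lemma~\ref{lem:(1)+(2)=>(*)}; this entirely avoids the filtration machinery and the conditional-variance identity, at the price of losing the $1/\mathfrak{M}_{n-1}$ factor (which the statement of the lemma does not demand). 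Your route requires carefully verifying conditional independence and conditional centering given $\mathbb{F}_{n-1}$, which you do correctly by appeal to Lemma~\ref{lem:Theta_adapted} and the i.i.d.\ hypothesis, and your final algebra (the shift to $\|\Theta_{\mathfrak{t}_{n-1}}-\Xi\|_{\R^d}$ with $\kappa = 2c(1+\|\Xi\|_{\R^d})^2$ and finiteness from Lemma~\ref{lem:Theta_L^2_sp_case}) is sound.
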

\begin{proof}[Proof of Lemma~\ref{lem:iterated_error}]
First, note that \eqref{eq:setting_2_3_2} ensures that $g(\Xi) =0$. This and \eqref{eq:setting_2_3_2} establish that for all $x\in\R^d$ we have that
\begin{equation}
\langle x-\Xi,g(x)\rangle_{\R^d}\leq -\max\lb L\|x-\Xi\|_{\R^d}^2, L\|g(x)\|_{\R^d}^2\rb.
\end{equation}
Lemma~\ref{lem:(1)+(2)=>(*)} hence assures that  for all $x\in\R^d$ we have that
\begin{equation}
\label{eq:f:bound}
\|g(x)\|_{\R^d} \leq \tfrac{\max\lb 1,\norm{\Xi}_{\R^d}\rb}{L}(1+\|x\|_{\R^d}).
\end{equation}
Next observe that \eqref{eq:iterated_error_0} and the fact that $ \forall \, a,b \in \R \colon (a+b)^2 \leq 2|a|^2 + 2|b|^2$ imply that
\begin{equation}
\sup_{x\in\R^d}\left(\frac{\E\big[\|G(x,Z_{1,1})\|_{\R^d}^2 \big]}{1+\|x\|_{\R^d}^2}\right)<\infty.
\end{equation}
This  and \eqref{eq:f:bound} demonstrate that there exists  $c\in (0,\infty)$ which satisfies for all $x\in\R^d$  that
\begin{equation}\label{eq:iterated_error_5}
\|g(x)\|_{\R^d}\leq c(1+\|x\|_{\R^d})\quad\text{and}\quad \E\big[\|G(x,Z_{1,1})\|_{\R^d}^2 \big]\leq c(1+\|x\|_{\R^d}^2).
\end{equation}
Moreover, note that \eqref{eq:setting_2_6} and the fact that $\forall \, x,y \in \R^d \colon \|x+y \|_{\R^d}^2 \leq 2 \|x\|_{\R^d}^2 + 2\|y\|_{\R^d}^2$ ensure that for all $n \in \N$ we have that
\begin{align}
\label{eq:iterated_error_6}
\begin{split}
&\E\big[\|\Theta_{\mathfrak{t}_n}-(\Theta_{\mathfrak{t}_{n-1}}+(\mathfrak{t}_n - \mathfrak{t}_{n-1})g(\Theta_{\mathfrak{t}_{n-1}}))\|_{\R^d}^2 \big]\\
& = (\mathfrak{t}_n-\mathfrak{t}_{n-1})^2 \, \E\Big[\big\| \tfrac{1}{\mathfrak{M}_{n-1}} \big[ \textstyle\sum_{j=1}^{\mathfrak{M}_{n-1}} G(\Theta_{\mathfrak{t}_{n-1}},Z_{n, j}) \big] -g(\Theta_{\mathfrak{t}_{n-1}})\big\|_{\R^d}^2\Big]\\
& \leq 2(\mathfrak{t}_n-\mathfrak{t}_{n-1})^2 \, \E\Big[\big\|\tfrac{1}{\mathfrak{M}_{n-1}}\textstyle\sum_{j=1}^{\mathfrak{M}_{n-1}} G(\Theta_{\mathfrak{t}_{n-1}},Z_{n, j})\big\|_{\R^d}^2\Big] \\
& \quad + 2(\mathfrak{t}_n-\mathfrak{t}_{n-1})^2 \, \E\big[\|g(\Theta_{\mathfrak{t}_{n-1}})\|_{\R^d}^2\big].
\end{split}
\end{align}
This, \eqref{eq:iterated_error_5}, and Lemma~\ref{lem:Q_t_leq_Theta_floor_t} assure that for all $n \in \N$ we have that
\begin{align}
\begin{split}
&\E\big[\|\Theta_{\mathfrak{t}_n}-(\Theta_{\mathfrak{t}_{n-1}}+(\mathfrak{t}_n - \mathfrak{t}_{n-1})g(\Theta_{\mathfrak{t}_{n-1}}))\|_{\R^d}^2 \big]  \\
& \leq 2 (\mathfrak{t}_n-\mathfrak{t}_{n-1})^2  \Big(c\big(1+\E\big[\|\Theta_{\mathfrak{t}_{n-1}}\|_{\R^d}^2\big]\big)
 + \E\big[(c(1+\|\Theta_{\mathfrak{t}_{n-1}}\|_{\R^d}))^2\big]  \Big).
 \end{split}
\end{align}
Lemma~\ref{lem:bnd_(sum_xi)^p} hence implies that for all $n \in \N$ we have that
\begin{align}
\begin{split}
& \E\big[\|\Theta_{\mathfrak{t}_n}-(\Theta_{\mathfrak{t}_{n-1}}+(\mathfrak{t}_n - \mathfrak{t}_{n-1})g(\Theta_{\mathfrak{t}_{n-1}}))\|_{\R^d}^2\big]  \\
& \leq 2(\mathfrak{t}_n-\mathfrak{t}_{n-1})^2\Big(c\big(1+\E\big[\|\Theta_{\mathfrak{t}_{n-1}}\|_{\R^d}^2\big] \big)
+ \E\big[2c^2\big(1+\|\Theta_{\mathfrak{t}_{n-1}}\|_{\R^d}^2\big)\big]\Big).
\end{split}
\end{align}
Hence, we obtain  that for all $n \in \N$  it holds that
\begin{equation}
\begin{split}
&\E\big[\|\Theta_{\mathfrak{t}_n}-(\Theta_{\mathfrak{t}_{n-1}}+(\mathfrak{t}_n - \mathfrak{t}_{n-1})g(\Theta_{\mathfrak{t}_{n-1}}))\|_{\R^d}^2\big]\\
& \leq  2(\mathfrak{t}_n-\mathfrak{t}_{n-1})^2\Big(c\big(1+\E\big[\|\Theta_{\mathfrak{t}_{n-1}}\|_{\R^d}^2\big]\big)
+ 2c^2\big(1+\E\big[\|\Theta_{\mathfrak{t}_{n-1}}\|_{\R^d}^2\big]\big)\Big)\\
& = 2(\mathfrak{t}_n-\mathfrak{t}_{n-1})^2(c+ 2c^2)\big(1+\E\big[\|\Theta_{\mathfrak{t}_{n-1}}\|_{\R^d}^2\big] \big).
\end{split}
\end{equation}
The fact that $\forall \, x,y \in \R^d \colon \|x+y \|_{\R^d}^2 \leq 2 \|x\|_{\R^d}^2 + 2\|y\|_{\R^d}^2$ therefore demonstrates that for all $n \in \N$ we have that
\begin{equation}
\label{eq:iterated_error_4}
\begin{split}
&\E\big[\|\Theta_{\mathfrak{t}_n}-(\Theta_{\mathfrak{t}_{n-1}}+(\mathfrak{t}_n - \mathfrak{t}_{n-1})g(\Theta_{\mathfrak{t}_{n-1}}))\|_{\R^d}^2 \big]\\
& \leq 2(\mathfrak{t}_n-\mathfrak{t}_{n-1})^2(c+ 2c^2)\big(1+2\|\Xi\|_{\R^d}^2+2\E\big[\|\Theta_{\mathfrak{t}_{n-1}}-\Xi\|_{\R^d}^2\big]\big)\\
& \leq (\mathfrak{t}_n-\mathfrak{t}_{n-1})^2\big[2(c+ 2c^2)(2+2\|\Xi\|_{\R^d}^2)\big]\big(1+\E\big[\|\Theta_{\mathfrak{t}_{n-1}}-\Xi\|_{\R^d}^2\big] \big).
\end{split}
\end{equation}
Next note that \eqref{eq:iterated_error_0} and Lemma~\ref{lem:Theta_L^2_sp_case} imply that for all $n \in \N_0$ we have that
\begin{equation}
\E\big[\|\Theta_{\mathfrak{t}_n}\|_{\R^d}^2\big] < \infty.
\end{equation}
The fact that $\forall \, x,y \in \R^d \colon \|x+y \|_{\R^d}^2 \leq 2 \|x\|_{\R^d}^2 + 2\|y\|_{\R^d}^2$ hence ensures that for all $n \in \N$ we have that
\begin{equation}
\E\big[\|\Theta_{\mathfrak{t}_{n-1}}-\Xi\|_{\R^d}^2\big]\leq 2 \big(\E\big[\|\Theta_{\mathfrak{t}_{n-1}}\|_{\R^d}^2\big]+\|\Xi\|_{\R^d}^2\big)<\infty. 
\end{equation}
Combining this and \eqref{eq:iterated_error_4} establishes \eqref{eq:iterated_error_3}. The proof of Lemma~\ref{lem:iterated_error} is thus completed.
\end{proof}

\section{A priori estimates for suitable approximation error constants associated to SAAs}
\label{subsec:apriori:constant}
\sectionmark{}

\begin{lemma}\label{lem:bnd_EF^p}
Assume Setting~\ref{sec:setting_2} and let  $p, \mathfrak{m}\in\lb 0\rb \cup [1,\infty)$ satisfy that 
\begin{equation}\label{eq:bnd_EF^p_0}
\sup_{x\in\R^d}\left(\frac{\E\big[\|G(x,Z_{1,1})\|_{\R^d}^p \big]}{\big[1+\|x\|_{\R^d}^{\mathfrak{m}}\big]^p}\right) + \sup_{n\in\N_0}\E\big[\norm{\Theta_{\mathfrak{t}_n}}_{\R^d}^{\mathfrak{m} p}\big]<\infty.
\end{equation}
Then we have that 
\begin{equation}\label{eq:bnd_EF^p_3}
\sup_{n\in\N_0} \E\Big[ \big\|\tfrac{1}{\mathfrak{M}_{n}}\textstyle\sum_{j=1}^{\mathfrak{M}_{n}} G(\Theta_{\mathfrak{t}_{n}},Z_{n+1, j})\big\|_{\R^d}^p \Big]<\infty.
\end{equation}
\end{lemma}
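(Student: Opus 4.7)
The plan is to bound each of the inner summands in $L^p$ uniformly in $n$ by exploiting the independence of $\Theta_{\mathfrak{t}_n}$ from $Z_{n+1,j}$, and then combine this with Lemma~\ref{lem:bnd_(sum_xi)^p} to pass from the sum to its $p$-th power. More concretely, the hypothesis yields a constant $c\in[0,\infty)$ such that $\E[\|G(x,Z_{1,1})\|_{\R^d}^p]\leq c[1+\|x\|_{\R^d}^{\mathfrak{m}}]^p$ for all $x\in\R^d$, which via Lemma~\ref{lem:bnd_(sum_xi)^p} (with $p=p,\,n=2$) can be rewritten as $\E[\|G(x,Z_{1,1})\|_{\R^d}^p]\leq 2^{\max\{0,p-1\}}c(1+\|x\|_{\R^d}^{\mathfrak{m}p})$.

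Next, applying Lemma~\ref{lem:bnd_(sum_xi)^p} pointwise to the $\mathfrak{M}_n$ random summands gives
\begin{equation*}
\Big\|\tfrac{1}{\mathfrak{M}_{n}}\textstyle\sum_{j=1}^{\mathfrak{M}_{n}} G(\Theta_{\mathfrak{t}_{n}},Z_{n+1, j})\Big\|_{\R^d}^p
\leq \tfrac{1}{\mathfrak{M}_{n}}\textstyle\sum_{j=1}^{\mathfrak{M}_{n}} \|G(\Theta_{\mathfrak{t}_{n}},Z_{n+1, j})\|_{\R^d}^p,
\end{equation*}
so it suffices to bound $\E[\|G(\Theta_{\mathfrak{t}_n},Z_{n+1,j})\|_{\R^d}^p]$ uniformly in $n$ and $j\in\{1,\ldots,\mathfrak{M}_n\}$. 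For this I would invoke Lemma~\ref{lem:Theta_adapted}, which guarantees that $\Theta_{\mathfrak{t}_n}$ is measurable with respect to $\sigma_{\Omega}(Z_{m+1,k}\colon m\leq n-1,\, k\leq \mathfrak{M}_m)$. Since the $(Z_{m,k})_{(m,k)\in\N^2}$ are i.i.d., the random variable $Z_{n+1,j}$ is independent of $\Theta_{\mathfrak{t}_n}$ and has the same distribution as $Z_{1,1}$.

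Consequently, by Fubini's theorem, the combined growth bound, and a second application of Lemma~\ref{lem:bnd_(sum_xi)^p},
\begin{equation*}
\E\big[\|G(\Theta_{\mathfrak{t}_n},Z_{n+1,j})\|_{\R^d}^p\big]
= \int_{\Omega}\E\big[\|G(x,Z_{1,1})\|_{\R^d}^p\big]\big|_{x=\Theta_{\mathfrak{t}_n}(\omega)}\,\P(d\omega)
\leq \tilde c\,\big(1+\E\big[\|\Theta_{\mathfrak{t}_n}\|_{\R^d}^{\mathfrak{m}p}\big]\big)
\end{equation*}
for a suitable $\tilde c\in[0,\infty)$ independent of $n$ and $j$. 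Averaging over $j\in\{1,\ldots,\mathfrak{M}_n\}$ and taking the supremum over $n\in\N_0$, the second part of the hypothesis $\sup_{n\in\N_0}\E[\|\Theta_{\mathfrak{t}_n}\|_{\R^d}^{\mathfrak{m}p}]<\infty$ gives \eqref{eq:bnd_EF^p_3}. The only delicate step is the independence argument (and the attendant Fubini interchange), which is why I would cite Lemma~\ref{lem:Theta_adapted} explicitly; the remaining manipulations are routine applications of Lemma~\ref{lem:bnd_(sum_xi)^p}. The edge cases $p=0$ or $\mathfrak{m}=0$ can be absorbed into the same constant since $\|\cdot\|_{\R^d}^0\equiv 1$ makes all relevant expectations equal to $1$ or uniformly bounded in $x$.
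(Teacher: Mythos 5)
Your proof is correct, and the mathematical content matches the paper's argument: reduce via Jensen-type convexity to a single summand, exploit independence of $\Theta_{\mathfrak{t}_n}$ from $Z_{n+1,j}$, and push the growth bound for $\E[\|G(x,Z_{1,1})\|^p]$ through a Tonelli/Fubini interchange. The only organizational difference is that the paper does not inline this argument: it first massages the hypothesis into $\E[\|G(x,Z_{1,1})\|^p]\leq c(1+\|x\|_{\R^d}^{\mathfrak{m}p})$ via Lemma~\ref{lem:bnd_(sum_xi)^p} exactly as you do, but then quotes Lemma~\ref{lem:Q_t_leq_Theta_floor_t} (the mini-batch moment estimate established under the more general Setting~\ref{sec:setting_1}), whose proof already packages the $L^p$-convexity, the independence from the batch structure, and the Fubini step. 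You instead re-derive that estimate directly in Setting~\ref{sec:setting_2}, using Lemma~\ref{lem:Theta_adapted} (filtration-adaptedness of $\Theta$) to justify independence, whereas Lemma~\ref{lem:Q_t_leq_Theta_floor_t} argues this from disjointness of the batch index sets and the recursion for $\Theta$. Your route is a bit more self-contained within the doubly-indexed setting and avoids the implicit identification of Setting~\ref{sec:setting_2} as an instance of Setting~\ref{sec:setting_1}; the paper's route is more modular since it reuses the general lemma. Both are sound. Two minor cleanups: the ``second application of Lemma~\ref{lem:bnd_(sum_xi)^p}'' you mention in the Fubini step is not actually needed once the growth bound is already in the form $c(1+\|x\|^{\mathfrak{m}p})$; and the Fubini interchange is better framed as Tonelli since the integrand is nonnegative, which sidesteps any circularity about a priori integrability.
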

\begin{proof}[Proof of Lemma~\ref{lem:bnd_EF^p}]
First, observe that \eqref{eq:bnd_EF^p_0} and Lemma~\ref{lem:bnd_(sum_xi)^p} imply that there exists  $c\in [0,\infty)$ which satisfies for all $x\in\R^d$  that
\begin{equation}\label{eq:bnd_EF^p_2}
\E\big[\|G(x,Z_{1,1})\|_{\R^d}^p \big]\leq c(1+\|x\|_{\R^d}^{\mathfrak{m}p}).
\end{equation}
Lemma~\ref{lem:Q_t_leq_Theta_floor_t} therefore assures that for all $n\in\N_0$ we have that
\begin{equation}\label{eq:bnd_EF^p_7}
\begin{split}
\E\Big[ \big\|\tfrac{1}{\mathfrak{M}_{n}}\textstyle\sum_{j=1}^{\mathfrak{M}_{n}} G(\Theta_{\mathfrak{t}_{n}},Z_{n+1, j})\big\|_{\R^d}^p \Big]
& \leq c\big(1 + \E\big[\|\Theta_{\mathfrak{t}_{n}}\|_{\R^d}^{\mathfrak{m}p}\big]\big).
\end{split}
\end{equation}
Combining this and \eqref{eq:bnd_EF^p_0} establishes \eqref{eq:bnd_EF^p_3}. The proof of Lemma~\ref{lem:bnd_EF^p} is thus completed.
\end{proof}

\begin{lemma}\label{lem:bnd_Ef^p}
Assume Setting~\ref{sec:setting_2} and let $p,\mathfrak{m}\in\lb 0\rb \cup [1,\infty)$ satisfy that
\begin{equation}\label{eq:bnd_Ef^p_0}
\sup_{x\in\R^d} \left( \frac{\|g(x)\|_{\R^d}}{1+\|x\|_{\R^d}^{\mathfrak{m}}} \right) + \sup_{n\in\N_0}\E\big[\norm{\Theta_{\mathfrak{t}_n}}_{\R^d}^{\mathfrak{m} p}\big]<\infty.
\end{equation}
Then we have that
\begin{equation}\label{eq:bnd_Ef^p_3}
\sup_{t\in [0,\infty)}\E\big[\|g(\Theta_{t})\|_{\R^d}^p\big]<\infty.
\end{equation}
\end{lemma}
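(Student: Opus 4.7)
The plan is to bound $\|g(\Theta_t)\|_{\R^d}^p$ pointwise by the polynomial growth hypothesis and then control $\sup_{t \in [0,\infty)} \E[\|\Theta_t\|_{\R^d}^{\mathfrak{m} p}]$ by comparison with the grid values $\Theta_{\mathfrak{t}_n}$, whose moments are already controlled by the second summand in \eqref{eq:bnd_Ef^p_0}.

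First, I would dispatch the trivial case $p=0$, where $\|g(\Theta_t)\|_{\R^d}^0 \equiv 1$ and \eqref{eq:bnd_Ef^p_3} is immediate; so assume $p \in [1,\infty)$ in the remainder. From the hypothesis pick a constant $c\in [0,\infty)$ such that for all $x \in \R^d$ one has $\|g(x)\|_{\R^d}\leq c(1+\|x\|_{\R^d}^{\mathfrak{m}})$. Raising to the $p$-th power and applying Lemma~\ref{lem:bnd_(sum_xi)^p} yields
\begin{equation*}
\|g(\Theta_t)\|_{\R^d}^p \leq c^p (1+\|\Theta_t\|_{\R^d}^{\mathfrak{m}})^p \leq c^p 2^{p-1}\bigl(1+\|\Theta_t\|_{\R^d}^{\mathfrak{m} p}\bigr),
\end{equation*}
so it suffices to show $\sup_{t\in[0,\infty)} \E[\|\Theta_t\|_{\R^d}^{\mathfrak{m} p}]<\infty$.

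The key step is to express $\Theta_t$ as a convex combination of the adjacent grid values. Fix $m\in\N_0$ and $t\in [\mathfrak{t}_m,\mathfrak{t}_{m+1}]$. Combining \eqref{eq:setting_2_6} evaluated at $t$ and at $\mathfrak{t}_{m+1}$ (an argument entirely analogous to \eqref{eq:equiv_sup_L^p_Theta_t_5} in the proof of Lemma~\ref{lem:equiv_sup_L^p_Theta_t}) gives
\begin{equation*}
\Theta_t \;=\; \tfrac{\mathfrak{t}_{m+1}-t}{\mathfrak{t}_{m+1}-\mathfrak{t}_m}\,\Theta_{\mathfrak{t}_m}\;+\;\tfrac{t-\mathfrak{t}_m}{\mathfrak{t}_{m+1}-\mathfrak{t}_m}\,\Theta_{\mathfrak{t}_{m+1}}.
\end{equation*}
The triangle inequality and Lemma~\ref{lem:bnd_(sum_xi)^p} then yield
\begin{equation*}
\|\Theta_t\|_{\R^d}^{\mathfrak{m} p} \;\leq\; \bigl(\|\Theta_{\mathfrak{t}_m}\|_{\R^d}+\|\Theta_{\mathfrak{t}_{m+1}}\|_{\R^d}\bigr)^{\mathfrak{m} p}\;\leq\; 2^{\mathfrak{m} p - 1}\bigl(\|\Theta_{\mathfrak{t}_m}\|_{\R^d}^{\mathfrak{m} p}+\|\Theta_{\mathfrak{t}_{m+1}}\|_{\R^d}^{\mathfrak{m} p}\bigr).
\end{equation*}

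Taking expectations and then the supremum over all $t\in[0,\infty)$ (noting that every $t$ lies in some interval $[\mathfrak{t}_m,\mathfrak{t}_{m+1}]$ because $\mathfrak{t}_m\to\infty$ as $m\to\infty$) gives
\begin{equation*}
\sup_{t\in[0,\infty)}\E\bigl[\|\Theta_t\|_{\R^d}^{\mathfrak{m} p}\bigr]\;\leq\; 2^{\mathfrak{m} p}\sup_{n\in\N_0}\E\bigl[\|\Theta_{\mathfrak{t}_n}\|_{\R^d}^{\mathfrak{m} p}\bigr]\;<\;\infty,
\end{equation*}
where the final finiteness is the second summand of the hypothesis \eqref{eq:bnd_Ef^p_0}. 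Plugging this back into the pointwise bound from the first paragraph concludes \eqref{eq:bnd_Ef^p_3}. No real obstacle is anticipated; the only subtlety is verifying the convex-combination representation for $\Theta_t$ on each sub-interval, which is essentially a copy of the argument already carried out in Lemma~\ref{lem:equiv_sup_L^p_Theta_t}.
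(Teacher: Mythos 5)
Your proof is correct and follows essentially the same route as the paper: polynomial growth of $g$ gives $\|g(\Theta_t)\|_{\R^d}^p \leq 2^{p-1}c^p(1+\|\Theta_t\|_{\R^d}^{\mathfrak{m}p})$ via Lemma~\ref{lem:bnd_(sum_xi)^p}, and the remaining task is to bound $\sup_t\E[\|\Theta_t\|_{\R^d}^{\mathfrak{m}p}]$ by the grid-point moments. The only difference is that the paper dispatches this last step by invoking Lemma~\ref{lem:equiv_sup_L^p_Theta_t} (reading Setting~\ref{sec:setting_2} as a special case of Setting~\ref{sec:setting_1}), whereas you re-derive the convex-combination bound inline — a choice of modularity, not of substance.
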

\begin{proof}[Proof of Lemma~\ref{lem:bnd_Ef^p}]
First, observe that \eqref{eq:bnd_Ef^p_0} implies that there exists  $c \in [0, \infty)$ which satisfies for all $x\in\R^d$   that
\begin{equation}\label{eq:bnd_Ef^p_2}
\norm{g(x)}_{\R^d}\leq c(1+\|x\|_{\R^d}^\mathfrak{m}).
\end{equation}
This and Lemma~\ref{lem:bnd_(sum_xi)^p} ensure that for all $t\in [0,\infty)$  we have that
\begin{equation}
\begin{split}
\E\big[\|g(\Theta_{t})\|_{\R^d}^p\big]
& \leq \E\big[(c(1+\|\Theta_{t}\|_{\R^d}^\mathfrak{m}))^p\big]
 \leq \E\big[2^{p-1}c^p(1+\|\Theta_{t}\|_{\R^d}^{\mathfrak{m}p})\big]\\
& = 2^{p-1}c^p+2^{p-1}c^p \, \E\big[\|\Theta_{t}\|_{\R^d}^{\mathfrak{m}p}\big].
\end{split}
\end{equation}
Lemma~\ref{lem:equiv_sup_L^p_Theta_t} and \eqref{eq:bnd_Ef^p_0} hence demonstrate that 
\begin{equation}
\sup_{t\in [0,\infty)}\E\big[\|g(\Theta_{t})\|_{\R^d}^p\big] 
\leq 2^{p-1}c^p+2^{p-1}c^p\sup_{t\in [0,\infty)}\E\big[\|\Theta_{t}\|_{\R^d}^{\mathfrak{m}p}\big]<\infty.
\end{equation} 
This establishes \eqref{eq:bnd_Ef^p_3}.
The proof of Lemma~\ref{lem:bnd_Ef^p} is thus completed.
\end{proof}

\begin{lemma}\label{lem:bnd_Ef'^p}
Assume Setting~\ref{sec:setting_2} and let $p,\mathfrak{m}\in\lb 0\rb \cup [1,\infty)$ satisfy that
\begin{equation}\label{eq:bnd_Ef'^p_0}
\sup_{x\in\R^d}\left(\frac{\big\|\E\big[(\tfrac{\partial}{\partial x} G)(x,Z_{1,1})\big]\big\|_{L(\R^d,\R^d)}}{1+\|x\|_{\R^d}^\mathfrak{m}}\right) + \sup_{n\in\N_0}\E\big[\norm{\Theta_{\mathfrak{t}_n}}_{\R^d}^{\mathfrak{m} p}\big] <\infty.
\end{equation}
Then 
\begin{enumerate}[(i)]
\item\label{item:bnd_Ef'^p_1} we have that $g\in C^1(\R^d,\R^d)$ and
\item\label{item:bnd_Ef'^p_2} we have that
\begin{equation}\label{eq:bnd_Ef'^p_3}
\sup_{t\in [0,\infty)}\E\big[\|g'(\Theta_{t})\|_{L(\R^d,\R^d)}^p\big]<\infty.
\end{equation}
\end{enumerate}
\end{lemma}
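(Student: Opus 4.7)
The plan is to mimic the proof of Lemma~\ref{lem:bnd_Ef^p} almost verbatim, with $g$ replaced by $g'$ and with the derivative-under-the-expectation identity from Lemma~\ref{lem:f_C^1} providing the crucial bridge between the hypothesis on $\E[(\tfrac{\partial}{\partial x} G)(\cdot,Z_{1,1})]$ and a pointwise growth bound on $g'$. First I would verify that the standing integrability condition \eqref{eq:setting_2_2} of Setting~\ref{sec:setting_2} is exactly what Setting~\ref{sec:setting_1} requires, so that Lemma~\ref{lem:f_C^1} applies. Invoking it gives item~\eqref{item:bnd_Ef'^p_1} directly and furnishes the identity $g'(x) = \E[(\tfrac{\partial}{\partial x} G)(x,Z_{1,1})]$ for all $x\in\R^d$.

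Combining this identity with the first term in the hypothesis \eqref{eq:bnd_Ef'^p_0}, I would then extract a constant $c\in [0,\infty)$ such that $\|g'(x)\|_{L(\R^d,\R^d)} \leq c(1+\|x\|_{\R^d}^{\mathfrak{m}})$ for all $x\in\R^d$. Raising to the $p$-th power and applying Lemma~\ref{lem:bnd_(sum_xi)^p} (to the two-term sum $1+\|x\|_{\R^d}^{\mathfrak{m}}$) yields
\begin{equation}
\E\big[\|g'(\Theta_t)\|_{L(\R^d,\R^d)}^p\big] \leq 2^{p-1}c^p + 2^{p-1}c^p\,\E\big[\|\Theta_t\|_{\R^d}^{\mathfrak{m}p}\big]
\end{equation}
for every $t\in [0,\infty)$.

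To take the supremum over $t\in[0,\infty)$, I would then invoke Lemma~\ref{lem:equiv_sup_L^p_Theta_t} to pass from the hypothesis $\sup_{n\in\N_0}\E[\|\Theta_{\mathfrak{t}_n}\|_{\R^d}^{\mathfrak{m}p}]<\infty$ (which is the $\floorgrid{\cdot}$-bound in the language of Setting~\ref{sec:setting_1}) to the full bound $\sup_{t\in[0,\infty)}\E[\|\Theta_t\|_{\R^d}^{\mathfrak{m}p}]<\infty$. Plugging this into the previous display establishes item~\eqref{item:bnd_Ef'^p_2}. The degenerate cases $p=0$ or $\mathfrak{m}=0$ cause no trouble: if $p=0$ the moment is identically $1$, and if $\mathfrak{m}=0$ the growth bound on $g'$ degenerates to boundedness, making the conclusion immediate without invoking Lemma~\ref{lem:equiv_sup_L^p_Theta_t}.

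There is no real obstacle in this argument; it is a routine application of the three preceding lemmas. The only points requiring a small amount of care are the verification that Setting~\ref{sec:setting_2} indeed fits the hypotheses of Lemma~\ref{lem:f_C^1} (this is immediate from \eqref{eq:setting_2_2}) and the verification that the hypotheses of Lemma~\ref{lem:equiv_sup_L^p_Theta_t} are met, which in turn relies on the fact that Setting~\ref{sec:setting_2} implicitly places $\gamma$ on the countably infinite grid $\{\mathfrak{t}_n \colon n\in\N_0\}$, ensuring $\#_{\{t\in [0,\infty)\colon \gamma(t)\neq\emptyset\}}=\infty$ as required.
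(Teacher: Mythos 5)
Your proposal matches the paper's proof essentially step for step: item~(i) and the identity $g'(x)=\E[(\tfrac{\partial}{\partial x}G)(x,Z_{1,1})]$ from Lemma~\ref{lem:f_C^1}, the linear-growth bound $\|g'(x)\|_{L(\R^d,\R^d)}\leq c(1+\|x\|_{\R^d}^{\mathfrak{m}})$, Lemma~\ref{lem:bnd_(sum_xi)^p} to pass to $p$-th moments, and Lemma~\ref{lem:equiv_sup_L^p_Theta_t} to upgrade the grid-supremum hypothesis to a supremum over all $t\in[0,\infty)$. Your side remarks about checking that Setting~\ref{sec:setting_2} is an instance of Setting~\ref{sec:setting_1} and about the degenerate cases $p=0$ or $\mathfrak{m}=0$ (where $\mathfrak{m}p<1$ puts you outside the hypotheses of Lemma~\ref{lem:equiv_sup_L^p_Theta_t}, but the bound is then trivial) are a small extra dose of care beyond what the paper writes, but do not change the argument.
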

\begin{proof}[Proof of Lemma~\ref{lem:bnd_Ef'^p}]
First, note that item~\eqref{item:f_C^1_1} in Lemma~\ref{lem:f_C^1} proves item~\eqref{item:bnd_Ef'^p_1}. Furthermore, observe that \eqref{eq:bnd_Ef'^p_0} and  item~\eqref{item:f_C^1_2} in Lemma~\ref{lem:f_C^1} demonstrate that there exists  $c\in [0,\infty)$ which satisfies  for all $x\in\R^d$  that
\begin{equation}
\|g'(x)\|_{L(\R^d,\R^d)}=
\big\|\E\big[(\tfrac{\partial}{\partial x} G)(x,Z_{1,1})\big]\big\|_{L(\R^d,\R^d)}
\leq c(1+\|x\|_{\R^d}^\mathfrak{m}).
\end{equation}
This and Lemma~\ref{lem:bnd_(sum_xi)^p} imply that  for all $t\in [0,\infty)$ we have that
\begin{equation}
\begin{split}
\E\big[\|g'(\Theta_{t})\|_{L(\R^d,\R^d)}^p\big]
& \leq \E\big[(c(1+\|\Theta_{t}\|_{\R^d}^\mathfrak{m}))^p\big]\\
& \leq \E\big[2^{p-1}c^p(1+\|\Theta_{t}\|_{\R^d}^{\mathfrak{m}p})\big]\\
& = 2^{p-1}c^p+2^{p-1}c^p \, \E\big[\|\Theta_{t}\|_{\R^d}^{\mathfrak{m}p}\big].
\end{split}
\end{equation}
Lemma~\ref{lem:equiv_sup_L^p_Theta_t} and \eqref{eq:bnd_Ef'^p_0} hence demonstrate that 
\begin{equation}
\sup_{t\in [0,\infty)}\E\big[\|g'(\Theta_{t})\|_{L(\R^d,\R^d)}^p\big] 
\leq 2^{p-1}c^p+2^{p-1}c^p\sup_{t\in [0,\infty)}\E\big[\|\Theta_{t}\|_{\R^d}^{\mathfrak{m}p}\big]<\infty.
\end{equation}
The proof of Lemma~\ref{lem:bnd_Ef'^p} is thus completed.
\end{proof}

\begin{lemma}\label{lem:diff_bnded_map_linear_growth}
	Let $(V,\left\|\cdot\right\|_V)$ be a non-trivial $\R$-Banach space, let $(W,\left\|\cdot\right\|_W)$ be an $\R$-Banach space, and let $f\in C^1(V,W)$, $c\in \R$ satisfy for all $x\in V$ that
	\begin{equation}\label{eq:diff_bnded_map_linear_growth_1}
	\|f'(x)\|_{L(V,W)}\leq c.
	\end{equation}
	Then we have for all $x\in V$ that
	\begin{equation}\label{eq:diff_bnded_map_linear_growth_2}
	\|f(x)\|_W\leq (c+ \|f(0)\|_W )(1+\|x\|_V).
	\end{equation}
\end{lemma}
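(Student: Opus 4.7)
The plan is to apply the fundamental theorem of calculus along the straight line segment from $0$ to $x$, then estimate the resulting integral using the pointwise bound on $f'$. More precisely, for every $x \in V$ the map $[0,1] \ni t \mapsto f(tx) \in W$ is continuously differentiable with derivative $t \mapsto f'(tx)x$, so the fundamental theorem of calculus for the Bochner integral yields
\begin{equation}
f(x) = f(0) + \int_0^1 f'(tx) x \, dt.
\end{equation}

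Next I would take norms, use the triangle inequality for the Bochner integral and the operator-norm estimate $\|f'(tx)x\|_W \leq \|f'(tx)\|_{L(V,W)} \|x\|_V$ together with the hypothesis $\|f'(\cdot)\|_{L(V,W)} \leq c$ to obtain
\begin{equation}
\|f(x)\|_W \leq \|f(0)\|_W + \int_0^1 \|f'(tx)\|_{L(V,W)} \|x\|_V \, dt \leq \|f(0)\|_W + c\|x\|_V.
\end{equation}

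Finally, to pass from this additive bound to the multiplicative form \eqref{eq:diff_bnded_map_linear_growth_2}, I would simply note that
\begin{equation}
\|f(0)\|_W + c\|x\|_V \leq (c + \|f(0)\|_W)(1 + \|x\|_V),
\end{equation}
since expanding the right-hand side gives $c + c\|x\|_V + \|f(0)\|_W + \|f(0)\|_W\|x\|_V$, which dominates the left-hand side termwise. There is no genuine obstacle here; the only mild point to mention is that we rely on the Bochner-integral formulation of the fundamental theorem of calculus (which is standard for continuously differentiable Banach-space valued maps), and the non-triviality of $V$ is used only implicitly to ensure that the line segment $[0,1]x$ lies inside $V$, which is automatic.
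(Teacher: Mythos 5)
Your proof is correct and follows essentially the same route as the paper: apply the fundamental theorem of calculus for the Bochner integral along the segment from $0$ to $x$, bound the integrand by $c\|x\|_V$ via the triangle inequality and the operator-norm estimate, and then observe that $\|f(0)\|_W + c\|x\|_V \leq (c+\|f(0)\|_W)(1+\|x\|_V)$. (Your closing aside is slightly off, though: non-triviality of $V$ plays no role in keeping the segment inside $V$ — that is automatic in any normed space — and in fact the hypothesis is not needed at all here, since $\|f'(x)\|_{L(V,W)} \geq 0$ already forces $c \geq 0$.)
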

\begin{proof}[Proof of Lemma~\ref{lem:diff_bnded_map_linear_growth}]
	First, note that the  fundamental theorem of calculus for the Bochner integral (see, e.g., \cite[Lemma 2.1]{JentzenBochner}) proves that for all $x\in V$ we have that
	\begin{equation}
	\begin{split}
	\|f(x)-f(0)\|_W & = \bigg\|\int_0^1f'(\lambda x)x\, d\lambda \bigg\|_W.
	\end{split}
	\end{equation}
	This and the triangle inequality for the Bochner integral demonstrate that for all $x\in V$ we have that
	\begin{equation}
	\begin{split}
	\|f(x)-f(0)\|_W 
	 \leq \int_0^1\|f'(\lambda x)x\|_{W}\, d\lambda 
	& \leq \int_0^1\|f'(\lambda x)\|_{L(V,W)} \|x\|_{V}\, d\lambda \\
	& \leq \int_0^1c \|x\|_{V}\, d\lambda 
	= c\|x\|_V.
	\end{split}
	\end{equation}
	This reveals that for all $x\in V$  it holds that
	\begin{equation}
	\begin{split}
	\|f(x)\|_{W} &\leq \|f(x)-f(0)\|_W + \|f(0)\|_W \\
	& \leq c\|x\|_V + \|f(0)\|_W \\
	& \leq (c+ \|f(0)\|_W )(1+\|x\|_V). 
	\end{split}
	\end{equation}
	This establishes \eqref{eq:diff_bnded_map_linear_growth_2}. The proof of Lemma~\ref{lem:diff_bnded_map_linear_growth} is thus completed.
\end{proof}

\begin{lemma}\label{lem:bnd_C_case_f''_bnded}
Assume Setting~\ref{sec:setting_2},  let $\psi\in C^2(\R^d,\R)$ satisfy that
\begin{equation}\label{eq:bnd_C_case_f''_bnded_0}
\sup_{x\in\R^d}\left(\frac{\E\big[\|G(x,Z_{1,1})\|_{\R^d}^2 \big]}{\big[1+\|x\|_{\R^d} \big]^2} + \big\|\E\big[(\tfrac{\partial^2}{\partial x^2}G)(x,Z_{1,1})\big]\big\|_{L^{(2)}(\R^d,\R^d)}\right)<\infty
\end{equation}
and
$
\sup_{x\in\R^d} \max_{i \in \{1,2\}}
\|\psi^{(i)}(x)\|_{L^{(i)}(\R^d,\R)}
<\infty,
$
let $Q\colon [0,\infty)\times\Omega\to\R^d$ be the stochastic process which satisfies for all
$m \in \N_0$, $t \in [\mathfrak{t}_m, \mathfrak{t}_{m+1})$ that
\begin{equation}\label{eq:bnd_C_case_f''_bnded_6}
Q_t = \frac{1}{\mathfrak{M}_m} \sum_{n=1}^{\mathfrak{M}_m}G(\Theta_{\mathfrak{t}_m},Z_{m+1, n}),
\end{equation}
and let $C\colon [0,\infty)\to [0,\infty]$ satisfy for all $T\in [0,\infty)$ that
\begin{align*}
\label{eq:bnd_C_case_f''_bnded_7}
 &C(T)  = \sup_{s,v\in [0,T]}\E \bigg[ \|Q_s-g(\Theta_{\floorgrid{s}})\|_{\R^d}\|Q_s\|_{\R^d} \numberthis \\&  \cdot \bigg(\int_0^1  e^{-L(T-s)} \|\psi''( \theta_{T-s}^{\lambda \Theta_s + (1-\lambda)\Theta_{\floorgrid{s}}})\|_{L^{(2)}(\R^d,\R)}  + \|\psi'( \theta_{T-s}^{\lambda \Theta_s + (1-\lambda)\Theta_{\floorgrid{s}}} )\|_{L(\R^d,\R)} \\
&  \cdot \int_0^{T-s} e^{-Lu}\big\|
g''(\theta_u^{\lambda \Theta_s + (1-\lambda)\Theta_{\floorgrid{s}}})
\big\|_{L^{(2)}(\R^d,\R^d)}\, du\, d\lambda \bigg) + \|\psi'(\theta_{T-s}^{\Theta_s})\|_{L(\R^d,\R)} \|g'(\Theta_{v})
Q_v\|_{\R^d}\bigg].
\end{align*}
Then we have that
\begin{equation}\label{eq:bnd_C_case_f''_bnded_8}
\sup_{T\in [0,\infty)}C(T)<\infty.
\end{equation}
\end{lemma}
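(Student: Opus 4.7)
The plan is to reduce the bound on $C(T)$ to the finiteness of a handful of moment suprema and the integrability of the exponential decays, using essentially only the bounded-derivative assumptions together with the a priori estimates collected earlier in the section.

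First I will extract pointwise bounds on the objects appearing in \eqref{eq:bnd_C_case_f''_bnded_7}. The assumption $\sup_x \max_{i\in\{1,2\}} \|\psi^{(i)}(x)\|_{L^{(i)}(\R^d,\R)} < \infty$ supplies constants $c_1,c_2 \in (0,\infty)$ with $\|\psi'(x)\|_{L(\R^d,\R)}\le c_1$ and $\|\psi''(x)\|_{L^{(2)}(\R^d,\R)}\le c_2$ for every $x\in\R^d$. The assumption $\sup_x\|\E[(\tfrac{\partial^2}{\partial x^2}G)(x,Z_{1,1})]\|_{L^{(2)}(\R^d,\R^d)}<\infty$ together with Lemma~\ref{lem:f_C^n} (applied with $n=2$) gives $g\in C^2(\R^d,\R^d)$ and $g''(x) = \E[(\tfrac{\partial^2}{\partial x^2}G)(x,Z_{1,1})]$, hence there exists $c_3\in(0,\infty)$ with $\|g''(x)\|_{L^{(2)}(\R^d,\R^d)}\le c_3$ for all $x\in\R^d$; Lemma~\ref{lem:diff_bnded_map_linear_growth} applied to $g'$ then yields a constant $c_4\in(0,\infty)$ with $\|g'(x)\|_{L(\R^d,\R^d)} \le c_4(1+\|x\|_{\R^d})$ for all $x\in\R^d$.

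Second, I will establish the $L^2$-moment bounds $\sup_{t\in[0,\infty)} \E[\|\Theta_t\|_{\R^d}^2] + \sup_{t\in[0,\infty)} \E[\|Q_t\|_{\R^d}^2] + \sup_{t\in[0,\infty)} \E[\|g(\Theta_t)\|_{\R^d}^2] + \sup_{t\in[0,\infty)} \E[\|g'(\Theta_t)\|_{L(\R^d,\R^d)}^2] < \infty$. For this, Lemma~\ref{lem:iterated_error} provides the increment bound required as hypothesis \eqref{eq:PvW_bnd_L^2_Theta-Xi_2} in Corollary~\ref{cor:PvW_bnd_L^2_Theta-Xi}, and invoking Corollary~\ref{cor:PvW_bnd_L^2_Theta-Xi} yields $\sup_{n\in\N_0} \E[\|\Theta_{\mathfrak{t}_n}-\Xi\|_{\R^d}^2] < \infty$, hence $\sup_{n\in\N_0}\E[\|\Theta_{\mathfrak{t}_n}\|_{\R^d}^2] < \infty$. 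Lemma~\ref{lem:equiv_sup_L^p_Theta_t} then lifts this to $\sup_{t\in[0,\infty)}\E[\|\Theta_t\|_{\R^d}^2]<\infty$, after which Lemma~\ref{lem:bnd_EF^p} (with $p=2$, $\mathfrak{m}=1$) gives $\sup_n\E[\|Q_{\mathfrak{t}_n}\|_{\R^d}^2]<\infty$ and therefore $\sup_t \E[\|Q_t\|_{\R^d}^2]<\infty$ since $Q_t=Q_{\floorgrid{t}}$, while Lemma~\ref{lem:bnd_Ef^p} and Lemma~\ref{lem:bnd_Ef'^p} (each with $p=2$, $\mathfrak{m}=1$, using the growth bounds $\|g(x)\|\le c\,(1+\|x\|)$ coming from Jensen applied to $\E[\|G(x,Z_{1,1})\|^2]\le c(1+\|x\|)^2$ and $\|g'(x)\|\le c_4(1+\|x\|)$) yield the remaining two bounds.

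Third, I will dominate the integrand of $C(T)$ by constants. The pointwise bound $\|\psi''\|\le c_2$ together with $e^{-L(T-s)}\le 1$ handles the first summand inside the $\lambda$-integral, and the bounds $\|\psi'\|\le c_1$ and $\|g''\|\le c_3$ dominate the second summand by $c_1 c_3\int_0^{T-s} e^{-Lu}\,du \le c_1 c_3 / L$; consequently the entire $\lambda$-integral factor is at most $c_2 + c_1 c_3 / L$ uniformly in $T$, $s$, and in the random argument. Thus
\begin{equation*}
C(T) \le \Bigl(c_2 + \tfrac{c_1 c_3}{L}\Bigr) \sup_{s\in[0,T]} \E\bigl[\|Q_s - g(\Theta_{\floorgrid{s}})\|_{\R^d} \|Q_s\|_{\R^d}\bigr] + c_1 \sup_{v\in[0,T]} \E\bigl[\|g'(\Theta_v) Q_v\|_{\R^d}\bigr].
\end{equation*}
Applying the Cauchy--Schwarz inequality to each of the two supremands, combined with $\|g'(\Theta_v) Q_v\|\le \|g'(\Theta_v)\|\,\|Q_v\|$ and with the triangle inequality $\|Q_s - g(\Theta_{\floorgrid{s}})\|\le \|Q_s\| + \|g(\Theta_{\floorgrid{s}})\|$, reduces the estimate to the four $L^2$-moment suprema obtained in the second step, all of which are finite and independent of $T$. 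This establishes \eqref{eq:bnd_C_case_f''_bnded_8}. The main obstacle is purely bookkeeping: correctly identifying which earlier lemma delivers each uniform moment bound and verifying the chain $\text{Lemma \ref{lem:iterated_error}} \to \text{Cor.~\ref{cor:PvW_bnd_L^2_Theta-Xi}} \to \text{Lemma \ref{lem:equiv_sup_L^p_Theta_t}} \to \text{Lemmas \ref{lem:bnd_EF^p}, \ref{lem:bnd_Ef^p}, \ref{lem:bnd_Ef'^p}}$ is fed the hypotheses it requires.
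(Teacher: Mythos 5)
Your proposal is correct and follows essentially the same route as the paper's proof: uniform bounds on $\psi'$, $\psi''$, $g''$ from the hypotheses, linear growth of $g'$ via Lemma~\ref{lem:diff_bnded_map_linear_growth}, uniform-in-time $L^2$-moment bounds via the chain Lemma~\ref{lem:iterated_error} $\to$ Corollary~\ref{cor:PvW_bnd_L^2_Theta-Xi} $\to$ Lemmas~\ref{lem:bnd_EF^p}, \ref{lem:bnd_Ef^p}, \ref{lem:bnd_Ef'^p}, and then domination of the integrand and Cauchy--Schwarz. The only cosmetic differences are your Jensen-based derivation of the linear growth of $g$ (the paper uses Lemma~\ref{lem:(1)+(2)=>(*)} instead) and a small omitted verification (the finiteness condition \eqref{eq:PvW_bnd_L^2_Theta-Xi_1} on $\E[\|G(\Theta_{\mathfrak{t}_m},Z_{m+1,j})\|_{\R^d}]$ needs a short argument, which the paper supplies via the i.i.d.\ structure and Lemma~\ref{lem:Theta_L^2_sp_case}), neither of which changes the structure of the argument.
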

\begin{proof}[Proof of Lemma~\ref{lem:bnd_C_case_f''_bnded}]
Throughout this proof let $\kappa\in (0,\infty)$ be a real number which satisfies for all $n\in\N$ that
\begin{align}\label{eq:bnd_C_case_f''_bnded_8_2}
\begin{split}
&\E\big[\|\Theta_{\mathfrak{t}_n}-(\Theta_{\mathfrak{t}_{n-1}}+(\mathfrak{t}_n - \mathfrak{t}_{n-1})g(\Theta_{\mathfrak{t}_{n-1}}))\|_{\R^d}^2 \big]\\
& \leq (\mathfrak{t}_{n}-\mathfrak{t}_{n-1})^2\kappa\big(1+\E\big[\|\Theta_{\mathfrak{t}_{n-1}}-\Xi\|_{\R^d}^2\big]\big)
\end{split}
\end{align}
(cf.~Lemma~\ref{lem:iterated_error}).
Note that Lemma~\ref{lem:f_C^n} assures that for all $x\in\R^d$ we have that 
\begin{equation}
g\in C^2(\R^d,\R^d) \qquad\text{and}\qquad g''(x)=\E\big[\big(\tfrac{\partial^2}{\partial x^2}G\big)(x,Z_{1,1})\big].
\end{equation}
This, \eqref{eq:bnd_C_case_f''_bnded_0},  and Lemma~\ref{lem:(1)+(2)=>(*)} prove that there exists  $c\in [0,\infty)$ which satisfies for all $x\in\R^d$  that
\begin{equation}\label{eq:bnd_C_case_f''_bnded_3}
\E\big[\|G(x,Z_{1,1})\|_{\R^d}^2 \big]  \leq c(1+\|x\|_{\R^d})^2,\qquad \|g(x)\|_{\R^d}\leq c(1+\|x\|_{\R^d}),
\end{equation}
and
\begin{equation}\label{eq:bnd_C_case_f''_bnded_3p}
\max\big\lb \| g''(x)\|_{L^{(2)}(\R^d,\R^d)}, \|\psi'(x)\|_{L(\R^d,\R)},\|\psi''(x)\|_{L^{(2)}(\R^d,\R)}\big\rb  \leq c.
\end{equation}
Lemma~\ref{lem:diff_bnded_map_linear_growth} hence implies that for all $x\in\R^d$ we have that
\begin{equation}\label{eq:bnd_C_case_f''_bnded_8_p}
\|g'(x)\|_{L(\R^d,\R^d)} \leq (c+\|g'(0)\|_{L(\R^d,\R^d)} )(1+\|x\|_{\R^d}).
\end{equation}
Moreover, note that Lemma~\ref{lem:iterated_error} proves that  for all $n\in\N$ we have that
\begin{equation}
\E\big[\|\Theta_{\mathfrak{t}_n}-(\Theta_{\mathfrak{t}_{n-1}}+(\mathfrak{t}_n - \mathfrak{t}_{n-1})g(\Theta_{\mathfrak{t}_{n-1}}))\|_{\R^d}^2\big]<\infty.
\end{equation}
Next observe that the assumption that $Z_{m, n}$, $(m, n) \in \N^2$, are i.i.d.\ random variables and \eqref{eq:setting_2_6} ensure that for all $n \in \N_0$, $j \in \{1, 2,\ldots, \mathfrak{M}_n\}$ we have that $Z_{n+1, j}$ and $ \Theta_{\mathfrak{t}_{n}}$
are independent. This and the assumption that $Z_{m, n}$, $(m, n) \in \N^2$, are i.i.d.\ random variables ensure that for all $n \in \N_0$, $j \in \{1, 2, \ldots, \mathfrak{M}_n\}$  we have that
\begin{equation}
\begin{split}
\E\big[ \| G(\Theta_{\mathfrak{t}_{n}},Z_{n+1,j}) \|_{\R^d}^2\big]
& = \int_{\Omega}\| G(\Theta_{\mathfrak{t}_{n}}(\omega),Z_{n+1, j}(\omega)) \|_{\R^d}^2\, \P(d\omega)\\
& = \int_{\Omega}\int_{\Omega}\| G(\Theta_{\mathfrak{t}_{n}}(\omega),Z_{n+1, j}(\tilde{\omega})) \|_{\R^d}^2\, \P(d\tilde{\omega})\, \P(d\omega)\\
& = \int_{\Omega}\int_{\Omega}\| G(\Theta_{\mathfrak{t}_{n}}(\omega),Z_{1,1}(\tilde{\omega})) \|_{\R^d}^2\, \P(d\tilde{\omega})\, \P(d\omega).
\end{split}
\end{equation}
Combining this with \eqref{eq:bnd_C_case_f''_bnded_3} and  Lemma~\ref{lem:Theta_L^2_sp_case} demonstrates that for all $n \in \N_0$, $j \in \{1, 2, \ldots, \mathfrak{M}_n\}$ we have that
\begin{equation}
\begin{split}
\E\big[ \| G(\Theta_{\mathfrak{t}_{n}},Z_{n+1, j}) \|_{\R^d}^2\big]
& \leq \int_{\Omega}c\big(1 + \| \Theta_{\mathfrak{t}_{n}}(\omega) \|_{\R^d}\big)^2 \, \P(d\omega) \\
&\leq 2 c\big(1 + \E\big[\| \Theta_{\mathfrak{t}_{n}} \|_{\R^d}^2\big]\big) < \infty.
\end{split}
\end{equation}
This reveals that for all $n \in \N_0$, $j \in \{1, 2, \ldots, \mathfrak{M}_n\}$  it holds that
\begin{equation}
\E\big[ \|G(\Theta_{\mathfrak{t}_{n}},Z_{n+1, j})\|_{\R^d}\big] \leq \big|\E\big[ \| G(\Theta_{\mathfrak{t}_{n}},Z_{n+1, j}) \|_{\R^d}^2\big] \big|^{1/2} < \infty.
\end{equation}
This and Corollary~\ref{cor:PvW_bnd_L^2_Theta-Xi} imply that
\begin{equation}\label{eq:bnd_C_case_f''_bnded_9}
\sup_{n\in\N_0}\big|\E\big[\norm{\Theta_{\mathfrak{t}_n}-\Xi}_{\R^d}^{2}\big]\big|^{1/2}<\infty.
\end{equation}
The Minkowski inequality hence assures that
\begin{equation}\label{eq:bnd_C_case_f''_bnded_9_2}
\sup_{n\in\N_0}\big|\E\big[\norm{\Theta_{\mathfrak{t}_n}}_{\R^d}^{2}\big]\big|^{1/2}
\leq \|\Xi\|_{\R^d} + \sup_{n\in\N_0}\big|\E\big[\norm{\Theta_{\mathfrak{t}_n}-\Xi}_{\R^d}^{2}\big]\big|^{1/2}<\infty.
\end{equation}
Next observe that \eqref{eq:bnd_C_case_f''_bnded_3p} demonstrates that for all $T\in[0,\infty)$ we have that
\begin{align*}
\label{eq:bnd_C_case_f''_bnded_10}
&  \sup_{s\in [0,T]}\E \bigg[ \|Q_s-g(\Theta_{\floorgrid{s}})\|_{\R^d}\norm{Q_s}_{\R^d} \int_0^1  e^{-L(T-s)} \|\psi''( \theta_{T-s}^{\lambda \Theta_s + (1-\lambda)\Theta_{\floorgrid{s}}})\|_{L^{(2)}(\R^d,\R)}\, d\lambda\bigg]\\
& \leq \sup_{s\in [0,T]}\E \bigg[ \|Q_s-g(\Theta_{\floorgrid{s}})\|_{\R^d}\norm{Q_s}_{\R^d} \int_0^1  c e^{-L(T-s)}\, d\lambda\bigg]\\
& \leq c\sup_{s\in [0,T]}\E \big[ \|Q_s-g(\Theta_{\floorgrid{s}})\|_{\R^d}\norm{Q_s}_{\R^d} \big]\\
& \leq c\sup_{s\in [0,\infty)}\E \big[ \|Q_s-g(\Theta_{\floorgrid{s}})\|_{\R^d}\norm{Q_s}_{\R^d} \big]. \numberthis
\end{align*}
Hölder's inequality therefore assures that for all $T\in [0,\infty)$ we have that
\begin{align*}
\label{eq:bnd_C_case_f''_bnded_11}
&  \sup_{s\in [0,T]}\E \bigg[ \|Q_s-g(\Theta_{\floorgrid{s}})\|_{\R^d}\norm{Q_s}_{\R^d} \int_0^1  e^{-L(T-s)} \|\psi''( \theta_{T-s}^{\lambda \Theta_s + (1-\lambda)\Theta_{\floorgrid{s}}})\|_{L^{(2)}(\R^d,\R)}\, d\lambda\bigg]\\
& \leq c \sup_{s\in [0,\infty)}\Big(\big|\E\big[\|Q_s-g(\Theta_{\floorgrid{s}})\|_{\R^d}^2\big]\big|^{1/2}\big| \E\big[\|Q_s\|_{\R^d}^2\big]\big|^{1/2}\Big)\\
& \leq c \sup_{s\in [0,\infty)}\big|\E \big[\|Q_s-g(\Theta_{\floorgrid{s}})\|_{\R^d}^2\big]\big|^{1/2}\sup_{s\in [0,\infty)} \big| \E\big[\|Q_s\|_{\R^d}^2\big]\big|^{1/2}. \numberthis
\end{align*}
Moreover, note that the Minkowski inequality implies that
\begin{equation}\label{eq:bnd_C_case_f''_bnded_12}
\begin{split}
&\sup_{s\in [0,\infty)}\big|\E\big[\|Q_s-g(\Theta_{\floorgrid{s}})\|_{\R^d}^2\big]\big|^{1/2}\\
&\leq \sup_{s\in [0,\infty)}\big|\E \big[\|Q_s\|_{\R^d}^2\big]\big|^{1/2}+\sup_{s\in [0,\infty)}\big|\E\big[\|g(\Theta_{\floorgrid{s}})\|_{\R^d}^2\big]\big|^{1/2}.
\end{split}
\end{equation}
In the next step observe that  \eqref{eq:bnd_C_case_f''_bnded_3}, \eqref{eq:bnd_C_case_f''_bnded_9_2},  and  Lemma~\ref{lem:bnd_EF^p} assure that
\begin{equation}\label{eq:bnd_C_case_f''_bnded_13}
\begin{split}
\sup_{s\in [0,\infty)}\E\big[\|Q_s\|^2_{\R^d}\big]
 = \sup_{n\in\N_0}\E\big[\| \tfrac{1}{\mathfrak{M}_n}\textstyle\sum_{j=1}^{\mathfrak{M}_n} G(\Theta_{\mathfrak{t}_n},Z_{n+1, j})\|^2_{\R^d}\big]
<\infty.
\end{split}
\end{equation}
Next note that \eqref{eq:bnd_C_case_f''_bnded_3}, \eqref{eq:bnd_C_case_f''_bnded_9_2}, and Lemma~\ref{lem:bnd_Ef^p} demonstrate that
\begin{equation}\label{eq:bnd_C_case_f''_bnded_14}
\sup_{s\in [0,\infty)}\E\big[\|g(\Theta_{\floorgrid{s}})\|_{\R^d}^2\big]
\leq \sup_{t\in [0,\infty)}\E\big[\|g(\Theta_{t})\|_{\R^d}^2\big]
<\infty.
\end{equation}
Combining this, \eqref{eq:bnd_C_case_f''_bnded_12}, and \eqref{eq:bnd_C_case_f''_bnded_13} ensures that
\begin{equation}\label{eq:bnd_C_case_f''_bnded_15}
\sup_{s\in [0,\infty)}\big|\E \big[\|Q_s-g(\Theta_{\floorgrid{s}})\|_{\R^d}^2\big]\big|^{1/2}<\infty.
\end{equation}
This, \eqref{eq:bnd_C_case_f''_bnded_11}, and \eqref{eq:bnd_C_case_f''_bnded_13} prove that
\begin{equation}\label{eq:bnd_C_case_f''_bnded_16}
\begin{split}
&  \sup_{T\in [0,\infty)} \sup_{s\in [0,T]}\E \bigg[  \|Q_s-g(\Theta_{\floorgrid{s}})\|_{\R^d}\norm{Q_s}_{\R^d}  \\
  & \quad \cdot \int_0^1  e^{-L(T-s)} \|\psi''( \theta_{T-s}^{\lambda \Theta_s + (1-\lambda)\Theta_{\floorgrid{s}}})\|_{L^{(2)}(\R^d,\R)}\, d\lambda\bigg] <\infty.
\end{split}
\end{equation}
Next observe that \eqref{eq:bnd_C_case_f''_bnded_3p} implies that for all $T\in(0,\infty)$ we have that
\begin{equation}\label{eq:bnd_C_case_f''_bnded_17}
\begin{split}
&\sup_{s\in [0,T]}\E \bigg[ \|Q_s-g(\Theta_{\floorgrid{s}})\|_{\R^d}\norm{Q_s}_{\R^d}  \int_0^1 \|\psi'( \theta_{T-s}^{\lambda \Theta_s + (1-\lambda)\Theta_{\floorgrid{s}}} )\|_{L(\R^d,\R)}\\ 
&\quad \cdot\int_0^{T-s} e^{-Lu}\|g''(\theta_u^{\lambda \Theta_s + (1-\lambda)\Theta_{\floorgrid{s}}})\|_{L^{(2)}(\R^d,\R^d)}\, du\, d\lambda \bigg]\\
& \leq c^2\sup_{s\in [0,T]}\E \bigg[ \|Q_s-g(\Theta_{\floorgrid{s}})\|_{\R^d}\norm{Q_s}_{\R^d} 
 \int_0^1 \int_0^{T-s} e^{-Lu}\, du\, d\lambda \bigg]\\
& = c^2\sup_{s\in [0,T]}\E \bigg[ \|Q_s-g(\Theta_{\floorgrid{s}})\|_{\R^d}\norm{Q_s}_{\R^d} 
 \int_0^{T-s} e^{-Lu}\, du \bigg]\\
& = \tfrac{c^2}{L}\sup_{s\in [0,T]}\E \big[ \|Q_s-g(\Theta_{\floorgrid{s}})\|_{\R^d}\norm{Q_s}_{\R^d}(1-e^{-L(T-s)})\big]\\
 & \leq \tfrac{c^2}{L}\sup_{s\in [0,\infty)}\E \big[ \|Q_s-g(\Theta_{\floorgrid{s}})\|_{\R^d}\norm{Q_s}_{\R^d} \big].
\end{split}
\end{equation}
This and Hölder's inequality ensure that for all $T\in(0,\infty)$ we have that
\begin{equation}
\begin{split}
&\sup_{s\in [0,T]}\E \bigg[ \|Q_s-g(\Theta_{\floorgrid{s}})\|_{\R^d}\norm{Q_s}_{\R^d} \int_0^1  \|\psi'( \theta_{T-s}^{\lambda \Theta_s + (1-\lambda)\Theta_{\floorgrid{s}}} )\|_{L(\R^d,\R)}\\ 
& \quad \cdot \int_0^{T-s} e^{-Lu}\|g''(\theta_u^{\lambda \Theta_s + (1-\lambda)\Theta_{\floorgrid{s}}})\|_{L^{(2)}(\R^d,\R^d)}\, du\, d\lambda \bigg]\\
& \leq \tfrac{c^2}{L}\sup_{s\in [0,\infty)}\big(\big|\E\big[ \|Q_s-g(\Theta_{\floorgrid{s}})\|_{\R^d}^2\big]\big|^{1/2}\big|\E\big[\norm{Q_s}_{\R^d}^2\big]\big|^{1/2}\big).
\end{split}
\end{equation}
Combining this, \eqref{eq:bnd_C_case_f''_bnded_13}, and \eqref{eq:bnd_C_case_f''_bnded_15} demonstrates that
\begin{align}
\label{eq:bnd_C_case_f''_bnded_18}
\begin{split}
&\sup_{T\in [0,\infty)}\sup_{s\in [0,T]}\E \bigg[ \|Q_s-g(\Theta_{\floorgrid{s}})\|_{\R^d}\norm{Q_s}_{\R^d} \int_0^1 \|\psi'( \theta_{T-s}^{\lambda \Theta_s + (1-\lambda)\Theta_{\floorgrid{s}}} )\|_{L(\R^d,\R)}\\ 
& \quad \cdot \int_0^{T-s} e^{-Lu}\|g''(\theta_u^{\lambda \Theta_s + (1-\lambda)\Theta_{\floorgrid{s}}})\|_{L^{(2)}(\R^d,\R^d)}\, du\, d\lambda \bigg]<\infty. 
\end{split}
\end{align}
Next observe that \eqref{eq:bnd_C_case_f''_bnded_3p} and Hölder's inequality imply that for all $T\in [0,\infty)$ we have that
\begin{equation}\label{eq:bnd_C_case_f''_bnded_19}
\begin{split}
& \sup_{s,v\in [0,T]}\E\big[\|\psi'(\theta_{T-s}^{\Theta_s})\|_{L(\R^d,\R)} \|g'(\Theta_{v})Q_v \|_{\R^d}\big]\\
&
\leq \sup_{v\in [0,\infty)}\E\big[c \|g'(\Theta_{v})Q_v \|_{\R^d}\big]\\
&
\leq c\sup_{v\in [0,\infty)}\E\big[ \|g'(\Theta_{v})\|_{L(\R^d,\R^d)}\|Q_v \|_{\R^d}\big]\\
&
\leq c\sup_{v\in [0,\infty)}\Big(\big|\E\big[ \|g'(\Theta_{v})\|_{L(\R^d,\R^d)}^2\big]\big|^{1/2}\big|\E\big[\|Q_v \|_{\R^d}^2\big]\big|^{1/2}\Big)\\
&
\leq c\sup_{v\in [0,\infty)}\big|\E\big[ \|g'(\Theta_{v})\|_{L(\R^d,\R^d)}^2\big]\big|^{1/2} \sup_{v\in [0,\infty)}\big|\E\big[\|Q_v \|_{\R^d}^2\big]\big|^{1/2}.
\end{split}
\end{equation}
Furthermore, note that Lemma~\ref{lem:bnd_Ef'^p}, \eqref{eq:bnd_C_case_f''_bnded_8_p}, and \eqref{eq:bnd_C_case_f''_bnded_9_2} ensure that 
\begin{equation}\label{eq:bnd_C_case_f''_bnded_20}
\sup_{v\in [0,\infty)}\E\big[ \|g'(\Theta_{v})\|_{L(\R^d,\R^d)}^2\big]<\infty.
\end{equation}
Combining this, \eqref{eq:bnd_C_case_f''_bnded_13}, and \eqref{eq:bnd_C_case_f''_bnded_19} proves that
\begin{equation}\label{eq:bnd_C_case_f''_bnded_21}
\sup_{T\in [0,\infty)}\sup_{s,v\in [0,T]}\E\big[\|\psi'(\theta_{T-s}^{\Theta_s})\|_{L(\R^d,\R)} \|g'(\Theta_{v})Q_v \|_{\R^d}\big]<\infty.
\end{equation}
This, \eqref{eq:bnd_C_case_f''_bnded_16}, and \eqref{eq:bnd_C_case_f''_bnded_18}  establish \eqref{eq:bnd_C_case_f''_bnded_8}. The proof of Lemma~\ref{lem:bnd_C_case_f''_bnded} is thus completed.
\end{proof}

\section{Weak convergence rates for SAAs in the case of polynomially decaying learning rates with  mini-batches}
\label{subsec:weak:SAA:poly}
\sectionmark{}

\begin{prop}\label{prop:rate_sp_case_f''_bnded}
Assume Setting~\ref{sec:setting_2} and 
let $\psi\in C^2(\R^d,\R)$ satisfy that
\begin{equation}\label{eq:rate_sp_case_f''_bnded_0}
\sup_{x\in\R^d}\left(\frac{\E\big[\|G(x,Z_{1,1})\|_{\R^d}^2 \big]}{\big[1+\|x\|_{\R^d} \big]^2} + \big\|\E\big[(\tfrac{\partial^2}{\partial x^2} G )(x,Z_{1,1})\big]\big\|_{L^{(2)}(\R^d,\R^d)}  \right)<\infty
\end{equation}
and $\sup_{x\in\R^d} \max_{i \in \{1,2\}}
\|\psi^{(i)}(x)\|_{L^{(i)}(\R^d,\R)} < \infty$.
Then 
\begin{enumerate}[(i)]
\item\label{item:rate_sp_case_f''_bnded_1} we have that 
$
\lb x\in \R^d\colon g(x)=0\rb=\lb \Xi \rb
$
and
\item\label{item:rate_sp_case_f''_bnded_2} there exists  $C \in \R$ such that for all $n\in\N$ we have that
\begin{equation}
 |\E[\psi(\Theta_{\mathfrak{t}_{n}})]- \psi(\Xi)|  \leq C n^{2\epsilon-1}.
\end{equation}
\end{enumerate}
\end{prop}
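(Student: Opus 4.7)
The plan is to derive the weak error rate by combining the a priori estimates from the previous sections with the master inequality in Corollary~\ref{cor:sp_case_gamma}.

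For item~\eqref{item:rate_sp_case_f''_bnded_1}, I would argue directly from \eqref{eq:setting_2_3_2}: setting $x=\Xi$ in the first inequality there yields $0\le -L\|g(\Xi)\|_{\R^d}^2$, whence $g(\Xi)=0$, and any zero $y$ of $g$ satisfies $0=\langle y-\Xi, g(y)-g(\Xi)\rangle_{\R^d}\le -L\|y-\Xi\|_{\R^d}^2$ by the second (strongly monotone) inequality in \eqref{eq:setting_2_3_2}, forcing $y=\Xi$.

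For item~\eqref{item:rate_sp_case_f''_bnded_2}, the first task is to recast Setting~\ref{sec:setting_2} as an instance of Setting~\ref{sec:setting_1} so that Corollary~\ref{cor:sp_case_gamma} applies. I would enumerate the doubly-indexed i.i.d.\ family $(Z_{m+1,n})$ as a single i.i.d.\ sequence $(\tilde Z_j)_{j\in\N}$ via the bijection $(m+1,n)\mapsto n+\sum_{k=0}^{m-1}\mathfrak{M}_k$, and define $\gamma\colon[0,\infty)\to\{A\subseteq\N:\#_A<\infty\}$ so that $\gamma(\mathfrak{t}_m)$ is the corresponding block of $\mathfrak{M}_m$ indices and $\gamma(t)=\emptyset$ otherwise. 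Under this identification the recursion \eqref{eq:setting_2_6} matches \eqref{eq: equation Theta 2}, the disjointness $\gamma(v)\cap\gamma(w)=\emptyset$ for $v\ne w$ is automatic, and $\{s\in[0,\infty):\gamma(s)\ne\emptyset\}=\{0\}\cup\{\sum_{n=1}^m\eta/n^{1-\epsilon}:m\in\N\}$. It remains to verify the growth hypothesis
\[
\sup_{x\in\R^d}\left(\frac{\E[\|G(x,Z_{1,1})\|_{\R^d}^2]}{(1+\|x\|_{\R^d})^2}+\frac{\|\E[(\tfrac{\partial}{\partial x} G)(x,Z_{1,1})]\|_{L(\R^d,\R^d)}}{1+\|x\|_{\R^d}}+\|\psi'(x)\|_{L(\R^d,\R)}\right)<\infty;
\]
the first and third summands are given, while for the middle one I would invoke Lemma~\ref{lem:f_C^n} with $n=2$ (licensed by \eqref{eq:setting_2_2}) to identify $g''(x)=\E[(\tfrac{\partial^2}{\partial x^2} G)(x,Z_{1,1})]$, combine this with the uniform bound on $\|\E[(\tfrac{\partial^2}{\partial x^2} G)]\|_{L^{(2)}(\R^d,\R^d)}$ from \eqref{eq:rate_sp_case_f''_bnded_0}, and conclude via Lemma~\ref{lem:diff_bnded_map_linear_growth} that $g'(x)=\E[(\tfrac{\partial}{\partial x} G)(x,Z_{1,1})]$ grows at most linearly.

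Corollary~\ref{cor:sp_case_gamma}, applied with a fixed $\lambda\in(0,1)$ (say $\lambda=1/2$), then yields for every $k\in\N$ the inequality
\[
|\E[\psi(\Theta_{\mathfrak{t}_{k-1}})]-\psi(\Xi)|\le k^{2\epsilon-1}\!\left[K(\lambda)\,C(\mathfrak{t}_{k-1})+k^{1-2\epsilon}e^{-L\mathfrak{t}_{k-1}}\sup_{\alpha\in[0,1]}\|\psi'(\alpha\theta^\xi_{\mathfrak{t}_{k-1}}+(1-\alpha)\Xi)\|_{L(\R^d,\R)}\|\xi-\Xi\|_{\R^d}\right].
\]
Here $K(\lambda)$ is finite by definition, Lemma~\ref{lem:bnd_C_case_f''_bnded} provides $\sup_{T\in[0,\infty)}C(T)<\infty$, and Lemma~\ref{lem:bnd_exp_term} bounds the exponential-decay factor uniformly in $k$, so there exists $\tilde C\in[0,\infty)$ with $|\E[\psi(\Theta_{\mathfrak{t}_{k-1}})]-\psi(\Xi)|\le\tilde C k^{2\epsilon-1}$ for all $k\in\N$. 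Reindexing $k=n+1$ and using that $(n+1)^{2\epsilon-1}\le n^{2\epsilon-1}$ (valid because $2\epsilon-1<0$) then yields the claimed bound for every $n\in\N$. The main obstacle is the bootstrap of the linear-growth bound for $\|\E[(\tfrac{\partial}{\partial x} G)]\|$ from the uniform bound on the second derivative (via Lemmas~\ref{lem:f_C^n} and \ref{lem:diff_bnded_map_linear_growth}) together with the bookkeeping needed to realize Setting~\ref{sec:setting_2} inside Setting~\ref{sec:setting_1}; beyond that, the argument is a mechanical assembly of Lemmas~\ref{lem:bnd_C_case_f''_bnded} and~\ref{lem:bnd_exp_term} with Corollary~\ref{cor:sp_case_gamma}.
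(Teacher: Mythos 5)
Your proposal follows essentially the same route as the paper's proof: argue item~(i) directly from \eqref{eq:setting_2_3_2}; derive linear growth of $g'$ from the uniform bound on the second derivative via Lemma~\ref{lem:f_C^n} and Lemma~\ref{lem:diff_bnded_map_linear_growth}; then invoke Corollary~\ref{cor:sp_case_gamma} with the uniform bounds supplied by Lemma~\ref{lem:bnd_C_case_f''_bnded} and Lemma~\ref{lem:bnd_exp_term}, and reindex using $(n+1)^{2\epsilon-1}\le n^{2\epsilon-1}$. The only difference is cosmetic: you spell out the bookkeeping that realizes Setting~\ref{sec:setting_2} inside Setting~\ref{sec:setting_1} (which the paper leaves implicit), and you should really attribute the finiteness of $K(\lambda)$ to Lemma~\ref{lem:bnd_sum_main} rather than to the definition, but neither affects the correctness.
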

\begin{proof}[Proof of Proposition~\ref{prop:rate_sp_case_f''_bnded}]
Throughout this proof let $\lambda\in (0,1)$, 
let $K(\lambda) \in (0,\infty)$ be the  real number given by
\begin{equation}
K(\lambda) 
 = \sup_{n \in \N \cap [2,\infty)} \bigg[ \frac{\eta^2e^{L\eta+\frac{L\eta}{\epsilon}} }{2(1-2\epsilon)} \Big(  n^{1-2\epsilon} \Big[ 2e^{-\frac{L\eta}{\epsilon}(1-\lambda^{\epsilon})n^{\epsilon}} + (n-1)^{2\epsilon-2} \Big] + \lambda^{2\epsilon-1}\Big)  \bigg]
\end{equation}
(cf.~Lemma~\ref{lem:bnd_sum_main}), let $Q\colon [0,\infty)\times\Omega\to\R^d$ be the stochastic process which satisfies for all
$m \in \N_0$, $t \in [\mathfrak{t}_m, \mathfrak{t}_{m+1})$ that
\begin{equation}\label{eq:rate_sp_case_f''_bnded_7_2}
Q_t = \frac{1}{\mathfrak{M}_m} \textstyle\sum\limits_{n=1}^{\mathfrak{M}_m}G(\Theta_{\mathfrak{t}_m},Z_{m+1, n}),
\end{equation}
and let $R\colon [0,\infty)\to [0,\infty]$ satisfy for all $T\in [0,\infty)$ that
\begin{align*}
\label{eq:rate_sp_case_f''_bnded_7_3}
& R(T)  = \sup_{s,v\in [0,T]}\E \bigg[ \|Q_s-g(\Theta_{\floorgrid{s}})\|_{\R^d}\|Q_s\|_{\R^d} \numberthis \\&  \cdot \bigg(\int_0^1  e^{-L(T-s)} \|\psi''( \theta_{T-s}^{\lambda \Theta_s + (1-\lambda)\Theta_{\floorgrid{s}}})\|_{L^{(2)}(\R^d,\R)}  + \|\psi'( \theta_{T-s}^{\lambda \Theta_s + (1-\lambda)\Theta_{\floorgrid{s}}} )\|_{L(\R^d,\R)} \\
&  \cdot \int_0^{T-s} e^{-Lu}\big\|
g''(\theta_u^{\lambda \Theta_s + (1-\lambda)\Theta_{\floorgrid{s}}})
\big\|_{L^{(2)}(\R^d,\R^d)}\, du\, d\lambda \bigg) + \|\psi'(\theta_{T-s}^{\Theta_s})\|_{L(\R^d,\R)} \|g'(\Theta_{v})
Q_v\|_{\R^d}\bigg].
\end{align*}
Note that \eqref{eq:rate_sp_case_f''_bnded_0} and Lemma~\ref{lem:f_C^n} prove that 
\begin{equation}\label{eq:rate_sp_case_f''_bnded_7_4}
\sup_{x\in\R^d}\|g''(x)\|_{L^{(2)}(\R^d,\R^d)}<\infty.
\end{equation}
This, Lemma~\ref{lem:f_C^1}, and Lemma~\ref{lem:diff_bnded_map_linear_growth} demonstrate that
\begin{equation}\label{eq:rate_sp_case_f''_bnded_7_5}
\sup_{x\in\R^d} \left(\frac{\big\|\E\big[(\tfrac{\partial}{\partial x}G)(x,Z_{1,1})\big]\big\|_{L(\R^d,\R^d)}}{\big[1+\|x\|_{\R^d}\big]} \right)
=\sup_{x\in\R^d} \left(\frac{\|g'(x)\|_{L(\R^d,\R^d)}}{\big[1+\|x\|_{\R^d}\big]} \right)<\infty.
\end{equation}
Next observe that \eqref{eq:setting_2_3_2} assures that $\Xi$ is the unique zero of $g$. This proves item~\eqref{item:rate_sp_case_f''_bnded_1}. Item~\eqref{item:motionless_pt_f_3} in Lemma~\ref{lem:motionless_pt_f} therefore ensures that
\begin{equation}
\limsup_{s\to\infty} \|\theta_s^{\xi}-\Xi\|_{\R^d} =0.
\end{equation}
Corollary~\ref{cor:sp_case_gamma}, \eqref{eq:rate_sp_case_f''_bnded_0}, \eqref{eq:rate_sp_case_f''_bnded_7_5}, and \eqref{eq:setting_2_3_2} hence assure that for all  $k\in\N$ we have that
\begin{align}
 \label{eq:rate_sp_case_f''_bnded_8}
 \begin{split}
 &|\E[\psi(\Theta_{\mathfrak{t}_{k}})]- \psi(\Xi)|  \\
&\leq (k+1)^{2\epsilon-1} \bigg[ K(\lambda)R(\mathfrak{t}_{k})\\
&
  + (k+1)^{1-2\epsilon}e^{-L\mathfrak{t}_{k}}
 \sup_{\alpha \in [0,1]}\Big( \big\| \psi'(\alpha \theta_{\mathfrak{t}_{k}}^{\xi}+(1-\alpha)\Xi) \big\|_{L(\R^d,\R)}\Big) \|\xi-\Xi\|_{\R^d}\bigg]\\
&  \leq (k+1)^{2\epsilon-1} \bigg[ K(\lambda)\sup_{T\in [0,\infty)}R(T)\\
&  + (k+1)^{1-2\epsilon}e^{-L\mathfrak{t}_{k}}
  \sup_{\alpha \in[0,1]}\Big( \big\| \psi'(\alpha \theta_{\mathfrak{t}_{k}}^{\xi}+(1-\alpha)\Xi) \big\|_{L(\R^d,\R)}\Big) \|\xi-\Xi\|_{\R^d}\bigg]\\
&    \leq (k+1)^{2\epsilon-1} \bigg[ K(\lambda)\sup_{T\in [0,\infty)}R(T)\\
&  +\sup_{l\in\N_0}\bigg((l+1)^{1-2\epsilon}e^{-L\mathfrak{t}_{l}} \sup_{\alpha \in[0,1]}\Big( \big\| \psi'(\alpha \theta_{\mathfrak{t}_{l}}^{\xi}+(1-\alpha)\Xi) \big\|_{L(\R^d,\R)}\Big)\bigg) \|\xi-\Xi\|_{\R^d}\bigg].
\end{split}
\end{align}
Next note that Lemma~\ref{lem:bnd_exp_term} and Lemma~\ref{lem:bnd_C_case_f''_bnded} imply that
\begin{multline}\label{eq:rate_sp_case_f''_bnded_9}
 \sup_{l\in\N_0}\bigg((l+1)^{1-2\epsilon}e^{-L\mathfrak{t}_{l}} \sup_{\alpha \in[0,1]}\Big( \big\| \psi'(\alpha \theta_{\mathfrak{t}_{l}}^{\xi}+(1-\alpha)\Xi) \big\|_{L(\R^d,\R)}\Big)\bigg) \|\xi-\Xi\|_{\R^d} \\
+ K(\lambda)\sup_{T\in [0,\infty)}R(T)  <\infty .
\end{multline}
Furthermore, observe that for all $k\in\N$ we have that
\begin{equation}
 (k+1)^{2\epsilon-1} \leq  k^{2\epsilon-1}.
\end{equation}
This, \eqref{eq:rate_sp_case_f''_bnded_9}, and \eqref{eq:rate_sp_case_f''_bnded_8} establish item~\eqref{item:rate_sp_case_f''_bnded_2}. The proof of Proposition~\ref{prop:rate_sp_case_f''_bnded} is thus completed.
\end{proof}

\begin{theorem}\label{cor:no_setting_discrete_version}
Let  $d\in \N$, $ \xi,\,\Xi\in \R^d$, $\epsilon\in (0,\nicefrac{1}{2})$,  $\eta, L,  c \in (0,\infty)$, $(\mathfrak{M}_n)_{n \in \N_0} \subseteq \N$,  $\psi\in C^2(\R^d,\R)$, let $(S,\mathcal{S})$ be a measurable space, let $(\Omega,\F,\P)$ be a probability space, let $Z_{m, n} \colon\Omega \to S$, $(m, n) \in \N^2$, be i.i.d.\ random variables,
let  $G = (G(x, s))_{(x, s) \in \R^d \times S}\colon \R^d\times S\to \R^d$ be  $(\mathcal{B}(\R^d)\otimes \mathcal{S})/\mathcal{B}(\R^d)$-measurable, let $g\colon\R^d\to\R^d$ be a function, assume for all $s \in S$ that $( \R^d \ni x \mapsto G(x,s)\in\R^d)\in C^2(\R^d,\R^d)$,
assume for all $x,y\in\R^d$  that 
\begin{equation}
\E\big[\|G(x,Z_{1,1})\|_{\R^d}^2 \big] \leq c \big[1+\|x\|_{\R^d} \big]^2, \quad \langle x-\Xi,g(x)\rangle_{\R^d} \leq -L \|g(x)\|_{\R^d}^2,
\end{equation}
\vspace{-.5cm}
\begin{equation}
g(x)=\E\big[ G(x,Z_{1,1}) \big],\quad \langle x-y,g(x)-g(y)\rangle_{\R^d} \leq -L \|x-y\|_{\R^d}^2,
\end{equation}
\vspace{-.4cm}
\begin{equation}
\max_{i \in \{1,2\}} \inf_{\delta\in(0,\infty)}\sup_{u\in [-\delta,\delta]^d}\E\Big[ \| (\tfrac{\partial^i}{\partial x^i}G)(x+u,Z_{1,1})\|_{L^{(i)}(\R^d,\R^d)}^{1+\delta}\Big]<\infty,
\end{equation}
and
\begin{equation}
\big\|\E\big[ (\tfrac{\partial^2}{\partial x^2}G)(x,Z_{1,1}) \big]\big\|_{L^{(2)}(\R^d,\R^d)} 
+ \max_{i \in \{1,2\}}
\|\psi^{(i)}(x)\|_{L^{(i)}(\R^d,\R)}  < c
\end{equation}
(cf.~Corollary~\ref{cor:derivative:gen}), and 
 let $\Theta \colon \N_0\times \Omega \to \R^d$ be the stochastic process which satisfies for all $n\in\N$ that $\Theta_0 = \xi$ and 
\begin{equation}
\Theta_n = \Theta_{n-1} + \frac{\eta}{n^{1-\epsilon} \mathfrak{M}_{n-1}} \textstyle\sum\limits_{j=1}^{\mathfrak{M}_{n-1}} G(\Theta_{n-1},Z_{n, j}).
\end{equation}
Then 
\begin{enumerate}[(i)]
\item we have that
$
\lb x\in\R^d\colon g(x)=0\rb = \lb \Xi\rb
$
and 
\item there exists  $C\in [0,\infty)$ such that for all $n\in\N$ we have that
\begin{equation}\label{eq:no_setting_discrete_version_10}
|\E[\psi(\Theta_{n})]- \psi(\Xi)|  \leq C n^{2\epsilon-1}.
\end{equation}
\end{enumerate}
\end{theorem}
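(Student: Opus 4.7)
The plan is to deduce Theorem \ref{cor:no_setting_discrete_version} from Proposition \ref{prop:rate_sp_case_f''_bnded} by casting the discrete recursion into the continuous-time framework of Setting \ref{sec:setting_2}. The identification is natural since, with $\mathfrak{t}_m = \eta \sum_{n=1}^m n^{\epsilon-1}$, one has $\mathfrak{t}_n - \mathfrak{t}_{n-1} = \eta n^{\epsilon-1}$, which matches the step size in the theorem.

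First, I would verify that the hypotheses of Setting \ref{sec:setting_2} hold with the objects $d,\xi,\Xi,\epsilon,\eta,L,(\mathfrak{M}_n),(S,\mathcal{S}),(\Omega,\F,\P),(Z_{m,n}),G,g$ supplied by the theorem. The $C^2$-regularity of $G(\cdot,s)$, the two dissipativity inequalities in \eqref{eq:setting_2_3_2}, the identity $g(x)=\E[G(x,Z_{1,1})]$, and the measurability of $G$ are all given directly. For \eqref{eq:setting_2_2}, note that Jensen's inequality combined with the hypothesis $\E[\|G(x,Z_{1,1})\|_{\R^d}^2]\leq c(1+\|x\|_{\R^d})^2$ yields $\E[\|G(x,Z_{1,1})\|_{\R^d}]<\infty$, and this can be added to the given bound $\inf_\delta \sup_u \E[\|(\tfrac{\partial^i}{\partial x^i}G)(x+u,Z_{1,1})\|_{L^{(i)}(\R^d,\R^d)}^{1+\delta}]<\infty$ to obtain the combined form required by the setting. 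Then define $\theta^\vartheta$ via \eqref{eq:setting_2_4} and let $\Theta^{\mathrm{set}}\colon[0,\infty)\times\Omega\to\R^d$ denote the stochastic process w.c.s.p.\ from \eqref{eq:setting_2_6}.

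Next, I would prove by induction on $n\in\N_0$ that the random variable $\Theta_n$ from the theorem coincides with $\Theta^{\mathrm{set}}_{\mathfrak{t}_n}$. The base case $n=0$ holds since both equal $\xi$. For the induction step, by continuity of $\Theta^{\mathrm{set}}$ and \eqref{eq:setting_2_6} applied on $[\mathfrak{t}_n,\mathfrak{t}_{n+1})$, one has
\begin{equation*}
\Theta^{\mathrm{set}}_{\mathfrak{t}_{n+1}} = \Theta^{\mathrm{set}}_{\mathfrak{t}_n} + \frac{\mathfrak{t}_{n+1}-\mathfrak{t}_n}{\mathfrak{M}_n}\sum_{j=1}^{\mathfrak{M}_n} G(\Theta^{\mathrm{set}}_{\mathfrak{t}_n},Z_{n+1,j}) = \Theta^{\mathrm{set}}_{\mathfrak{t}_n} + \frac{\eta}{(n+1)^{1-\epsilon}\mathfrak{M}_n}\sum_{j=1}^{\mathfrak{M}_n} G(\Theta^{\mathrm{set}}_{\mathfrak{t}_n},Z_{n+1,j}),
\end{equation*}
which, by the induction hypothesis, matches the recursion defining $\Theta_{n+1}$ in the theorem (with the index shift $Z_{n, j} \leftrightarrow Z_{n+1, j}$ being purely cosmetic since the sequence is i.i.d.).

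Finally, I would verify the remaining hypotheses of Proposition \ref{prop:rate_sp_case_f''_bnded}: the polynomial growth of $\E[\|G(x,Z_{1,1})\|_{\R^d}^2]$ is given, the uniform bound $\|\E[(\tfrac{\partial^2}{\partial x^2}G)(x,Z_{1,1})]\|_{L^{(2)}(\R^d,\R^d)}<c$ and the uniform bound $\max_{i\in\{1,2\}}\|\psi^{(i)}(x)\|_{L^{(i)}(\R^d,\R)}<c$ are also explicit. Proposition \ref{prop:rate_sp_case_f''_bnded} then yields $\{x\in\R^d\colon g(x)=0\}=\{\Xi\}$ and a constant $C\in\R$ with $|\E[\psi(\Theta^{\mathrm{set}}_{\mathfrak{t}_n})]-\psi(\Xi)|\leq C n^{2\epsilon-1}$ for all $n\in\N$; by the identification $\Theta_n = \Theta^{\mathrm{set}}_{\mathfrak{t}_n}$, this is the claim. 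The proof is essentially bookkeeping with no real obstacle; the only delicate point is confirming that the mild discrepancy between the integrability hypothesis in the theorem and the slightly stronger form in \eqref{eq:setting_2_2} is bridged by the $L^2$-growth of $G$ via Jensen's inequality.
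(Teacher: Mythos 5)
Your proof is correct and follows exactly the paper's route: the paper's proof of Theorem~\ref{cor:no_setting_discrete_version} consists of the single sentence ``This is a direct consequence of Proposition~\ref{prop:rate_sp_case_f''_bnded},'' and you have filled in the implicit bookkeeping, namely verifying the hypotheses of Setting~\ref{sec:setting_2} (using Jensen to supply the $\E[\|G(x,Z_{1,1})\|_{\R^d}]<\infty$ part of \eqref{eq:setting_2_2}) and identifying $\Theta_n$ with $\Theta^{\mathrm{set}}_{\mathfrak{t}_n}$ by induction. One small remark: the parenthetical about an ``index shift $Z_{n,j}\leftrightarrow Z_{n+1,j}$'' is not needed, since with $m=n-1$ the two recursions use the same indices $Z_{m+1,j}=Z_{n,j}$ verbatim, but this does not affect the argument.
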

\begin{proof}[Proof of Theorem~\ref{cor:no_setting_discrete_version}]
This is a direct consequence of Proposition~\ref{prop:rate_sp_case_f''_bnded}. The proof of Theorem~\ref{cor:no_setting_discrete_version} is thus completed.
\end{proof}

\section{Weak convergence rates for SAAs in the case of polynomially decaying learning rates  without mini-batches}
\label{subsec:SAA:without}
\sectionmark{}

\begin{cor}
\label{cor:no_setting_discrete_version_one_sample}
Let  $d\in \N$, $ \xi,\,\Xi\in \R^d$, $\epsilon\in (0,\nicefrac{1}{2})$,  $\eta, L,  c \in (0,\infty)$, $\psi\in C^2(\R^d,\R)$,  let $(S,\mathcal{S})$ be a measurable space, let $(\Omega,\F,\P)$ be a probability space, 
let  $G  = (G(x, s))_{(x, s) \in \R^d \times S} \colon \R^d\times S\to \R^d$ be  $(\mathcal{B}(\R^d)\otimes \mathcal{S})/\mathcal{B}(\R^d)$-measurable,
let $Z_n\colon\Omega \to S$, $n \in \N$, be i.i.d.\ random variables, let $g\colon\R^d\to\R^d$ be a function, assume for all $s \in S$ that $(\R^d\ni x \mapsto G(x,s)\in\R^d)\in C^2(\R^d,\R^d)$,
 assume for all $x,y\in\R^d$  that 
\begin{equation}
\E\big[\|G(x,Z_1)\|_{\R^d}^2 \big] \leq c \big[1+\|x\|_{\R^d} \big]^2, \quad \langle x-\Xi,g(x)\rangle_{\R^d} \leq -L \|g(x)\|_{\R^d}^2,
\end{equation}
\vspace{-.5cm}
\begin{equation}
g(x)=\E\big[ G(x,Z_1) \big],\quad \langle x-y,g(x)-g(y)\rangle_{\R^d} \leq -L \|x-y\|_{\R^d}^2,
\end{equation}
\vspace{-.4cm}
\begin{equation}
\max_{i \in \{1,2\}} \inf_{\delta\in(0,\infty)}\sup_{u\in [-\delta,\delta]^d}\E\Big[ \| (\tfrac{\partial^i}{\partial x^i}G)(x+u,Z_1)\|_{L^{(i)}(\R^d,\R^d)}^{1+\delta}\Big]<\infty,
\end{equation}
 and
\begin{equation}
\big\|\E\big[ (\tfrac{\partial^2}{\partial x^2}G)(x,Z_1) \big]\big\|_{L^{(2)}(\R^d,\R^d)} 
+ \max_{i \in \{1,2\}}
\|\psi^{(i)}(x)\|_{L^{(i)}(\R^d,\R)}  < c
\end{equation}
(cf.~Corollary~\ref{cor:derivative:gen}), and let $\Theta \colon \N_0\times \Omega \to \R^d$ be the stochastic process which satisfies for all $n\in\N$   that $\Theta_0 = \xi$ and 
\begin{equation}
\Theta_n = \Theta_{n-1} + \tfrac{\eta}{n^{1-\epsilon}} G(\Theta_{n-1},Z_n).
\end{equation}
Then 
\begin{enumerate}[(i)]
\item we have that
$
\lb x\in\R^d\colon g(x)=0\rb = \lb \Xi\rb
$
and 
\item there exists  $C \in \R$ such that for all $n\in\N$ we have that
\begin{equation}
|\E[\psi(\Theta_{n})]- \psi(\Xi)|  \leq C n^{2\epsilon-1}.
\end{equation}
\end{enumerate}
\end{cor}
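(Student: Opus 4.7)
The plan is to obtain Corollary~\ref{cor:no_setting_discrete_version_one_sample} as an immediate specialization of Theorem~\ref{cor:no_setting_discrete_version} by taking the constant mini-batch size $\mathfrak{M}_n = 1$ for every $n \in \N_0$. All the analytic hypotheses on $G$, $g$, $\psi$, and $\Xi$ in the corollary match verbatim those of the theorem, so the only issue is to reconcile the single-indexed i.i.d.\ family $(Z_n)_{n \in \N}$ of the corollary with the doubly-indexed i.i.d.\ family $(Z_{m,n})_{(m,n) \in \N^2}$ demanded by the theorem.

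First, I would fix any bijection $\pi \colon \N^2 \to \N$ and set $\tilde{Z}_{m,n} := Z_{\pi(m,n)}$ for every $(m,n) \in \N^2$. Because $(Z_n)_{n \in \N}$ is i.i.d., the reindexed family $(\tilde{Z}_{m,n})_{(m,n) \in \N^2}$ is also i.i.d.\ on $(\Omega, \F, \P)$ with the same marginal distribution as $Z_1$. In particular, for every Borel-measurable functional $\Phi$ and every $x \in \R^d$ we have $\E[\Phi(G(x, \tilde{Z}_{1,1}))] = \E[\Phi(G(x, Z_1))]$, so the quantitative hypotheses of the corollary transfer word-for-word to the corresponding hypotheses of the theorem with $\tilde{Z}_{m,n}$ in place of $Z_{m,n}$.

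Next, I would introduce $\tilde{\Theta} \colon \N_0 \times \Omega \to \R^d$ as the SAA process produced by the theorem for the tilded data and $\mathfrak{M}_n = 1$; this recursion reads $\tilde{\Theta}_0 = \xi$ and $\tilde{\Theta}_n = \tilde{\Theta}_{n-1} + \tfrac{\eta}{n^{1-\epsilon}} G(\tilde{\Theta}_{n-1}, \tilde{Z}_{n,1})$ for $n \in \N$. Theorem~\ref{cor:no_setting_discrete_version} then gives $\{ x \in \R^d : g(x) = 0 \} = \{\Xi\}$ and yields a constant $C \in [0, \infty)$ with $|\E[\psi(\tilde{\Theta}_n)] - \psi(\Xi)| \leq C n^{2\epsilon - 1}$ for every $n \in \N$.

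To transfer the estimate from $\tilde{\Theta}_n$ back to $\Theta_n$, I would observe that the sequence $(\tilde{Z}_{n,1})_{n \in \N}$ is i.i.d.\ with the common marginal law of $Z_1$, hence its joint distribution on the product measurable space $(S^{\N}, \mathcal{S}^{\otimes \N})$ equals that of $(Z_n)_{n \in \N}$. Since both $\Theta_n$ and $\tilde{\Theta}_n$ are obtained by the same deterministic Borel-measurable map $S^n \to \R^d$ (determined by $\xi$, $G$, $\eta$, $\epsilon$) applied to $(Z_1,\dots,Z_n)$ and $(\tilde{Z}_{1,1},\dots,\tilde{Z}_{n,1})$ respectively, a short induction shows that $\Theta_n$ and $\tilde{\Theta}_n$ have identical distributions on $\R^d$, so $\E[\psi(\Theta_n)] = \E[\psi(\tilde{\Theta}_n)]$ and the desired bound follows with the same constant $C$. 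There is essentially no hard step here; the only mild bookkeeping is the identification of laws of $\Theta_n$ and $\tilde{\Theta}_n$ via the reindexing $\pi$.
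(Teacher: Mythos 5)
Your proposal is correct and follows essentially the same route as the paper, which simply declares Corollary~\ref{cor:no_setting_discrete_version_one_sample} a direct consequence of Theorem~\ref{cor:no_setting_discrete_version} under the specialization $\mathfrak{M}_n = 1$. You additionally spell out the harmless but genuine bookkeeping needed to reconcile the singly-indexed i.i.d.\ family $(Z_n)_{n\in\N}$ of the corollary with the doubly-indexed family $(Z_{m,n})_{(m,n)\in\N^2}$ required by the theorem, via a bijection $\pi\colon\N^2\to\N$ and an identification of the laws of $\Theta_n$ and the reindexed process $\tilde\Theta_n$, which the paper's one-line proof leaves implicit.
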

\begin{proof}[Proof of Corollary~\ref{cor:no_setting_discrete_version_one_sample}]
This is a direct consequence of Theorem~\ref{cor:no_setting_discrete_version}. The proof of Corollary~\ref{cor:no_setting_discrete_version_one_sample} is thus completed.
\end{proof}

\section{SAAs for random rotation problems}
\label{subsec:example}
\sectionmark{}

\begin{lemma}\label{example:random_rotation}
Let $(\Omega,\mathcal{F},\mathbb{P})$ be a probability space, let $Z_n\colon \Omega\to [\nicefrac{\pi}{4},\nicefrac{5\pi}{4}]$, $n \in \N$, be  i.i.d.\ random variables, assume that $Z_1$  is continuous uniformly distributed on
 $(\nicefrac{\pi}{4},\nicefrac{5\pi}{4})$, let $A\colon [\nicefrac{\pi}{4},\nicefrac{5\pi}{4}]\to \R^{2\times 2}$ satisfy for all $s\in [\nicefrac{\pi}{4},\nicefrac{5\pi}{4}] $ that
\begin{equation}\label{eq:random_rotation_1}
A(s)=\begin{pmatrix}
\cos(s) & -\sin(s)\\
\sin(s) & \cos(s)\\
\end{pmatrix},
\end{equation}
let $G = (G(x, s))_{(x, s) \in \R^2 \times [\nicefrac{\pi}{4},\nicefrac{5\pi}{4}]} \colon \R^2\times [\nicefrac{\pi}{4},\nicefrac{5\pi}{4}]\to \R^2$ satisfy for all $x\in\R^2$, $s\in [\nicefrac{\pi}{4},\nicefrac{5\pi}{4}]$ that
\begin{equation}\label{eq:random_rotation_2}
G(x,s)=A(s)x,
\end{equation}
and let $g\colon\R^2\to\R^2$ satisfy for all $x\in\R^2$ that
\begin{equation}\label{eq:random_rotation_3}
g(x)=\E\big[G(x,Z_1)\big].
\end{equation}
Then
\begin{enumerate}[(i)]
\item\label{item:random_rotation_1} we have for all $x\in\R^2$ that
\begin{equation}\label{eq:random_rotation_4}
\E\big[A(Z_1)\big]=\tfrac{\sqrt{2}}{\pi}\begin{pmatrix}
-1 & -1 \\
1 & -1 \\
\end{pmatrix}
=\tfrac{2}{\pi}A\big(\tfrac{3\pi}{4}\big)
\end{equation}
and
\begin{equation}\label{eq:random_rotation_4_2}
g(x)=\tfrac{2}{\pi}A\big(\tfrac{3\pi}{4}\big)x,
\end{equation}
\item\label{item:random_rotation_2} we have for all $s\in [\nicefrac{\pi}{4},\nicefrac{5\pi}{4}]$ that
\begin{equation}\label{eq:random_rotation_6}
\big(\R^2\ni x \mapsto A(s)x\in \R^2\big)\in C^2(\R^2,\R^2),
\end{equation}
\item\label{item:random_rotation_3} we have for all $x\in\R^2$ that
\begin{equation}\label{eq:random_rotation_5}
\max_{i \in \{1,2\}} \sup_{u\in[-1,1]^2}\E\Big[ 
\|(\tfrac{\partial^i}{\partial x^i}G)(x+u,Z_1)\|_{L^{(i)}(\R^2,\R^2)}^2
 \Big]<\infty,
\end{equation}
\item\label{item:random_rotation_3_02} we have for all $s\in [\nicefrac{\pi}{4},\nicefrac{5\pi}{4}]$, $x\in\R^2$ that
\begin{equation}\label{eq:random_rotation_5_02}
\|A(s)x\|_{\R^2}=\|x\|_{\R^2}, 
\qquad 
\|g(x)\|_{\R^2}=\tfrac{2}{\pi}\|x\|_{\R^2},
\end{equation}
and 
\begin{equation}\label{eq:random_rotation_5_03}
\E\big[\|G(x,Z_1)\|_{\R^2}^2\big]=\|x\|_{\R^2}^2,
\end{equation}
\item\label{item:random_rotation_4} we have for all $s\in [\nicefrac{\pi}{4},\nicefrac{5\pi}{4}]$, $x\in\R^2$ that
\begin{equation}\label{eq:random_rotation_7}
\langle A(s)x,x\rangle_{\R^2}= \cos(s)\|x\|_{\R^2}^2,
\end{equation}
and 
\item\label{item:random_rotation_5} we have for all  $x,y\in\R^2$ that
\begin{equation}\label{eq:random_rotation_8}
\langle x-y,g(x)-g(y)\rangle_{\R^2}= -\tfrac{\sqrt{2}}{\pi}\|x-y\|_{\R^2}^2.
\end{equation}
\end{enumerate}
\end{lemma}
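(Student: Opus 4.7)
The plan is to verify items \eqref{item:random_rotation_1}--\eqref{item:random_rotation_5} essentially by direct computation using the explicit form of the rotation matrix $A(s)$ together with elementary trigonometric identities; the only nontrivial ingredient is the computation of the expectation in item~\eqref{item:random_rotation_1}, which exploits the uniform distribution of $Z_1$ on the interval of length $\pi$ starting at $\pi/4$.

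First, for item~\eqref{item:random_rotation_1}, I will compute $\E[\cos(Z_1)]$ and $\E[\sin(Z_1)]$ by integrating against the density $\frac{1}{\pi}\mathbbm{1}_{(\pi/4,5\pi/4)}$. The fundamental theorem of calculus yields $\E[\cos(Z_1)] = \tfrac{1}{\pi}(\sin(5\pi/4)-\sin(\pi/4)) = -\tfrac{\sqrt{2}}{\pi}$ and analogously $\E[\sin(Z_1)] = \tfrac{\sqrt{2}}{\pi}$. Plugging these into \eqref{eq:random_rotation_1} and comparing with the explicit values $\cos(3\pi/4) = -\tfrac{\sqrt{2}}{2}$ and $\sin(3\pi/4) = \tfrac{\sqrt{2}}{2}$ gives both equalities in \eqref{eq:random_rotation_4}. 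The identity \eqref{eq:random_rotation_4_2} then follows because $G(x,s) = A(s)x$ is linear in $x$, so the expectation may be interchanged with matrix multiplication.

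Items \eqref{item:random_rotation_2}--\eqref{item:random_rotation_3} reduce to the observation that $x\mapsto A(s)x$ is linear. Hence the first spatial derivative is the constant map $A(s)$ and the second spatial derivative vanishes identically; in particular $\|(\tfrac{\partial}{\partial x}G)(x,s)\|_{L(\R^2,\R^2)} = \|A(s)\|_{L(\R^2,\R^2)}$ is bounded uniformly in $s\in[\pi/4,5\pi/4]$ (since rotation matrices have operator norm $1$), which produces the finiteness in \eqref{eq:random_rotation_5}. For item~\eqref{item:random_rotation_3_02}, the norm preservation $\|A(s)x\|_{\R^2}=\|x\|_{\R^2}$ follows from a direct computation using $\cos^2(s)+\sin^2(s)=1$; the second identity in \eqref{eq:random_rotation_5_02} comes from $\|g(x)\|_{\R^2} = \tfrac{2}{\pi}\|A(3\pi/4)x\|_{\R^2}$ together with the norm preservation, and \eqref{eq:random_rotation_5_03} follows analogously.

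For item~\eqref{item:random_rotation_4}, I expand $\langle A(s)x,x\rangle_{\R^2} = x_1(\cos(s)x_1 - \sin(s)x_2) + x_2(\sin(s)x_1 + \cos(s)x_2)$ and note that the cross terms cancel, leaving $\cos(s)(x_1^2+x_2^2) = \cos(s)\|x\|_{\R^2}^2$. Finally, for item~\eqref{item:random_rotation_5}, the linearity of $g$ established in \eqref{eq:random_rotation_4_2} yields $g(x)-g(y) = \tfrac{2}{\pi}A(3\pi/4)(x-y)$; applying \eqref{eq:random_rotation_7} with $s=3\pi/4$ to the vector $x-y$ and using $\cos(3\pi/4)=-\tfrac{\sqrt{2}}{2}$ gives exactly $-\tfrac{\sqrt{2}}{\pi}\|x-y\|_{\R^2}^2$. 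There is no genuine obstacle here; the entire lemma is a bookkeeping exercise, with the mild subtlety being only to keep track of the factor $\tfrac{2}{\pi}$ relating $\E[A(Z_1)]$ to $A(3\pi/4)$.
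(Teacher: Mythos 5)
Your proposal is correct and follows essentially the same approach as the paper: integrating the sine and cosine against the uniform density, exploiting the linearity of $x\mapsto A(s)x$ for the spatial derivatives, the norm preservation $\|A(s)x\|_{\R^2}=\|x\|_{\R^2}$ via $\cos^2+\sin^2=1$, the cancellation of cross terms in $\langle A(s)x,x\rangle_{\R^2}$, and the linearity of $g$ for the final coercivity identity. There is no substantive difference between your argument and the paper's.
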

\begin{proof}[Proof of Lemma~\ref{example:random_rotation}]
First, observe that
\begin{equation}\label{eq:random_rotation_9}
\begin{split}
\E\big[ \!\cos(Z_1) \big]
 = \int_{\nicefrac{\pi}{4}}^{\nicefrac{5\pi}{4}}\cos(s)\tfrac{1}{\pi}\, ds
 = \tfrac{1}{\pi}\big[\sin(s)\big]_{s=\nicefrac{\pi}{4}}^{s=\nicefrac{5\pi}{4}}
 = \tfrac{1}{\pi}(-\tfrac{\sqrt{2}}{2}-\tfrac{\sqrt{2}}{2})=-\tfrac{\sqrt{2}}{\pi}
\end{split}
\end{equation}
and
\begin{equation}\label{eq:random_rotation_10}
\begin{split}
\E\big[ \!\sin(Z_1) \big]
 = \int_{\nicefrac{\pi}{4}}^{\nicefrac{5\pi}{4}}\sin(s)\tfrac{1}{\pi}\, ds
 = -\tfrac{1}{\pi}\big[\cos(s)\big]_{s=\nicefrac{\pi}{4}}^{s=\nicefrac{5\pi}{4}}
 = -\tfrac{1}{\pi}(-\tfrac{\sqrt{2}}{2}-\tfrac{\sqrt{2}}{2})=\tfrac{\sqrt{2}}{\pi}.
\end{split}
\end{equation}
This and \eqref{eq:random_rotation_1}  prove \eqref{eq:random_rotation_4}. Combining this, \eqref{eq:random_rotation_2}, and \eqref{eq:random_rotation_3} demonstrates \eqref{eq:random_rotation_4_2}. This establishes item~\eqref{item:random_rotation_1}.
Moreover, note that item~\eqref{item:random_rotation_2} is obvious.
Next observe that for all $x\in\R^2$, $s\in [\nicefrac{\pi}{4},\nicefrac{5\pi}{4}] $ we have that
\begin{equation}\label{eq:random_rotation_11}
(\tfrac{\partial}{\partial x}G)(x,s)=A(s)\qquad\text{and}\qquad (\tfrac{\partial^2}{\partial x^2}G)(x,s)=0.
\end{equation}
Furthermore, note that for all $s\in [\nicefrac{\pi}{4},\nicefrac{5\pi}{4}]$, $x=(x_1,x_2)\in\R^2$ we have that
\begin{equation}\label{eq:random_rotation_11_02}
\begin{split}
\|A(s)x\|^2_{\R^2} &= (\cos(s) x_1 - \sin(s) x_2)^2 + (\sin(s) x_1 + \cos(s) x_2)^2\\
& = \cos(s)^2x_1^2 - 2 \cos(s)\sin(s)x_1x_2 + \sin(s)^2x_2^2\\
& \quad + \sin(s)^2x_1^2 + 2 \cos(s)\sin(s)x_1x_2 + \cos(s)^2x_2^2\\
& = x_1^2 + x_2^2 = \|x\|_{\R^2}^2.
\end{split}
\end{equation}
This and \eqref{eq:random_rotation_11} establish item~\eqref{item:random_rotation_3}.
Next observe that \eqref{eq:random_rotation_11_02} and \eqref{eq:random_rotation_4_2} prove that for all $x \in \R^2$ we have that
\begin{equation}\label{eq:random_rotation_11_04}
\|g(x)\|_{\R^2}=\tfrac{2}{\pi}\|x\|_{\R^2}.
\end{equation}
In addition, observe that  \eqref{eq:random_rotation_11_02} and \eqref{eq:random_rotation_2} ensure that for all $x\in\R^2$ we have that
\begin{equation}\label{eq:random_rotation_11_05}
\E\big[\|G(x,Z_1)\|_{\R^2}^2\big]
= \E\big[\|A(Z_1)x\|_{\R^2}^2\big]
= \E\big[\|x\|_{\R^2}^2\big]
= \|x\|_{\R^2}^2.
\end{equation}
Combining this, \eqref{eq:random_rotation_11_02}, and \eqref{eq:random_rotation_11_04} establishes item~\eqref{item:random_rotation_3_02}.
Next note that for all $x=(x_1,x_2)\in\R^2$, $s\in [\nicefrac{\pi}{4},\nicefrac{5\pi}{4}] $ we have that
\begin{align*}
\langle A(s)x,x\rangle_{\R^2}
& = \langle (x_1\cos(s) - x_2\sin(s), x_1\sin(s) + x_2\cos(s)),(x_1, x_2)\rangle_{\R^2}\\
& = x_1^2\cos(s) + x_2^2\cos(s)\\
& = \cos(s)\|x\|_{\R^2}^2. \numberthis \label{eq:random_rotation_12}
\end{align*}
This proves item~\eqref{item:random_rotation_4}.
Combining this with~\eqref{eq:random_rotation_4_2}  assures that for all $x,y\in\R^2$ we have that
\begin{equation}\label{eq:random_rotation_13}
\begin{split}
\langle x - y, g(x) - g(y)\rangle_{\R^2}
&= \tfrac{2}{\pi}\langle x - y, A\big(\tfrac{3\pi}{4}\big)(x-y)\rangle_{\R^2} \\
& = \tfrac{2}{\pi}\langle A\big(\tfrac{3\pi}{4}\big)(x-y), (x-y) \rangle_{\R^2} \\
&=  \tfrac{2}{\pi} \cos\!\big(\tfrac{3 \pi}{4}\big) \| x - y \|_{\R^2}^2 = -\tfrac{\sqrt{2}}{\pi}\| x - y \|_{\R^2}^2. 
\end{split}
\end{equation}
This  establishes item~\eqref{item:random_rotation_5}.
The proof of Lemma~\ref{example:random_rotation} is thus completed.
\end{proof}

\begin{cor}
	\label{cor:example}
Let $\xi \in \R^2$, $\varepsilon \in (0, \nicefrac{1}{2})$, $\eta \in (0, \infty)$, $\psi \in C^2(\R^2, \R)$ satisfy that $\sup_{x \in \R^2} \max_{i \in \{1, 2\}} \allowbreak \|\psi^{(i)} (x)\|_{L^{(i)}(\R^2, \R)}< \infty$,
let $(\Omega,\mathcal{F},\mathbb{P})$ be a probability space, let $Z_n\colon \Omega\to [\nicefrac{\pi}{4},\nicefrac{5\pi}{4}]$, $n \in \N$, be  i.i.d.\ random variables,
assume that $Z_1$  is continuous uniformly distributed on 
$(\nicefrac{\pi}{4},\nicefrac{5\pi}{4})$, let $A\colon [\nicefrac{\pi}{4},\nicefrac{5\pi}{4}]\to \R^{2\times 2}$ satisfy for all $s\in [\nicefrac{\pi}{4},\nicefrac{5\pi}{4}] $ that
\begin{equation}
A(s)=\begin{pmatrix}
\cos(s) & -\sin(s)\\
\sin(s) & \cos(s)\\
\end{pmatrix},
\end{equation}
let $G = (G(x, s))_{(x, s) \in \R^2 \times [\nicefrac{\pi}{4},\nicefrac{5\pi}{4}]} \colon \R^2\times [\nicefrac{\pi}{4},\nicefrac{5\pi}{4}]\to \R^2$ satisfy for all $x\in\R^2$, $s\in [\nicefrac{\pi}{4},\nicefrac{5\pi}{4}]$ that
\begin{equation}
G(x,s)=A(s)x,
\end{equation}
let $g\colon\R^2\to\R^2$ satisfy for all $x\in\R^2$ that
\begin{equation}
g(x)=\E\big[G(x,Z_1)\big],
\end{equation}
and let $\Theta \colon \N_0\times \Omega \to \R^2$ be the stochastic process which satisfies for all $n\in\N$   that $\Theta_0 = \xi$ and 
\begin{equation}
\Theta_n = \Theta_{n-1} + \tfrac{\eta}{n^{1-\epsilon}} G(\Theta_{n-1},Z_n).
\end{equation}
Then 
\begin{enumerate}[(i)]
	\item
	\label{item:ex:1}
we have that $\lb x \in \R^2 \colon  g(x) = 0 \rb = \lb 0 \rb$
	and 
	\item
	\label{item:ex:2}
 there exists  $C \in \R$ such that for all $n\in\N$ we have that
	\begin{equation}
	|\E [\psi(\Theta_{n})]- \psi(0)|  \leq C n^{2\epsilon-1}.
	\end{equation}
\end{enumerate}
\end{cor}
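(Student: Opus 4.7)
The plan is to deduce Corollary~\ref{cor:example} directly from Corollary~\ref{cor:no_setting_discrete_version_one_sample} by taking $\Xi = 0$ and verifying each hypothesis through Lemma~\ref{example:random_rotation}. First, I would invoke item~\eqref{item:random_rotation_1} of Lemma~\ref{example:random_rotation} to get $g(x) = \tfrac{2}{\pi} A(\tfrac{3\pi}{4}) x$ for all $x \in \R^2$. Since $A(\tfrac{3\pi}{4})$ is a rotation and hence invertible, we immediately obtain $\{x \in \R^2 \colon g(x) = 0\} = \{0\}$, which establishes item~\eqref{item:ex:1} and also identifies the unique fixed point $\Xi = 0$.

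The next step is to check both monotonicity inequalities of Corollary~\ref{cor:no_setting_discrete_version_one_sample} with $\Xi = 0$. Item~\eqref{item:random_rotation_5} of Lemma~\ref{example:random_rotation} yields $\langle x - y, g(x) - g(y) \rangle_{\R^2} = -\tfrac{\sqrt{2}}{\pi} \|x-y\|_{\R^2}^2$ for all $x, y \in \R^2$. Specializing to $y = 0$ and combining with $g(0) = 0$ and the identity $\|g(x)\|_{\R^2} = \tfrac{2}{\pi} \|x\|_{\R^2}$ from item~\eqref{item:random_rotation_3_02}, we obtain
\begin{equation}
\langle x - 0, g(x)\rangle_{\R^2} = -\tfrac{\sqrt{2}}{\pi} \|x\|_{\R^2}^2 = -\tfrac{\pi \sqrt{2}}{4} \|g(x)\|_{\R^2}^2.
\end{equation}
Choosing $L_0 = \min\{\sqrt{2}/\pi, \pi \sqrt{2}/4\} = \sqrt{2}/\pi$ thus gives both $\langle x - \Xi, g(x) \rangle_{\R^2} \leq -L_0 \|g(x)\|_{\R^2}^2$ and $\langle x - y, g(x) - g(y) \rangle_{\R^2} \leq -L_0 \|x - y\|_{\R^2}^2$ simultaneously.

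For the remaining hypotheses I would argue as follows. The $L^2$-growth bound $\E[\|G(x, Z_1)\|_{\R^2}^2] \leq c (1 + \|x\|_{\R^2})^2$ holds with $c = 1$ because item~\eqref{item:random_rotation_3_02} gives $\E[\|G(x, Z_1)\|_{\R^2}^2] = \|x\|_{\R^2}^2$. The $C^2$-regularity of $x \mapsto G(x, s)$ follows from item~\eqref{item:random_rotation_2}, and since $G$ is linear in $x$ with $(\tfrac{\partial}{\partial x} G)(x, s) = A(s)$ (a unit-norm rotation) and $(\tfrac{\partial^2}{\partial x^2} G)(x, s) = 0$, the integrability condition $\inf_{\delta} \sup_u \E[\|(\tfrac{\partial^i}{\partial x^i} G)(x+u, Z_1)\|^{1+\delta}] < \infty$ reduces to a bounded quantity (for $\delta = 1$) by item~\eqref{item:random_rotation_3}, and $\|\E[(\tfrac{\partial^2}{\partial x^2} G)(x, Z_1)]\|_{L^{(2)}(\R^2, \R^2)} = 0$. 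Finally, since $\sup_{x \in \R^2} \max_{i \in \{1,2\}} \|\psi^{(i)}(x)\|_{L^{(i)}(\R^2, \R)} < \infty$ by hypothesis on $\psi$, we may choose a single constant
\begin{equation}
c_0 = 1 + \sup_{x \in \R^2} \max_{i \in \{1,2\}} \|\psi^{(i)}(x)\|_{L^{(i)}(\R^2, \R)} \in (0, \infty)
\end{equation}
that dominates all the bounds required by Corollary~\ref{cor:no_setting_discrete_version_one_sample}. Applying that corollary with $d = 2$, $S = [\pi/4, 5\pi/4]$, $\mathcal{S} = \mathcal{B}([\pi/4, 5\pi/4])$, $\Xi = 0$, $L = L_0$, $c = c_0$, the given $\psi, \eta, \varepsilon$, and the $G, g, Z_n, \Theta$ of the present statement yields exactly the conclusion~\eqref{item:ex:2}.

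There is no serious analytic obstacle; the entire proof is a verification of hypotheses, and the only bookkeeping care needed is to choose a single constant $c_0$ and a single constant $L_0$ that simultaneously meet all the requirements of Corollary~\ref{cor:no_setting_discrete_version_one_sample}, which is straightforward because every intermediate bound is explicit.
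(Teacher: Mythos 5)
Your proof is correct and follows essentially the same route as the paper: both verify the hypotheses of Corollary~\ref{cor:no_setting_discrete_version_one_sample} by drawing on the explicit computations in Lemma~\ref{example:random_rotation}, and in particular both derive the coercivity bound $\langle x, g(x)\rangle_{\R^2} = -\tfrac{\sqrt{2}}{\pi}\|x\|_{\R^2}^2 = -\tfrac{\pi\sqrt{2}}{4}\|g(x)\|_{\R^2}^2$ from items~\eqref{item:random_rotation_3_02} and~\eqref{item:random_rotation_5} of that lemma. Your explicit choice $L_0 = \min\{\sqrt{2}/\pi,\ \pi\sqrt{2}/4\} = \sqrt{2}/\pi$ is a small but useful clarification that the paper leaves implicit.
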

\begin{proof}[Proof of Corollary~\ref{cor:example}]
First, note that item~\eqref{item:random_rotation_3_02}  in Lemma~\ref{example:random_rotation} proves that for all $x \in \R^2$ we have that
\begin{align}
\label{eq:example:F}
 \E\big[\|G(x,Z_1)\|_{\R^2}^2\big]= \|x\|_{\R^2}^2 \leq  \big[ 1+ \|x\|_{\R^2}\big]^2
\end{align}
and 
\begin{align}
\label{eq:example:f}
 \|g(x)\|_{\R^2}=\tfrac{2}{\pi}\|x\|_{\R^2}. 
\end{align}
Next observe that item~\eqref{item:random_rotation_3} in Lemma~\ref{example:random_rotation} establishes for all $x \in \R^2$ that 
\begin{align}
\label{eq:example:sup}
\begin{split}
 \max_{i \in \{1,2\}} \sup_{u\in[-1,1]^2}\E\Big[ 
\|(\tfrac{\partial^i}{\partial x^i}G)(x+u,Z_1)\|_{L^{(i)}(\R^2,\R^2)}^2
\Big]<\infty.
\end{split} 
\end{align}
Moreover, note that item~\eqref{item:random_rotation_5} in  Lemma~\ref{example:random_rotation} ensures that for all $x, y \in \R^2$ we have that
\begin{align}
\label{eq:example:scalar}
\begin{split}
& \langle x - y, g(x) - g(y)\rangle_{\R^2}  = -\tfrac{\sqrt{2}}{\pi}\| x - y \|_{\R^2}^2.
\end{split}
\end{align}
This and \eqref{eq:example:f} ensure that for all $x \in \R^2$ we have that
\begin{align}
\label{eq:example:0}
\langle x , g(x) \rangle_{\R^2} = \langle x-0 , g(x) - g(0) \rangle_{\R^2} = -\tfrac{\sqrt{2}}{\pi}\| x  \|_{\R^2}^2 = -\tfrac{\pi \sqrt{2}}{4}\| g(x)  \|_{\R^2}^2.
\end{align}
In addition, observe that for all $x \in \R^2$,  $s\in [\nicefrac{\pi}{4},\nicefrac{5\pi}{4}] $ we have that
\begin{align}
(\tfrac{\partial^2}{\partial x^2} G)(x,s)=0.
\end{align}
This reveals that for all  $s\in [\nicefrac{\pi}{4},\nicefrac{5\pi}{4}] $  it holds that
\begin{align}
\sup_{x \in \R^2} \| (\tfrac{\partial^2}{\partial x^2} G)(x,s) \|_{L^{(2)}(\R^2,\R^2)} < \infty.
\end{align}
Combining this with Corollary~\ref{cor:no_setting_discrete_version_one_sample}, \eqref{eq:example:F}, \eqref{eq:example:sup}, \eqref{eq:example:scalar}, \eqref{eq:example:0}, and the assumption that $\sup_{x \in \R^2} \max_{i \in \{1, 2\}} \allowbreak \|\psi^{(i)} (x)\|_{L^{(i)}(\R^2, \R)}< \infty$ establishes item~\eqref{item:ex:1} and item~\eqref{item:ex:2}. The proof of Corollary~\ref{cor:example} is thus completed.
\end{proof}

\chapter{Weak  error estimates for stochastic gradient descent (SGD) optimization methods}
\label{sec:SGD}
\chaptermark{}

 In this chapter we apply the weak error analysis results for SAAs from Chapter~\ref{sec:SAA:poly} above to establish weak error estimates for SGD optimization methods. In particular, in \Cref{cor:conv:sgd:nof} in Section~\ref{subsec:SGD:linear} below we establish weak  error estimates for SGD optimization methods in the case of objective functions with linearly growing derivatives. In our proof of  \Cref{cor:conv:sgd:nof} we employ the weak  error estimates for SGD optimization methods in the case of coercive objective functions
 in 	Corollary~\ref{cor:conv:sgd}  in Section~\ref{subsec:SGD:coercive} below. Our proof of Corollary~\ref{cor:conv:sgd}, in turn, uses the elementary result on derivatives of gradients of smooth functions in \Cref{lem:der:gradient}  in Section~\ref{subsec:SGD:coercive}  below and  the weak convergence result for SAAs in Corollary~\ref{cor:no_setting_discrete_version_one_sample} in Section~\ref{subsec:SAA:without} above. 

\section{Weak  error estimates for SGD optimization methods in the case of coercive objective functions}
\label{subsec:SGD:coercive}
\sectionmark{}

\begin{lemma}
	\label{lem:der:gradient}
	Let $d, n \in \N$,  $ f \in C^n(\R^d, \R)$ and let $g \colon  \R^d \to \R^d$ satisfy for all $x \in \R^d$ that $
	g(x) = (\nabla f) (x).
	$
	Then 
	\begin{enumerate}[(i)]
		\item
		\label{item:cont:g} we have that 
	$
		g \in C^{(n-1)}(\R^d, \R^d),
		$
		\item
		\label{item:inner:product} we have for all $k \in \{1, 2, \ldots, n\}$,  $x , y_1, y_2, \ldots, y_k \in \R^d$  that
		\begin{equation}
		\label{eq:rel:gradient}
		f^{(k)}(x)(y_1, y_2, \ldots, y_k) = \langle g^{(k-1)}(x)(y_2, y_3, \ldots, y_{k}), y_1 \rangle_{\R^d},
		\end{equation}
		and
		\item
		\label{item:norm} we have for all $k \in \{1, 2, \ldots, n\}$, $x \in \R^d$  that
		\begin{equation}
		\|f^{(k)}(x) \|_{L^{(k)}(\R^d, \R)} = \|g^{(k-1)} (x) \|_{L^{(k-1)}(\R^d, \R^d)}.
		\end{equation}
	\end{enumerate}
\end{lemma}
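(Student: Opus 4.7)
The plan is to establish the three items in order, with item (ii) being the technical heart and items (i) and (iii) following as relatively clean consequences.

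For item (i), I will use the standard characterization of $C^{n-1}$-regularity component-wise. Since $f \in C^n(\R^d,\R)$, each partial derivative $\partial_i f$ lies in $C^{n-1}(\R^d,\R)$. Because $g$ has components $g_i = \partial_i f$, classical results on coordinate-wise differentiability (cf.\ Coleman~\cite[Proposition~4.6]{Coleman2012}, which was already invoked in Lemma~\ref{lem:diff:induction}) give $g \in C^{n-1}(\R^d,\R^d)$.

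For item (ii), I will proceed by induction on $k \in \{1, 2, \ldots, n\}$. The base case $k=1$ is immediate from the definition of the gradient together with the convention $L^{(0)}(\R^d,\R^d) = \R^d$: for all $x, y_1 \in \R^d$,
\begin{equation}
f^{(1)}(x)(y_1) = \langle \nabla f(x), y_1 \rangle_{\R^d} = \langle g(x), y_1 \rangle_{\R^d} = \langle g^{(0)}(x), y_1 \rangle_{\R^d}.
\end{equation}
For the induction step $k \to k+1$ with $k+1 \leq n$, I will differentiate the identity from the induction hypothesis in the direction of a fresh vector $y_{k+1}$. On the left this produces $f^{(k+1)}(x)(y_1,\dots,y_{k+1})$; on the right, since the inner product is linear and continuous in its first slot and $y_1$ is held fixed, it produces $\langle g^{(k)}(x)(y_2,\dots,y_k,y_{k+1}), y_1 \rangle_{\R^d}$. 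By Schwarz's theorem (e.g.\ K\"onigsberger~\cite[Section~2.3]{Koenigsberger2004}), applied to $f \in C^{k+1}$, the multilinear form $g^{(k)}(x)$ is symmetric in its arguments, so the right-hand side equals $\langle g^{(k)}(x)(y_2,y_3,\dots,y_{k+1}), y_1 \rangle_{\R^d}$, which closes the induction. The main technical point to check here is just this symmetry argument and the elementary bookkeeping that the directional derivative commutes with the pairing against a fixed $y_1$; no hard analysis is required.

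For item (iii), I will use (ii) to convert between the two operator norms. The Cauchy-Schwarz inequality together with (ii) yields, for all $x \in \R^d$ and $y_1,\dots,y_k \in \R^d\setminus\{0\}$,
\begin{equation}
\frac{|f^{(k)}(x)(y_1,\dots,y_k)|}{\|y_1\|_{\R^d}\cdots\|y_k\|_{\R^d}} \leq \frac{\|g^{(k-1)}(x)(y_2,\dots,y_k)\|_{\R^d}}{\|y_2\|_{\R^d}\cdots\|y_k\|_{\R^d}} \leq \|g^{(k-1)}(x)\|_{L^{(k-1)}(\R^d,\R^d)},
\end{equation}
so that $\|f^{(k)}(x)\|_{L^{(k)}(\R^d,\R)} \leq \|g^{(k-1)}(x)\|_{L^{(k-1)}(\R^d,\R^d)}$. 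For the reverse inequality, fix $y_2,\dots,y_k \in \R^d\setminus\{0\}$; if $g^{(k-1)}(x)(y_2,\dots,y_k) \neq 0$, choose $y_1 = g^{(k-1)}(x)(y_2,\dots,y_k)$, which yields $\langle g^{(k-1)}(x)(y_2,\dots,y_k), y_1\rangle_{\R^d} = \|y_1\|_{\R^d} \cdot \|g^{(k-1)}(x)(y_2,\dots,y_k)\|_{\R^d}$ and gives the matching lower bound on $\|f^{(k)}(x)\|_{L^{(k)}(\R^d,\R)}$; the case $g^{(k-1)}(x)(y_2,\dots,y_k) = 0$ is trivial. Taking the supremum over $y_2, \dots, y_k \in \R^d\setminus\{0\}$ concludes the proof. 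I do not anticipate a real obstacle here — the only subtlety worth being careful about is the definitional convention for $L^{(0)}(\R^d,\R^d)$ and that Schwarz's theorem is the right tool to discard any ordering-of-arguments discrepancies in item (ii).
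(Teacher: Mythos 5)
Your proposal is correct and follows essentially the same route as the paper's proof: induction on $k$ for item~\eqref{item:inner:product} with the base case handled by the definition of the gradient, the induction step obtained by differentiating the inner-product identity in direction $y_{k+1}$, and item~\eqref{item:norm} deduced from item~\eqref{item:inner:product} via the standard identity $\sup_{y_1 \neq 0} |\langle v, y_1\rangle_{\R^d}|/\|y_1\|_{\R^d} = \|v\|_{\R^d}$. The only cosmetic differences are that the paper carries out the induction step as an explicit $\limsup$ computation (differentiating $g^{(k-1)}$, passing through Cauchy--Schwarz, and then substituting the hypothesis), whereas you phrase it as ``differentiate the identity,'' and that your explicit appeal to Schwarz's theorem to reconcile argument-ordering conventions for $g^{(k)}(x)$ is a careful but essentially unnecessary precaution — the tuple $(y_2,\dots,y_k,y_{k+1})$ is literally $(y_2,y_3,\dots,y_{k+1})$, so symmetry is only needed if one worries about whether the differentiation direction is slotted first or last, a bookkeeping point the paper leaves implicit.
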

\begin{proof}[Proof of Lemma~\ref{lem:der:gradient}]
	First, note that the hypothesis that $f \in C^n(\R^d, \R)$ establishes item~\eqref{item:cont:g}.
	Next we prove item~\eqref{item:inner:product} by induction on $k \in \{1, 2, \ldots, n\}$. For the base case $k=1$ note that for all $x, y \in \R^d$ we have that
	\begin{equation}
	f'(x)(y)= \langle g(x), y \rangle_{\R^d}.
	\end{equation}
	This proves \eqref{eq:rel:gradient} in the base case $k=1$. For the induction step $ \{1, 2, \ldots, n-1\} \ni k \to k+1 \in \{2, 3, \ldots, n\}$ let $k \in \{1, 2, \ldots, n-1\}$ satisfy
	for all $x , y_1, y_2, \ldots, y_k \in \R^d$  that
	\begin{equation}
	\label{eq:ind:hypo}
	f^{(k)}(x)(y_1, y_2, \ldots, y_k) = \langle g^{(k-1)}(x)(y_2, y_3, \ldots, y_{k}), y_1 \rangle_{\R^d}.
	\end{equation}
	Next observe that   item~\eqref{item:cont:g} ensures that for all $x,  y_2, y_3, \ldots, y_{k+1} \in \R^d$ we have that
	\begin{align}
	\begin{split}
	\limsup_{\substack{h \to 0\\
		h \in \R \backslash \{0\}}}  &\bigg\| \frac{g^{(k-1)}(x+h y_{k+1})(y_2, y_3, \ldots, y_k) - g^{(k-1)}(x)(y_2, y_3, \ldots, y_k)}{h} \\
	& \quad - g^{(k)}(x)(y_2, y_3, \ldots, y_k, y_{k+1})\bigg\|_{\R^d} = 0. 
	\end{split}
	\end{align}
	The Cauchy-Schwartz inequality hence implies that for all $x, y_1, y_2, \ldots, y_{k+1} \in \R^d$ we have that
	\begin{align}
	\begin{split}
\limsup_{\substack{h \to 0\\
			h \in \R \backslash \{0\}}} &  \bigg|\frac{ \langle g^{(k-1)}(x+h y_{k+1})(y_2, y_3, \ldots, y_k) - g^{(k-1)}(x)(y_2, y_3, \ldots, y_k), y_1 \rangle_{\R^d}}{h} \\
&\quad	-\langle g^{(k)}(x)(y_2, y_3, \ldots, y_k, y_{k+1}), y_1 \rangle_{\R^d} \bigg| \\
\leq  \limsup_{\substack{h \to 0\\
			h \in \R \backslash \{0\}}} & \bigg\| \frac{g^{(k-1)}(x+h y_{k+1})(y_2, y_3, \ldots, y_k) - g^{(k-1)}(x)(y_2, y_3, \ldots, y_k)}{h} \\
&\quad	- g^{(k)}(x)(y_2, y_3, \ldots, y_k, y_{k+1})\bigg\|_{\R^d} \| y_1\|_{\R^d} = 0.
	\end{split}
	\end{align}
	The induction hypothesis (see~\eqref{eq:ind:hypo}) therefore assures that for all $x, y_1, y_2, \ldots, y_{k+1} \in \R^d$ we have that
	\begin{align*}
	 \limsup_{\substack{h \to 0\\
			h \in \R \backslash \{0\}}} & \bigg|\frac{  f^{(k)}(x+h y_{k+1})(y_1, y_2, \ldots, y_k) - f^{(k)}(x)(y_1, y_2, \ldots, y_k)}{h} \\
	& \quad  -\langle g^{(k)}(x)(y_2, y_3, \ldots, y_{k+1}), y_1 \rangle_{\R^d} \bigg| \numberthis \\
	=\limsup_{\substack{h \to 0\\
			h \in \R \backslash \{0\}}} & \bigg|\frac{ \langle g^{(k-1)}(x+h y_{k+1})(y_2, y_3, \ldots, y_k) - g^{(k-1)}(x)(y_2, y_3, \ldots, y_k), y_1 \rangle_{\R^d}}{h}  \\
		& \quad
	-\langle g^{(k)}(x)(y_2, y_3, \ldots, y_{k+1}), y_1 \rangle_{\R^d} \bigg| = 0. 
	\end{align*}
	This and the assumption that $f \in C^{n}(\R^d, \R)$ demonstrates that for all $x, y_1, y_2, \ldots, \allowbreak y_{k+1} \in \R^d$ we have that
	\begin{equation}
	f^{(k+1)}(x)(y_1, y_2, \ldots,  y_{k+1}) = \langle g^{(k)}(x)(y_2, y_3,  \ldots, y_{k+1}), y_1 \rangle_{\R^d}.
	\end{equation}
	Induction thus proves item~\eqref{item:inner:product}. Next observe that item~\eqref{item:inner:product} implies that for all $k \in \{1, 2, \ldots, n\}$, $x \in \R^d$ it holds  that
	\begin{align}
	\begin{split}
	&\|f^{(k)}(x) \|_{L^{(k)}(\R^d, \R)} = \sup_{y_1, y_2, \ldots, y_k \in \R^d \backslash \{0\}} \frac{|f^{(k)}(x)(y_1, y_2, \ldots, y_k)|}{\|y_1\|_{\R^d} \|y_2\|_{\R^d} \ldots \|y_k\|_{\R^d}}\\
	&= \sup_{y_1, y_2,  \ldots, y_k \in \R^d \backslash \{0\}} \frac{|\langle g^{(k-1)}(x)(y_2, y_3, \ldots, y_{k}), y_1 \rangle_{\R^d}|}{\|y_1\|_{\R^d} \|y_2\|_{\R^d} \ldots \|y_k\|_{\R^d}}\\
	& = \sup_{y_2, y_3,  \ldots, y_k \in \R^d \backslash \{0\}}  \left[ \sup_{y_1 \in \R^d \backslash \{0\}}  \frac{|\langle g^{(k-1)}(x)(y_2, y_3, \ldots, y_{k}), y_1 \rangle_{\R^d}|}{\|y_1\|_{\R^d} \|y_2\|_{\R^d} \ldots \|y_k\|_{\R^d}}\right] \\
	& = \sup_{y_2, y_3,  \ldots, y_k \in \R^d \backslash \{0\}}    \frac{\| g^{(k-1)}(x)(y_2, y_3, \ldots, y_{k})\|_{\R^d}}{\|y_2\|_{\R^d} \|y_3\|_{\R^d}  \ldots \|y_k\|_{\R^d}} \\
	&= \|g^{(k-1)} (x) \|_{L^{(k-1)}(\R^d, \R^d)}.
	\end{split}
	\end{align}
	This establishes item~\eqref{item:norm}. The proof of Lemma~\ref{lem:der:gradient} is thus completed.
\end{proof}

\begin{cor}\label{cor:conv:sgd}
	Let  $d\in \N$, $ \xi,\,\Xi\in \R^d$, $\epsilon\in (0,\nicefrac{1}{2})$,  $\eta, L, c \in (0,\infty)$, $\psi\in C^2(\R^d,\R)$,  let $(S,\mathcal{S})$ be a measurable space, let $(\Omega,\F,\P)$ be a probability space, 
	let  $F  =  (F(x, s))_{(x, s) \in \R^d \times S} \colon \R^d\times S\to \R$ be  $(\mathcal{B}(\R^d)\otimes \mathcal{S})/\mathcal{B}(\R)$-measurable,
	let $Z_n\colon\Omega \to S$, $n \in \N$, be i.i.d.\ random variables, let $g\colon\R^d\to\R^d$ be a function, assume for all $s \in S$ that $( \R^d\ni x \mapsto F(x,s)\in\R\big)\in C^3(\R^d,\R)$, assume  for all $x,y\in\R^d$ that 
	\begin{gather}
	\label{eq:gradient:linear}
	\E\big[\|(\nabla_{x} F)(x, Z_1)\|_{\R^d}^2 \big]
	 \leq c \big[1+\|x\|_{\R^d}^2 \big], \qquad 	\langle x-\Xi,g(x)\rangle_{\R^d} \leq -L \|g(x)\|_{\R^d}^2, \\
	\label{eq:gradient:exp}
	g(x)=\E[ ( \nabla_x F)(x,Z_1)],\quad \langle x-y,g(x)-g(y)\rangle_{\R^d} \leq -L \|x-y\|_{\R^d}^2,
	 \\
	\label{eq:cond:bound:G}
	\max_{i \in \{2,3\}} \inf_{\delta\in(0,\infty)}\sup_{u\in [-\delta,\delta]^d}\E\Big[ \| (\tfrac{\partial^i}{\partial x^i}F)(x+u,Z_1)\|_{L^{(i)}(\R^d,\R)}^{1+\delta}\Big]<\infty,
	\end{gather}
	and
	\begin{equation}
	\label{eq:psi:G}
	\begin{split}
	\big\|\E\big[ (\tfrac{\partial^3}{\partial x^3}F)(x,Z_1) \big]\big\|_{L^{(3)}(\R^d,\R)} 
	+ \max_{i \in \{1,2\}}
	\|\psi^{(i)}(x)\|_{L^{(i)}(\R^d,\R)} < c
	\end{split}
	\end{equation}
	(cf.~Corollary~\ref{cor:derivative:gen}),	and let $\Theta \colon \N_0\times \Omega \to \R^d$ be the stochastic process which satisfies for all $n\in\N$   that $\Theta_0 = \xi$ and 
	\begin{equation}
	\label{eq:gradient:Theta}
	\Theta_n = \Theta_{n-1} + \tfrac{\eta}{n^{1-\epsilon}} (\nabla_x F)(\Theta_{n-1},Z_n).
	\end{equation}
	Then 
	\begin{enumerate}[(i)]
		\item we have that
		$
		\lb x\in\R^d\colon g(x)=0\rb = \lb \Xi\rb
	$
		and 
		\item there exists  $C \in \R$ such that for all $n\in\N$ we have that
		\begin{equation}
		|\E[\psi(\Theta_{n})]- \psi(\Xi)|  \leq C n^{2\epsilon-1}.
		\end{equation}
	\end{enumerate}
\end{cor}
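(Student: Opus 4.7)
The plan is to realise Corollary \ref{cor:conv:sgd} as an immediate specialisation of Corollary \ref{cor:no_setting_discrete_version_one_sample}: the SGD recursion \eqref{eq:gradient:Theta} is precisely the SAA recursion of Corollary \ref{cor:no_setting_discrete_version_one_sample} with the choice $G(x,s) = (\nabla_x F)(x,s)$, and with this choice the function $g$ of Corollary \ref{cor:no_setting_discrete_version_one_sample} coincides with the $g$ of Corollary \ref{cor:conv:sgd}. Once this identification is made, both items of the conclusion follow verbatim from the corresponding items in Corollary \ref{cor:no_setting_discrete_version_one_sample}, and the only real task is to verify that the hypotheses of Corollary \ref{cor:no_setting_discrete_version_one_sample} are implied by those of Corollary \ref{cor:conv:sgd}.

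The main technical ingredient for that verification is Lemma \ref{lem:der:gradient}, whose norm identity yields, for every $s \in S$, $k \in \{0,1,2\}$, and $x \in \R^d$,
\begin{equation*}
\big\|(\tfrac{\partial^{k}}{\partial x^{k}}(\nabla_x F))(x,s)\big\|_{L^{(k)}(\R^d,\R^d)}
= \big\|(\tfrac{\partial^{k+1}}{\partial x^{k+1}} F)(x,s)\big\|_{L^{(k+1)}(\R^d,\R)},
\end{equation*}
with the analogous identity holding after applying the expectation $\E$ by linearity. I would use this identity to convert every condition on derivatives of the SAA function $G = \nabla_x F$ into the corresponding condition on derivatives of $F$, which is the form in which the hypotheses of Corollary \ref{cor:conv:sgd} are actually stated.

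With $G \coloneqq \nabla_x F$ the remaining checks are routine: smoothness $G(\cdot,s) \in C^2(\R^d,\R^d)$ is inherited from $F(\cdot,s) \in C^3(\R^d,\R)$; the quadratic growth bound required in Corollary \ref{cor:no_setting_discrete_version_one_sample} follows from \eqref{eq:gradient:linear} via the elementary inequality $1 + \|x\|_{\R^d}^2 \leq 2(1+\|x\|_{\R^d})^2$; the two coercivity conditions on $g$ are taken over directly from \eqref{eq:gradient:linear}--\eqref{eq:gradient:exp}; the integrability bound on $(\tfrac{\partial^i}{\partial x^i} G)(x+u,Z_1)$ for $i \in \{1,2\}$ reduces, via the displayed norm identity, to hypothesis \eqref{eq:cond:bound:G} for $i \in \{2,3\}$; and the bound on $\|\E[(\tfrac{\partial^2}{\partial x^2}G)(x,Z_1)]\|_{L^{(2)}(\R^d,\R^d)}$ together with the bounds on $\psi^{(i)}$ are both contained in \eqref{eq:psi:G}. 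Since the entire substantive analytic content is supplied by Corollary \ref{cor:no_setting_discrete_version_one_sample}, there is no genuine mathematical obstacle; the only point that demands care is the consistent bookkeeping of tensor orders when passing back and forth between derivatives of $F$ and of $\nabla_x F$, and this is precisely what Lemma \ref{lem:der:gradient} provides in a single step.
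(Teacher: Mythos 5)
Your proposal is correct and follows essentially the same route as the paper: set $G = \nabla_x F$, verify the hypotheses of Corollary~\ref{cor:no_setting_discrete_version_one_sample} via Lemma~\ref{lem:der:gradient}, and invoke that corollary. One small imprecision worth noting: the identity $\|\E[(\tfrac{\partial^{3}}{\partial x^{3}}F)(x,Z_1)]\|_{L^{(3)}(\R^d,\R)} = \|\E[(\tfrac{\partial^{2}}{\partial x^{2}}G)(x,Z_1)]\|_{L^{(2)}(\R^d,\R^d)}$ does not follow ``by linearity'' from the pathwise \emph{norm} identity alone; you need the tensor-level relation in item~\eqref{item:inner:product} of Lemma~\ref{lem:der:gradient}, pass $\E$ through the (continuous linear) evaluation functionals after checking integrability of the second-order Fr\'echet derivative (Jensen together with \eqref{eq:cond:bound:G} does this), and only then take operator norms --- which is exactly the extra bookkeeping the paper's proof carries out. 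You should also record that $G = \nabla_x F$ is jointly measurable (the paper cites \cite[Lemma~4.4]{Wurstemberger2018}), since that is a stated hypothesis of Corollary~\ref{cor:no_setting_discrete_version_one_sample}.
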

\begin{proof}[Proof of Corollary~\ref{cor:conv:sgd}]
	Throughout this proof let $G  = (G(x, s))_{(x, s) \in \R^d \times S}  \colon \R^d \times S \to \R^d$ satisfy for all $x \in \R^d$, $s \in S$ that
	\begin{align}
	\label{eq:gradient:F}
	G(x, s) = (\nabla_x F)(x, s).
	\end{align}
	Observe that the hypothesis that $ \forall \, s \in S \colon ( \R^d\ni x\mapsto F(x,s)\in\R)\in C^3(\R^d,\R)$ ensures that for all $s \in S$ we have that
	\begin{align}
	\label{eq:gradient:C2}
	\big( \R^d\ni x \mapsto G(x,s)\in\R^d\big)\in C^2(\R^d,\R^d).
	\end{align}
	In addition, note that \eqref{eq:gradient:F} and \eqref{eq:gradient:linear} imply that 
	\begin{equation}
\label{eq:gradient:linear:F}
	\sup_{x\in\R^d}\left(\frac{\E\big[\|G(x, Z_1)\|_{\R^d}^2 \big]}{\big[1+\|x\|_{\R^d} \big]^2}\right) \leq c.
	\end{equation}
	Next observe that item~\eqref{item:norm} in Lemma~\ref{lem:der:gradient} (with $d =d$, $n =3$, $f = ( \R^d \ni x \mapsto F(x,s) \in \R) \in C^3(\R^d, \R)$, $g = ( \R^d \ni x \mapsto G(x,s) \in \R^d)$ for $s \in S$ in the notation of Lemma~\ref{lem:der:gradient})  assures that for all $i \in \{1, 2\}$, $x \in \R^d$, $s \in S$ we have that 
	\begin{align}
	\begin{split}
	\| (\tfrac{\partial^i}{\partial x^i}G)(x, s)\|_{L^{(i)}(\R^d,\R^d)} = \| (\tfrac{\partial^{i+1}}{\partial x^{i+1}}F)(x, s)\|_{L^{(i+1)}(\R^d,\R)}.
	\end{split}
	\end{align}
	This and \eqref{eq:cond:bound:G} demonstrate that for all $x \in \R^d$ we have that	
	\begin{equation}
	\label{eq:gradient:max}
	\max_{i \in \{1,2\}} \inf_{\delta\in(0,\infty)}\sup_{u\in [-\delta,\delta]^d}\E\Big[ \| (\tfrac{\partial^i}{\partial x^i}G)(x+u,Z_1)\|_{L^{(i)}(\R^d,\R^d)}^{1+\delta}\Big]<\infty.
	\end{equation}
	Jensen's inequality hence proves that for all $x \in \R^d$ we have that	
	\begin{equation}
	\label{eq:gradient:finite}
	\max_{i \in \{1,2\}} \E\Big[ \| (\tfrac{\partial^i}{\partial x^i}G)(x,Z_1)\|_{L^{(i)}(\R^d,\R^d)}\Big]<\infty.
	\end{equation}
	Moreover, observe that for all $y_1, y_2, y_3 \in \R^d$ we have that $(L^{(3)}(\R^d, \R) \ni A \mapsto \allowbreak A(y_1, y_2, y_3) \in \R)$ is a continuous linear function. This ensures that for all vectors $y_1, y_2, y_3 \in \R^d$ and all random variables $ A \colon \Omega \to L^{(3)}(\R^d, \R)$  with $\E[\|A\|_{L^{(3)}(\R^d, \R)}] < \infty$ we have that $\E[|A(y_1,y_2,y_3)|] < \infty$ and
	\begin{equation}
	\E [A](y_1, y_2, y_3) = \E[A(y_1, y_2, y_3)].
	\end{equation}
	Combining this, Corollary~\ref{cor:derivative:gen}, and item~\eqref{item:inner:product} in Lemma~\ref{lem:der:gradient} (with $d =d$, $n =3$, $f = ( \R^d \ni x \mapsto F(x,Z_1(\omega)) \in \R) \in C^3(\R^d, \R)$, $g = ( \R^d \ni x \mapsto G(x,Z_1(\omega)) \in \R^d)$ for $\omega \in \Omega$ in the notation of Lemma~\ref{lem:der:gradient}) implies that for all $x, y_1, y_2, y_3 \in \R^d$ we have that
	\begin{align}
	\label{eq:G:cube}
	\begin{split}
	\E\big[ (\tfrac{\partial^3}{\partial x^3}F)(x,Z_1) \big](y_1, y_2, y_3) &=  \E\big[ (\tfrac{\partial^3}{\partial x^3}F)(x,Z_1) (y_1, y_2, y_3) \big]\\
	& = \E \big[ \langle (\tfrac{\partial^2}{\partial x^2}G)(x,Z_1) (y_2, y_3), y_1  \rangle_{\R^d} \big].
	\end{split}
	\end{align}
	Moreover, note that \eqref{eq:gradient:finite} assures that for all $x, y_1, y_2, y_3 \in \R^d$ we have that
	\begin{align}
	\begin{split}
	&\E \big[ \big| \langle (\tfrac{\partial^2}{\partial x^2}G)(x,Z_1) (y_2, y_3), y_1  \rangle_{\R^d} \big| \big] 
	\leq \E \big[ \|(\tfrac{\partial^2}{\partial x^2}G)(x,Z_1) (y_2, y_3)\|_{\R^d} \| y_1  \|_{\R^d} \big]\\
	& \leq \E \big[ \|(\tfrac{\partial^2}{\partial x^2}G)(x,Z_1) \|_{L^{(2)}(\R^d, \R^d)} \big] \|y_2\|_{\R^d} \|y_3\|_{\R^d} \| y_1  \|_{\R^d}  < \infty.
	\end{split}
	\end{align}
	This and \eqref{eq:G:cube} prove that for all $x, y_1, y_2, y_3 \in \R^d$ we have that
	\begin{align}
	\begin{split}
	\E\big[ (\tfrac{\partial^3}{\partial x^3}F)(x,Z_1) \big](y_1, y_2, y_3)  = \big\langle \E \big[  (\tfrac{\partial^2}{\partial x^2}G)(x,Z_1) (y_2, y_3)  \big], y_1   \big \rangle_{\R^d}.
	\end{split}
	\end{align}
	This reveals that for all $x \in \R^d$  it holds that
	\begin{align}
	\begin{split}
	&\big\|\E\big[ (\tfrac{\partial^3}{\partial x^3}F)(x,Z_1) \big]\big\|_{L^{(3)}(\R^d,\R)} = \sup_{y_1, y_2, y_3 \in \R^d \backslash \{0\}} \frac{ \big| \E\big[ (\tfrac{\partial^3}{\partial x^3}F)(x,Z_1) \big](y_1, y_2, y_3) \big|}{\|y_1\|_{\R^d} \|y_2\|_{\R^d} \|y_3\|_{\R^d}}\\
	& = \sup_{y_1, y_2, y_3 \in \R^d \backslash \{0\}} \frac{ \big|\big\langle \E \big[  (\tfrac{\partial^2}{\partial x^2}G)(x,Z_1) (y_2, y_3)  \big], y_1   \big \rangle_{\R^d} \big|}{\|y_1\|_{\R^d} \|y_2\|_{\R^d} \|y_3\|_{\R^d}}\\
	& = \sup_{ y_2, y_3 \in \R^d \backslash \{0\}} \left[ \sup_{y_1 \in \R^d \backslash \{0\}}\frac{ \big| \big\langle \E \big[  (\tfrac{\partial^2}{\partial x^2}G)(x,Z_1) (y_2, y_3)  \big], y_1   \big \rangle_{\R^d} \big|}{\|y_1\|_{\R^d} \|y_2\|_{\R^d} \|y_3\|_{\R^d}} \right] \\
	& = \sup_{ y_2, y_3 \in \R^d \backslash \{0\}} \frac{\big\| \E \big[  (\tfrac{\partial^2}{\partial x^2}G)(x,Z_1) (y_2, y_3)  \big] \big\|_{\R^d}}{\|y_2\|_{\R^d} \|y_3\|_{\R^d}} = \big\|\E\big[ (\tfrac{\partial^2}{\partial x^2}G)(x,Z_1) \big]\big\|_{L^{(2)}(\R^d,\R^d)}.
	\end{split}
	\end{align}
	This and \eqref{eq:psi:G} demonstrate that for all $x \in \R^d$ we have that
	\begin{align}
	\label{eq:gradient:sup}
\big\|\E\big[ (\tfrac{\partial^2}{\partial x^2}G)(x,Z_1) \big]\big\|_{L^{(2)}(\R^d,\R^d)} 
	+ \max_{i \in \{1,2\}}
	\|\psi^{(i)}(x)\|_{L^{(i)}(\R^d,\R)} < c.
	\end{align}
	Moreover, note that combining \eqref{eq:gradient:Theta} and \eqref{eq:gradient:F} ensures that for all $n \in \N$ we have that
	$\Theta_0 = \xi$ and 
	\begin{equation}
	\label{eq:gradient:F:Theta}
	\Theta_n = \Theta_{n-1} + \tfrac{\eta}{n^{1-\epsilon}} G(\Theta_{n-1},Z_n).
	\end{equation}
	Next observe that, e.g., \cite[Lemma~4.4]{Wurstemberger2018} proves that $G$ is $(\mathcal{B}(\R^d)\otimes \mathcal{S})/\mathcal{B}(\R^d)$-measurable.
	Corollary~\ref{cor:no_setting_discrete_version_one_sample} (with 
	$d = d$, $\xi = \xi$, $\Xi = \Xi$, $\epsilon = \epsilon$,  $\eta = \eta$, $L = L $,  $c = c$, $\psi = \psi$, $(S,\mathcal{S}) = (S,\mathcal{S})$, $(\Omega,\F,\P) = (\Omega,\F,\P)$, $Z_n = Z_n$,
	$G=G$, $g= g$, $\Theta = \Theta$ 
	for $n \in \N$ in the notation of Corollary~\ref{cor:no_setting_discrete_version_one_sample}), \eqref{eq:gradient:C2}, \eqref{eq:gradient:linear:F}, \eqref{eq:gradient:max}, \eqref{eq:gradient:sup}, \eqref{eq:gradient:F:Theta}, \eqref{eq:gradient:exp}, and \eqref{eq:gradient:linear} therefore assure that $\lb x\in\R^d\colon g(x)=0 \rb = \lb \Xi\rb$ and that there exists  $C \in \R$ such that for all $n\in\N$ we have that
	\begin{equation}
	|\E[\psi(\Theta_{n})]- \psi(\Xi)|  \leq C n^{2\epsilon-1}.
	\end{equation}
	The proof of Corollary~\ref{cor:conv:sgd} is thus completed.
\end{proof}

\section{Weak  error estimates for SGD optimization methods in the case of objective functions with linearly growing derivatives}
\label{subsec:SGD:linear}
\sectionmark{}

\begin{cor}\label{cor:conv:sgd:nof}
	Let  $d\in \N$, $ \xi,\,\Xi\in \R^d$, $\epsilon\in (0,\nicefrac{1}{2})$,  $\eta, L, c \in (0,\infty)$, $\psi\in C^2(\R^d,\R)$,  let $(S,\mathcal{S})$ be a measurable space, let $(\Omega,\F,\P)$ be a probability space,
let  $F  =  (F(\theta, s))_{(\theta, s) \in \R^d \times S} \colon \R^d\times S\to \R$ be  $(\mathcal{B}(\R^d)\otimes \mathcal{S})/\mathcal{B}(\R)$-measurable, let $Z_n\colon\Omega \to S$, $n \in \N$, be i.i.d.\ random variables,
 assume for all $s \in S$ that $(\R^d \ni \theta \mapsto F(\theta,s)\in\R )\in C^3(\R^d,\R)$,  assume  for all $\theta,  \vartheta \in \R^d$  that 
	\begin{gather}
	\label{eq:gradient:linear:nof}
	\E\big[\|(\nabla_{\theta} F)(\theta, Z_1)\|_{\R^d}^2 \big] \leq c \big[1+\|\theta\|_{\R^d} \big]^2,\\
	\label{eq:cond:bound:F:nof}
	\max_{i \in \{2,3\}} \inf_{\delta\in(0,\infty)}\sup_{u\in [-\delta,\delta]^d}\E\big[ |F(\theta, Z_1)| + \| (\tfrac{\partial^i}{\partial \theta^i}F)(\theta+u,Z_1)\|_{L^{(i)}(\R^d,\R)}^{1+\delta}\big]<\infty,\\
	\label{eq:gradient:exp:nof}
	\langle \theta - \vartheta, \E[ ( \nabla_{\theta} F)(\theta, Z_1) ]-\E[ ( \nabla_{\theta} F)(\vartheta,Z_1) ]\rangle_{\R^d} \geq L \|\theta-\vartheta\|_{\R^d}^2,\\
	\label{eq:cube:F}
	\big\|\E\big[ (\tfrac{\partial^3}{\partial {\theta}^3}F)(\theta, Z_1) \big]\big\|_{L^{(3)}(\R^d,\R)} 
	+ \max_{i \in \{1,2\}}
	\|\psi^{(i)}(\theta)\|_{L^{(i)}(\R^d,\R)} < c,
	\end{gather}
	and $\| \E[ ( \nabla_{\theta} F)(\theta, Z_1) ]\|_{\R^d} \leq c \|\theta - \Xi\|_{\R^d}$ (cf.~Corollary~\ref{cor:derivative:gen}),
	and let $\Theta \colon \N_0\times \Omega \to \R^d$ be the stochastic process which satisfies for all $n\in\N$   that $\Theta_0 = \xi$ and 
	\begin{equation}
	\label{eq:gradient:Theta:nof}
	\Theta_n = \Theta_{n-1} - \tfrac{\eta}{n^{1-\epsilon}} (\nabla_{\theta} F)(\Theta_{n-1},Z_n).
	\end{equation}
	Then 
	\begin{enumerate}[(i)]
		\item
		\label{item:sgd:inf} we have that $\lb \theta \in \R^d \colon ( \E[F(\theta, Z_1)] = \inf\nolimits_{\vartheta \in \R^d} \E[F(\vartheta, Z_1)])\rb = \lb \Xi\rb$
		and 
		\item
		\label{item:sgd:conv} there exists  $C \in \R$ such that for all $n\in\N$ we have that
		\begin{equation}
		|\E [\psi(\Theta_{n})]- \psi(\Xi)|  \leq C n^{2\epsilon-1}.
		\end{equation}
	\end{enumerate}
\end{cor}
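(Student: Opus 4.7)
My approach is to reduce item~\eqref{item:sgd:conv} directly to \Cref{cor:conv:sgd} via a sign-flip, and to derive item~\eqref{item:sgd:inf} separately from a standard strong-convexity argument. Concretely, I will introduce $\tilde F := -F$, so that $(\nabla_\theta \tilde F)(\theta,s) = -(\nabla_\theta F)(\theta,s)$ and the SGD recursion \eqref{eq:gradient:Theta:nof} becomes $\Theta_n = \Theta_{n-1} + \tfrac{\eta}{n^{1-\epsilon}}(\nabla_\theta \tilde F)(\Theta_{n-1},Z_n)$, which has exactly the form treated in \Cref{cor:conv:sgd}. Writing $\tilde g(\theta) := \E[(\nabla_\theta \tilde F)(\theta,Z_1)] = -\E[(\nabla_\theta F)(\theta,Z_1)]$, the strong-monotonicity hypothesis \eqref{eq:gradient:exp:nof} becomes the dissipativity estimate $\langle \theta-\vartheta,\tilde g(\theta)-\tilde g(\vartheta)\rangle_{\R^d}\leq -L\|\theta-\vartheta\|_{\R^d}^2$, and the quantitative bounds \eqref{eq:gradient:linear:nof}, \eqref{eq:cond:bound:F:nof}, and \eqref{eq:cube:F} transfer verbatim from $F$ to $\tilde F$.

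The only non-trivial hypothesis of \Cref{cor:conv:sgd} to verify is the coercivity condition $\langle \theta-\Xi,\tilde g(\theta)\rangle_{\R^d}\leq -\tilde L\|\tilde g(\theta)\|_{\R^d}^2$. To establish this I will first evaluate the linear-growth assumption $\|\E[(\nabla_\theta F)(\theta,Z_1)]\|_{\R^d}\leq c\|\theta-\Xi\|_{\R^d}$ at $\theta=\Xi$ to conclude $\tilde g(\Xi)=0$, so the monotonicity bound specialized to $\vartheta=\Xi$ reads $\langle \theta-\Xi,\tilde g(\theta)\rangle_{\R^d}\leq -L\|\theta-\Xi\|_{\R^d}^2$. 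Combining this with $\|\tilde g(\theta)\|_{\R^d}^2 \leq c^2\|\theta-\Xi\|_{\R^d}^2$ yields $\langle \theta-\Xi,\tilde g(\theta)\rangle_{\R^d}\leq -\tfrac{L}{c^2}\|\tilde g(\theta)\|_{\R^d}^2$. \Cref{cor:conv:sgd} applied with objective $\tilde F$ and constant $\min\{L,L/c^2\}$ then delivers the weak-error estimate $|\E[\psi(\Theta_n)]-\psi(\Xi)|\leq Cn^{2\epsilon-1}$, establishing item~\eqref{item:sgd:conv}.

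For item~\eqref{item:sgd:inf}, the plan is to show that the objective $f(\theta):=\E[F(\theta,Z_1)]$ is strongly convex with unique minimizer $\Xi$. An application of \Cref{lem:diff^n_under_int} (using \eqref{eq:cond:bound:F:nof} together with the local $L^2$-bound on $\nabla_\theta F$ implied by \eqref{eq:gradient:linear:nof}) ensures that $f\in C^3(\R^d,\R)$ and that $\nabla f(\theta)=\E[(\nabla_\theta F)(\theta,Z_1)]$; in particular $\nabla f(\Xi)=0$. Integrating the inequality $\langle \nabla f(\Xi+t(\theta-\Xi))-\nabla f(\Xi),\theta-\Xi\rangle_{\R^d}\geq Lt\|\theta-\Xi\|_{\R^d}^2$, which follows from \eqref{eq:gradient:exp:nof} with $\vartheta=\Xi$ after dividing by $t$, over $t\in[0,1]$ via the fundamental theorem of calculus produces $f(\theta)-f(\Xi)\geq \tfrac{L}{2}\|\theta-\Xi\|_{\R^d}^2$, which identifies $\Xi$ as the unique global minimizer of $f$ and establishes item~\eqref{item:sgd:inf}. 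The main obstacle is essentially the bookkeeping required in paragraph two to carry the linear-growth hypothesis through the sign flip into the coercivity condition of \Cref{cor:conv:sgd}, since the substantive analytic work is already contained in \Cref{cor:conv:sgd} and \Cref{lem:diff^n_under_int}.
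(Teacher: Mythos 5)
Your proposal is correct and follows essentially the same structure as the paper's proof: sign-flip to put the recursion into the form of \Cref{cor:conv:sgd}, verify the coercivity hypothesis from the linear-growth bound, then establish item~\eqref{item:sgd:inf} by integrating the strong-monotonicity bound. The one place where your route diverges noticeably is in deriving the coercivity constant: you observe directly that $\tilde g(\Xi)=0$, combine $\langle \theta-\Xi,\tilde g(\theta)\rangle_{\R^d}\le -L\|\theta-\Xi\|_{\R^d}^2$ with $\|\theta-\Xi\|_{\R^d}^2 \ge c^{-2}\|\tilde g(\theta)\|_{\R^d}^2$ to get the explicit constant $L/c^2$, whereas the paper first shows that $\inf_{r>0}\sup_{\theta\ne\Xi}[2\langle\theta-\Xi,-(\nabla f)(\theta)\rangle_{\R^d}+r\|(\nabla f)(\theta)\|_{\R^d}^2]/\|\theta-\Xi\|_{\R^d}^2<0$ and then invokes an external result (a proposition from the earlier Jentzen--von Wurstemberger preprint) to produce a constant $M$ with $\langle\theta-\Xi,(\nabla f)(\theta)\rangle_{\R^d}\ge M\max\{\|\theta-\Xi\|_{\R^d}^2,\|(\nabla f)(\theta)\|_{\R^d}^2\}$. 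Your direct calculation is shorter and gives the constant $\min\{L,L/c^2\}$ in closed form, which is a small gain in transparency. For the regularity of $f(\theta)=\E[F(\theta,Z_1)]$ you use \Cref{lem:diff^n_under_int} (in-paper) instead of the paper's citation to an $L^2$-based differentiability lemma from the earlier preprint; this requires checking that a single $\delta$ serves all three derivative orders simultaneously (use the minimum of the $\delta$'s coming from \eqref{eq:cond:bound:F:nof} and an application of Jensen's inequality to downgrade the $L^2$ bound in \eqref{eq:gradient:linear:nof} to the $1+\delta$ moment), which works, though it yields $f\in C^3$ where only $f\in C^1$ is needed for item~\eqref{item:sgd:inf}. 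Both approaches buy essentially the same thing; yours is marginally more self-contained.
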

\begin{proof}[Proof of Corollary~\ref{cor:conv:sgd:nof}]
	Throughout this proof let $f \colon \R^d \to \R$ satisfy for all $\theta \in \R^d$ that
	\begin{align}
	f(\theta) = \E[F(\theta, Z_1)].
	\end{align}
	Observe that \eqref{eq:gradient:linear:nof} ensures that for all $\theta \in \R^d$ we have that
	\begin{align}
	\label{eq:gradient:kappa}
	\E\big[\|(\nabla_{\theta} F)(\theta, Z_1)\|_{\R^d} \big] \leq \sqrt{\E\big[\|(\nabla_{\theta} F)(\theta, Z_1)\|_{\R^d}^2 \big]} \leq \sqrt{c} \big[1 + \|\theta\|_{\R^d} \big].
	\end{align}
	This and \eqref{eq:cond:bound:F:nof} imply that  for all $\theta \in \R^d$ we have that
	\begin{align}
	\label{eq:bound:F}
	\E\big[|F(\theta, Z_1)| + \|(\nabla_{\theta} F)(\theta, Z_1)\|_{\R^d} \big]  < \infty.
	\end{align} 
	Next  note that \eqref{eq:gradient:linear:nof} and Lemma~\ref{lem:bnd_(sum_xi)^p} (with $n=2$, $p=2$ in the notation of Lemma~\ref{lem:bnd_(sum_xi)^p}) demonstrate that for all $\theta \in \R^d$ we have that
	\begin{align}
	\begin{split}
	&\E \big[ \|(\nabla_{\theta} F)(\theta, Z_1) - \E[ (\nabla_{\theta} F)(\theta, Z_1)]\|_{\R^d}^2 ] \\
	&= \E \big[ \|(\nabla_{\theta} F)(\theta, Z_1)\|_{\R^d}^2 ]  -  \|\E[ (\nabla_{\theta} F)(\theta, Z_1)]\|_{\R^d}^2\\
	& \leq \E \big[ \|(\nabla_{\theta} F)(\theta, Z_1)\|_{\R^d}^2 ]  \leq c \big[1 + \|\theta\|_{\R^d}\big]^2 \leq 2c (1 + \|\theta\|_{\R^d}^2).
	\end{split}
	\end{align}
	Combining this, \eqref{eq:bound:F}, and \cite[Lemma~4.8]{Wurstemberger2018} (with $d=d$, $p=2$, $\kappa = 2c$, $(\Omega,\F,\P) = (\Omega,\F,\P)$, $(S, \mathcal{S}) = (S, \mathcal{S})$, $X = Z_1$, $F = F$, $f = f$ in the notation of  \cite[Lemma~4.8]{Wurstemberger2018}) ensures that for all $\theta \in \R^d$  we have that
	\begin{align}
	\label{eq:gradient:f}
	f \in C^1(\R^d, \R) \qquad \text{and} \qquad (\nabla f)(\theta) = \E [(\nabla_\theta F)(\theta, Z_1)].
	\end{align}
	This and the assumption that $\forall \, \theta \in \R^d \colon \| \E[ ( \nabla_{\theta} F)(\theta, Z_1) ]\|_{\R^d} \leq c \|\theta - \Xi\|_{\R^d}$ prove that for all $\theta \in \R^d$ we have that 
	\begin{align}
	\label{eq:linear:growth}
	\| (\nabla f)(\theta)\|_{\R^d} \leq c \|\theta - \Xi\|_{\R^d}.
	\end{align}
	This reveals that
	\begin{align}
	\label{eq:gradient_f_zero}
	(\nabla f)(\Xi) = 0.
	\end{align}	
	Combining this with \eqref{eq:gradient:f} and \eqref{eq:gradient:exp:nof} assures that for all $\theta \in \R^d$ we have that
	\begin{align}
	\label{eq:coercivity}
	\langle \theta - \Xi, (\nabla f)(\theta) \rangle_{\R^d} \geq  L \|\theta- \Xi\|_{\R^d}^2.
	\end{align}
	This proves that for all $\theta \in \R^d$ we have that
	\begin{align}
	\langle \theta , (\nabla f)(\theta+ \Xi) \rangle_{\R^d} \geq  L \|\theta\|_{\R^d}^2.
	\end{align}
	The fundamental theorem of calculus hence demonstrates that for all $\theta \in \R^d$ we have that
	\begin{align}
	\begin{split}
	f(\theta) &= f(\Xi) + \big[ f(\Xi + t(\theta- \Xi)) \big]_{t = 0}^{t = 1}  \\
	&= f(\Xi) + \int_0^1 f'(\Xi + t(\theta-\Xi))(\theta- \Xi) \, dt  \\
	&= f(\Xi)+ \int_0^1 \langle (\nabla f)(\Xi + t(\theta-\Xi)),t(\theta-\Xi)  \rangle_{\R^d} \frac{1}{t} \,dt \\
	& \geq f(\Xi)+  \int_0^1 L \norm{t(\theta-\Xi)}_{\R^d}^2\frac{1}{t} \, dt \\
	&= f(\Xi)+   L \norm{\theta-\Xi}_{\R^d}^2\int_0^1 t\, dt 
	= f(\Xi)+ \tfrac{L}{2} \norm{\theta-\Xi}_{\R^d}^2.
	\end{split}
	\end{align}
	The hypothesis that $L \in (0, \infty)$ therefore ensures that for all $\theta \in \R^d \backslash \{\Xi\}$ we have that
	\begin{align}
	f(\theta) \geq  f(\Xi)+ \tfrac{L}{2} \norm{\theta-\Xi}_{\R^d}^2 > f(\Xi).
	\end{align}
	This establishes item~\eqref{item:sgd:inf}. 
	Moreover, observe that \eqref{eq:linear:growth} and \eqref{eq:coercivity} ensure that for all $\theta \in \R^d$, $r \in (0, \infty)$  we have that
	\begin{align}
	\begin{split}
	  2 \langle \theta - \Xi, -(\nabla f)(\theta) \rangle_{\R^d} + r\|(\nabla f)(\theta) \|_{\R^d}^2
	&\leq -2L \|\theta- \Xi\|_{\R^d}^2 + r c^2  \|\theta- \Xi\|_{\R^d}^2\\
	& = (rc^2 -2L) \|\theta- \Xi\|_{\R^d}^2.
	\end{split}
	\end{align}
	This reveals that
	\begin{align}
	\inf_{r \in (0,\infty) } \left( \sup_{\theta \in \R^d \backslash \{\Xi\}} \left[  \frac{ 2 \langle \theta - \Xi, -(\nabla f)(\theta) \rangle_{\R^d} + r\|(\nabla f)(\theta) \|_{\R^d}^2}{ \|\theta- \Xi\|_{\R^d}^2} \right] \right) < 0.
	\end{align}
	Combining this with \eqref{eq:gradient_f_zero} and, e.g.,
	\cite[Proposition~2.16]{Wurstemberger2018} (with $d=d$,  $\vartheta = \Xi$, $\langle \cdot, \cdot \rangle = \langle \cdot, \cdot \rangle_{\R^d}$, $\left\| \cdot \right\| = \left\| \cdot \right \|_{\R^d}$, $ g = - (\nabla f)$
	in the notation  of \cite[Proposition~2.16]{Wurstemberger2018}) prove that  there exists $M \in (0, \infty)$ which satisfies for all $\theta \in \R^d$  that
	\begin{align}
	\label{eq:coercivity:two}
	\langle \theta - \Xi, (\nabla f)(\theta) \rangle_{\R^d} \geq  M  \max\{ \|\theta - \Xi \|_{\R^d}^2, \|(\nabla f)(\theta)\|_{\R^d}^2\}.
	\end{align}
	This, \eqref{eq:gradient:f}, and \eqref{eq:gradient:exp:nof} assure that for all $\theta, \vartheta \in \R^d$ we have that
	\begin{align}
	\begin{split}
	\langle \theta - \vartheta, -(\nabla f)(\theta) + (\nabla f)(\vartheta)\rangle_{\R^d} &= - \langle \theta - \vartheta, (\nabla f)(\theta) - (\nabla f)(\vartheta)\rangle_{\R^d} \\
	&\leq  -L \|\theta- \vartheta \|_{\R^d}^2 \\
	&\leq - \min\{L, M\} \|\theta- \vartheta \|_{\R^d}^2
	\end{split}
	\end{align}
	and
	\begin{align}
	\begin{split}
	\langle \theta - \Xi, -(\nabla f)(\theta) \rangle_{\R^d} &= - \langle \theta - \Xi, (\nabla f)(\theta) \rangle_{\R^d}
	 \leq -M\|(\nabla f)(\theta)\|_{\R^d}^2 \\
	&\leq - \min\{L, M\} \|(\nabla f)(\theta)\|_{\R^d}^2.
	\end{split}
	\end{align}
	Corollary~\ref{cor:conv:sgd} (with $d=d$, $\xi = \xi$, $\Xi=\Xi$, $\varepsilon = \varepsilon$, $\eta = \eta$, $L = \min\{L, M\} \in (0, \infty)$, $c = c$,  $\psi = \psi$, $(S,\mathcal{S}) = (S,\mathcal{S})$, $(\Omega,\F,\P) = (\Omega,\F,\P)$, $Z_n = Z_n$,
	$F = -F$, $ g= (\R^d \ni \theta \mapsto -(\nabla f)(\theta) \in \R^d)$, $\Theta = \Theta$ 
	for $n \in \N$  in the  notation of Corollary~\ref{cor:conv:sgd}) therefore establishes  item~\eqref{item:sgd:conv}. The proof of  Corollary~\ref{cor:conv:sgd:nof} is thus completed.
\end{proof}

\section*{Acknowledgments}

This article is to a large extent based on the master thesis of AB written in 2017 at ETH Zurich under the supervision of AJ. Special thanks are due to Philipp Grohs for  several instructive suggestions. Mike Giles is also gratefully acknowledged for several useful comments. This work has been funded by the Deutsche Forschungsgemeinschaft (DFG, German Research Foundation) under Germany's Excellence Strategy EXC 2044-390685587, Mathematics M\"unster: Dynamics-Geometry-Structure and by the Swiss National Science Foundation (SNSF) through the research grant  \allowbreak 200020\_175699.

\bibliographystyle{acm}			 
\bibliography{bibmasterthesis}

\end{document}